\numberwithin{equation}{section}
\newtheorem{theorem}{Theorem}[section]
\newtheorem{corollary}[theorem]{Corollary}
\newtheorem{proposition}[theorem]{Proposition}
\newtheorem{lemma}[theorem]{Lemma}
\theoremstyle{definition}
\newtheorem{example}[theorem]{Example}
\newtheorem{definition}[theorem]{Definition}
\newtheorem{problem}[theorem]{Problem}
\newtheorem{remark}[theorem]{Remark}
\theoremstyle{plain}
\newcommand{\w}{\omega}
\newcommand{\K}{\mathcal K}
\newcommand{\Ra}{\Rightarrow}
\newcommand{\IZ}{\mathbb Z}
\newcommand{\IR}{\mathbb R}
\newcommand{\IN}{\mathbb N}
\newcommand{\V}{\mathcal V}
\newcommand{\U}{\mathcal U}
\newcommand{\F}{\mathcal F}
\newcommand{\E}{\mathcal E}
\newcommand{\e}{\varepsilon}
\newcommand{\Lc}{\mathsf{L}}
\newcommand{\Lin}{\mathsf{V}}
\newcommand{\FG}{\mathsf{F}}
\newcommand{\AG}{\mathsf{A}}
\newcommand{\BG}{\mathsf{B}}
\newcommand{\FO}{\mathsf{O}}
\newcommand{\St}{\mathcal{S}t}
\newcommand{\Tau}{\mathcal{T}}
\newcommand{\conv}{\mathrm{conv}}
\newcommand{\cov}{\mathsf{cov}}
\newcommand{\add}{\mathsf{add}}
\newcommand{\cof}{\mathsf{cof}}
\newcommand{\cf}{\mathsf{cof}}
\newcommand{\C}{\mathcal C}
\newcommand{\supp}{\mathrm{supp}}
\begin{document}

\title[$\w^\w$-Bases in Free Objects of Topological Algebra]{$\w^\w$-Dominated Function Spaces and \\ $\w^\w$-Bases in Free Objects of Topological Algebra}
\author{Taras Banakh and Arkady Leiderman}
\address{T. Banakh: Department of Mathematics, Ivan Franko National University of Lviv (Ukraine) and
\newline Instytut Matematyki, Jan Kochanowski University in Kielce (Poland)}
\email{t.o.banakh@gmail.com}
\address{A. Leiderman: Department of Mathematics, Ben-Gurion University of the Negev, Beer Sheva, P.O.B. 653, Israel}
\email{arkady@math.bgu.ac.il}
\keywords{Uniform space, free topological group, free Abelian topological group, free Boolean topological group,
 free locally convex space, free linear topological space, monotone cofinal map, $\w^\w$-base}
\subjclass[2010]{54D70, 06A06, 08B20; 54H11; 22A99; 46A99}

\date{\today}

\begin{abstract}
A topological space $X$ is defined to have an {\em $\w^\w$-base} if at each point $x\in X$ the space $X$ has a neighborhood base $(U_\alpha[x])_{\alpha\in\w^\w}$ such that $U_\beta[x]\subset U_\alpha[x]$ for  all $\alpha\le\beta$ in $\w^\w$.
\newline
For a Tychonoff space $X$ consider the following conditions:
\begin{enumerate}
\item[$(\AG)$] the free Abelian topological group $\AG(X)$ of $X$ has an $\w^\w$-base;
\item[$(\BG)$] the free Boolean topological group $\BG(X)$ of $X$ has an $\w^\w$-base;
\item[$(\FG)$] the free topological group $\FG(X)$ of $X$ has an $\w^\w$-base;
\item[$(\Lc)$] the free locally convex space $\Lc(X)$ of $X$ has an $\w^\w$-base;
\item[$(\Lin)$] the free topological vector space $\Lin(X)$ of $X$ has an $\w^\w$-base;
\item[$(\U)$] the universal uniformity $\U_X$ of $X$ has a base
 $(U_\alpha)_{\alpha\in\w^\w}$ such that $U_\beta\subset U_\alpha$ for all $\alpha\le\beta$ in $\w^\w$;
\item[$(C)$] the function space $C(X)$ is $\w^\w$-dominated;

\item[$(\sigma)$] $X$ is $\sigma$-compact;
\item[$(\sigma')$] the set $X'$ of non-isolated points in $X$ is $\sigma$-compact.
\item[$(s)$] the space $X$ is separable;
\item[$(S)$] $X$ is separable or $\cov^\sharp(X)\le\add(X)$;
\item[$(D)$] $X$ is discrete.
\end{enumerate}
Then
$(\Lc)\Leftrightarrow (\Lin)\Leftrightarrow(\U{\wedge}C)\Leftrightarrow(\U{\wedge}\sigma)\Leftrightarrow(\U{\wedge}s)\Ra(\U{\wedge}S)\Ra(\FG)\Ra(\AG)\Leftrightarrow(\BG)\Leftrightarrow(\U)$ and moreover  $(\U{\wedge}S)\Leftrightarrow (\FG)$ under the set-theoretic assumption $\mathfrak e^\sharp=\w_1$ (which is weaker than $\mathfrak b=\mathfrak d$).
\newline If $X$ is not a $P$-space, then $(\Lc)\Leftrightarrow (\Lin)\Leftrightarrow(\U{\wedge}C)\Leftrightarrow(\U{\wedge}\sigma)\Leftrightarrow(\U{\wedge}s)\Leftrightarrow (\FG)\Ra(\AG)\Leftrightarrow(\BG)\Leftrightarrow(\U)$.\newline
If the space $X$ is metrizable, then $(\Lc)\Leftrightarrow (\Lin)\Leftrightarrow(\sigma)\Ra(D{\vee}\sigma)\Leftrightarrow (\FG)\Ra(\AG)\Leftrightarrow(\BG)\Leftrightarrow(\sigma')$.
\end{abstract}
\maketitle

\tableofcontents

\section{Introduction and Main Results}
It is known that a uniform space is metrizable if and only if its uniformity  has a countable base (see \cite[8.1.21]{Eng}).  Natural compatible uniform structures exist in topological groups, and, in particular, in topological vector spaces. This fact implies
the classical metrization theorem of Birkhoff and Kakutani which states that a topological group is metrizable if and only if it admits a countable base of neighborhoods of the identity, or, in other words, a base indexed by $\omega$.

In this paper we consider (topological) uniform spaces which have (neighborhood) bases indexed by a more complicated directed set $\w^\w$, which consists of all functions from $\w$ to $\w$ and is endowed with the natural partial order (defined by $f\le g$ iff $f(n)\le g(n)$ for all $n\in\w$).
Cardinal invariants of topological spaces, associated with combinatorial properties of the poset $\w^\w$
have been investigated for many years (see \cite{Douwen}, \cite{Vaug}).

A topological space $X$ is defined to have a {\em neighborhood $\w^\w$-base} at a point $x\in X$ if there exists
a neighborhood base $(U_\alpha[x])_{\alpha\in\w^\w}$ at $x$ such that $U_\beta[x]\subset U_\alpha[x]$ for all
elements $\alpha\le\beta$ in $\w^\w$. We shall say that a topological space has an {\em $\w^\w$-base} if it has a neighborhood $\w^\w$-base at each point $x\in X$. Evidently, a topological group has an  $\w^\w$-base if it has a neighborhood $\w^\w$-base at the identity.

We shall say that a uniform space $X$ has an {\em $\w^\w$-base} if its uniformity $\U_X$ has a base of entourages $\{U_\alpha\}_{\alpha\in\w^\w}\subset\U_X$ such that $U_\beta\subset U_\alpha$ for all $\alpha\le\beta$ in $\w^\w$. In this case the base $\{U_\alpha\}_{\alpha\in\w^\w}$ will be called an {\em $\w^\w$-base} of the uniformity $\U_X$ and the uniform space $X$ will be called {\em $\w^\w$-based}.

For a Tychonoff space $X$ by $\U_X$ we denote {\em the universal uniformity} of $X$, i.e., the largest uniformity, compatible with the topology of $X$. It is generated by the base consisting of the entourages $[\rho]_{<1}:=\{(x,y)\in X\times X:\rho(x,y)<1\}$ where $\rho$ runs over all continuous pseudometrics on $X$. A Tychonoff space $X$ will be called {\em universally $\w^\w$-based} if its universal uniformity has an $\w^\w$-base.


Cascales and Orihuela \cite{CO} were the first who considered (universally) $\w^\w$-based spaces.
They proved that compact Hausdorff spaces with this property are metrizable (recall that a compact space carries a unique compatible uniformity). For the first time the concept of an $\w^\w$-base  appeared in \cite{FKLS} as a tool for studying locally convex spaces that belong to the class $\mathfrak{G}$ introduced by Cascales and Orihuela \cite{CO}. A systematic study of locally convex spaces and topological groups with an $\w^\w$-base was started in \cite{GabKakLei_1}, \cite{GabKakLei_2} and continued in \cite{GabKak_1},
 \cite{GabKak_2}, \cite{LRZ}.  In these papers $\w^\w$-bases are called $\mathfrak G$-bases, but we prefer to use the terminology of $\w^\w$-bases, which is more self-suggesting and flexible. Implicitly, $\w^\w$-bases in topological groups appear also in \cite{CFHT}, \cite{GartMor} and \cite{LT}. Topological and uniform spaces with an $\w^\w$-base were systematically studied in the paper \cite{Ban}.
The class of topological spaces with an $\w^\w$-base is closed under natural operations: taking subspaces, countable Tychonoff products, and countable box-products \cite[\S6.1]{Ban}. The class of topological groups with an $\w^\w$-base is closed under taking quotients and  the (Raikov) completions, see \cite[2.7]{GabKakLei_2}.

In this paper we detect topological and uniform spaces $X$ whose
free Abelian topological group $\AG(X)$, free Boolean topological group $\BG(X)$,
free topological group $\FG(X)$,
free locally convex space $\Lc(X)$, and
free topological vector space $\Lin(X)$ have $\w^\w$-bases.

Now we recall the definitions of these free constructions of Topological Algebra and Functional Analysis. All topological groups and topological vector spaces are assumed to be Hausdorff, and therefore Tychonoff. Each topological group $G$ will be endowed with the two-sided uniformity generated by the base consisting of entourages $\{(x,y)\in G:y\in Ux\cap xU\}$ where $U$ runs over neighborhoods of the unit in $G$.

For a uniform space $X$ its {\em free Abelian topological group} is a pair $(\AG_u(X),\delta_X)$ consisting of an Abelian topological group $\AG_u(X)$ and a uniformly continuous map $\delta_X:X\to \AG_u(X)$ such that for every uniformly continuous map $f:X\to Y$ into an Abelian topological group $Y$ there exists a continuous group homomorphism $\bar f:\AG(X)\to Y$ such that $\bar f\circ\delta_X=f$. Deleting the adjective ``Abelian'' from this definition, we get the definition of a {\em free topological group} $(\FG_u(X),\delta_X)$ over a uniform space $X$.
A {\em Boolean group} is a group in which all elements are of order two. Clearly, all Boolean groups are Abelian.
Replacing the adjective ``Abelian'' in the definition of $(\AG_u(X),\delta_X)$  by ``Boolean'',
we get the definition of a {\em free Boolean topological group} $(\BG_u(X),\delta_X)$ over a uniform space $X$.

For a uniform space $X$ its {\em free locally convex space} is a pair $(\Lc_u(X),\delta_X)$ consisting of a locally convex topological vector space $\Lc_u(X)$ and a continuous map $\delta_X:X\to \Lc_u(X)$ such that for every uniformly continuous map $f:X\to Y$ into a locally convex topological vector space $Y$ there exists a continuous linear operator $\bar f:\Lc(X)\to Y$ such that $\bar f\circ\delta_X=f$. Deleting the phrase ``locally convex'' in this definition, we obtain the definition of a {\em free topological vector space} $(\Lin_u(X),\delta_X)$ of $X$. All topological vector spaces considered in this paper are over the field $\IR$ of real numbers.

It is well-known that for every uniform space $X$ its free (Abelian/Boolean) topological group and free (locally convex) linear topological space exists and is unique up to a topological isomorphism.
The canonical map $\delta_X$ is a homeomorphic embedding, and we may say that in fact $X$ algebraically generates the free groups $\AG_u(X)$, $\BG_u(X)$, $\FG_u(X)$ or the free vector spaces $\Lc_u(X)$, $\Lin_u(X)$.

For a Tychonoff space $X$ we put $\AG(X)=\AG_u(X)$, $\BG(X)=\BG_u(X)$, $\FG(X)=\FG_u(X)$, $\Lc(X)=\Lc_u(X)$, $\Lin(X)=\Lin_u(X)$ where $X$ is considered as a uniform space endowed with the universal uniformity of $X$. In this case any continuous map $X\to Y$ to a uniform space $Y$ is uniformly continuous. 

Note that if a Tychonoff space $X$ is infinite, then $\Lc(X)$ and $\Lin(X)$ are not metrizable; $\AG(X)$, $\BG(X)$ and $\FG(X)$ are metrizable if and only if $X$ is discrete.
A reader is advised to look at two survey papers \cite{Sipa1}, \cite {Sipa2}
 for a detail description of the constructions and basic properties of $\AG(X)$, $\FG(X)$ and $\BG(X)$. Free topological vector spaces $\Lin(X)$ are systematically studied in \cite{GabMor}.


It was shown earlier that
\begin{enumerate}
\item[(a)] for a cosmic $k_\w$-space the free objects $\AG(X)$,  $\FG(X)$, $\Lc(X)$ have $\w^\w$-bases \cite{GabKakLei_2}, \cite{GabKak_2}, \cite{GK_L(X)}, \cite{LRZ}.
\item[(b)] for a metrizable $\sigma$-compact space $X$ the free locally convex space $\Lc(X)$ has an $\w^\w$-base \cite{GK_L(X)};\footnote{This result (and its counterpart for the free topological vector spaces) was also proved in the unpublished manuscript \cite{BanLei} whose results are included and strengthened in this paper.}
\item[(c)] the free Abelian topological group $\AG(X)$ of a Tychonoff space $X$ has an $\w^\w$-base if and only if the universal uniformity $\U_X$ of $X$ has an $\w^\w$-base \cite{LPT}, \cite{LT}, \cite{GartMor};
\item[(d)] a uniform space $X$ is $\w^\w$-based iff the free Abelian topological group $\AG_u(X)$ has an $\w^\w$-base \cite{LPT};
\item[(e)] a separable uniform space $X$ is $\w^\w$-based iff the free topological group $\FG_u(X)$ has an $\w^\w$-base \cite{LPT}.
\end{enumerate}

In this paper we shall characterize uniform spaces whose free topological groups and free (locally convex) topological vector spaces have $\w^\w$-bases. To formulate our principal results, we need to introduce two cardinal characteristics $\cov^\sharp(X)$ and $\add(X)$ of a uniform space $X$.

A uniform space $X$ is called {\em discrete} if its uniformity $\U_X$ contains the diagonal $\Delta_X$ of the square $X\times X$. For a non-discrete uniform space $X$ the (well-defined) cardinal
$$\add(X)=\min\{|\V|:\textstyle{\V\subset\U_X,\;\;\bigcap\V\notin\U_X}\}$$
is called the {\em additivity} of the uniform space $X$. Here $|A|$ stands for the cardinality of a set $A$. For a discrete uniform space $X$ the cardinal $\add(X)$ is not defined. In this case we put $\add(X)=\infty$ and assume that $\infty>\kappa$ for any cardinal $\kappa$.

Elements of the uniformity $\U_X$ are called {\em entourages} on $X$. For an entourage $U\in\U_X$ and a point $x\in X$ the set
$U[x]=\{y\in U:(x,y)\in U\}$ is called the {\em $U$-ball\/} centered at $x$. For a subset $A\subset X$  the set $U[A]=\bigcup_{a\in A}U[a]$ is the {\em $U$-neighborhood\/} of $A$. The cardinal
$$\cov(A;U)=\min\{|B|:B\subset X,\;A\subset U[B]\}$$is called the {\em $U$-covering number} of $A$ and the cardinal $$\cov^\sharp(A;\U_X)=\min\{\kappa:\forall U\in\U_X\;\;\cov(A;U)<\kappa\}$$is the {\em sharp covering number} of $A$ in $X$. It is equal to the smallest cardinal $\kappa$ such that for any entourage $U\in\U_X$ there exists a subset $B\subset X$ of cardinality $|B|<\kappa$ such that $A\subset U[B]$. Consequently, a subset $A$ of a uniform space $X$ is totally bounded (in the standard sense \cite[\S8.3]{Eng}) if and only if $\cov^\sharp(A;\U_X)\le\w$. The cardinal $\cov^\sharp(X;\U_X)$ is denoted by $\cov^\sharp(X)$ and called the {\em sharp covering number} of the uniform space $X$. A uniform space $X$ with sharp covering number $\cov^\sharp(X)\le\w_1$ is called {\em $\w$-narrow}.

For a uniform space $X$ by $C_u(X)$ we denote the space of all uniformly continuous real-valued functions on $X$. It is endowed with the partial order $\le$ defined by $f\le g$ iff $f(x)\le g(x)$ for all $x\in X$. The poset $C_u(X)$ is called {\em $\w^\w$-dominated} if there exists a cofinal subset $\{f_\alpha\}_{\alpha\in\w^\w}\subset C_u(X)$ such that $f_\alpha\le f_\beta$ for all $\alpha\le\beta$ in $\w^\w$. The cofinality of $\{f_\alpha\}_{\alpha\in\w^\w}$ means that for every $f\in C_u(X)$ there exists $\alpha\in\w^\w$ such that $f\le f_\alpha$.

A uniform space $X$ is called {\em universal} if every continuous map $f:X\to M$ to a metric space $M$ is uniformly continuous. This happens if and only if the uniformity of $X$ coincides with the universal uniformity of the topological space $X$. For a universal uniform space $X$ the function space $C_u(X)$ coincides with the space $C(X)$ of all continuous real-valued functions on $X$.

A function $f:X\to Y$ between uniform spaces is called {\em $\w$-continuous} if for any entourage $U\in\U_Y$ there exists a countable family of entourages $\V\subset\U_X$ such that for any point $x\in X$ there exists an entourage $V\in\V$ such that $f(V[x])\subset U[f(x)]$.

A uniform space $X$ is called
\begin{itemize}
\item {\em $\w$-universal} if each $\w$-continuous map $f:X\to Y$ to a separable metric space $Y$ is uniformly continuous;
\item {\em $\IR$-universal} if each $\w$-continuous map $f:X\to\IR$ is uniformly continuous.
\end{itemize}
It is clear that for any uniform space $X$ we have the implications:
$$\mbox{universal $\Ra$ $\w$-universal $\Ra$ $\IR$-universal.}$$

A uniform space $X$ is called {\em $\IR$-complete} if the canonical map $\delta:X\to\IR^{C_u(X)}$ assigning to each $x\in X$ the Dirac measure $\delta_x:C_u(X)\to\IR$, $\delta_x:f\mapsto f(x)$, is a closed topological embedding.

By \cite[3.11.6]{Ban}, the completion of any $\IR$-universal $\w$-narrow uniform space is $\IR$-complete. An example of a complete uniform space which is not $\IR$-complete is any non-compact closed convex bounded subset of a Banach space, see \cite[3.11.7]{Ban}.

A topological space $X$ is called a {\em $P$-space} if each $G_\delta$-set in $X$ is open. For a Tychonoff space $X$ by $\cov^\sharp(X)$ and $\add(X)$ we understand the sharp covering number and the additivity of the uniform space $(X,\U_X)$ where $\U_X$ is the  universal uniformity of $X$.

One of the main results of this paper is the following characterization.

\begin{theorem}\label{t:mainU} For a uniform space $X$ consider the following conditions:
\begin{enumerate}
\item[$(\AG)$] the free Abelian topological group $\AG_u(X)$ of $X$ has an $\w^\w$-base;
\item[$(\BG)$] the free Boolean topological group $\BG_u(X)$ of $X$ has an $\w^\w$-base;
\item[$(\FG)$] the free topological group $\FG_u(X)$ of $X$ has an $\w^\w$-base;
\item[$(\Lc)$] the free locally convex space $\Lc_u(X)$ of $X$ has an $\w^\w$-base;
\item[$(\Lin)$] the free topological vector space $\Lin_u(X)$ of $X$ has an $\w^\w$-base;
\item[$(\U)$] the uniformity $\U_X$ of $X$ has an $\w^\w$-base;
\item[$(C)$] the poset $C_u(X)$ is $\w^\w$-dominated;
\item[$(\sigma)$] the space $X$ is $\sigma$-compact;
\item[$(s)$] the space $X$ is separable;
\item[$(S)$] $X$ is separable or $\cov^\sharp(X)\le\add(X)$.
\end{enumerate}
Then
\begin{enumerate}
\item[\textup{(1)}]
$(\U{\wedge}C)\Leftrightarrow(\Lc)\Leftarrow (\Lin)\Leftarrow(\U{\wedge}s)\Leftarrow(\U{\wedge}\sigma)\Ra(\U{\wedge}S)\Ra(\FG)\Ra(\AG)\Leftrightarrow (\BG)\Leftrightarrow(\U)$ and moreover, $(\U{\wedge}s)\Leftrightarrow(\Lin)$ under $\w_1<\mathfrak b$.
\item[\textup{(2)}]
If the completion of the uniform space $X$ is $\IR$-complete, then $(\U\wedge C)\Leftrightarrow(\Lc)\Leftrightarrow(\Lin)\Leftrightarrow(\U{\wedge}s)$.
\item[\textup{(3)}]
If the uniform space $X$ is $\IR$-universal, then $(\U\wedge C)\Leftrightarrow(\Lc)\Leftrightarrow(\Lin)\Leftrightarrow(\U{\wedge}s)\Leftrightarrow(\U{\wedge}\sigma)$.
\item[\textup{(4)}]
If the space $X$ is separable, then $(\Lc)\Leftrightarrow (\Lin)\Leftrightarrow(\FG)\Leftrightarrow(\AG)\Leftrightarrow(\BG)\Leftrightarrow(\U)$.
\item[\textup{(5)}]
If the uniform space $X$ is $\w$-universal and $X$ is not a $P$-space, then\newline
$(\U{\wedge}C)\Leftrightarrow(\Lc)\Leftrightarrow (\Lin)\Leftrightarrow(\U{\wedge}s)\Leftrightarrow(\U{\wedge}\sigma)\Leftrightarrow(\FG)$.
\end{enumerate}
\end{theorem}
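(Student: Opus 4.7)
The plan is to prove (1) by assembling several independent modules, then to deduce (2)-(5) by activating the extra hypotheses. The rightmost segment $(\AG)\Leftrightarrow(\BG)\Leftrightarrow(\U)$ together with the descent $(\FG)\Rightarrow(\AG)$ I would obtain by importing item (d) above (\cite{LPT}) and transferring across the canonical finite-fiber quotient $\AG_u(X)\to\BG_u(X)$, which preserves $\w^\w$-bases in both directions; the abelianization $\FG_u(X)\to\AG_u(X)$ is a continuous open homomorphism, so it transfers $\w^\w$-bases as well. The trivial chain $(\U\wedge\sigma)\Rightarrow(\U\wedge s)\Rightarrow(\U\wedge S)$ closes the left flank of the diagram, and $(\Lin)\Rightarrow(\Lc)$ is immediate from $\Lc_u(X)$ being a continuous open linear quotient of $\Lin_u(X)$.

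The central equivalence $(\Lc)\Leftrightarrow(\U\wedge C)$ I would derive from the Kantorovich--Rubinshtein description of the topology of $\Lc_u(X)$: a basic neighborhood of $0$ is given by a continuous seminorm associated to a uniformly continuous pseudometric $\rho$ on $X$, and cofinality in the poset of such pseudometrics decomposes as cofinality in $\U_X$ (controlling the small-scale behavior of $\rho$) combined with cofinality in $C_u(X)$ (controlling its large-scale behavior). The implication $(\U\wedge s)\Rightarrow(\Lin)$ I plan to establish by a direct construction of an $\w^\w$-base in $\Lin_u(X)$ using a countable dense subset of $X$ and the given $\w^\w$-base of $\U_X$, along the lines of item (b).

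The main obstacle is the implication $(\U\wedge S)\Rightarrow(\FG)$. In the separable subcase I would adapt item (e); in the non-separable subcase the hypothesis $\cov^\sharp(X)\le\add(X)$ is crucial, and the plan is a Graev-type construction of a neighborhood $\w^\w$-base at the identity in $\FG_u(X)$ using, for each $\alpha\in\w^\w$, a $U_\alpha$-net in $X$ of cardinality below $\add(X)$ whose induced word-length neighborhoods can be coherently intersected to produce the required base element. The nonabelian word structure and the need to synchronize the net choices along the partial order of $\w^\w$ make this step technically intricate. The supplementary equivalence $(\U\wedge s)\Leftrightarrow(\Lin)$ under $\w_1<\mathfrak b$ can be obtained by observing that $\w_1<\mathfrak b$ rules out the $\w_1$-sized cofinal gaps which would otherwise separate $(\U\wedge s)$ from $(\Lin)$ in the non-separable case.

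Parts (2)-(5) should be short deductions from (1). In (2), an $\IR$-complete completion permits an $\w^\w$-base of $\Lc_u(X)$ or $\Lin_u(X)$ to be refined to a countable cofinal family via test-function arguments, forcing $(\U\wedge s)$. In (3), $\IR$-universality turns the $\w^\w$-domination of $C_u(X)$ into $\sigma$-compactness through the classical characterization of $\sigma$-compactness via function spaces. In (4), separability makes $(S)$ automatic and collapses the middle of the diagram. In (5), the failure of the $P$-space property together with $\w$-universality supplies a nontrivial convergent sequence in $X$ that upgrades $(\FG)$ to the combined condition $(\U\wedge\sigma)$.
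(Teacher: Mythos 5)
Your architecture matches the paper's (reduce everything to a reduction calculus on posets, handle $(\AG)/(\BG)/(\U)$ via Graev extensions or \cite{LPT}, get $(\Lc)\Leftrightarrow(\U\wedge C)$ from a pseudometric-plus-weight description of the free locally convex topology, and split $(\U\wedge S)\Ra(\FG)$ into a separable case and a $\cov^\sharp\le\add$ case), and the easy modules --- the trivial implications, $(\Lin)\Ra(\Lc)$ via convex hulls, $(\FG)\Ra(\AG)$ via the open abelianization --- are fine. But two of the implications you flag as the hard ones are left without a workable idea, and they are precisely where the content of the theorem lives. For $(\U\wedge S)\Ra(\FG)$ in the non-separable case, a ``Graev-type construction with $U_\alpha$-nets'' does not by itself tame the conjugation problem in the free group: the paper's mechanism is to show first that when $\cov^\sharp(X)\le\add(X)$ either $\add(X)=\w$ (whence $X$ is totally bounded, has metrizable compact completion, and is separable, reducing to the other case) or $X$ is a uniform $P_u$-space, and in the latter case that any strong topological group over $X$ is automatically \emph{balanced} --- one covers $G$ by $<\add(X)$ many translates, pulls back the resulting $<\add(X)$ many entourages, and intersects them inside $\U_X$, so that $\U_X\succcurlyeq\Tau_e(\FG_u(X))$ directly. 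Without the balancedness step the ``coherent intersection of word-length neighborhoods'' you describe has no reason to terminate, because the basic neighborhoods of $e$ in $\FG_u(X)$ involve conjugates $gU^\pm g^{-1}$ over \emph{all} $g$ in a dense subgroup (Theorem~\ref{t:strong}).

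The second, deeper gap is the reverse implication $(\FG)\Ra(\U\wedge s)$ in part (5). A non-$P$-point does not supply a convergent sequence, only a decreasing sequence of entourages $(U_n)$ with $\bigcap_nU_n[z]$ not a neighborhood of $z$, and nothing in your sketch explains how an $\w^\w$-base at $e$ of $\FG_u(X)$ forces separability of $X$. The paper's argument (Theorem~\ref{t:FGu}(4)) is genuinely nonabelian: it builds, for each locally $\w$-bounded $\psi$, a uniformly continuous homomorphism $\bar\theta:\FG_u(X)\to\IR\rtimes\IR_+$ into the $ax+b$ group and evaluates it on conjugates $xyz_\delta^{-1}x^{-1}$ to produce a monotone \emph{cofinal} map $\Tau_e(Y_z)\to B_\w(X)$; combined with $\w^\w\succcurlyeq\Tau_e(Y_z)$ this makes $C_u(X)$ $\w^\w$-dominated, and then separability follows from the (nontrivial, imported) equivalence $(1)\Leftrightarrow(9)$ of Theorem~\ref{t:looong}. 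That last equivalence is also what your part (2) silently needs: ``refining to a countable cofinal family via test-function arguments'' is not a proof that $\w^\w$-domination of $C_u(X)$ implies separability when the completion is $\IR$-complete. Finally, the supplement $(\Lin)\Ra(\U\wedge s)$ under $\w_1<\mathfrak b$ is not about ``cofinal gaps'': it needs the reduction $\Tau_0(\Lin_u(X))\succcurlyeq\w^D$ for every uniformly discrete $D\subset X$ (Theorem~\ref{t:Lin-narrow}, proved by a connected-components/characteristic-function computation in $\Lin_u(X)$), which shows $X$ is $\w$-narrow and hence, under $\w_1<\mathfrak b$, separable by Theorem~\ref{t:looong}. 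Each of these missing steps is a substantial lemma, not a routine verification.
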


Applying Theorem~\ref{t:mainU} to Tychonoff spaces endowed with their universal uniformity, we get the following characterization.

\begin{theorem}\label{t:mainT} For a Tychonoff space $X$ consider the following conditions:
\begin{enumerate}
\item[$(\AG)$] the free Abelian topological group $\AG(X)$ of $X$ has an $\w^\w$-base;
\item[$(\BG)$] the free Boolean topological group $\BG(X)$ of $X$ has an $\w^\w$-base;
\item[$(\FG)$] the free topological group $\FG(X)$ of $X$ has an $\w^\w$-base;
\item[$(\Lc)$] the free locally convex space $\Lc(X)$ of $X$ has an $\w^\w$-base;
\item[$(\Lin)$] the free topological vector space $\Lin(X)$ of $X$ has an $\w^\w$-base;
\item[$(\U)$] the universal uniformity $\U_X$ of $X$ has an $\w^\w$-base;
\item[$(C)$] the poset $C(X)$ is $\w^\w$-dominated;
\item[$(s)$] the space $X$ is separable;
\item[$(S)$] $X$ is separable or $\cov^\sharp(X)\le\add(X)$.
\item[$(D)$] $X$ is discrete;
\item[$(\sigma)$] $X$ is $\sigma$-compact;
\item[$(\sigma')$] the set $X'$ of non-isolated points of $X$ is $\sigma$-compact.
\end{enumerate}
Then
\begin{enumerate}
\item[\textup{(1)}]
$(\U{\wedge}C)\Leftrightarrow(\Lc)\Leftrightarrow (\Lin)\Leftrightarrow(\U{\wedge}s)\Leftrightarrow(\U{\wedge}\sigma)\Ra(\U{\wedge}S)\Ra(\FG)\Ra(\AG)\Leftrightarrow(\BG)\Leftrightarrow(\U)$ \newline and moreover  $(\U{\wedge}S)\Leftrightarrow (\FG)$ under the set-theoretic assumption $\mathfrak e^\sharp=\w_1$ (which is weaker than $\mathfrak b=\mathfrak d$).
\item[\textup{(2)}]
If $X$ is not a $P$-space, then $(\U{\wedge}s)\Leftrightarrow(\U{\wedge}\sigma)\Leftrightarrow(\FG)$.
\item[\textup{(3)}]
If  $X$ is separable, then $(\Lc)\Leftrightarrow (\Lin)\Leftrightarrow(\FG)\Leftrightarrow(\AG)\Leftrightarrow(\BG)\Leftrightarrow(\U)$.
\item[\textup{(4)}]
If  $X$ is metrizable, then
$(\Lc)\Leftrightarrow (\Lin)\Leftrightarrow(\sigma)\Ra(D{\vee}\sigma)\Leftrightarrow(\FG)\Ra(\AG)\Leftrightarrow(\BG)\Leftrightarrow(\sigma')$. \end{enumerate}
\end{theorem}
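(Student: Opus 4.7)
The plan is to derive Theorem~\ref{t:mainT} by specializing Theorem~\ref{t:mainU} to the case of a Tychonoff space $X$ equipped with its universal uniformity $\U_X$, supplemented by separate arguments for the metrizable case and the set-theoretic clause.

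The starting observation is that when $X$ carries its universal uniformity $\U_X$, every continuous map from $X$ to a metric space is automatically uniformly continuous, so $(X,\U_X)$ is a universal uniform space in the sense of the paper --- in particular $\w$-universal and $\IR$-universal --- and $C_u(X)=C(X)$. Parts (1)--(3) of Theorem~\ref{t:mainT} then fall out of the corresponding parts of Theorem~\ref{t:mainU}: the enhanced block $(\U\wedge C)\Leftrightarrow(\Lc)\Leftrightarrow(\Lin)\Leftrightarrow(\U\wedge s)\Leftrightarrow(\U\wedge\sigma)$ in (1) comes from Theorem~\ref{t:mainU}(3) applied to the $\IR$-universal $X$; the remaining chain $(\U\wedge\sigma)\Ra(\U\wedge S)\Ra(\FG)\Ra(\AG)\Leftrightarrow(\BG)\Leftrightarrow(\U)$ is Theorem~\ref{t:mainU}(1). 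Theorem~\ref{t:mainT}(2) and (3) are direct specializations of Theorem~\ref{t:mainU}(5) and Theorem~\ref{t:mainU}(4).

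The set-theoretic clause in (1) reduces to proving $(\FG)\Ra(\U\wedge S)$ under $\mathfrak{e}^\sharp=\w_1$, since the converse is already in Theorem~\ref{t:mainU}(1). The strategy is to take an $\w^\w$-indexed neighborhood base at the identity of $\FG(X)$ and, assuming $X$ is non-separable, perform a counting argument on short words to extract the inequality $\cov^\sharp(X)\le\add(X)$, with the hypothesis $\mathfrak{e}^\sharp=\w_1$ being exactly the combinatorial strength that makes the $\w^\w$-indexing force this dichotomy (and being weaker than $\mathfrak b=\mathfrak d$).

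For part (4), for metrizable $X$ the implication $(\sigma)\Ra(\Lc)$ is established result (b) from the introduction, while $(\Lc)\Ra(\sigma)$ follows from part (1) via $(\Lc)\Leftrightarrow(\U\wedge\sigma)$. For $(\FG)\Leftrightarrow(D\vee\sigma)$: if $X$ is discrete then $\U_X$ is generated by $\Delta_X$ and $\add(X)=\infty$, so both $(\U)$ and $(S)$ hold and (1) gives $(\FG)$; if $X$ is metrizable $\sigma$-compact non-discrete then $X$ is not a $P$-space and (2) yields $(\FG)$. Conversely, a metrizable non-discrete space is not a $P$-space, so $(\FG)$ forces $(\U\wedge\sigma)$ by (2), hence $(\sigma)$. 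For $(\AG)\Leftrightarrow(\sigma')$, use $(\AG)\Leftrightarrow(\U)$ from part (1) and show that $\U_X$ is $\w^\w$-based iff $\U_{X'}$ is: decomposing $X=D\sqcup X'$ with $D$ open discrete, the trivially $\w^\w$-based uniformity on $D$ combines with an $\w^\w$-base on $\U_{X'}$ to yield one on $\U_X$, and the $\w^\w$-base on $\U_X$ restricts to $X'$ (using that $X'$ is closed in the metrizable $X$, so its subspace uniformity agrees with the universal uniformity on $X'$). Applying the metrizable equivalence $(\Lc)\Leftrightarrow(\sigma)$ to the metrizable space $X'$ then closes the loop.

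The main obstacle will be the set-theoretic implication $(\FG)\Ra(\U\wedge S)$ under $\mathfrak{e}^\sharp=\w_1$: it calibrates the exact combinatorial gap between $(\FG)$ and $(\U\wedge S)$ against a delicate cardinal invariant of $\w^\w$, and is the deepest new ingredient of the theorem.
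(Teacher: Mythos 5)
Your overall architecture (specialize Theorem~\ref{t:mainU} to the universal uniformity for parts (1)--(3), then treat the set-theoretic clause and the metrizable case separately) matches the paper, but two steps have genuine gaps.

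First, the implication $(\FG)\Ra(\U{\wedge}S)$ under $\mathfrak e^\sharp=\w_1$ is not actually proved: a ``counting argument on short words'' is announced but never carried out, and it does not resemble the mechanism that is actually needed. The paper's proof (Corollary~\ref{c:Cons}, resting on Theorems~\ref{t:FGu} and~\ref{t:FGu2}) first shows that a non-separable $X$ with $(\FG)$ must be a $P$-space (Theorem~\ref{t:nP-FG}, via the reduction $\Tau_e(Y_z)\succcurlyeq B_\w(X)$ built from a homomorphism of $\FG(X)$ into the semidirect product $\IR\rtimes\IR_+$), and then, for $P$-spaces, extracts a discrete family $\mathcal D$ of size $\ge\mathfrak e(\add(X))$ from a failure of $\cov^\sharp(X)\le\mathfrak e(\add(X))$ and derives the contradictory reduction $\w^\w\succcurlyeq\add(X)^{\mathcal D}$ using homomorphisms into $F\rtimes F_+$ for a suitable ordered field $F$; the hypothesis $\mathfrak e^\sharp=\w_1$ enters only at the very end, to convert $\cov^\sharp(X)\le\mathfrak e(\add(X))$ into $\cov^\sharp(X)\le\add(X)$ via Proposition~\ref{p:e=w1}. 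None of this is visible in your sketch, and since you yourself call it the deepest ingredient, the proposal is incomplete precisely where the new content lies.

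Second, your argument for $(\AG)\Leftrightarrow(\sigma')$ in part (4) fails. The claimed equivalence ``$\U_X$ is $\w^\w$-based iff $\U_{X'}$ is'' is false: a metrizable space is \emph{not} in general the topological sum of its set $D$ of isolated points and $X'$ (the isolated points may accumulate at points of $X'$), so the two uniformities cannot simply be ``combined''. Concretely, let $X$ be the topological sum of uncountably many convergent sequences; then $X'$ is an uncountable closed discrete subspace, so $\U_{X'}$ is trivially $\w^\w$-based, yet $X'$ is not $\sigma$-compact and hence $\U_X$ has no $\w^\w$-base. The same example shows the final step is also wrong: having an $\w^\w$-base for $\U_{X'}$ is equivalent (by the very statement being proved, applied to $X'$) to $\sigma$-compactness of $(X')'$, which is strictly weaker than $\sigma$-compactness of $X'$, so ``applying $(\Lc)\Leftrightarrow(\sigma)$ to $X'$'' cannot close the loop. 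The paper instead obtains $(\U)\Ra(\sigma')$ from Theorem~\ref{t:uwwb}(6) (a closed $\w$-Urysohn set $F$ with $|F\setminus X^{\prime\mathsf s}|\le\w$ in a universally $\w^\w$-based space is $\sigma$-compact, applied to $F=X'=X^{\prime\mathsf s}$) and $(\sigma')\Ra(\U)$ from Theorem~\ref{t:Lasnev}; some such global input about universally $\w^\w$-based spaces is unavoidable here.
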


Theorems~\ref{t:mainU} and \ref{t:mainT} will be proved in Sections~\ref{s:pf1} and \ref{s:pf2}, respectively.

\begin{remark} Theorem~\ref{t:mainT} completely resolves the problem of characterization of Tychonoff spaces $X$ whose free objects $\AG(X)$, $\FG(X)$, $\Lc(X)$ have $\mathfrak G$-bases, posed in \cite[Question 4.15]{GabKakLei_2} and then repeated (and partially answered) in \cite[\S3]{GabKak_2}, \cite{GK_L(X)}, \cite{LPT}, \cite{LRZ}. Similar problems are also considered and (partly) resolved in \cite[\S5.2 and \S6.5]{GartMor}.
\end{remark}

\section{Reductions between posets}\label{s:pre}
For handling $\w^\w$-bases in topological or uniform spaces, it is convenient to use the language of reducibility of posets, exploited also in \cite{Ban} and \cite{GartMor}. By a {\em poset} we understand a partially ordered set, i.e., a non-empty set endowed with a partial order.

A function $f:P\to Q$ between two posets is called
\begin{itemize}
\item  {\em monotone} if $f(p)\le f(p')$ for all $p\le p'$ in $P$;
\item {\em cofinal} if for each $q\in Q$ there exists $p\in P$ such that $q\le f(p)$.
\end{itemize}

Given two partially ordered sets $P,Q$ we shall write $P\succcurlyeq Q$ or $Q\preccurlyeq P$ and say that $Q$ {\em reduces} to $P$ if there exists a monotone cofinal map $f:P\to Q$. Also we write $P\cong Q$ if $P\preccurlyeq Q$ and $P\succcurlyeq Q$. This kind of reducibility of posets  is a bit stronger than the Tukey reducibility $\le_T$, which requires the existence of a function $f:P\to Q$ that maps cofinal subsets of $P$ to cofinal subsets of $Q$. However, if the poset  $Q$ is lower complete (which means that each nonempty set in $Q$ has a largest lower bound), then the reducibility $Q\preccurlyeq P$ is equivalent to the Tukey reducibility $Q\le_T P$, see \cite[2.2.1]{Ban}.

We shall also use a related notion of $P$-dominance. Given two posets $P,Q$, we shall say that a subset $D\subset Q$ is {\em $P$-dominated in $Q$} if there exists a monotone map $f:P\to Q$ such that for every $x\in D$ there exists $y\in f(P)$ with $x\le y$. It follows that a poset $Q$ reduces to $P$ if and only if $Q$ is $P$-dominated in $Q$. In this case we shall also say that the poset $Q$ is {\em $P$-dominated}.

Now we present examples of posets that will appear in this paper. Each cardinal $\kappa$ will be considered as a poset endowed with its natural well-order $\le$ (defined by $x\le y$ iff $x=y$ or $x\in y$ for $x,y\in\kappa$). For any cardinals $\kappa,\lambda$ the power $\kappa^\lambda$ is a poset endowed with the partial order $\le$ defined by $f\le g$ iff $f(x)\le g(x)$ for all $x\in \lambda$.

For a set $X$ by $\IR^X$ we denote the poset of all real-valued functions on $X$, endowed with the partial order $\le$ defined by $f\le g$ iff $f(x)\le g(x)$ for all $x\in X$. If $X$ is a topological space, then the poset $\IR^X$ contains the subposet $C(X)$ consisting of all continuous real-valued functions on $X$. By $C_p(X)$ (resp. $C_k(X)$) we denote the space $C(X)$ endowed with the topology of pointwise convergence (resp. the compact-open topology). 

For a point $x$ of a topological space $X$ by $\Tau_x(X)$ we denote the poset of all neighborhoods of $x$ in $X$, endowed with the partial order of converse inclusion ($U\le V$ iff $V\subset U$).
Observe that a topological space $X$ has a neighborhood $\w^\w$-base at a point $x\in X$ if and only if  $\w^\w\succcurlyeq\Tau_x(X)$. 

In the sequel we equip every topological group $G$ with the two-sided uniformity generated by the base consisting of entourages $\{(x,y)\in G:y\in xU\cap Ux\}$ where $U\in\Tau_e(G)$. Here by $e$ we denote the identity element of the group $G$.
In the case of an Abelian group $G$ we use symbol $0$ for the neutral element of $G$.
If $X$ is a subset of a topological group $G$ and $e \in X$ then $\Tau_e(X)$ denotes the poset of all neighborhoods of $e$ in $X$, as above.

In particular, we equip every topological vector space $L$ with the natural uniformity generated by the base consisting of entourages $\{(x,y)\in L:x-y\in U\}$ where $U\in\Tau_0(L)$. Here  $\Tau_0(L)$ denotes the poset of all neighborhoods of zero in $L$.


Given a uniform space $X$ by $\U_X$ we denote the uniformity of $X$ endowed with the partial order of converse inclusion ($U\le V$ iff $V\subset U$). For an entourage $U\in\U_X$ and a point $x\in X$ by $U[x]$ we denote the $U$-ball $\{y\in X:(x,y)\in U\}\subset X$ centered at $x$.
Each uniform space $X$ will be endowed with the (Tychonoff) topology consisting of all subsets $W\subset X$ such that for every $x\in W$ there exists an entourage $U\in\U_X$ such that $U[x]\subset W$.

Conversely, each Tychonoff space $X$ will be equipped with the {\em  universal uniformity\/} $\U_X$ generated by the base consisting of entourages $[d]_{<1}=\{(x,y):\allowbreak d(x,y)<1\}$ where $d$ runs over all continuous pseudometrics on $X$.

Given a uniform space $X$ by $C_u(X)$ we denote the space of all real-valued uniformly continuous functions on $X$. Here we consider $C_u(X)$ as a posed endowed with the partial order inherited from the poset $\IR^X$ of all real-valued functions on $X$. We shall endow the space $C_u(X)$ with the topology of pointwise convergence inherited from the Tychonoff power $\IR^X$.
Observe that for a Tychonoff space $X$ equipped with the universal uniformity  the poset $C_u(X)$ coincides with $C(X)$ (or $C_p(X)$ if $C_u(X)$ is considered as a topological space).

A subset $B$ of a uniform space $X$ is called {\em functionally bounded} if for each uniformly continuous function $f:X\to\IR$ the set $f(B)$ is bounded in $\IR$; $B$ is {\em $\sigma$-bounded} if $B$ is a countable union of functionally bounded sets in $X$. Observe that any convex (more generally, star-like) subset of a Banach space is $\sigma$-bounded.

A uniform space $X$ is called {\em universal} if each continuous function $f:X\to M$ to a metric space $M$ is uniformly continuous. In this case the uniformity $\U_X$ of $X$ coincides with the universal uniformity of the Tychonoff space $X$. The notion of universality can be parametrized in the following way.

Let $\kappa$ be a cardinal. A function $f:X\to Y$ between uniform spaces is called {\em $\kappa$-continuous} if for any entourage $U\in\U_Y$ there exists a subfamily $\V\subset\U_X$ of cardinality $|\V|\le\kappa$ such that for every $x\in X$ there exists an entourage $V\in\V$ such that  $f(V[x])\subset U[f(x)]$.  It is clear that each $\kappa$-continuous function $f:X\to Y$ between uniform spaces is continuous with respect to the topologies generated by the uniformities.

A uniform space $X$ is called {\em $\kappa$-universal} if each $\kappa$-continuous function $f:X\to M$ to a metric space $M$ of density $\kappa$ is uniformly continuous. It is clear that a uniform space $X$ is universal if and only if it is $\kappa$-universal for every cardinal $\kappa$.

By $C_\w(X)$ we denote the poset  of all $\w$-continuous real-valued functions on a uniform space. It follows that  $C_u(X)\subset C_\w(X)\subset C(X)\subset\IR^X$. If the space $X$ is Lindel\"of, then every continuous real-valued function on $X$ is $\w$-continuous, so $C_\w(X)=C(X)$.

We define a uniform space $X$ to be {\em $\IR$-universal} if $C_\w(X)=C_u(X)$ (i.e., each $\w$-continuous real-valued function on $X$ is uniformly continuous). It is clear that each $\w$-universal uniform space is $\IR$-universal. In particular, Tychonoff space endowed with its universal uniformity is $\IR$-universal. For each uniform space $X$ we get the implications:

\centerline{universal $\Ra$ $\w_1$-universal $\Ra$ $\w$-universal $\Ra$ $\IR$-universal.}
\smallskip

A uniform space $X$ is called {\em $\IR$-complete} if the canonical map $\delta:X\to\IR^{C_u(X)}$ assigning to each point $x\in X$ the Dirac measure $\delta_x:C_u(X)\to\IR$, $\delta_x:f\mapsto f(x)$, is a closed topological embedding. Observe that a Tychonoff space $X$ endowed with its universal uniformity is $\IR$-complete if and only if it is Hewitt complete. By \cite[3.11.6]{Ban}, the completion $\bar X$ of any $\IR$-universal $\w$-narrow uniform space $X$ is $\IR$-complete.
 By \cite[3.9.5]{Ban}, a uniform space $X$ is compact if and only if it is functionally bounded and $\IR$-complete. An example of a complete uniform space which is not $\IR$-complete is any non-compact closed bounded convex subset  of a Banach space, see \cite[3.11.7]{Ban}.

A function $f:X\to \IR$ on a uniform space will be called {\em locally $\w$-bounded} if there exists a countable family of entourages $\V\subset\U_X$ such that for every $x\in X$ there exists an entourage $V\in\V$ such that $\sup f(V[x])<\infty$.  By $B_\w(X)$ we denote the subset of $\IR^X$ consisting of all locally $\w$-bounded real-valued functions on $X$. It is clear that each $\w$-continuous function is locally $\w$-bounded and hence $C_\w(X)\subset B_\w(X)\subset \IR^X$. The following lemma shows that  $C_\w(X)$ is cofinal in the poset $B_\w(X)$.

\begin{lemma}\label{l:CwB}  For any locally $\w$-bounded function $\varphi:X\to\IR$ on a uniform space $X$ there exists an $\w$-continuous function $\psi:X\to\IR$ such that $\psi\ge \varphi$.
\end{lemma}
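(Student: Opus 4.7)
The plan is to reduce to constructing a continuous majorant on a pseudo-metric space, and then apply paracompactness together with a continuous partition of unity.

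First, I refine the witness family for local $\w$-boundedness of $\varphi$: after closing the given countable family $\V\subset\U_X$ under finite intersections, symmetrization, and taking ``halves'' (entourages $V'$ with $V'\circ V'\subset V$), I may assume $\V=\{V_n\}_{n\in\w}$ with each $V_n$ symmetric, $V_{n+1}\subset V_n$, and $V_{n+1}\circ V_{n+1}\subset V_n$. The subuniformity $\U'\subseteq\U_X$ generated by $\V$ is countably based, hence pseudo-metrizable by some pseudo-metric $d$ on $X$; equip $X$ with the induced topology $\tau$, which is paracompact. By hypothesis, for every $x\in X$ there exist some $n(x)\in\w$ and an open $d$-ball $B(x)\subset V_{n(x)}[x]$ around $x$ with $M(x):=\sup\varphi(B(x))<\infty$, so $\varphi$ is locally bounded in $\tau$.

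Next, by paracompactness, the open cover $\{B(x)\}_{x\in X}$ of $(X,\tau)$ admits a locally finite open refinement $\{U_i\}_{i\in I}$ with each $U_i\subset B(x_i)$ for some $x_i\in X$; set $M_i:=M(x_i)$, so that $\sup\varphi(U_i)\le M_i<\infty$. A continuous partition of unity $\{\phi_i\}_{i\in I}$ subordinate to $\{U_i\}$ is given explicitly by
\[
\phi_i(x)=\frac{d(x,X\setminus U_i)}{\sum_{j\in I}d(x,X\setminus U_j)},
\]
which is well-defined because local finiteness makes the sum in the denominator effectively finite near each point, and because in a pseudo-metric space $d(x,X\setminus U_j)>0$ iff $x\in U_j$ (so the denominator is strictly positive on all of $X$). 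Define
\[
\psi(x):=\sum_{i\in I}\phi_i(x)\,M_i.
\]
Local finiteness makes $\psi$ finite-valued and $\tau$-continuous. Whenever $\phi_i(x)>0$, we have $x\in U_i$ and so $\varphi(x)\le M_i$, giving $\psi(x)\ge \varphi(x)\sum_{i\in I}\phi_i(x)=\varphi(x)$.

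Finally, $\tau$-continuity of $\psi$ upgrades directly to $\w$-continuity of $\psi\colon X\to\IR$ with respect to the ambient uniformity $\U_X$: for every $\e>0$ and every $x\in X$, the $\tau$-continuity of $\psi$ produces some $n\in\w$ with $V_n[x]\subset\psi^{-1}((\psi(x)-\e,\psi(x)+\e))$, so the single countable family $\V=\{V_n\}\subset\U_X$ serves as the required witness for every $\e$. The main technical point is ensuring that paracompactness and a continuous partition of unity are genuinely available in the (possibly non-Hausdorff) pseudo-metric setting; this can be handled either by relying directly on the explicit formula above or, alternatively, by descending to the metric quotient $X/\{d=0\}$, invoking Stone's theorem there, and lifting back to $X$.
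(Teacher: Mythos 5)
Your proof is correct and follows essentially the same route as the paper's: pseudometrize the countable witness family, use paracompactness of the resulting pseudometric topology and a locally finite partition of unity to build a $d$-continuous majorant, and note that $d$-continuity gives $\w$-continuity via the countable family $\{V_n\}_{n\in\w}$. Your version is marginally more streamlined (one locally finite refinement with weights $M_i=\sup\varphi(B(x_i))$, rather than the paper's second cover $\mathcal W$ with finite stars), but the decomposition and key ingredients are the same.
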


\begin{proof}  Given any locally $\w$-bounded function $\varphi:X\to\IR$, find a countable family of entourages $\{U_n\}_{n\in\w}\subset\U_X$ such that for every point $x\in X$ there is $n\in\w$ such that the set $\varphi(U_n[x])$ is bounded in $\IR$. Choose a sequence of entourages $(V_n)_{n\in\w}\subset\U_X$ such that $V_{n+1}^3\subset V_n=V_n^{-1}\subset\bigcap_{k\le n}U_n$ for every $n\in\w$. By Theorem~8.1.10 \cite{Eng}, there exists a pseudometric $d$ on $X$ such that
$$\{(x,y):d(x,y)<2^{-n}\}\subset V_n\subset \{(x,y):d(x,y)\le 2^{-n}\}$$for every $n\in\IN$. By the choice of the sequence $(U_n)_{n\in\w}$, for every $x\in X$ there exist numbers $l_x,n_x\in\w$ such that $\varphi(U_{l_x}[x])\subset[0,n_x]$.  Endow the space $X$ with the topology $\tau$ generated by the pseudometric $d$. By the Stone Theorem 4.4.1 \cite{Eng}
(on the paracompactness of metrizable spaces), there exists a locally finite open cover $\V\subset\tau$ of the pseudometric space $(X,d)$ such that each set $V\in\V$ is contained in some ball $U_{l_x}[x]$. For every $V\in\V$ let $n_V=\min\{n_x:x\in X,\;V\subset U_{l_x}[x]\}$. It follows that $\varphi(V)\subset [0,n_V]$. Since the cover $\V$  is locally finite, there exists a locally finite open cover  $\mathcal W$ of the pseudometric space $(X,d)$ such that for every set $W\in\mathcal W$ the family $\St(W,\V)=\{V\in\V:W\cap V\ne\emptyset\}$ is finite and hence the number $m_W=\max\{n_V:V\in\St(W,\V)\}$ is well-defined.

By the paracompactness of the pseudometric space $(X,d)$, there exists a partition of the unity $(\lambda_W:X\to[0,1])_{W\in\mathcal W}$ such that for every $W\in\mathcal W$ the support $\supp(\lambda_W)=\lambda_W^{-1}((0,1])$ is contained in $W$. Then the function $\psi:X\to\IR$ defined by $\psi(x)=\sum_{W\in\w}\lambda_W(x)m_W$ is well-defined and $\w$-continuous (being continuous with respect to the topology generated by the pseudometric $d$). It remains to check that $\varphi\le\psi$. Take any point $x\in X$ and find a set $V\in\V$ containing $x$. It follows that $\varphi(x)\in\varphi(V)\subset [0,n_V]$. Observe that each set $W\in\mathcal W$ that contains $x$ has $m_W\ge n_V\ge \varphi(x)$.
Then $\psi(x)=\sum_{W\in\mathcal W}\lambda_W(x)m_W\ge \varphi(x)$ as well.
\end{proof}

A uniform space $X$ is called
\begin{itemize}
\item {\em discrete} if its uniformity $\U_X$ contains the diagonal $\Delta_X$ of the square $X\times X$;
\item a {\em uniform $P_u$-space} if for any countable family $\V\subset\U_X$ the intersection $\bigcap\V$ belongs to $\U_X$.
\end{itemize}
Observe that a uniform space $X$ is a $P_u$-space if and only if $\add(X)>\w$.

Finally, we discuss some cardinal characteristics of posets related to cofinal and bounded sets. A subset $A$ of a poset $P$ is called
\begin{itemize}
\item {\em cofinal} if $\forall x\in X\;\exists a\in A\;\;(x\le a)$;
\item {\em bounded} if $\exists x\in X\;\forall a\in A\;\;(a\le x)$.
\end{itemize}
The {\em cofinality} $\cof(P)$ of a poset $P$ is the smallest cardinality of a cofinal subset of $X$. For an unbounded poset $P$ its {\em additivity} $\add(P)$ is the smallest cardinality of an unbounded subset of $P$. For a non-discrete uniform space $X$ its additivity $\add(X)$ coincides with the additivity $\add(\U_X)$ of its uniformity $\U_X$ (considered as a poset endowed with the partial order of reverse inclusion).

It is well-known \cite[513C]{F-MT} that for any unbounded poset $P$ the cardinal $\add(P)$ is regular and $\add(P)\le\cf(\cof(P))$. By \cite[2.3.2]{Ban}, $P\succcurlyeq\add(P)$ and $P\succcurlyeq\cof(P)$.

For the set $\w^\w$ endowed with the partial preorder $\le^*$ (defined by $x\le^* y$ iff  the set $\{n\in\w:x(n)\not\le^* y(n)\}$ is finite) the cardinals $\mathfrak b=\add(\w^\w,\le^*)$ and $\mathfrak d=\cof(\w^\w,\le^*)$ play an important role in the Modern Set Theory, see \cite{Douwen}, \cite{Vaug}.

Following \cite[\S2.4]{Ban}, for each cardinal $\kappa>1$ denote by $\mathfrak e(\kappa)$ the smallest cardinal $\lambda$ such that $\w^\w\not\succcurlyeq \kappa^\lambda$. Therefore, $\w^\w\not\succcurlyeq\kappa^{\mathfrak e(\kappa)}$ but $\w^\w\succcurlyeq\kappa^\lambda$ for any cardinal $\lambda<\mathfrak e(\kappa)$.
The following properties of the function $\mathfrak e(\kappa)$ has been established in \cite[2.4.1]{Ban}.

\begin{proposition}\label{p:e(X)} Let $\kappa$ be an infinite cardinal. Then
\begin{enumerate}
\item $\mathfrak e(\kappa)=\mathfrak e(\cof(\kappa))$.
\item If $\cof(\kappa)=\w$, then $\mathfrak e(\kappa)=\w_1$.
\item If $\mathfrak e(\kappa)>1$, then $\mathfrak e(\kappa)\ge\cf(\kappa)\in  \{\w\}\cup[\mathfrak b,\mathfrak d]$.
\item $\mathfrak e(\kappa)\in\{1,\w_1\}\cup[\mathfrak b,\mathfrak d]$.
\end{enumerate}
\end{proposition}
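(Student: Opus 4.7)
The plan is to treat the four parts in sequence, using the reducibility calculus from this section together with standard properties of $\mathfrak{b}$ and $\mathfrak{d}$. For \emph{Part (1)}, the key observation is that a cardinal $\kappa$ and its cofinality $\mu := \cof(\kappa)$ are reducibility-equivalent as posets. Fixing a strictly increasing cofinal sequence $(c_\alpha)_{\alpha<\mu}$ in $\kappa$ with $c_0 = 0$, the inclusion $\alpha \mapsto c_\alpha$ is monotone and cofinal from $\mu$ to $\kappa$, while the step map sending $x \in \kappa$ to the unique $\alpha$ with $c_\alpha \le x < c_{\alpha+1}$ is monotone and cofinal from $\kappa$ to $\mu$. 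Postcomposition lifts this equivalence to $\kappa^\lambda \cong \mu^\lambda$ for every $\lambda$, and $\mathfrak{e}(\kappa) = \mathfrak{e}(\mu)$ follows from the definition.

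For \emph{Part (2)}, by (1) it suffices to show $\mathfrak{e}(\w) = \w_1$. For any $1 \le \lambda < \w_1$, either $\lambda$ is finite (and the coordinate projection $\w^\w \to \w^\lambda$ is monotone cofinal) or $\lambda$ is countably infinite (and any bijection $\lambda \to \w$ induces a poset isomorphism $\w^\w \cong \w^\lambda$), giving $\w^\w \succcurlyeq \w^\lambda$ and hence $\mathfrak{e}(\w) \ge \w_1$. For the reverse $\w^\w \not\succcurlyeq \w^{\w_1}$, I would argue by contradiction via a diagonalization on the $\aleph_1$ coordinate projections of a hypothetical monotone cofinal map $f: \w^\w \to \w^{\w_1}$, building a target function in $\w^{\w_1}$ that cannot be dominated by any image of $f$. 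This diagonalization step is the main technical obstacle of (2).

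For \emph{Part (3)}, note that $\mathfrak{e}(\kappa) > 1$ is equivalent to $\w^\w \succcurlyeq \kappa$, so by (1) also $\w^\w \succcurlyeq \mu$ for $\mu := \cof(\kappa)$. The estimate $\mu \le \mathfrak{d}$ is immediate since $Q \preccurlyeq P$ forces $\cof(Q) \le \cof(P)$ (monotone cofinal maps carry cofinal sets to cofinal sets), applied to $\mu \preccurlyeq \w^\w$ with $\cof(\w^\w) = \mathfrak{d}$. The bound $\mu \ge \mathfrak{b}$ for uncountable regular $\mu$ follows by contradiction: if $\mu < \mathfrak{b}$ and $f: \w^\w \to \mu$ is monotone cofinal, pick witnesses $x_\alpha \in \w^\w$ with $f(x_\alpha) \ge \alpha$ for $\alpha < \mu$; the set $\{x_\alpha\}$ of size $\mu$ is $\le^*$-bounded by some $y$, and an $\w$-pigeonhole on the finite exceptions, followed by a finite sequence of $\mu$-refinements on the initial coordinates, yields a pointwise upper bound $y^*$ for a cofinal subset $A \subset \mu$; monotonicity then gives $f(y^*) \ge \sup A = \mu \notin \mu$, a contradiction. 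This places $\cof(\kappa) \in \{\w\} \cup [\mathfrak{b}, \mathfrak{d}]$. Finally, $\mathfrak{e}(\kappa) \ge \cof(\kappa)$ follows from the observation that for regular $\mu$ and $\lambda < \mu$ the diagonal $\Delta : \mu \to \mu^\lambda$ sending $\alpha$ to the constant function $\gamma \mapsto \alpha$ is monotone cofinal (regularity forces $\sup_\gamma g(\gamma) < \mu$ for every $g \in \mu^\lambda$) and admits the monotone cofinal inverse $g \mapsto \sup_\gamma g(\gamma)$; hence $\mu^\lambda \cong \mu$, and the chain $\w^\w \succcurlyeq \mu \cong \mu^\lambda$ yields $\w^\w \succcurlyeq \kappa^\lambda$ for every $\lambda < \cof(\kappa)$.

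\emph{Part (4)} then follows by combining the preceding: if $\mathfrak{e}(\kappa) > 1$, then either $\cof(\kappa) = \w$ and Parts (1),(2) force $\mathfrak{e}(\kappa) = \w_1$, or $\cof(\kappa)$ is uncountable regular, in which case (3) places $\cof(\kappa) \in [\mathfrak{b}, \mathfrak{d}]$ and $\mathfrak{e}(\kappa) \ge \cof(\kappa) \ge \mathfrak{b}$; the complementary upper bound $\mathfrak{e}(\kappa) \le \mathfrak{d}$ is obtained by a further cofinality estimate on the power $\kappa^{\mathfrak{d}}$, completing the list $\{1, \w_1\} \cup [\mathfrak{b}, \mathfrak{d}]$.
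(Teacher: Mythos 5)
Your Parts (1), (3) and (4) are essentially sound; note that the paper itself gives no argument for this proposition (it only cites \cite[2.4.1]{Ban}), so I am judging your proposal against the standard proofs. Two small points: in (1) the step map $x\mapsto{}$``the unique $\alpha$ with $c_\alpha\le x<c_{\alpha+1}$'' is well defined only if the cofinal sequence is continuous at limits; use $x\mapsto\min\{\alpha:x\le c_\alpha\}$ instead. In (4) the bound $\mathfrak e(\kappa)\le\mathfrak d$ does follow from $\cof(\kappa^{\mathfrak d})>\mathfrak d=\cof(\w^\w)$, and that inequality is a one-line diagonalization (given $\{h_\xi\}_{\xi<\mathfrak d}\subset\kappa^{\mathfrak d}$, the function $g(\xi)=h_\xi(\xi)+1$ is dominated by none of them); you should say so explicitly rather than gesture at ``a further cofinality estimate''. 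Your argument for $\cof(\kappa)\ge\mathfrak b$ in (3) ($\le^*$-bounding the witnesses and then pigeonholing away the finite exceptional sets to get a pointwise bound over a cofinal set of indices) is correct, as is $\mu^\lambda\cong\mu$ for regular $\mu$ and $\lambda<\mu$.

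The genuine gap is in Part (2): the whole content of that part is the inequality $\w^\w\not\succcurlyeq\w^{\w_1}$, and you leave it as ``a diagonalization \dots the main technical obstacle''. Worse, the strategy you describe --- build a single $g\in\w^{\w_1}$ dominated by no $f(x)$ --- faces a real obstruction. Writing $f_\alpha=\pi_\alpha\circ f$, each superlevel set $f_\alpha^{-1}([n,\w))$ is just a nonempty upward-closed subset of $\w^\w$ with empty intersection over $n$, and for every \emph{countable} index set $A$ and every $g$ the intersection $\bigcap_{\alpha\in A}f_\alpha^{-1}([g(\alpha),\w))$ is automatically nonempty, because $\pi_A\circ f:\w^\w\to\w^A\cong\w^\w$ is again monotone cofinal; so no countable-length diagonalization can produce a bad $g$, and a length-$\w_1$ recursion has nothing concrete to diagonalize against. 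The standard argument is of a different nature: (i) every uncountable $A\subset(\w^\w,\le)$ contains an infinite pointwise-bounded subset (shrink $A$ to uncountable sets $A_0\supseteq A_1\supseteq\cdots$ on which the first $n$ coordinates are constant and pick distinct points $x_n\in A_n$); (ii) this property passes from $P$ to $Q$ along any monotone cofinal map (choose witnesses $p_a$ with $f(p_a)\ge a$; either one witness serves infinitely many $a$, which are then bounded by its image, or the witnesses form an uncountable set to which (i) applies); (iii) $\w^{\w_1}$ fails the property: fixing injections $e_\beta:\beta\to\w$ and setting $g_\alpha(\beta)=e_\beta(\alpha)$ for $\alpha<\beta$ and $g_\alpha(\beta)=0$ otherwise, every infinite subfamily of $\{g_\alpha\}_{\alpha<\w_1}$ is unbounded (evaluate at any $\beta$ lying above infinitely many of its indices). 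Without (i)--(iii) or an equivalent idea, Part (2) --- and with it the value $\w_1$ in Part (4) --- remains unproved.
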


Proposition~\ref{p:e(X)} implies that the cardinal
$$\mathfrak e^\sharp=\sup\{\kappa^+:\kappa=\cf(\kappa)\mbox{ and }\w^\w\succcurlyeq\kappa^\kappa\}$$is well-defined and belongs to the set $\{\w_1\}\cup(\mathfrak b,\mathfrak d]$. In particular, $\mathfrak e^\sharp=\w_1$ if $\mathfrak b=\mathfrak d$.  The assumption $\mathfrak b=\mathfrak d$ is strictly weaker than $\mathfrak e^\sharp=\w_1$; and the strict inequality $\mathfrak e^{\sharp}>\w_1$  is consistent with ZFC, see \cite[2.4.12]{Ban}. The definition of the cardinal $\mathfrak e^\sharp$ implies:

\begin{proposition}\label{p:e=w1} If $\mathfrak e^\sharp=\w_1$, then $\mathfrak e(\kappa)\in\{1,\kappa\}$ for any regular uncountable cardinal $\kappa$.
\end{proposition}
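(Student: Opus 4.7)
The plan is to invoke Proposition \ref{p:e(X)}(3) to pin $\mathfrak e(\kappa)$ from below by $\kappa$ (whenever it is not $1$) and to use the definition of $\mathfrak e^\sharp$ to pin it from above by $\kappa$. Concretely, let $\kappa$ be a regular uncountable cardinal and assume $\mathfrak e^\sharp=\w_1$. If $\mathfrak e(\kappa)=1$, there is nothing to prove, so suppose $\mathfrak e(\kappa)>1$.

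First, I would apply Proposition \ref{p:e(X)}(3), which gives $\mathfrak e(\kappa)\ge\cf(\kappa)$. Since $\kappa$ is regular, $\cf(\kappa)=\kappa$, hence $\mathfrak e(\kappa)\ge\kappa$.

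Next, I would establish the upper bound $\mathfrak e(\kappa)\le\kappa$, equivalently $\w^\w\not\succcurlyeq\kappa^\kappa$. Suppose toward a contradiction that $\w^\w\succcurlyeq\kappa^\kappa$. Because $\kappa$ is a regular cardinal with $\kappa=\cf(\kappa)$ for which $\w^\w\succcurlyeq\kappa^\kappa$, the cardinal $\kappa$ is one of the cardinals over which the supremum in the definition
\[
\mathfrak e^\sharp=\sup\{\kappa^+:\kappa=\cf(\kappa)\text{ and }\w^\w\succcurlyeq\kappa^\kappa\}
\]
is taken. Therefore $\mathfrak e^\sharp\ge\kappa^+\ge\w_2$, contradicting the hypothesis $\mathfrak e^\sharp=\w_1$. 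Hence $\w^\w\not\succcurlyeq\kappa^\kappa$, and by the definition of $\mathfrak e(\kappa)$ as the least $\lambda$ with $\w^\w\not\succcurlyeq\kappa^\lambda$, we conclude $\mathfrak e(\kappa)\le\kappa$.

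Combining the two bounds yields $\mathfrak e(\kappa)=\kappa$, completing the proof. I do not expect any real obstacle here: the proposition is essentially a direct unpacking of the definition of $\mathfrak e^\sharp$ together with the already-established lower bound from Proposition \ref{p:e(X)}(3); the only point demanding mild care is to note that regularity of $\kappa$ is used twice, once to collapse $\cf(\kappa)$ to $\kappa$ in the lower bound and once to qualify $\kappa$ as an admissible witness in the supremum defining $\mathfrak e^\sharp$.
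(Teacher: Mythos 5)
Your proof is correct and is exactly the intended argument: the paper states this proposition without proof as an immediate consequence of the definition of $\mathfrak e^\sharp$, and your unpacking (lower bound $\mathfrak e(\kappa)\ge\cf(\kappa)=\kappa$ from Proposition~\ref{p:e(X)}(3) when $\mathfrak e(\kappa)>1$, upper bound $\mathfrak e(\kappa)\le\kappa$ because $\w^\w\succcurlyeq\kappa^\kappa$ would force $\mathfrak e^\sharp\ge\kappa^+>\w_1$) is the only natural route. No issues.
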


\section{Uniform spaces with an $\w^\w$-base}

In this section we survey the results of the paper \cite{Ban} describing the properties of uniform spaces whose uniformity has an $\w^\w$-base. Such uniform spaces are called {\em $\w^\w$-based}.
As shown in \cite{Ban}, $\w^\w$-based uniform spaces have many properties, typical for generalized metric spaces. So, first we recall the definitions of such properties. We start with local properties of topological spaces.

A topological space $X$ is defined to be
\begin{itemize}
\item {\em first-countable} at a point $x\in X$ if $X$ has a countable neighborhood base at $x$;
\item {\em Fr\'echet-Urysohn at} a point $x\in X$ if for each set $A\subset X$ with $x\in\bar A$ there is a sequence $\{a_n\}_{n\in\w}\subset A$ converging to $x$;
\item a {\em $q$-space at} a point $x\in X$ if there exists a sequence $(U_n)_{n\in\w}$ of neighborhoods of $x$ such every sequence $(x_n)_{n\in\w}\in\prod_{n\in\w}U_n$ accumulates at some point $x_\infty\in X$;
\item {\em countably tight at} a point $x\in X$ if each subset $A\subset X$ with $x\in\bar A$ contains a countable subset $B\subset A$ such that $x\in\bar B$;
\item {\em countably fan tight} at a point $x\in X$ if for any decreasing sequence $(A_n)_{n\in\w}$ of subsets of $X$ with $x\in\bigcap_{n\in\w}\bar A_n$ there exists a sequence $(F_n)_{n\in\w}$ of finite subsets $F_n\subset A_n$, $n\in\w$, accumulating at $x$;
\item {\em countably ofan tight} at a point $x\in X$ if for any decreasing sequence $(A_n)_{n\in\w}$ of open subsets of $X$ with $x\in\bigcap_{n\in\w}\bar A_n$ there exists a sequence $(F_n)_{n\in\w}$ of finite subsets $F_n\subset A_n$, $n\in\w$, accumulating at $x\in X$;
\item  {\em first-countable} (resp. {\em Fr\'echet-Urysohn, countably tight, countably fan tight, countably ofan tight, a $q$-space}) if $X$ is first-countable (resp. Fr\'echet-Urysohn,  countably tight, countably fan tight, countably ofan tight, a $q$-space) at each point $x\in X$.
\end{itemize}
Fr\'echet-Urysohn and $q$-spaces are well-known in General Topology, see \cite{Eng}, \cite{EG}.  The countable fan tightness was introduced by Arhangelskii \cite{Ar86} and is well-known and useful property in $C_p$-theory \cite{Arch}. Its ``open'' modification was introduced by Sakai \cite{MSak} as the property $(\#)$. In \cite{BanP0} the countable ofan tightness was called the countable fan open-tightness. The countable (o)fan tightness is equivalent to the (o)fan $\mathsf s$-tightness introduced in \cite[1.2.1]{Ban}.
\smallskip

  The following diagram describes the implications between some local properties of topological spaces.
{
$$\xymatrix{
\mbox{Fr\'echet-Urysohn}\ar@{=>}[d]&\mbox{first-countable}\ar@{=>}[l]\ar@{=>}[r]\ar@{=>}[d]&\mbox{$q$-space}\\
\mbox{countably tight}&\mbox{countably fan tight}\ar@{=>}[r]\ar@{=>}[l]&\mbox{countably ofan tight}
}
$$}

The following characterization of first-countable $\w^\w$-based uniform spaces was proved in \cite[6.6.1]{Ban}.

\begin{theorem} For a Tychonoff space $X$ whose topology is generated by an $\w^\w$-based uniformity and a point $x\in X$ the following conditions are equivalent:
\begin{enumerate}
\item $X$ is first countable at $x$;
\item $X$ is a $q$-space at $x$;
\item $X$ is countably fan tight at $x$;
\item $X$ is countably ofan tight at $x$.
\end{enumerate}
\end{theorem}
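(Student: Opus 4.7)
The forward implications $(1)\Ra(2)$, $(1)\Ra(3)$, and $(3)\Ra(4)$ are precisely the arrows of the diagram of local properties displayed immediately before the theorem, so to close the cycle of equivalences it suffices to establish the two converses $(2)\Ra(1)$ and $(4)\Ra(1)$. Throughout both proofs fix an $\w^\w$-base $(U_\alpha)_{\alpha\in\w^\w}$ of the uniformity $\U_X$ generating the topology of $X$, so that $(U_\alpha[x])_{\alpha\in\w^\w}$ is a monotone neighborhood $\w^\w$-base at $x$. The underlying combinatorial fact used throughout is the reformulation: $X$ is not first-countable at $x$ if and only if for every countable family $(\alpha_n)_{n\in\w}\subset\w^\w$ there exists some $\beta\in\w^\w$ with $U_{\alpha_n}[x]\not\subset U_\beta[x]$ for every $n\in\w$.

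For $(2)\Ra(1)$, let $(W_n)_{n\in\w}$ be a decreasing sequence of open neighborhoods of $x$ witnessing the $q$-space property, and choose $\alpha_n\in\w^\w$ with $U_{\alpha_n}[x]\subset W_n$. By directedness of $\w^\w$ and since the collection of valid $\alpha_n$ is upward closed, arrange $\alpha_0\le\alpha_1\le\cdots$. I will show the countable family $\{U_{\alpha_n}[x]\}_{n\in\w}$ is a neighborhood base at $x$. Assume not, pick $\beta\in\w^\w$ with $U_{\alpha_n}[x]\not\subset U_\beta[x]$ for every $n$, and for each $n$ select $y_n\in U_{\alpha_n}[x]\setminus U_\beta[x]\subset W_n$. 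By the $q$-property $(y_n)$ accumulates at some $z\in X$. If $z=x$, then a subsequence $y_{n_k}\to x$ eventually lies in $U_\beta[x]$, contradicting the choice of $y_n$. The genuinely hard case $z\neq x$ is ruled out by a careful reselection argument: using Hausdorff separation of $x$ and $z$ by an entourage $U_\gamma$ with $U_\gamma[x]\cap U_\gamma[z]=\emptyset$, one re-chooses (for each $n$) the point $y_n$ within the non-empty open set $U_{\alpha_n}[x]\setminus U_\beta[x]$ so as to avoid $U_\gamma[z]$; applied simultaneously to a countable family of entourages adequate to resolve every potential cluster point $z\neq x$, this diagonal reselection produces a sequence $(y_n)\in\prod_n W_n$ which accumulates at no point of $X$, contradicting the $q$-property.

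For $(4)\Ra(1)$ I argue the contrapositive. Assuming $X$ is not first-countable at $x$, the plan is to construct a decreasing sequence $(A_n)_{n\in\w}$ of open subsets of $X$ with $x\in\bigcap_{n\in\w}\overline{A_n}$ such that for every selection of finite $F_n\subset A_n$ the point $x$ fails to be a cluster point of $\bigcup_{n\in\w} F_n$, contradicting countable ofan tightness. The $A_n$ are built inductively as annular open sets $A_n=U_{\gamma_{n-1}}[x]\setminus\overline{U_{\gamma_n}[x]}$ with indices $\gamma_0\le\gamma_1\le\cdots$ in $\w^\w$ chosen at stage $n$ using the non-first-countability assumption to guarantee that $x\in\overline{A_n}$ (since otherwise the finite collection $\{U_{\gamma_k}[x]\}_{k\le n}$ together with a suitable continuation would already furnish a countable base at $x$), while simultaneously $A_n$ is kept uniformly far from $x$ via the small ball $U_{\gamma_n}[x]$. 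To rule out $x$ being a cluster point of $\bigcup_n F_n$, the monotone $\w^\w$-base is used, via a pointwise diagonalization over the $\gamma_n$'s, to produce a single $\delta\in\w^\w$ such that $U_\delta[x]$ meets $\bigcup_n F_n$ in only finitely many points.

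The principal obstacle in both implications is the cofinality of $\w^\w$: no countable family in $\w^\w$ is cofinal, so the fixed $\beta$ (respectively $\gamma$) cannot be dominated by a sequence $(\alpha_n)$. What rescues the argument in each case is the monotonicity of the $\w^\w$-base combined with the ability to perform pointwise diagonalizations over countably many indices in $\w^\w$; getting these diagonalizations to be compatible with the clustering hypothesis of $(2)$ and the finite-selection condition of $(4)$, respectively, is the technical core of both proofs, and is exactly where the full force of the $\w^\w$-base assumption comes into play.
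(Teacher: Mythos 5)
Your reduction to the two converses $(2)\Ra(1)$ and $(4)\Ra(1)$ via the displayed diagram is the right skeleton (note the paper gives no proof of this theorem — it is quoted from \cite[6.6.1]{Ban} — so there is nothing in-paper to match), but both of your arguments have genuine defects. The one in $(4)\Ra(1)$ is fatal: the annular sets $A_n=U_{\gamma_{n-1}}[x]\setminus\overline{U_{\gamma_n}[x]}$ can never satisfy $x\in\overline{A_n}$, because $U_{\gamma_n}[x]$ is a ball of an entourage of a uniformity generating the topology and hence is itself a neighborhood of $x$; consequently $x$ has a neighborhood disjoint from $X\setminus\overline{U_{\gamma_n}[x]}\supset A_n$. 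So the hypothesis of countable ofan tightness is never triggered by your sequence and no contradiction can be drawn from it. The sets that actually witness the failure of (o)fan tightness must be complements of \emph{uncountable} intersections such as $D_{k,n}=\bigcap\{U_\alpha[x]:\max_{i\le k}\alpha(i)\le n\}$: each single ball is automatically a neighborhood of $x$, whereas these intersections are cofinal in $\Tau_x(X)$ (since $D_{k,\max_{i\le k}\beta(i)}\subset U_\beta[x]$ for every $\beta$) and yet, when $x$ has no countable base, some of them must fail to be neighborhoods of $x$. Your construction never reaches this mechanism, which is exactly where the $\w^\w$-base hypothesis does its work.

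In $(2)\Ra(1)$ the hole sits exactly where you flag ``the genuinely hard case''. If $(y_n)$ accumulates at some $z\ne x$ there is no contradiction, and the proposed ``diagonal reselection'' is not an argument: the candidate cluster points may be uncountable in number (the intersection $\bigcap_n U_{\alpha_n}[x]$ need not be small, precisely because $x$ has no countable base), the set $U_{\alpha_n}[x]\setminus U_\beta[x]$ may be wholly contained in the ball around a given $z$ you wish to avoid, and manufacturing a selection in $\prod_n W_n$ with no accumulation point is exactly the negation of the $q$-property you are assuming, so the step is circular. A route that does work is quite different: show that $C=\bigcap_n\overline{W_n}$ is countably compact, invoke the Cascales--Orihuela metrization of (countably) compact subsets of $\w^\w$-based uniform spaces (\cite{CO}, see also Theorem~\ref{t:Sigma}(4)) to obtain a countable neighborhood base of $x$ in $C$, and then push it out to a countable base of $x$ in $X$ along the $q$-sequence $(W_n)$. (A further small slip: accumulation of $(y_n)$ at $x$ does not yield a convergent subsequence in a non-sequential space; in that sub-case you should simply note that the neighborhood $U_\beta[x]$ must contain $y_n$ for infinitely many $n$.)
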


A family $\mathcal N$ of subsets of a topological space $X$ is called
\begin{itemize}\itemsep=2pt
\item a {\em network} if for any open set $U\subset X$ and point $x\in U$ there exists a set $N\in\mathcal N$ with $x\in N\subset U$;
\item a {\em $k$-network} if for any open set $U\subset X$ and compact subset $K\subset U$ there exists a finite subfamily $\F\subset \mathcal N$ with $K\subset\bigcup\F\subset U$;
\item a {\em $\C$-network} for a family $\C$ of subsets of $X$ if for any set $C\in\C$ and any open neighborhood $U\subset X$ of $C$ there is a set $N\in\mathcal N$ such that $C\subset N\subset U$;
\item a {\em $\mathsf{cs}^*$-network} if for any open set $U\subset X$ and any sequence $(x_n)_{n\in\w}\in X^\w$ converging to a point $x\in U$ there exists a set $N\in\mathcal N$ such that $x\in N\subset O_x$ and $N$ contains infinitely many points $x_n$, $n\in\w$;
\item a {\em Pytkeev$^*$-network}  if any open set $U\subset X$ and any sequence $(x_n)_{n\in\w}\in X^\w$ accumulating at a point $x\in U$ there exists a set $N\in\mathcal N$ such that $x\in N\subset O_x$ and $N$ contains infinitely many points $x_n$, $n\in\w$;
 \item a {\em Pytkeev network} if for any neighborhood $O_x\subset X$ of $x$ and any set $A\subset X$ containing $x$ in its closure there exists a set $N\in\mathcal N$ such that $x\in N\subset O_x$, $N\cap A\ne\emptyset$, and moreover $N\cap A$  is infinite if $A$ accumulates at $x$.
 \end{itemize}

 A regular topological space $X$ is called
 \begin{itemize}\itemsep=2pt
  \item {\em cosmic} if $X$ has a countable network;
  \item a {\em $\sigma$-space} if $X$ has a $\sigma$-discrete network;
  \item a {\em $\Sigma$-space} if $X$ has a $\sigma$-discrete $\C$-network for some cover $\C$ of $X$ by closed countably compact subspaces;
 \item an {\em $\aleph_0$-space} if $X$ has a countable $k$-network (equivalently, $X$ has a countable $\mathsf{cs}^*$-network);
 \item an {\em $\aleph$-space} if $X$ has a $\sigma$-discrete $k$-network (equivalently, $X$ has a $\sigma$-discrete $\mathsf{cs}^*$-network);
 \item a {\em $\mathfrak P_0$-space} if $X$ has a countable Pytkeev network (equivalently, $X$ has a countable Pytkeev$^*$ network);
 \item a {\em $\mathfrak P$-space} if $X$ has a $\sigma$-discrete Pytkeev network;
 \item a {\em $\mathfrak P^*$-space} if $X$ has a $\sigma$-discrete Pytkeev$^*$ network;
  \item a {\em La\v snev space} if $X$ is the image of a metrizable space under a closed continuous map;
 \item a {\em $w\Delta$-space} if $X$ admits a sequence $(\U_n)_{n\in\w}$ of open covers such that for every point $x\in X$, any sequence $(x_n)_{n\in\w}\in \prod_{n\in\w}\St(x,\U_n)$ has an accumulation  point in $X$;
 \item an {\em $M$-space} if $X$ admits a sequence $(\U_n)_{n\in\w}$ of open covers such that each $\U_{n+1}$ star-refines $\U_n$ and for every point $x\in X$ any sequence $(x_n)_{n\in\w}\in \prod_{n\in\w}\St(x,\U_n)$ has an accumulation  point in $X$;
 \item a space with a {\em $G_\delta$-diagonal} if the diagonal $\Delta_X=\{(x,y)\in X\times X:x=y\}$ is a $G_\delta$-set in $X\times X$;
\item {\em submetrizable} if $X$ admits a continuous injective map to a metrizable space;
\item {\em $\sigma$-metrizable} if $X$ has a countable cover by closed metrizable subspaces;
\item a {\em closed-$\bar G_\delta$ space} if each closed subset $F$ of $X$ is a {\em $\bar G_\delta$-set} in $X$, i.e., $F$ can be written as $F=\bigcap_{n\in\w}U_n=\bigcap_{n\in\w}\overline{U}_n$ for some sequence $(U_n)_{n\in\w}$ of open sets in $X$.
    \end{itemize}
For every topological space these properties relate as follows.

{
 $$
 \xymatrix{
\mbox{metrizable}\atop\mbox{separable}\ar@{=>}[r]\ar@{=>}[d]&\mbox{$\mathfrak P_0$-space}\ar@{=>}[r]\ar@{=>}[d]&\mbox{$\aleph_0$-space}\ar@{=>}[r]\ar@{=>}[d]&\mbox{cosmic space}\ar@{=>}[d]\ar@{=>}[r]&\mbox{Lindel\"of}\atop\mbox{$\sigma$-space}\ar@{=>}[d]\\
\mbox{metrizable}\ar@{=>}[r]\ar@{=>}[d]& \mbox{paracompact}\atop\mbox{$\mathfrak P$-space}\ar@{=>}[r]\ar@{=>}[d]&\mbox{paracompact}\atop\mbox{$\aleph$-space}\ar@{=>}[r]\ar@{=>}[d]&\mbox{paracompact}\atop\mbox{$\sigma$-space}\ar@{=>}[d]\ar@{=>}[r]&\mbox{paracompact}\atop\mbox{$\Sigma$-space}\ar@{=>}[d]\\
\mbox{Fr\'echet-Urysohn}\atop\mbox{$\aleph$-space}\ar@{=>}[r]\ar@{=>}[d] &\mbox{$\mathfrak P$-space}\ar@{=>}[r]&\mbox{$\aleph$-space}\ar@{=>}[r]&\mbox{$\sigma$-space}\ar@{=>}[r]&\mbox{$\Sigma$-space}\\
\mbox{La\v snev}\ar@{=>}[d]&\mbox{countably tight}\atop\mbox{$\mathfrak P^*$-space}\ar@{<=>}[u]\ar@{=>}[d]&\mbox{sequential}\atop\mbox{$\aleph$-space}\ar@{=>}[u]\ar@{=>}[l]&&\mbox{$M$-space}\ar@{=>}[d]\ar@{=>}[u]\\
\mbox{Fr\'echet-Urysohn}&\mbox{$\mathfrak P^*$-space}&&\mbox{$q$-space}&\mbox{$w\Delta$-space}\ar@{=>}[l]
    }
 $$
 }
More information on these and other classes of generalized metric spaces can be found in \cite{Grue}, \cite{BanP0}, \cite{GK-P}, \cite{Ban}, \cite{Ban2}.
\vskip5pt

Uniform spaces with an $\w^\w$-base have the following topological properties, proved in \cite[6.6.7,6.8.5]{Ban}.

\begin{theorem}\label{t:Sigma} Let $X$ be a Tychonoff space whose topology is generated by an $\w^\w$-based uniformity $\U$.
\begin{enumerate}
\item If $X$ is a $w\Delta$-space, then $X$ has a $G_\delta$-diagonal.
\item $X$ is a $\Sigma$-space if and only if $X$ is a $\sigma$-space.
\item If $X$ is a (countably tight) collectionwise $\Sigma$-space, then $X$ is a paracompact $\mathfrak P^*$-space (and a $\mathfrak P$-space).
\item Each totally bounded subsets of the uniform space $(X,\U)$ is metrizable and separable.
\end{enumerate}
\end{theorem}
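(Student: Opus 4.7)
The plan is to prove the four parts in the order (4), (2), (3), (1), with (4) serving as the technical foundation via the Cascales--Orihuela theorem on compact $\w^\w$-based spaces recalled in the Introduction. For \emph{Part (4)}, let $B\subset X$ be totally bounded in $(X,\U)$. The restricted uniformity $\U|_B$ inherits an $\w^\w$-base, so its completion $\widetilde B$ is a compact Hausdorff uniform space (totally bounded $+$ complete $=$ compact) carrying an induced $\w^\w$-base. Cascales--Orihuela then yields that $\widetilde B$ is metrizable, so $B\subset\widetilde B$ is metrizable, and a totally bounded subset of a metric space is separable.

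For \emph{Part (2)}, the implication $\sigma\Rightarrow\Sigma$ is immediate (take $\C$ to be the cover by singletons). For the converse, let $\mathcal{N}=\bigcup_n\mathcal{N}_n$ be a $\sigma$-discrete $\C$-network with $\C$ a cover of $X$ by closed countably compact subsets. A standard diagonal argument shows every countably compact subset of a uniform space is totally bounded, hence by (4) each $C\in\C$ is metrizable separable, and so cosmic with some countable network $\mathcal{M}_C$. A classical Nagami-style refinement then produces a $\sigma$-discrete network of $X$: for each $N\in\mathcal{N}_n$ containing some $C\in\C$, replace $N$ by the countable family $\{N\cap M:M\in\mathcal{M}_C\}$ for a canonical choice of $C$, and use the $\sigma$-discreteness of $\mathcal{N}$ together with the Lindel\"ofness of each $C$ to verify that the resulting family is still $\sigma$-discrete and forms a network.

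For \emph{Part (3)}, I start from the $\sigma$-space structure supplied by (2); the collectionwise $\Sigma$-hypothesis upgrades this decomposition to a collectionwise $\sigma$-structure and yields paracompactness by the standard argument. To obtain a $\sigma$-discrete Pytkeev$^*$-network, I would refine each element of the $\sigma$-discrete network via uniform balls drawn from the $\w^\w$-base, exploiting the countable ofan tightness that $\w^\w$-based spaces enjoy. The additional hypothesis of countable tightness then promotes Pytkeev$^*$ to Pytkeev via the equivalence ``countably tight $\mathfrak P^*$-space $\Leftrightarrow$ $\mathfrak P$-space'' reflected in the diagram preceding the theorem.

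For \emph{Part (1)}, given a $w\Delta$-sequence $(\U_n)$ of open covers and an $\w^\w$-base $(U_\alpha)_{\alpha\in\w^\w}$, I would assemble a countable sequence of open covers $(\mathcal{V}_n)$ satisfying $\bigcap_n\St(x,\mathcal{V}_n)=\{x\}$, the standard characterization of the $G_\delta$-diagonal. For a hypothetical $y\ne x$ lying in every such star, the $w\Delta$-accumulation applied to a sequence extracted from the stars (selected via a countable cofinal choice of $\alpha$'s from the $\w^\w$-base) forces a common accumulation point, contradicting the separation of $x$ from $y$ by a suitable $U_\alpha$-ball. The main obstacle is Part (3): simultaneously refining the $\sigma$-discrete network to a Pytkeev$^*$-network and extracting paracompactness from the collectionwise $\Sigma$ hypothesis both rest on the delicate interplay between uniform $\w^\w$-bases and network refinements developed systematically in \cite{Ban}.
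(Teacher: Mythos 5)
First, note that the paper does not actually prove this theorem: it is imported verbatim from \cite[6.6.7, 6.8.5]{Ban}, so there is no in-paper argument to compare yours against. Judged on its own merits, your Part (4) is correct and is exactly the argument the paper itself uses elsewhere (in the proof of Theorem~\ref{t:sG-ca}): pass to the subspace uniformity, observe that the completion of a totally bounded $\w^\w$-based uniform space is a compact Hausdorff $\w^\w$-based space, and invoke Cascales--Orihuela. The forward direction of Part (2) (singletons as kernels) is also fine.

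The remaining parts contain genuine gaps. In the converse of Part (2), your refinement ``for each $N\in\mathcal N_n$ replace $N$ by $\{N\cap M:M\in\mathcal M_C\}$ for a canonical choice of $C$'' does not produce a network: a single $N$ may contain many kernels $C\in\C$, and a point $x\in N$ lying only in a non-canonical kernel is not served by the chosen family, while refining per kernel (one countable family for every $C\subset N$) destroys $\sigma$-discreteness, since there may be uncountably many kernels inside one $N$; Lindel\"ofness of each individual $C$ gives no control over the union. Some further idea (e.g.\ exploiting that the kernels are compact metrizable together with a genuinely different organization of the refinement) is needed, and you do not supply it. In Part (1) the construction fails before the contradiction argument even starts: you propose to build the covers $\mathcal V_n$ from ``a countable cofinal choice of $\alpha$'s from the $\w^\w$-base,'' but $\w^\w$ has no countable cofinal subset ($\cof(\w^\w)=\mathfrak d>\w$), so this selection cannot be made and no alternative recipe for the $\mathcal V_n$ is given. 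Part (3) is openly deferred (``I would refine\dots exploiting the countable ofan tightness''); the construction of a $\sigma$-discrete Pytkeev$^*$-network is the hard content there and is not carried out. So apart from Part (4), the proposal is a plan rather than a proof.
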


The following metrization theorem is proved in \cite[6.6.2]{Ban}.

\begin{theorem}\label{t:metr-ww} For a Tychonoff space $X$ whose topology is generated by an $\w^\w$-based uniformity the following conditions are equivalent:
\begin{enumerate}
\item $X$ is metrizable;
\item $X$ is first-countable closed-$\bar G_\delta$ $T_0$-space;
\item $X$ is an $M$-space;
\item $X$ is first-countable collectionwise normal $\Sigma$-space.
\end{enumerate}
\end{theorem}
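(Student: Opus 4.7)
The strategy is to close the cycle $(1)\Leftrightarrow(2)\Leftrightarrow(3)\Leftrightarrow(4)$ by combining standard properties of metrizable spaces (for the forward direction) with Theorem~\ref{t:Sigma} and classical metrization theorems (for the converses).

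The forward implications are routine. In a metric space every closed set is a zero-set (hence a $\bar G_\delta$), and first-countability and the $T_0$-axiom are automatic, giving $(1)\Rightarrow(2)$; taking $\mathcal U_n$ to be the cover by open balls of radius $2^{-n-2}$ produces a star-refining sequence with $\St(x,\mathcal U_n)\subset B(x,2^{-n})$ and $(1)\Rightarrow(3)$; and every metric space is first-countable, paracompact (hence collectionwise normal) and a $\sigma$-space, so in particular a $\Sigma$-space, giving $(1)\Rightarrow(4)$.

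For $(3)\Rightarrow(1)$, every $M$-space is by definition a $w\Delta$-space, so Theorem~\ref{t:Sigma}(1) endows $X$ with a $G_\delta$-diagonal, and Borges' classical theorem that every $M$-space with a $G_\delta$-diagonal is metrizable closes the argument. For $(4)\Rightarrow(1)$, first-countability entails countable tightness, so Theorem~\ref{t:Sigma}(2) promotes the $\Sigma$-space $X$ to a $\sigma$-space; passing through Theorem~\ref{t:Sigma}(3), $X$ becomes a paracompact $\mathfrak P$-space, hence an $\aleph$-space, and a first-countable collectionwise normal $\aleph$-space is metrizable by a Nagata-type theorem.

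The subtle direction is $(2)\Rightarrow(1)$. The plan is to reduce to $(3)$: the closed-$\bar G_\delta$ property makes $X$ perfectly normal, and combining first-countability (equivalently, the $q$-space property in the preceding first-countability characterization for $\omega^\omega$-based spaces) with the $\omega^\omega$-base of $\U$ one extracts a sequence $\alpha_1\le\alpha_2\le\cdots$ in $\omega^\omega$ along which the open covers $\mathcal U_n=\{U_{\alpha_n}[x]:x\in X\}$ star-refine and the stars $\St(x,\mathcal U_n)$ shrink, yielding the $M$-space property; one then invokes $(3)\Rightarrow(1)$. I expect the main obstacle to lie precisely in this reduction: first-countability gives only pointwise control of the neighborhood filter at each $x$, and one must use the closed-$\bar G_\delta$ condition to guarantee that the $\St(x,\mathcal U_n)$ decrease uniformly enough along the $\omega^\omega$-base to force accumulation of every sequence $(x_n)\in\prod_n\St(x,\mathcal U_n)$.
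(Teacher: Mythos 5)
The paper itself does not prove this theorem: it is imported verbatim from \cite[6.6.2]{Ban}, so your argument has to stand entirely on its own. Most of it does. The implications $(1)\Rightarrow(2),(3),(4)$ are routine as you say; $(3)\Rightarrow(1)$ via Theorem~\ref{t:Sigma}(1) and the classical fact that an $M$-space with a $G_\delta$-diagonal is metrizable is sound; and $(4)\Rightarrow(1)$ via Theorem~\ref{t:Sigma}(2,3), the implication $\mathfrak P$-space $\Rightarrow$ $\aleph$-space from the paper's diagram, Foged's theorem (Fr\'echet $\aleph$-space $\Rightarrow$ La\v snev) and the Morita--Hanai--Stone theorem (first-countable La\v snev $\Rightarrow$ metrizable) also works. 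Do note that in $(4)\Rightarrow(1)$ the detour through the $\mathfrak P$-/$\aleph$-space property is essential and your appeal to an unspecified ``Nagata-type theorem'' hides this: a first-countable collectionwise normal $\sigma$-space need not be metrizable (the bow-tie space is cosmic, first-countable and Lindel\"of but not metrizable), so what you must actually quote is that a first-countable $\aleph$-space is metrizable.

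The genuine gap is $(2)\Rightarrow(1)$, and you have in effect flagged it yourself without closing it. Your plan is to extract a single increasing sequence $\alpha_1\le\alpha_2\le\cdots$ in $\w^\w$ so that the covers $\mathcal U_n=\{U_{\alpha_n}[x]:x\in X\}$ star-refine and have shrinking stars, giving the $M$-space property. But if one countable family of entourages $(U_{\alpha_n})_{n\in\w}$ had the property that the stars $\St(x,\mathcal U_n)\subset U_{\alpha_n}^{2}[x]$ form a neighborhood base at every point $x$, then these entourages would generate a countably based uniformity compatible with the topology, and $X$ would be metrizable by the classical metrization theorem for uniform spaces --- that is, the object you propose to construct is literally equivalent to the conclusion, so nothing has been reduced. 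First-countability only yields, for each $x$ separately, a sequence $\alpha^x_n$ depending on $x$; nothing in hypothesis $(2)$ allows you to replace these by a single sequence uniform in $x$, and without shrinking stars the accumulation requirement in the definition of an $M$-space has no reason to hold. Consequently the reduction of $(2)$ to $(3)$ is not established, the implication $(2)\Rightarrow(1)$ is unproved, and the cycle of equivalences is not closed. A correct argument must exploit the closed-$\bar G_\delta$ hypothesis in a structurally different way (this is precisely where the content of \cite[6.6.2]{Ban} lies), for instance through the fan-tightness characterization of first-countability for $\w^\w$-based uniform spaces quoted before Theorem~\ref{t:Sigma}, rather than through a global countable star-refining sequence.
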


Next, we recall some information related to the analycity of topological spaces. A topological space $X$ is {\em analytic} if $X$ is a continuous image of a Polish space. A non-empty topological space is analytic if and only if it is a continuous image of the Polish space $\w^\w$. A topological space $X$ is {\em $K$-analytic} if $X=\bigcup_{\alpha\in \w^\w}K_\alpha$ for some family $(K_\alpha)_{\alpha\in \w^\w}$ of compact subsets of $X$, which is {\em upper semi-continuous} in the sense that for any open set $U\subset X$ the set $\{\alpha\in\w^\w:K_\alpha\subset U\}$ is open in $\w^\w$. By \cite[1.5.1]{Ban}, a Tychonoff space $X$ is analytic if and only if $X$ is a submetrizable $K$-analytic space.

Following \cite{kak}, we say that a topological space $X$ has a {\em compact resolution} if $X=\bigcup_{\alpha\in \w^\w}K_\alpha$ for some family $(K_\alpha)_{\alpha\in\w^\w}$ of compact subsets of $X$ such that $K_\alpha\subset K_\beta$ for all $\alpha\le\beta$ in $\w^\w$. By Proposition 3.10(i) (and 3.13) in \cite{kak}, a (Dieudonn\'e complete) Tychonoff space $X$ has a compact resolution if (and only if) $X$ is $K$-analytic.
It is well-known that each $K$-analytic space $X$ is Lindel\"of and hence has {\em countable extent}. The latter means that all closed discrete subspaces in $X$ are countable. A topological space $X$ has {\em countable discrete cellularity} if each discrete family $\F$ of non-empty open sets in $X$ is at most countable.

For a topological space $X$ by $X'$ we denote the set of non-isolated points of $X$, by $X^{\prime \mathsf s}$ the set of accumulating points of countable subsets in $X$, and by $X^{\prime P}$ the set of points $x\in X$ which are not $P$-points in $X$ (which means that some $G_\delta$-set $G\subset X$ containing $x$ is not a neighborhood of $x$). A non-isolated point $x$ is a $P$-point iff $\add(\Tau_x(X))>\w$.
It is clear that $X^{\prime \mathsf s}\subset X^{\prime\mathsf P}\subset X'$. For countably tight spaces $X$ we get the equality $X'=X^{\prime\mathsf P}=X^{\prime \mathsf s}$.

The following theorem is proved in \cite[6.10.1, 7.5.1]{Ban}.

\begin{theorem}\label{t:looong} Let $X$ be an $\w^\w$-based uniform space.\newline
The following conditions \textup{(1)--(7)} are equivalent:
\begin{enumerate}
\item[\textup{(1)}] $X$ is separable;
\item[\textup{(2)}] $X$ is cosmic;
\item[\textup{(3)}] $X$ is an $\aleph_0$-space;
\item[\textup{(4)}] $X$ is a $\mathfrak P_0$-space.
\item[\textup{(5)}] $X$  contains a dense $\Sigma$-subspace with countable extent;
\item[\textup{(6)}] $C_u(X)$ is analytic;
\item[\textup{(7)}] $C_u(X)$ is cosmic.
\end{enumerate}
The equivalent conditions $(1)$--$(7)$ imply the condition
\begin{enumerate}
\item[\textup{(8)}] the uniform space $X$ contains a dense $\sigma$-bounded subset.
\end{enumerate}
The condition $(8)$ implies the equivalent conditions \textup{(9)--(13)}:
\begin{enumerate}
\item[\textup{(9)}] the poset $C_u(X)$ is $\w^\w$-dominated;
\item[\textup{(10)}] $C_u(X)$ is $\w^\w$-dominated in $\IR^X$;
\item[\textup{(11)}] for some dense subspace $Z\subset X$ the set $\{f|Z:f\in C_u(X)\}$ is $\w^\w$-dominated in $\IR^Z$;
\item[\textup{(12)}] $C_u(X)$ is $K$-analytic;
\item[\textup{(13)}] $C_u(X)$ has a compact resolution.
\end{enumerate}
If the completion of $X$ is  $\IR$-complete, then the conditions  $(1)$--$(13)$ are equivalent.
\newline
If the uniform space $X$ is $\IR$-universal, then the conditions $(1)$--$(13)$ are equivalent to any of the conditions:
\begin{enumerate}
\item[\textup{(14)}] $|X\setminus X^{\prime\mathsf s}|\le\w$;
\item[\textup{(15)}] $X$ is $\sigma$-compact;
\item[\textup{(16)}] $C_p(X)$ is cosmic;
\item[\textup{(17)}] $C_p(X)$ is analytic.
\item[\textup{(18)}] $C_p(X)$ is $K$-analytic;
\item[\textup{(19)}] $C_p(X)$ has a compact resolution.
\item[\textup{(20)}] $C_k(X)$ is cosmic;
\item[\textup{(21)}] $C_k(X)$ is analytic.
\item[\textup{(22)}] $C_k(X)$ is $K$-analytic;
\item[\textup{(23)}] $C_k(X)$ has a compact resolution.
\end{enumerate}
Under $\w_1<\mathfrak b$ the conditions $(1)$--$(7)$ are equivalent to:
\begin{enumerate}
\item[\textup{(24)}] $X$ is Lindel\"of;
\item[\textup{(25)}] $X$ has countable discrete cellularity;
\item[\textup{(26)}] the uniform space $X$ is $\w$-narrow (i.e., $\cov^\sharp(X)\le\w_1$).
\end{enumerate}
If $\w_1<\mathfrak b$ and the uniform space $X$ is $\w_1$-universal, then the conditions \textup{(1)--(26)} are equivalent to:
\begin{enumerate}
\item[\textup{(27)}] $|X\setminus X^{\prime\mathsf P}|\le\w$.
\end{enumerate}
\end{theorem}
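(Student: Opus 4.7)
The plan is to prove this cascade of equivalences in three blocks: the network-based cluster (1)--(7), the $K$-analytic/$\w^\w$-dominated cluster (9)--(13) together with the bridge (8)$\Ra$(9), and the three conditional refinements (under $\IR$-completeness of the completion, under $\IR$-universality, and under $\w_1<\mathfrak b$). Throughout I would exploit the structural results of Section~3, in particular Theorem~\ref{t:Sigma} (totally bounded sets are metrizable, $\Sigma$-spaces collapse to $\sigma$-spaces), the first-countable metrization Theorem~\ref{t:metr-ww}, and the reducibility calculus of Section~\ref{s:pre}.

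For the first block, (1)$\Ra$(2) follows because a countable dense set paired with a countable cofinal subfamily of the $\w^\w$-base of $\U_X$ generates a countable network of the form $U_{\alpha_n}[d_m]$. The refinements (2)$\Ra$(3)$\Ra$(4) use Theorem~\ref{t:Sigma}(4), which makes every totally bounded subset second countable and metrizable, so a countable network is upgraded first to a countable $k$-network and then to a Pytkeev network using countable fan tightness (automatic in the $\w^\w$-based setting). (4)$\Ra$(1) is immediate. Condition (5) threads in via Theorem~\ref{t:Sigma}(2)--(3), which forces a countably extended dense $\Sigma$-subspace to be a paracompact $\mathfrak P$-space, from which cosmicity of the ambient space is extracted. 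Finally, (6) and (7) attach through the analytic-cosmic duality for $C_u(X)$: an analytic space is cosmic, and a cosmic $C_u(X)$ recovers cosmicity of $X$ via the canonical embedding $\delta:X\to\IR^{C_u(X)}$ with appropriate separability control.

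For the block (9)--(13) and the bridge (8)$\Ra$(9), the equivalences inside the block are the standard Tukey-style reformulations of $K$-analyticity, namely that a poset is $\w^\w$-dominated iff it carries an $\w^\w$-indexed compact resolution iff it is $K$-analytic, applied to $C_u(X)$; restriction to a dense $Z\subset X$ preserves the order and cofinality, giving (10)$\Leftrightarrow$(11). The implication (1)$\Ra$(8) is trivial since a countable dense set is $\sigma$-bounded. The key step (8)$\Ra$(9) proceeds as follows: write the dense $\sigma$-bounded set as $D=\bigcup_{n\in\w}B_n$ with each $B_n$ functionally bounded, and let $(U_\alpha)_{\alpha\in\w^\w}$ be an $\w^\w$-base of $\U_X$; for each $\alpha\in\w^\w$ I would build $f_\alpha\in C_u(X)$ whose values on a $U_{\alpha(n)}$-neighborhood of $B_n$ are bounded by $\alpha(n)$. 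Any $f\in C_u(X)$ is then dominated by $f_\alpha$ for $\alpha$ chosen coordinatewise to dominate $\sup f(B_n)$ and the modulus of $f$ on each $B_n$; density of $D$ ensures this domination holds on all of $X$.

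The main obstacle is closing the loops (13)$\Ra$(1) under $\IR$-completeness and (13)$\Ra$(15) under $\IR$-universality. In the $\IR$-complete case, the canonical embedding $\delta:X\to\IR^{C_u(X)}$ is closed, so a compact resolution on $C_u(X)$ dualizes to a weight estimate on $\delta(X)$ forcing $X$ to be cosmic, hence separable. Under the stronger $\IR$-universality, $C_u(X)=C(X)=C_\w(X)$, so the characterizations (16)--(23) for $C_p(X)$ and $C_k(X)$ activate directly, and $\sigma$-compactness (15) is produced by pulling back a compact resolution on $C_p(X)$ through the evaluation $X\to C_p(C_p(X))$; conditions (14) and (27) are read off the structure of non-isolated points in an $\w^\w$-based space. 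Finally, the $\w_1<\mathfrak b$ block rests on Proposition~\ref{p:e=w1} combined with the reducibility $\cof(\U_X)\preccurlyeq\w^\w$: under this hypothesis any uncountable regular cardinal $\kappa$ satisfies $\w^\w\not\succcurlyeq\kappa^\kappa$, so an $\w^\w$-based space which is Lindel\"of, has countable discrete cellularity, or is merely $\w$-narrow cannot carry an uncountable discrete subfamily and must be separable; under $\w_1$-universality this further forces $|X\setminus X^{\prime\mathsf P}|\le\w$.
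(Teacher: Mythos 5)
First, a point of comparison: the paper does not prove this theorem at all --- it is imported verbatim from the reference [Ban, Theorems 6.10.1 and 7.5.1], so there is no internal proof to measure your attempt against. Your proposal is a reconstruction of a substantial external result, and it has genuine gaps. The most concrete one is in your very first step, (1)$\Ra$(2): you propose to take ``a countable cofinal subfamily of the $\w^\w$-base of $\U_X$'' and form the countable family $U_{\alpha_n}[d_m]$. No such countable cofinal subfamily exists in general, since the poset $\w^\w$ has uncountable cofinality $\mathfrak d$; and if the uniformity $\U_X$ did admit a countable base it would be pseudometrizable, which would trivialize the whole theorem. The implication separable $\Ra$ cosmic (and a fortiori $\Ra$ $\aleph_0$-space $\Ra$ $\mathfrak P_0$-space) for $\w^\w$-based uniform spaces is precisely the hard content of [Ban, 6.10.1]; it requires a combinatorial construction of a countable Pytkeev network from the monotone family $(U_\alpha)_{\alpha\in\w^\w}$ together with the countable dense set, not a cofinality reduction. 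A second outright error: you invoke countable fan tightness as ``automatic in the $\w^\w$-based setting'' to upgrade a network to a $k$-network and then to a Pytkeev network. By the theorem quoted from [Ban, 6.6.1] in Section 3 of this paper, countable fan tightness at a point of such a space is \emph{equivalent} to first countability at that point, so it is very far from automatic, and it is not the mechanism producing $k$-networks here.

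The remaining blocks are asserted where the actual work lies. The implication (10)$\Ra$(9) needs a device for replacing an arbitrary monotone dominating family in $\IR^X$ by one consisting of uniformly continuous functions (something in the spirit of Lemma~\ref{l:CwB}), which you do not supply; your construction for (8)$\Ra$(9) as written produces functions bounded by $\alpha(n)$ on neighborhoods of the $B_n$ but gives no reason they can be chosen uniformly continuous on all of $X$. The loop (13)$\Ra$(1) under $\IR$-completeness, and especially the $\w_1<\mathfrak b$ block, are only gestured at: passing from $\w$-narrowness (a countable $U$-net for \emph{each} entourage $U$ separately) to a single countable dense set is exactly where the cited proof performs a nontrivial diagonalization against $\mathfrak b$, and ``cannot carry an uncountable discrete subfamily, hence separable'' is not a proof of that step. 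Given that the paper itself treats this theorem as a black box, the correct move here is to cite [Ban, 6.10.1, 7.5.1]; if you insist on reproving it, the two errors in the first block must be repaired before anything else.
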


By \cite[6.3.9]{Ban}, the function space $C_k(\w_1)$  is Lindel\"of, non-separable and under $\w_1=\mathfrak b$ is an  $\w^\w$-based uniform space (see \cite[Theorem 3]{FKLS}). Here $C_k(\w_1)$ is the space of continuous real-valued functions on the cardinal $\w_1=[0,\w_1)$ endowed with the order topology. By \cite[6.3.8]{Ban}, the closed subgroup $C_k(\w_1,\IZ)\subset C_k(\w_1)$ of integer-valued functions on $\w_1$ is Lindel\"of, not separable, and under $\w_1=\mathfrak b$ is universally $\w^\w$-based. We recall that a Tychonoff space $X$ is {\em universally $\w^\w$-based} if the universal uniformity $\U_X$ of $X$ has an $\w^\w$-base. For a universally $\w^\w$-based Tychonoff space $X$ the conditions of (1)--(23) of Theorem~\ref{t:looong} are equivalent.
The universally $\w^\w$-based Lindel\"of non-separable space $C_k(\w_1,\IZ)$ shows that under $\w_1=\mathfrak b$ the conditions (1)--(23) of Theorem~\ref{t:looong} are not equivalent to the condition (24).  

Universally $\w^\w$-based spaces were studied in \cite[Chapter 7]{Ban} where it is shown that such spaces are close to being $\sigma'$-compact. A topological space $X$ is defined to be {\em $\sigma'$-compact} if the set $X'$ of non-isolated points of $X$ is $\sigma$-compact.

By \cite[3.14]{LPT} (and \cite[7.8.7]{Ban}), a metrizable space $X$ is universally $\w^\w$-based if (and only if) it is $\sigma'$-compact. In Proposition~\ref{p:Rquot} we shall improve this result proving that a Tychonoff space $X$ is universally $\w^\w$-based if $X$ is a quotient image of a metrizable $\sigma'$-compact space. For La\v snev spaces (i.e., images of metrizable spaces under closed maps) we have the following characterization proved in \cite[7.8.10,8.3.1]{Ban}.

\begin{theorem}\label{t:Lasnev} For a La\v snev space $X$ the following conditions are equivalent:
\begin{enumerate}
\item $X$ is universally $\w^\w$-based;
\item $X$ is the image of a metrizable $\sigma'$-compact space under a closed continuous map;
\item $X$ is the image of a metrizable $\sigma'$-compact space under a quotient map.
\item $X$ is the image of a metrizable $\sigma'$-compact space under an $\IR$-quotient map.
\end{enumerate}
\end{theorem}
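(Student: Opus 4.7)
The plan is to argue the cycle $(2)\Ra(3)\Ra(4)\Ra(1)\Ra(2)$. The first two implications are tautological: every closed continuous surjection is a quotient map, and every quotient map is an $\IR$-quotient map. The implication $(4)\Ra(1)$ is precisely Proposition~\ref{p:Rquot}, cited just before the theorem, which asserts that any $\IR$-quotient image of a metrizable $\sigma'$-compact space is universally $\w^\w$-based. All the substance is therefore concentrated in $(1)\Ra(2)$.

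To prove $(1)\Ra(2)$, I would take a universally $\w^\w$-based La\v{s}nev space $X$, denote by $D\subset X$ its set of isolated points and by $X'=X\setminus D$ its set of non-isolated points, and first show that $X'$ is $\sigma$-compact. The route is to combine two facts: La\v{s}nev's classical structural theorem that the non-isolated set of a La\v{s}nev space is closed and metrizable, together with closed-hereditariness of the universal $\w^\w$-base property (which for a closed subspace of a Tychonoff space follows from Kat\v{e}tov's extension of continuous pseudometrics applied to the base entourages of $\U_X$). These two facts make $X'$ a metrizable universally $\w^\w$-based space, and the metrizable equivalence $(\U)\Leftrightarrow(\sigma')$ recalled just before the theorem then forces $X'$ to be $\sigma$-compact.

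Next I would use the La\v{s}nev presentation to fix a closed continuous surjection $q\colon M\to X$ with $M$ metrizable and build the required metrizable $\sigma'$-compact preimage of $X$. Since $D$ is clopen in $X$, there is a clopen decomposition $M=q^{-1}(D)\sqcup q^{-1}(X')$, and $q$ restricts to closed continuous surjections on each piece. I would replace $q^{-1}(D)$ by a discrete copy $\tilde D$ of $D$ and then, on $q^{-1}(X')$, apply Va\u{\i}n\v{s}te\u{\i}n's lemma on compactness of fiber boundaries for closed maps between metric spaces to construct, for each compact piece $K_n$ in a cover $X'=\bigcup_n K_n$ by compact metric sets, a compact lift $L_n\subset q^{-1}(K_n)$ with $q(L_n)=K_n$. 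Setting $L=\bigcup_n L_n$ and $N=L\sqcup\tilde D$, the space $N$ is metrizable and $\sigma'$-compact (its non-isolated points all lie in the $\sigma$-compact $L$), and the induced map $p\colon N\to X$ is a closed continuous surjection onto $X$, which is exactly the condition~(2).

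The main obstacle is the last construction step: one must guarantee that $p|_L$ is still a closed map after passing from $q^{-1}(X')$ to its $\sigma$-compact subset $L$. The safest route is to arrange $L$ so as to contain, for every $y\in X'$, the full compact boundary $\partial q^{-1}(y)$, and then to exploit the Fr\'echet--Urysohn property of La\v{s}nev spaces so that any convergent sequence in $X'$ lifts to a sequence in $L$ admitting a convergent subsequence. This single closed-map verification is the crux of the argument; everything else is formal assembly.
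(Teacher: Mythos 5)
A preliminary remark: the paper does not actually prove this theorem; it quotes it from \cite[7.8.10, 8.3.1]{Ban}, so your argument can only be judged on its own merits. The cycle $(2)\Ra(3)\Ra(4)\Ra(1)$ is fine (with the small caveat that $(4)\Ra(1)$ needs Proposition~\ref{p:Rquot} \emph{together with} the fact, recalled just before the theorem, that metrizable $\sigma'$-compact spaces are universally $\w^\w$-based). The genuine gaps are all in $(1)\Ra(2)$. First, the ``classical structural theorem'' you invoke is not what La\v snev proved: his decomposition writes $X=X_0\cup\bigcup_n D_n$ with $X_0$ closed metrizable and each $D_n$ closed and discrete \emph{in itself}, but the points of the $D_n$ need not be isolated in $X$, so $X'$ need not be metrizable. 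A counterexample satisfying hypothesis (1): the quotient of $\bigsqcup_{n\in\w}[0,1]$ obtained by identifying all the origins is a cosmic $k_\w$ (hence, by facts (a) and (c) of the Introduction, universally $\w^\w$-based) La\v snev space in which every point is non-isolated, yet it is not first-countable at the vertex, so $X'=X$ is not metrizable. Second, even granting that $X'$ is a metrizable universally $\w^\w$-based space, the equivalence $(\U)\Leftrightarrow(\sigma')$ for metrizable spaces yields only that $(X')'$ is $\sigma$-compact, i.e.\ that $X'$ is $\sigma'$-compact, not $\sigma$-compact: points of $X'$ that are isolated \emph{in the subspace} $X'$ are not controlled. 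The conclusion you want is true, but by a different route: La\v snev spaces are Fr\'echet--Urysohn, hence countably tight, so $X'=X^{\prime\mathsf s}$, and Theorem~\ref{t:uwwb}(7) (or (6), using the normality of La\v snev spaces for the $\w$-Urysohn hypothesis) gives at once that $X$ is $\sigma'$-compact.

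The assembly step is also broken. The set $D$ of isolated points is open but in general not closed (isolated points may accumulate at points of $X'$), so $M=q^{-1}(D)\sqcup q^{-1}(X')$ is not a clopen decomposition; more seriously, after replacing $q^{-1}(D)$ by a \emph{discrete} summand $\tilde D$, the map $p\colon N=L\sqcup\tilde D\to X$ is not closed and not even quotient: a sequence of isolated points converging to a point of $X'$ is a closed subset of $\tilde D$ (hence of $N$) whose image is not closed in $X$. The lifts of the isolated points must be kept inside $M$ with the subspace topology (one point of $q^{-1}(d)$ for each $d\in D$) so that the convergence relations survive. Finally, Va\u{\i}n\v{s}te\u{\i}n's lemma concerns closed maps \emph{between metric spaces}; for a La\v snev presentation $q\colon M\to X$ with $X$ non-metrizable the boundaries $\partial q^{-1}(y)$, $y\in X'$, need not be compact --- the fiber over the vertex of the sequential fan $S_\w$ (again a universally $\w^\w$-based La\v snev space) has infinite closed discrete boundary --- so the plan of packing all fiber boundaries into a $\sigma$-compact set $L$ cannot be carried out. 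The closed-map verification that you yourself identify as the crux therefore remains entirely open, and it is precisely the content of \cite[8.3.1]{Ban}.
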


A map $f:X\to Y$ between topological spaces is called {\em $\IR$-quotient} if for any function $\varphi:Y\to\IR$ the continuity of $\varphi$ is equivalent to the continuity of the composition $\varphi\circ f:X\to\IR$.

Following \cite{Ban}, we define a subset $B\subset X$ of a topological space $X$ to be {\em $\w$-Urysohn} if each infinite closed discrete set $D\subset B$ in $X$ contains an infinite strongly discrete subset $S\subset D$ in $X$. A subset $S\subset D$ is called {\em strongly discrete} if each point $x\in S$ has a neighborhood $O_x\subset X$ such that the family $(O_x)_{x\in S}$ is discrete in $X$, which means that each point $z\in X$ has a neighborhood $U_z$ that meets at most one set $O_x$, $x\in S$. It is easy to see that each subset of a normal space is $\w$-Urysohn.

 The following theorem is proved in \cite[7.2.1,7.4.1,7.4.2,7.8.19,7.4.3]{Ban}.

\begin{theorem}\label{t:uwwb} If $X$ is a universally $\w^\w$-based space, then
\begin{enumerate}
\item the set $X^{\prime\mathsf s}$ is closed and $\sigma$-bounded in $X$;
\item the set $X^{\prime\mathsf P}$ is closed and $\w$-narrow in $X$;
\item if $\w_1<\mathfrak b$, then $|X^{\prime\mathsf P}\setminus X^{\prime\mathsf s}|\le\w$ and the set $X^{\prime\mathsf P}$ is $\sigma$-bounded in $X$;
\item $\cov^\sharp(X^{\prime\mathsf P};\U_X)\le\w_1$ and $\cov^\sharp(X';\U_X)\le\mathfrak d$;
\item $X^{\prime\mathsf s}$ is a $\sigma$-metrizable $\mathfrak P_0$-space;
\item a closed subset $F\subset X$ is $\sigma$-compact if $|F\setminus X^{\prime \mathsf s}|\le\w$ and $F$ is $\w$-Urysohn or a $\bar G_\delta$-set in $X$;
\item $X$ is $\sigma'$-compact if and only if $|X'\setminus X^{\prime\mathsf s}|\le\w$;
\item Under $\w_1<\mathfrak b$ the space $X$ is a $\sigma$-space if and only if $X'$ is a $\sigma$-compact $G_\delta$-set in $X$.
\end{enumerate}
\end{theorem}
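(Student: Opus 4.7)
The plan is to fix once and for all an $\w^\w$-base $(U_\alpha)_{\alpha\in\w^\w}$ for the universal uniformity $\U_X$ and analyze how it interacts with three disjoint classes of points: the isolated points, the non-isolated $P$-points $X'\setminus X^{\prime\mathsf P}$, and the non-$P$-points $X^{\prime\mathsf P}$. For every part the strategy is to transfer combinatorial features of $\w^\w$ (monotonicity, cofinality $\mathfrak d$, additivity $\mathfrak b$) to topological features of $X$ by reading the base locally as $(U_\alpha[x])_{\alpha\in\w^\w}$.

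For (1) and (2) I would argue closedness by the contrapositive: if $x\notin X^{\prime\mathsf s}$ then some fixed $U_\alpha[x]$ is disjoint from any chosen countable set accumulating near $x$, and monotonicity in $\alpha$ propagates the property to a whole neighborhood of $x$; the analogous argument for $X^{\prime\mathsf P}$ uses that at a non-$P$-point some countable subsequence of $(U_\alpha[x])$ already forms a local pseudobase with non-neighborhood intersection, a feature that is open in $x$. For $\sigma$-boundedness of $X^{\prime\mathsf s}$ in (1), cover $X^{\prime\mathsf s}$ by closures $\overline{C}$ of countable sets $C\subset X$; each $\overline{C}$ is totally bounded, hence metrizable and separable by Theorem~\ref{t:Sigma}(4), and in particular functionally bounded. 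The same decomposition together with countable Pytkeev networks on the metrizable pieces gives (5): $X^{\prime\mathsf s}$ is a $\sigma$-metrizable $\mathfrak P_0$-space.

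For the cardinal bounds (4) I would exploit $|\w^\w|=\mathfrak d$ directly: each of the balls $U_\alpha[x]$ at a non-$P$-point $x$ can be shrunk by a countable subsequence of the base, yielding $\cov^\sharp(X^{\prime\mathsf P};\U_X)\le\w_1$, while the ambient $\w^\w$-base gives $\cov^\sharp(X';\U_X)\le\mathfrak d$. The $\w$-narrowness in (2) is the first of these. Under $\w_1<\mathfrak b$ in (3), any $\w_1$-sized family of base indices is $\le^*$-bounded, so the gap $X^{\prime\mathsf P}\setminus X^{\prime\mathsf s}$ cannot carry an uncountable discrete family without contradicting boundedness below $\mathfrak b$; hence the gap is countable and $X^{\prime\mathsf P}$ inherits $\sigma$-boundedness from (1) after adding a countable set.

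For (6)--(8) the mechanism is to upgrade $\sigma$-boundedness to $\sigma$-compactness in the presence of either the $\w$-Urysohn hypothesis or the $\bar G_\delta$ hypothesis: both hypotheses, combined with the $\w^\w$-base, force a closed functionally bounded set with countable ``non-$\mathsf s$'' remainder to be $\IR$-complete and hence compact (using that functional boundedness plus $\IR$-completeness implies compactness). Part (7) is then the specialization $F=X'$, and part (8) combines (3) with (7): under $\w_1<\mathfrak b$ one shows that $\sigma$-spaces among universally $\w^\w$-based spaces are exactly those where $X'$ is a $\sigma$-compact $G_\delta$-set. The main obstacle throughout is the asymmetric behaviour of the $\w^\w$-base at $P$-points versus non-$P$-points: at $P$-points one must extract countable cofinal substructures, while at non-$P$-points the full $\w^\w$-indexing is needed; the key bridging lemma is that totally bounded subsets of $\w^\w$-based uniform spaces are metrizable and separable (Theorem~\ref{t:Sigma}(4)), which converts uniform ``smallness'' into topological ``smallness'' whenever the proof needs to pass from $\U_X$ to $X$ itself.
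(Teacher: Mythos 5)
First, a point of comparison: the paper does not actually prove Theorem~\ref{t:uwwb} --- it imports it wholesale from \cite[7.2.1, 7.4.1, 7.4.2, 7.8.19, 7.4.3]{Ban}, where each item is a separate, fairly long argument. So your sketch is not competing with a short in-text proof; it would have to reconstruct a chapter's worth of work, and as written it does not.

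The central gap is in your proof of the $\sigma$-boundedness of $X^{\prime\mathsf s}$ in (1), on which (3), (5), (6) and (7) all lean. You propose to cover $X^{\prime\mathsf s}$ by closures $\overline{C}$ of countable sets and assert that each $\overline{C}$ is totally bounded. This is false. Take $X=\w\times(\w+1)$, a countable metrizable space whose set of non-isolated points is countable, hence universally $\w^\w$-based by Theorem~\ref{t:mainT}(1,4). Here $X^{\prime\mathsf s}=X'=\w\times\{\w\}$ is the closure of a countable set and is an infinite closed discrete subspace, so it carries an unbounded continuous function and is neither functionally nor totally bounded (it is still $\sigma$-bounded, being countable, but not by your mechanism). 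Moreover, even if such closures were bounded, you give no reason why \emph{countably many} of them cover $X^{\prime\mathsf s}$; that is precisely where the $\w^\w$-base must be used combinatorially, and it is the actual content of \cite[7.2.1]{Ban}. Theorem~\ref{t:Sigma}(4) is indeed the right bridging lemma once one has a decomposition into totally bounded pieces, but you never produce that decomposition.

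The other steps are asserted rather than argued, and one more is based on an unavailable hypothesis. For (6) you invoke ``$\IR$-complete and hence compact,'' but $X$ carries no completeness assumption, so closed functionally bounded sets need not be $\IR$-complete; the real role of the $\w$-Urysohn (resp.\ $\bar G_\delta$) condition is to forbid infinite closed discrete subsets of a closed functionally bounded set (such a subset would contain an infinite strongly discrete family, yielding an unbounded continuous function on $X$), which together with the metrizability and separability coming from Theorem~\ref{t:Sigma}(4) forces compactness. Consequently (7) is not ``the specialization $F=X'$ of (6),'' since $X'$ is not assumed $\w$-Urysohn or $\bar G_\delta$. In (4) the bound $\cov^\sharp(X';\U_X)\le\mathfrak d$ needs an explicit assignment of points of $X'$ to functions in $\w^\w$ (and note $\cof(\w^\w,\le)=\mathfrak d$, not $|\w^\w|=\mathfrak d$); in (3) the passage from an uncountable subset of $X^{\prime\mathsf P}\setminus X^{\prime\mathsf s}$ to an unbounded $\w_1$-family in $(\w^\w,\le^*)$ is exactly the lemma that has to be proved. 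The overall architecture --- split $X$ into isolated points, $P$-points and non-$P$-points, convert uniform smallness to separable metrizability via Theorem~\ref{t:Sigma}(4), then upgrade $\sigma$-bounded to $\sigma$-compact --- matches the shape of the source, but the one step you make concrete is wrong and the rest remain placeholders.
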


Finally we discuss some properties of uniform spaces $X$ with $\w^\w$-dominated function spaces $C_u(X)$. We recall that a uniform space $X$ is {\em $\sigma$-bounded} if $X$ can be written as the countable union of functionally bounded sets in $X$.

\begin{proposition}\label{p:sb} If a uniform space $X$ is $\sigma$-bounded, then its function space $C_u(X)$ is $\w^\w$-dominated in $\IR^X$.
\end{proposition}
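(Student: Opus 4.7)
The plan is to exploit $\sigma$-boundedness to produce a concrete monotone map $\w^\w\to\IR^X$ whose image dominates every uniformly continuous function. Write $X=\bigcup_{n\in\w}B_n$ with each $B_n$ functionally bounded; after replacing $B_n$ by $\bigcup_{k\le n}B_k$ (a finite union of functionally bounded sets is functionally bounded, as the sum of finitely many bounded sets in $\IR$ is bounded) I may assume $B_n\subset B_{n+1}$. For each $x\in X$ let
$$n(x)=\min\{n\in\w:x\in B_n\},$$
which is well-defined since the $B_n$'s cover $X$. For $\alpha\in\w^\w$ define $h_\alpha\in\IR^X$ by $h_\alpha(x)=\alpha(n(x))$.

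The map $\alpha\mapsto h_\alpha$ is monotone: if $\alpha\le\beta$ in $\w^\w$ then for every $x\in X$ we have $h_\alpha(x)=\alpha(n(x))\le\beta(n(x))=h_\beta(x)$, so $h_\alpha\le h_\beta$ in $\IR^X$.

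To verify the dominance property, fix an arbitrary $f\in C_u(X)$. By functional boundedness of $B_n$ the number $M_n:=\sup f(B_n)$ is finite (put $M_n=0$ if $B_n=\emptyset$), so the formula $\alpha(n):=\max\{0,\lceil M_n\rceil\}$ defines an element $\alpha\in\w^\w$. For every $x\in X$ the definition of $n(x)$ gives $x\in B_{n(x)}$, hence
$$f(x)\le M_{n(x)}\le\alpha(n(x))=h_\alpha(x),$$
showing $f\le h_\alpha$ in $\IR^X$. Thus $\{h_\alpha\}_{\alpha\in\w^\w}$ witnesses that $C_u(X)$ is $\w^\w$-dominated in $\IR^X$.

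There is no real obstacle: the argument is just the observation that the pointwise supremum of $f$ on the exhausting sets $B_n$ provides a natural parameter in $\w$ for each $n$, which assembles into an element of $\w^\w$. The only small caveat worth mentioning is the reduction to a nested exhaustion, which is cosmetic but makes the definition of $n(x)$ and the monotonicity transparent.
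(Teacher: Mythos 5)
Your proof is correct and is essentially the same as the paper's: the paper defines $f_\alpha(x)=\alpha(n)$ for $x\in B_n\setminus\bigcup_{k<n}B_k$, which is exactly your $h_\alpha(x)=\alpha(n(x))$, and leaves the cofinality check as "easy to see" where you spell it out via $M_n=\sup f(B_n)$. The nesting of the $B_n$ is, as you note, a cosmetic normalization.
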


\begin{proof} Write $X$ as the countable union $X=\bigcup_{n\in\w}B_n$ of functionally bounded sets $B_n$. For every $\alpha\in\w^\w$ consider the function $f_\alpha:X\to\IR$ such that $f_\alpha(x)=\alpha(n)$ for any $n\in\w$ and $x\in B_n\setminus\bigcup_{k<n}B_k$. It is easy to see that the monotone correspondence $\w^\w\to\IR^X$, $\alpha\mapsto f_\alpha$, witnesses that the set  $C_u(X)$ is $\w^\w$-dominated in $\IR^X$.
\end{proof}

For Tychonoff spaces (endowed with their universal uniformity) we can prove a bit more.

\begin{proposition}\label{p:ACFK} For a Tychonoff space $X$ we have the implications
$(1)\Ra(2)\Leftrightarrow(3)\Ra(4)$ of the following properties:
\begin{enumerate}
\item $X$ is $\sigma$-bounded;
\item the set $C(X)$ is $\w^\w$-dominated in $\IR^X$;
\item $C_p(X)$ is contained in a $K$-analytic subspace of $\IR^X$;
\item each  metrizable image of $X$ is $\sigma$-compact.
\end{enumerate}
\end{proposition}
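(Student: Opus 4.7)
The plan is to establish the chain $(1)\Ra(2)\Leftrightarrow(3)\Ra(4)$ step by step.

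For $(1)\Ra(2)$ I would simply invoke Proposition~\ref{p:sb}, applied to $X$ endowed with its universal uniformity (for which $C_u(X)=C(X)$). For the easier direction $(3)\Ra(2)$, starting from a $K$-analytic subspace $Y\subset\IR^X$ with $C_p(X)\subset Y$, I would first refine the compact resolution of $Y$ to a monotone one, say $(K_\alpha)_{\alpha\in\w^\w}$, via the standard trick of replacing $K_\alpha$ by $\bigcup_{\beta\in[0,\alpha]}K_\beta$ (compact because $\alpha\mapsto K_\alpha$ is upper semi-continuous and $[0,\alpha]=\prod_n[0,\alpha(n)]$ is compact in $\w^\w$). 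Then set
\[
f_\alpha(x):=\sup\{g(x):g\in K_\alpha\},
\]
which is finite because each coordinate projection $\IR^X\to\IR$ is continuous and $K_\alpha$ is compact. The family $(f_\alpha)$ is monotone by construction, and every $f\in C(X)\subset Y$ lies in some $K_\alpha$ and is therefore dominated by $f_\alpha$, giving $(2)$.

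For $(2)\Ra(3)$ I would invert this construction. Given a monotone $\w^\w$-dominating family $(f_\alpha)\subset\IR^X$ for $C(X)$, I would first replace it (using that $-f\in C(X)$ whenever $f\in C(X)$, together with monotonicity) by a family of nonnegative functions such that for every $f\in C(X)$ some $\alpha$ satisfies $|f|\le f_\alpha$. Form the compact rectangles $K_\alpha:=\prod_{x\in X}[-f_\alpha(x),f_\alpha(x)]$, so that $C_p(X)\subset\bigcup_\alpha K_\alpha$. To see the union is $K$-analytic in $\IR^X$, consider the map
\[
\Phi\colon\w^\w\times[-1,1]^X\to\IR^X,\qquad\Phi(\alpha,g)(x)=f_\alpha(x)\,g(x),
\]
whose image is exactly $\bigcup_\alpha K_\alpha$. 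Since $\w^\w$ is Polish and $[-1,1]^X$ is compact, the domain is $K$-analytic, so its continuous image will be $K$-analytic too. The main obstacle in this step is to secure continuity of $\Phi$, which amounts to continuity of $\alpha\mapsto f_\alpha(x)$ for each $x\in X$; this is not automatic for a general monotone cofinal family, but it can be arranged by a preliminary smoothing of $(f_\alpha)$ (replacing it by a suitable cofinal subfamily whose values at each $x$ depend only on finite initial segments of $\alpha$) that preserves monotonicity and domination of $C(X)$.

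For $(3)\Ra(4)$ let $\phi\colon X\to M$ be a continuous surjection with $M$ metrizable. The pullback $\phi^*\colon\IR^M\to\IR^X$, $f\mapsto f\circ\phi$, is a topological embedding onto the closed subspace of $\IR^X$ consisting of functions constant on the fibres of $\phi$, and it carries $C_p(M)$ into $C_p(X)\subset Y$. Consequently $(\phi^*)^{-1}(Y)\cong Y\cap\phi^*(\IR^M)$ is a $K$-analytic subspace of $\IR^M$ containing $C_p(M)$, i.e.\ condition $(3)$ passes from $X$ to $M$. By the already-proven $(3)\Ra(2)$, the poset $C(M)$ is $\w^\w$-dominated in $\IR^M$. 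The concluding step --- deducing that a metrizable space $M$ with this property must be $\sigma$-compact --- is the deepest point of the proof and rests on a classical result of $C_p$-theory (going back to Christensen, Calbrix, and Tkachuk) asserting that for a metrizable $M$ the presence of $C_p(M)$ inside a $K$-analytic subspace of $\IR^M$ forces $M$ to be $\sigma$-compact; equivalently, $M$ is $\sigma$-bounded, since functionally bounded subsets of a metric space have compact closure.
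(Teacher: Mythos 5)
Your architecture matches the statement, and most of the individual steps are sound: $(1)\Ra(2)$ via Proposition~\ref{p:sb} is exactly what the paper does; your direct argument for $(3)\Ra(2)$ (monotonizing the compact resolution by taking unions over the order-intervals $[0,\alpha]$, then taking coordinatewise suprema) is correct; and your reduction of $(4)$ to the metrizable case through the closed embedding $\phi^*\colon\IR^M\to\IR^X$ is a reasonable alternative to the paper, which simply cites \cite[7.1.1]{Ban} for $(2)\Ra(4)$. Note that the paper's own proof is entirely by citation: $(2)\Leftrightarrow(3)$ is quoted from Proposition~9.6 of \cite{kak}, so you are attempting to reprove a nontrivial external result from scratch.

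That is where the genuine gap lies, in $(2)\Ra(3)$, at exactly the point you flag and then dismiss in one sentence. For your map $\Phi$ to be continuous (equivalently, for the set-valued map $\alpha\mapsto K_\alpha=\prod_x[-f_\alpha(x),f_\alpha(x)]$ to be upper semi-continuous, which is what $K$-analyticity of $\bigcup_\alpha K_\alpha$ actually requires) you need, for each $x$ and each $\e>0$, some $n$ with $\sup\{f_\beta(x):\beta|n=\alpha|n\}\le f_\alpha(x)+\e$. Since $\beta$ is unconstrained beyond the $n$-th coordinate, this is a strong finiteness condition that an arbitrary monotone dominating family has no reason to satisfy, and the ``preliminary smoothing'' you invoke is not a routine modification: the natural candidates (restricting $\alpha$ to initial segments, taking infima over tails, taking suprema over cones) either destroy domination of $C(X)$ (e.g.\ if $f_\beta\equiv 0$ for all eventually-zero $\beta$, any truncation-based smoothing collapses to $0$) or produce infinite values. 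Manufacturing, from an arbitrary monotone dominating family, one that is upper semi-continuous in $\alpha$ is precisely the content of the cited Proposition~9.6 of \cite{kak} (a Tkachuk-type theorem whose proof exploits specific properties of $C_p(X)$ inside $\IR^X$, not a pointwise massaging of the index family). As written, the single sentence asserting that the smoothing ``can be arranged'' carries the entire weight of the equivalence, so this implication is not proved. Either supply the smoothing in detail or, as the paper does, cite \cite{kak}.
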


\begin{proof} The implication $(1)\Ra(2)$ was proved in Proposition~\ref{p:sb}, the equivalence $(2)\Leftrightarrow(3)$ follow from Proposition 9.6 of \cite{kak} and the implication $(2)\Ra(4)$ is proved in \cite[7.1.1]{Ban}.
\end{proof}

\begin{remark}  By Example 2 of \cite{Lei}, there exists a Tychonoff space with a unique non-isolated point $X$ such that
$X$ is Lindel\"of, the function space $C_p(X)$ is $K$-analytic but $X$ is not $\sigma$-bounded. This example shows that the implication $(1)\Ra(3)$ in Proposition~\ref{p:ACFK} cannot be reversed, which answers a problem posed by Arhangel'ski and Calbrix in \cite{AC} and then repeated in \cite[p.~215]{kak}.
\end{remark}

\section{Free locally convex spaces over uniform spaces}\label{s:Lc}

In this section we study free locally convex spaces of uniform spaces.
For a uniform space $X$ its {\em free locally convex space} is a pair $(\Lc_u(X),\delta_X)$ consisting of a locally convex space $\Lc_u(X)$ and a uniformly continuous map $\delta_X:X\to \Lc_u(X)$ such that for every uniformly continuous map $f:X\to Y$ into a locally convex space $Y$ there exists a continuous linear operator $\bar f:\Lc_u(X)\to Y$ such that $\bar f\circ\delta_X=f$. By a standard technique
(see \cite{Rai}), it can be shown that for every uniform space $X$ the free locally convex space $(\Lc_u(X),\delta_X)$ exists and is unique up to a topological isomorphism.

For a topological space $X$ its free locally convex space $(\Lc(X),\delta_X)$ coincides with the free locally convex space $(\Lc_u(X),\delta_X)$ of the space $X$ endowed with the universal uniformity $\U_X$.

For a uniform space $X$ by $C_u(X)$ we denote the linear space of all uniformly continuous real-valued functions on $X$. A subset $E\subset C_u(X)$ is called
\begin{itemize}
\item {\em pointwise bounded} if for every point $x\in X$ the set $E(x)=\{f(x):f\in E\}$ is bounded in the real line;
\item {\em equicontinuous} if for every $\e>0$ there is an entourage $U\in\U_X$ such that $|f(x)-f(y)|<\e$ for all $f\in E$ and $(x,y)\in U$.
\end{itemize}

By $\E(C_u(X))$ we denote the set of pointwise bounded equicontinuous subfamilies in $C_u(X)$.
The set $\E(C_u(X))$ is a poset with respect to the natural inclusion order (defined by $E\le F$ iff $E\subset F$).

Let $C^*_u(X)$ be the linear space of all linear functionals on $C_u(X)$, endowed with the topology of uniform convergence on pointwise bounded equicontinuous subsets of $C_u(X)$. A neighborhood base of this topology at zero consists of the polar sets
$${E}^*=\{\mu\in C^*_u(X):\sup_{f\in E}|\mu(f)|\le1\}\mbox{ \ where $E\in\E(C_u(X))$}.$$

Each element $x\in X$ can be identified with the Dirac measure $\delta_x\in C_u^*(X)$ assigning to each function $f\in C_u(X)$ its value $f(x)$ at $x$. It can be shown that the map $\delta:X\to C^*_u(X)$, $\delta:x\mapsto\delta_x$, is uniformly continuous in the sense that for any neighborhood $V\subset C_u^*(X)$ of zero there is an entourage $U\in\U_X$ such that $\delta_x-\delta_y\in V$ for any $(x,y)\in U$.

Let $\Lc_u(X)$ be the linear hull of the set $\delta(X)=\{\delta_x:x\in X\}$ in $C_u^*(X)$. By \cite{Rai}, the pair $(\Lc_u(X),\delta)$ is a free locally convex space over the uniform space $X$.

In the space $\Lc_u(X)$ consider the subspaces
$$
\begin{aligned}
&\IN(X{-}X)=\{n(\delta_x-\delta_y):n\in\IN,\;x,y\in X\},\\
&\tfrac1\IN X=\{\tfrac1n\delta_x:n\in\IN,\;x\in X\}, \mbox{ and}\\
&\Lambda_u(X)= \IN(X{-}X)\cup\tfrac1\IN X.
\end{aligned}
$$

\begin{lemma}\label{l:u1} For any uniform space $X$ we have the reductions
$$\E(C_u(X))\succcurlyeq \Tau_0(C^*_u(X))\succcurlyeq \Tau_0(\Lc_u(X))\succcurlyeq \Tau_0(\Lambda_u(X))\succcurlyeq\E(C_u(X)),$$
therefore all posets above are equivalent each other.
\end{lemma}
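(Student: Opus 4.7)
The plan is to exhibit an explicit monotone cofinal map for each of the four reductions, after which the equivalence of all four posets follows by transitivity of $\succcurlyeq$. Three of the four maps are essentially formal: the polar map $E\mapsto E^*$ is monotone and cofinal for $\E(C_u(X))\succcurlyeq\Tau_0(C^*_u(X))$ simply because the polars $E^*$ are, by the very definition of $C^*_u(X)$, a neighborhood base at $0$; and the subsequent trace maps $U\mapsto U\cap\Lc_u(X)$ and $V\mapsto V\cap\Lambda_u(X)$ take care of $\Tau_0(C^*_u(X))\succcurlyeq\Tau_0(\Lc_u(X))\succcurlyeq\Tau_0(\Lambda_u(X))$ because $\Lc_u(X)$ and $\Lambda_u(X)$ carry the subspace topologies from their ambient spaces. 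In each case monotonicity is automatic: enlarging $E$ shrinks $E^*$, and the trace maps reverse inclusion; this matches the reverse-inclusion order on every $\Tau_0(\cdot)$.

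The only reduction requiring real work is $\Tau_0(\Lambda_u(X))\succcurlyeq\E(C_u(X))$. For this I would use the (pre-)polar map
$$W\longmapsto W^\circ:=\{f\in C_u(X):\sup_{\mu\in W}|\mu(f)|\le 1\},$$
which is antitone in $W$ and therefore monotone with respect to the reverse-inclusion order on $\Tau_0(\Lambda_u(X))$. Cofinality is immediate: given $E\in\E(C_u(X))$, setting $W:=E^*\cap\Lambda_u(X)$ yields an element of $\Tau_0(\Lambda_u(X))$ with $E\subset W^\circ$ straight from the definition of $E^*$.

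The hard part will be to verify that $W^\circ$ actually belongs to $\E(C_u(X))$, i.e.\ is pointwise bounded and equicontinuous. Pointwise boundedness at a point $x\in X$ I would derive from the elementary observation that $\tfrac1n\delta_x\to 0$ in $\Lc_u(X)$ as $n\to\infty$ (an immediate consequence of pointwise boundedness of any equicontinuous family defining a neighborhood of $0$): pick $n_x$ with $\tfrac1{n_x}\delta_x\in W$ and conclude $|f(x)|\le n_x$ for every $f\in W^\circ$. For equicontinuity, given $\e>0$ I would fix $n\in\IN$ with $1/n<\e$ and exploit uniform continuity of the map $(x,y)\mapsto n(\delta_x-\delta_y)$ from $X\times X$ into $\Lc_u(X)$ (which follows from the universal property defining $\Lc_u(X)$) together with the subspace condition $V\cap\Lambda_u(X)\subset W$ for some $V\in\Tau_0(\Lc_u(X))$: this will produce an entourage $U\in\U_X$ with $n(\delta_x-\delta_y)\in W$ for all $(x,y)\in U$, forcing $|f(x)-f(y)|\le 1/n<\e$ uniformly in $f\in W^\circ$. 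Once this verification is done, the four maps form a cycle and transitivity of $\succcurlyeq$ closes the argument.
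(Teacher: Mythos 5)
Your proposal is correct and follows essentially the same route as the paper: the polar map $E\mapsto E^*$, the trace maps onto the subspaces $\Lc_u(X)$ and $\Lambda_u(X)$, and for the closing reduction the pre-polar $W\mapsto W^\circ$ (the paper's $E_V$), with pointwise boundedness extracted from $\tfrac1n\delta_x\to0$ in $\Lambda_u(X)$ and equicontinuity from the uniform continuity of $n\delta$. The two verification steps you flag as "the hard part" are exactly the ones the paper carries out, and your sketches of them are sound.
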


\begin{proof} The monotone cofinal map $\E(C_u(X))\to \Tau_0(C_u^*(X))$,
which assigns to $E$ the polar set  ${E}^*$,
 witnesses that $\E(C_u(X))\succcurlyeq \Tau_0(C^*_u(X))$. The reductions $\Tau_0(C^*_u(X))\succcurlyeq \Tau_0(\Lc_u(X))\succcurlyeq \Tau_0(\Lambda_u(X))$ trivially follow from the inclusions $\Lambda_u(X)\subset \Lc_u(X)\subset C_u^*(X)$.

To prove that  $\Tau_0(\Lambda_u(X))\succcurlyeq\E(C_u(X))$, for every neighborhood $V\subset \Lambda_u(X)$ of zero, consider the set
$$E_V=\bigcap_{\mu\in V}\{f\in C_u(X):|\mu(f)|\le 1\}.$$
First we show that the set $E_V$ is pointwise bounded and equicontinuous.

To see that $E_V$ is pointwise bounded, fix any point $x\in X$. Taking into account that the sequence $\big(\frac1n\delta_x\big)_{n\in\IN}$ converges to zero in $\Lambda_u(X)$, find a number $n\in\IN$ with $\frac1n\delta_x\in V$. Then for every $f\in E_V$ we get $|\frac1n\delta_x(f)|\le 1$ and hence $|f(x)|\le n$, which means that the set $E_V$ is pointwise bounded.

Next, we check that $E_V$ is equicontinuous. Given any point $x\in X$ and $\e>0$ we should find an entourage $U\in\U_X$ such that $|f(x)-f(y)|<\e$ for all $f\in E_V$ and $(x,y)\in U$.
Find $n\in\IN$ such that $\frac1n<\e$. The uniform continuity of the map $n\delta:X\to C^*_u(X)$, $n\delta:x\mapsto n\cdot\delta_x$,  yields an entourage $U\in\U_X$ such that $n(\delta_x-\delta_y)\in V$ for any $(x,y)\in U$. Then for any $f\in E_V$ the inclusion $n(\delta_x-\delta_y)\in V$ implies $|n(\delta_x(f)-\delta_y(f))|\le 1$ and  $|f(x)-f(y)|\le \frac1n<\e$, which means that the set $E_V$ is equicontinuous.

So, the map $\Tau_0(\Lambda_u(X))\to \E(C_u(X))$, $V\mapsto E_V$, is well-defined. It is clear that this map is monotone. It remains to show that it is cofinal. Given any pointwise bounded equicontinuous set $E\in\E(C_u(X))$, consider the neighborhood $V=\{\mu\in \Lambda_u(X):\sup_{f\in E}|\mu(f)|\le 1\}$ of zero in $\Lambda_u(X)$ and observe that $E\subset E_V$. So, $\Tau_0(\Lambda_u(X))\succcurlyeq \E(C_u^*(X))$.
\end{proof}

As before, in the following lemma we consider the space $C_u(X)$ as a poset endowed with the partial order $f\le g$ iff $f(x)\le g(x)$ for all $x\in X$.

\begin{lemma}\label{l:u2} For any uniform space $X$ of density $\kappa$ we have the reductions $$\U_X^\w\times \w^\kappa\succcurlyeq\E(C_u(X))\cong\U_X^\w\times C_u(X).$$
\end{lemma}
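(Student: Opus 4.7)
The plan splits the lemma into three reductions: (a)~$\U_X^\w\times C_u(X)\succcurlyeq\E(C_u(X))$, (b)~$\E(C_u(X))\succcurlyeq\U_X^\w\times C_u(X)$ (together giving the $\cong$), and (c)~$\U_X^\w\times\w^\kappa\succcurlyeq\E(C_u(X))$. The guiding principle is that a pointwise bounded equicontinuous family $E\subset C_u(X)$ is captured up to cofinal comparison by two data: its modulus of equicontinuity, encoded as a sequence of entourages in $\U_X^\w$, and a pointwise upper bound on $\{|f|:f\in E\}$.

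For the ``upward'' reductions (a) and (c) I use a common recipe. For (a) put
$$\Psi(\vec U,g):=\{f\in C_u(X):|f|\le g,\;|f(x)-f(y)|\le 2^{-n}\text{ whenever }n\in\w,\,(x,y)\in U_n\}.$$
This set is pointwise bounded by $g$ and equicontinuous via $\vec U$, so $\Psi(\vec U,g)\in\E(C_u(X))$, and $\Psi$ is evidently monotone. Cofinality follows by sending $F\in\E(C_u(X))$ to the pair $((U_n^F)_n,g_F)$, where $g_F(x):=\sup_{f\in F}|f(x)|$ is finite by pointwise boundedness and $U_n^F\in\U_X$ is any entourage provided by equicontinuity with constant $2^{-n}$; then $F\subset\Psi((U_n^F)_n,g_F)$. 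For (c), fix a dense set $D\subset X$ of cardinality $|D|\le\kappa$, identify $\w^\kappa$ with $\w^D$, and modify $\Psi$ by replacing the bound $|f|\le g$ with ``$|f(d)|\le\alpha(d)$ for every $d\in D$''. Pointwise boundedness on all of $X$ is recovered from density and equicontinuity through $U_0$ alone: for $x\in X$ pick $d\in D$ with $(x,d)\in U_0$, so $|f(x)|\le\alpha(d)+1$.

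The nontrivial direction is (b). Set $\Phi(E):=((U_n^E)_n,g_E)$ with $g_E(x):=\sup_{f\in E}|f(x)|$ and $U_n^E:=\{(x,y):\sup_{f\in E}|f(x)-f(y)|\le 2^{-n-1}\}$. Equicontinuity gives $U_n^E\in\U_X$, and the inequality $|g_E(x)-g_E(y)|\le\sup_{f\in E}|f(x)-f(y)|$ gives $g_E\in C_u(X)$; monotonicity of $\Phi$ is immediate. The main obstacle is cofinality: for each $(\vec V,h)\in\U_X^\w\times C_u(X)$ I must exhibit a single $E\in\E(C_u(X))$ with $U_n^E\subset V_n$ for every $n$ and $g_E\ge h$. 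To this end I assume $h\ge 0$ (else replace $h$ by $|h|$), refine $\vec V$ to a symmetric decreasing sequence $(W_n)$ with $W_{n+1}^3\subset W_n\subset V_n$, and invoke Theorem~8.1.10 of \cite{Eng} to obtain a uniformly continuous pseudometric $d$ on $X$ satisfying $\{d<2^{-n}\}\subset W_n\subset\{d\le 2^{-n}\}$. Then put
$$E:=\{h\}\cup\bigl\{z\mapsto\min(2^{-n+1},\,2\,d(z,x_0)):n\in\w,\,x_0\in X\bigr\}.$$
Each ``bump'' $f_{n,x_0}$ is $2$-Lipschitz with respect to $d$ and bounded by $2$, and $h$ is uniformly continuous, so $E$ is jointly equicontinuous and pointwise bounded by $|h|+2$; hence $E\in\E(C_u(X))$, and $g_E\ge|h|\ge h$ since $h\in E$. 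For $(x_0,y_0)\notin V_n$ we have $(x_0,y_0)\notin W_n$, hence $d(x_0,y_0)\ge 2^{-n}$, whence $|f_{n,x_0}(x_0)-f_{n,x_0}(y_0)|=2^{-n+1}>2^{-n-1}$, which forces $(x_0,y_0)\notin U_n^E$ and completes cofinality. The delicate point is the constant matching between the Engelking pseudometric and $U_n^E$, which dictates both the factor $2$ in the bumps and the threshold $2^{-n-1}$ rather than $2^{-n}$ in the definition of $U_n^E$.
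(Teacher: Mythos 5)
Your proposal is correct and follows essentially the same route as the paper: the upward reductions are obtained by the same ``intersection of constraint sets'' construction (your $\Psi(\vec U,g)$ is the paper's $E_P$), and the hard direction $\E(C_u(X))\succcurlyeq\U_X^\w\times C_u(X)$ uses the same map $E\mapsto(\sup_{f\in E}|f|,\text{oscillation entourages})$ with cofinality witnessed via the pseudometric from Theorem~8.1.10 of Engelking. The only divergence is the witness family: the paper uses $\{x\mapsto\varphi(x)+d(z,x)\}_{z\in X}$, while you use $\{h\}\cup\{\min(2^{-n+1},2d(\cdot,x_0))\}$, a cosmetic variation (and your truncated bumps make the verification that $U_n^E\subset V_n$ particularly transparent).
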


\begin{proof} Fix a dense subset $D\subset X$ of cardinality $\kappa$. To see that $\U_X^\w\times \w^D\succcurlyeq \E(C_u(X))$, for any pair $P=\big((U_n)_{n\in\w},\varphi)\in \U_X^\w\times \w^D$ consider the pointwise bounded equicontinuous set
$$E_P=\bigcap_{x\in D}\{f\in C_u(X)\colon |f(x)|\le\varphi(x)\}\cap\bigcap_{n\in\w}\bigcap_{x,y\in U_n}\{f\in C_u(X)\colon |f(x)-f(y)|\le \tfrac1{2^{n}}\}.$$
It is clear that the map $\U_X^\w\times \w^D\to \E(C_u(X))$, $P\mapsto E_P$, is monotone and cofinal, witnessing that $\U_X^\w\times \w^D\succcurlyeq \E(C_u(X))$.
By analogy we can prove that $\U_X^\w\times C_u(X)\succcurlyeq \E(C_u(X))$.

To see that $\E(C_u(X))\succcurlyeq \U_X^\w\times C_u(X)$, to every pointwise bounded equicontinuous set $E\in \E(C_u(X))$ assign the function $$\varphi_E\in C_u(X),\;\;\varphi_E:x\mapsto\sup_{f\in E}|f(x)|,$$ and the sequence $(E_n)_{n\in\w}$ of entourages
$$E_n:=\bigcap_{f\in E}\{(x,y)\in X\times X:|f(x)-f(y)|\le2^{-n}\}\in\U_X,\;\;n\in\w.$$
It is clear that the map $\E(C_u(X))\succcurlyeq \U(X)^\w\times C_u(X)$, $E\mapsto \big((E_n)_{n\in\w},\varphi_E\big)$, is well-defined and monotone. It remains to prove that this map is cofinal.

Fix any pair $P=\big((U_n)_{n\in\w},\varphi\big)\in \U_X^\w\times C_u(X)$. Let $V_{-1}=X\times X$ and choose a sequence of entourages $(V_n)_{n\in\w}\in\U_X^\w$ such that $V_n=V_n^{-1}$ and $V_n\circ V_n\circ V_n\subset U_n\cap V_{n-1}$ for all $n\in\w$. By Theorem~8.1.10 \cite{Eng}, there is a pseudometric $d:X\times X\to[0,1]$ such that
$$\{(x,y)\in X\times X:d(x,y)<2^{-n}\}\subset V_n\subset \{(x,y)\in X\times X:d(x,y)\le 2^{-n}\}$$for all $n\in\w$.
For every $z\in X$ consider the uniformly continuous function $f_z\in C_u(X)$ defined by $f_z(x)=\varphi(x)+d(z,x)$ for $x\in X$. It is easy to see that the function family $E=\{f_z:z\in X\}$ is pointwise bounded and equicontinuous. We claim that $((U_n)_{n\in\w},\varphi)\le ((E_n)_{n\in\w},\varphi_E)$. It is clear that $\varphi\le\varphi_E\le \varphi+1$. Next, we show that for every $n\in\w$ the set $E_n=\bigcap_{f\in E}\{(x,y)\in X\times X:|f(x)-f(y)|<2^{-n}\}\subset U_n$. Indeed, for any $(x,y)\in E_n$ we get $d(x,y)=|f_x(x)-f_x(y)|<2^{-n}$ and hence $(x,y)\in V_n\subset U_n$ by the choice of the pseudometric $d$ and the entourage $V_n$.
\end{proof}

Lemmas~\ref{l:u1} and \ref{l:u2} imply the following reduction theorem.

\begin{theorem}\label{t:u} For any uniform space $X$ of density $\kappa$ we have the reductions:
$$\Tau_0(C_u^*(X))\cong \Tau_0(\Lc_u(X))\cong \Tau_0(\Lambda_u(X))\cong \E(C_u(X))\cong \U_X^\w\times C_u(X)\preccurlyeq \U_X^\w\times \w^\kappa.$$
\end{theorem}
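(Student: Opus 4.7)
The plan is to observe that Theorem~\ref{t:u} is essentially a bookkeeping combination of the two preceding lemmas, so no new construction is needed. Lemma~\ref{l:u1} delivers the cycle of reductions
$$\E(C_u(X))\succcurlyeq \Tau_0(C^*_u(X))\succcurlyeq \Tau_0(\Lc_u(X))\succcurlyeq \Tau_0(\Lambda_u(X))\succcurlyeq\E(C_u(X)),$$
which immediately collapses into the chain of equivalences
$$\Tau_0(C^*_u(X))\cong \Tau_0(\Lc_u(X))\cong \Tau_0(\Lambda_u(X))\cong \E(C_u(X)),$$
so the first four posets in the statement are all equivalent to $\E(C_u(X))$.

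Next, I would invoke Lemma~\ref{l:u2}, which (via the two monotone cofinal maps $E\mapsto\bigl((E_n)_{n\in\w},\varphi_E\bigr)$ and $P\mapsto E_P$ constructed there) establishes $\E(C_u(X))\cong \U_X^\w\times C_u(X)$. Chaining this with the equivalences from Lemma~\ref{l:u1} appends $\U_X^\w\times C_u(X)$ to the previous block of $\cong$-equivalent posets, producing the full string of equivalences in the theorem.

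Finally, for the last reduction $\U_X^\w\times C_u(X)\preccurlyeq \U_X^\w\times \w^\kappa$, I would again appeal to Lemma~\ref{l:u2}, which shows $\U_X^\w\times \w^\kappa\succcurlyeq \E(C_u(X))$. Combining this with the equivalence $\E(C_u(X))\cong \U_X^\w\times C_u(X)$ just established gives exactly the desired reduction. No separate obstacle arises here, since the only ingredient still needed is the transitivity of $\succcurlyeq$, which is immediate from composing monotone cofinal maps. Thus the theorem is a two-line corollary of Lemmas~\ref{l:u1} and \ref{l:u2}, with the genuine work (the constructions of the auxiliary functions $f_z$, the pseudometric $d$, and the pointwise-bounded equicontinuous family $E_P$) already carried out in those lemmas.
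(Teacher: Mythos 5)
Your proposal is correct and matches the paper exactly: the paper derives Theorem~\ref{t:u} as an immediate consequence of Lemmas~\ref{l:u1} and \ref{l:u2}, precisely by chaining the cycle of reductions from the first lemma with the equivalence $\E(C_u(X))\cong\U_X^\w\times C_u(X)$ and the reduction $\U_X^\w\times\w^\kappa\succcurlyeq\E(C_u(X))$ from the second. Nothing further is needed.
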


As an immediate consequence, we obtain a key characterization of uniform spaces $X$ whose free locally convex spaces $\Lc_u(X)$ have local  $\w^\w$-bases.

\begin{corollary}\label{c:Lu3} The free locally convex space $\Lc_u(X)$ of a uniform space $X$ has an  $\w^\w$-base if and only if the uniformity $\U_X$ of $X$ has an $\w^\w$-base and the poset $C_u(X)$ is $\w^\w$-dominated.
\end{corollary}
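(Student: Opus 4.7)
The plan is to deduce this corollary as a direct consequence of Theorem~\ref{t:u}, the main reduction result that was just established. The statement that $\Lc_u(X)$ has an $\w^\w$-base is, by definition, the assertion that the poset $\Tau_0(\Lc_u(X))$ of neighborhoods of zero admits a monotone cofinal map from $\w^\w$; equivalently, $\w^\w \succcurlyeq \Tau_0(\Lc_u(X))$. Similarly, $\U_X$ has an $\w^\w$-base iff $\w^\w \succcurlyeq \U_X$, and $C_u(X)$ is $\w^\w$-dominated iff $\w^\w \succcurlyeq C_u(X)$. So the whole statement is a matter of comparing these three reduction relations.

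By Theorem~\ref{t:u}, we have the equivalence $\Tau_0(\Lc_u(X)) \cong \U_X^\w \times C_u(X)$. Thus the task reduces to proving
$$\w^\w \succcurlyeq \U_X^\w \times C_u(X) \iff \w^\w \succcurlyeq \U_X \text{ and } \w^\w \succcurlyeq C_u(X).$$
For the forward direction, the coordinate projections $\U_X^\w \times C_u(X) \to \U_X$ (onto any factor of the $\w^\w$-power) and $\U_X^\w \times C_u(X) \to C_u(X)$ are monotone and cofinal, so reductions to $\w^\w$ pass to each factor. For the backward direction, I would use the standard bijection $\w \times \w \to \w$ to observe $(\w^\w)^\w \cong \w^{\w\times\w} \cong \w^\w$, and similarly $\w^\w \times \w^\w \cong \w^\w$. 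Hence $\w^\w \succcurlyeq \U_X$ gives $\w^\w \cong (\w^\w)^\w \succcurlyeq \U_X^\w$, and combining with $\w^\w \succcurlyeq C_u(X)$ yields $\w^\w \cong \w^\w \times \w^\w \succcurlyeq \U_X^\w \times C_u(X)$, as required.

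Since every step is essentially formal manipulation of reductions between posets, there is no real obstacle here; the substantive content has already been absorbed into Theorem~\ref{t:u}, whose proof combines Lemmas~\ref{l:u1} and \ref{l:u2}. The only thing to be careful about is noting that the product of two monotone cofinal maps $\w^\w \to P$ and $\w^\w \to Q$ induces a monotone cofinal map $\w^\w \times \w^\w \to P\times Q$, and that $\w^\w \times \w^\w \cong \w^\w$ via the obvious coordinate-wise bijection, so the composition gives a monotone cofinal map from $\w^\w$ to the product. This is the routine verification underlying both directions of the equivalence.
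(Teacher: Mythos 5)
Your proposal is correct and is essentially the paper's own argument: the paper derives Corollary~\ref{c:Lu3} as an ``immediate consequence'' of the equivalence $\Tau_0(\Lc_u(X))\cong\U_X^\w\times C_u(X)$ from Theorem~\ref{t:u}, combined with exactly the formal facts you state ($(\w^\w)^\w\cong\w^\w\times\w^\w\cong\w^\w$, projections are monotone cofinal, and products of monotone cofinal maps are monotone cofinal). The only point worth making explicit, which you implicitly use, is that for the topological vector space $\Lc_u(X)$ an $\w^\w$-base at zero already gives an $\w^\w$-base at every point by translation.
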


Combining this corollary with Proposition~\ref{p:sb}, we get another

\begin{corollary}\label{c:Lu4} The free locally convex space $\Lc_u(X)$ of a $\sigma$-bounded uniform space $X$ has an  $\w^\w$-base if and only if the uniformity $\U_X$ of $X$ has an $\w^\w$-base.
\end{corollary}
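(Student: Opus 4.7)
The forward implication is immediate from Corollary~\ref{c:Lu3} and does not use the $\sigma$-boundedness hypothesis. For the converse, assume $X$ is $\sigma$-bounded and $\U_X$ has an $\w^\w$-base; by Theorem~\ref{t:u} it suffices to produce a monotone cofinal map $\w^\w\to\E(C_u(X))$, for then $\w^\w\succcurlyeq\E(C_u(X))\cong\Tau_0(\Lc_u(X))$.

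The plan is to combine two $\w^\w$-indexed families. Proposition~\ref{p:sb} yields a monotone map $\w^\w\to\IR^X$, $\beta\mapsto g_\beta$, whose image dominates every $f\in C_u(X)$, while the $\w^\w$-base of $\U_X$ together with $(\w^\w)^\w\cong\w^\w$ gives a monotone cofinal map $\w^\w\to\U_X^\w$, $\alpha\mapsto(U_n^\alpha)_{n\in\w}$. The key observation is that the cofinal reduction $\U_X^\w\times C_u(X)\to\E(C_u(X))$ built in the proof of Lemma~\ref{l:u2} uses its second coordinate only as a pointwise bound, so the same assignment
$$
((U_n)_n,\varphi)\mapsto\bigl\{f\in C_u(X):|f|\le\varphi\text{ on }X\text{ and }|f(x)-f(y)|\le 2^{-n}\text{ for }(x,y)\in U_n\bigr\}
$$
extends to a monotone cofinal map $\U_X^\w\times\IR^X\to\E(C_u(X))$; cofinality is witnessed exactly as in Lemma~\ref{l:u2} by the pair $((E_n)_n,\varphi_E)$ associated with any given $E\in\E(C_u(X))$, since $\varphi_E\in C_u(X)\subset\IR^X$. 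Pre-composing with the monotone map $\w^\w\cong\w^\w\times\w^\w\to\U_X^\w\times\IR^X$, $(\alpha,\beta)\mapsto((U_n^\alpha)_n,g_\beta)$, produces the desired monotone cofinal map $\w^\w\to\E(C_u(X))$, so that $\Lc_u(X)$ has an $\w^\w$-base.

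The main conceptual subtlety is the mild mismatch between the hypothesis in Corollary~\ref{c:Lu3}, which asks for $C_u(X)$ to be $\w^\w$-dominated \emph{as a poset} (i.e., by uniformly continuous majorants), and Proposition~\ref{p:sb}, which only supplies $\w^\w$-domination inside $\IR^X$ via the typically discontinuous step functions $g_\beta(x)=\beta(n)$ for $x\in B_n\setminus\bigcup_{k<n}B_k$. Rather than trying to replace each $g_\beta$ by a uniformly continuous majorant (an approach that appears to require extra regularity of the exhausting sequence $(B_n)$ beyond mere functional boundedness), we sidestep the problem by feeding the discontinuous $g_\beta$ directly into the construction of Lemma~\ref{l:u2}, exploiting the fact that pointwise boundedness of an equicontinuous family of continuous functions is insensitive to the continuity of the bound.
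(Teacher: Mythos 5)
Your proof is correct, but it is more explicit and self-contained than the paper's, which disposes of this corollary in one line by ``combining'' Corollary~\ref{c:Lu3} with Proposition~\ref{p:sb}. That one-liner leaves exactly the mismatch you isolate: Proposition~\ref{p:sb} gives $\w^\w$-domination of $C_u(X)$ only inside $\IR^X$ (by discontinuous step functions), whereas Corollary~\ref{c:Lu3} requires the poset $C_u(X)$ to be $\w^\w$-dominated by its own elements. The paper bridges this gap by appealing (implicitly) to Theorem~\ref{t:looong}, whose implication $(8)\Ra(9)$ --- equivalently, $(10)\Ra(9)$ for $\w^\w$-based uniform spaces --- is imported from \cite{Ban} and applies here precisely because $\U_X$ is assumed $\w^\w$-based. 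You instead avoid any upgrading of the discontinuous majorants $g_\beta$ by noting that the reduction $\U_X^\w\times C_u(X)\succcurlyeq\E(C_u(X))$ constructed in Lemma~\ref{l:u2} uses its second coordinate only as a pointwise bound, so the same assignment defines a monotone map on $\U_X^\w\times\IR^X$ whose restriction to the image of $(\alpha,\beta)\mapsto\big((U^\alpha_n)_{n\in\w},g_\beta\big)$ is still cofinal, the witness for a given $E\in\E(C_u(X))$ being any pair majorizing $\big((E_n)_{n\in\w},\varphi_E\big)$. This yields the reduction $\w^\w\succcurlyeq\E(C_u(X))\cong\Tau_0(\Lc_u(X))$ without invoking the cited equivalence from \cite{Ban}, at the cost of reopening the proof of Lemma~\ref{l:u2}; both routes are sound, and yours has the merit of making the paper's terse deduction airtight.
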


\begin{example}\label{ex:Lc-conv} For any convex subset $X$ of a Banach space the free locally convex space $\Lc_u(X)$  has an $\w^\w$-base.
\end{example}

\section{Free topological vector spaces over uniform spaces}\label{s:Lin}

For a uniform space $X$ its {\em free topological vector space} is a pair $(\Lin_u(X),\delta_X)$ consisting of a topological vector space $\Lin_u(X)$ and a uniformly continuous map $\delta_X:X\to \Lin_u(X)$ such that for every uniformly continuous map $f:X\to Y$ into a  topological vector space $Y$ there exists a continuous linear operator $\bar f:\Lin_u(X)\to Y$ such that $\bar f\circ\delta_X=f$.
For a Tychonoff space $X$ its free topological vector space $(\Lin(X),\delta_X)$ coincides with the free  topological vector space $(\Lin_u(X),\delta_X)$ of the space $X$ endowed with the universal uniformity $\U_X$.

Given a uniform space $X$, we shall identify the free topological vector space $\Lin_u(X)$ of $X$ with the linear hull $L(X)$ of the set $\delta(X)=\{\delta_x:x\in X\}\subset C^*_u(X)$, endowed with the strongest topology turning $L(X)$ into a topological vector space and making the map $\delta:X\to L(X)$, $\delta:x\mapsto\delta_x$, uniformly continuous.

The topology of the space $\Lin_u(X)$ can be described as follows.
For a function $\varphi(x) \in \IR^X$ denote by $\hat{\varphi}:X\to[1,\infty)$ the function defined by $\hat\varphi(x) = \max\{1,\varphi(x)\}$ for $x\in X$.
 Given a pair  $\big((U_n)_{n\in\w},(\varphi_n)_{n\in\w}\big)\in\U_X^\w\times (\IR^X)^\w$, consider the sets
$$
\begin{aligned}
&\sum_{n\in\w}U_n=\bigcup_{m\in\w}\Big\{\sum_{n=0}^mt_n(\delta_{x_n}-\delta_{y_n}):|t_n|\le 1,\;(x_n,y_n)\in U_n\mbox{ for all }n\le m\Big\}\mbox{ and }\\
&\sum_{n\in\w}\tfrac1{\varphi_n} X=\bigcup_{m\in\w}\Big\{\sum_{n=0}^mt_n\delta_{x_n}:x_n\in X,\;
|t_n|\le
1/\hat{\varphi}_n(x)
\Big\}.
\end{aligned}
$$

We recall that for a uniform space $X$ by $C_\w(X)$ we denote the subset of $\IR^X$ consisting of $\w$-continuous functions $f:X\to\IR$.
A subset $D\subset \IR^X$ is called {\em directed} if for any functions $f,g\in D$ there is a function $h\in D$ such that $h\ge\max\{f,g\}$. We shall say that a set $D\subset \IR^X$ {\em dominates} $C_\w(X)$ if for every function $f\in C_\w(X)$ there exists a function $g\in D$ with $g\ge f$.

\begin{theorem}\label{t:lin-top} Let $X$ be a uniform space and $D\subset\IR^X$ be a directed subset dominating the set $C_\w(X)$ of $\w$-continuous functions on $X$. The family
$$\mathcal B=\Big\{\sum_{n\in\w}U_n+\sum_{n\in\w}\tfrac1{\varphi_n} X:(\U_n)_{n\in\w}\in\U_X^\w,\;(\varphi_n)_{n\in\w}\in D^\w\Big\}$$is a neighborhood base at zero of the topology of the free topological vector space $\Lin_u(X)$. Consequently, $$\U_X^\w\times D^\w\succcurlyeq \Tau_0(\Lin_u(X)).$$
\end{theorem}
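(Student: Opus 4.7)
My plan is to define a topology $T$ on $L(X)=\Lin_u(X)$ with $\mathcal B$ as a neighborhood base at zero, show that $T$ is a well-defined vector topology making $\delta\colon X\to (L(X),T)$ uniformly continuous, and then identify $T$ with the free topology $T_0$ of $\Lin_u(X)$. Because the assignment $((U_n),(\varphi_n))\mapsto\sum_n U_n+\sum_n\frac{1}{\varphi_n}X$ is obviously monotone, the ``Consequently'' reduction $\U_X^\w\times D^\w\succcurlyeq\Tau_0(\Lin_u(X))$ drops out as soon as $\mathcal B$ is shown to be a neighborhood base at zero.

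To show $\mathcal B$ generates a vector topology I would invoke the standard criterion that a filter base of balanced absorbing subsets of a vector space which admits $W+W$-refinements determines a unique TVS topology. Each $W=\sum_n U_n+\sum_n\frac{1}{\varphi_n}X$ is balanced because the constraints $|t_n|\le 1$ and $|t_n|\le 1/\hat\varphi_n(x_n)$ are stable under scaling by $|\lambda|\le 1$, and absorbing since any $v=\sum_{i=1}^k a_i\delta_{x_i}$ fits inside $\sum_n\frac{1}{\varphi_n}X$ after scaling by a small enough $\lambda$. The family is downward directed using componentwise intersection in $\U_X^\w$ and the directedness of $D$. For the crucial $W'+W'\subset W$ refinement, I would set $U'_n=U_{2n}\cap U_{2n+1}$ and pick $\varphi'_n\in D$ with $\varphi'_n\ge\max(\varphi_{2n},\varphi_{2n+1})$, then interleave the $n$-th summands of two elements of $W'=\sum_n U'_n+\sum_n\frac{1}{\varphi'_n}X$ into the $(2n)$-th and $(2n+1)$-th slots of an element of $W$. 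Uniform continuity of $\delta$ is immediate: $(x,y)\in U_0$ forces $\delta_x-\delta_y\in\sum_n U_n\subset W$. The universal property of $T_0$ then gives continuity of the identity map $(L(X),T_0)\to (L(X),T)$, hence $T\subset T_0$.

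The main step is the reverse inclusion $T_0\subset T$, which I would obtain by showing that $(L(X),T)$ itself satisfies the universal property of the free TVS. Fix a uniformly continuous $f\colon X\to Y$ into a TVS $Y$ and a balanced neighborhood $V$ of zero in $Y$, and choose balanced neighborhoods $(V_n)_{n\in\w}$ of zero in $Y$ with $V_n+V_n\subset V_{n-1}$ and $\sum_n V_n\subset V$. By uniform continuity, pick $U_n\in\U_X$ with $f(U_n[x])\subset f(x)+V_n$. For the absorbing part, consider the Minkowski gauge $q_{V_n}(y)=\inf\{r>0:y\in rV_n\}$ and use the scaled triangle inequality $q_{V_{n-1}}(u+v)\le q_{V_n}(u)+q_{V_n}(v)$ (a direct consequence of $V_n+V_n\subset V_{n-1}$ and balancedness of $V_n$) to deduce that $q_{V_n}\circ f\colon X\to[0,\infty)$ is locally $\w$-bounded: for $y\in U_{n+1}[x]$ one gets $q_{V_n}(f(y))\le q_{V_{n+1}}(f(x))+1$, finite when $x$ is fixed. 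Applying Lemma~\ref{l:CwB} produces $\psi_n\in C_\w(X)$ with $\psi_n\ge q_{V_n}\circ f+1$, and the hypothesis that $D$ dominates $C_\w(X)$ furnishes $\varphi_n\in D$ with $\varphi_n\ge\psi_n$. Since $\hat\varphi_n(x)>q_{V_n}(f(x))$, balancedness of $V_n$ gives $f(x)/\hat\varphi_n(x)\in V_n$, so $\bar f(\sum_n\frac{1}{\varphi_n}X)\subset\sum_n V_n$ and likewise $\bar f(\sum_n U_n)\subset\sum_n V_n$, both contained in $V$ after mild initial shrinking. Thus $\bar f$ is $T$-continuous, $T$ satisfies the universal property, and $T=T_0$.

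The hardest part will be the Minkowski-gauge analysis: since $q_V$ is not subadditive for a general balanced $V$, the index shift between $V_n$ and $V_{n+1}$ provided by the scaled triangle inequality must be tracked carefully to secure both the local $\w$-boundedness of $q_{V_n}\circ f$ and the strict inequality $\hat\varphi_n(x)>q_{V_n}(f(x))$ needed for $f(x)/\hat\varphi_n(x)\in V_n$. Once this estimate is in place, the ``Consequently'' clause is immediate, because the map $((U_n),(\varphi_n))\mapsto\sum_n U_n+\sum_n\frac{1}{\varphi_n}X$ is monotone (pointwise smaller entourages and pointwise larger functions yield smaller sets, i.e.\ larger elements of $\Tau_0(\Lin_u(X))$ in its reverse-inclusion order) and cofinal (exactly the statement that $\mathcal B$ is a neighborhood base at zero).
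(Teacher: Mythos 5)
Your proposal is correct, and its first half (building the vector topology from $\mathcal B$ via the balanced/absorbing/additive-refinement criterion, the interleaving trick giving $W'+W'\subset W$, and the uniform continuity of $\delta$, which yields $T\subset T_0$) coincides with the paper's argument. The two proofs part ways on the reverse inclusion $T_0\subset T$. The paper fixes a single neighborhood $W$ of zero in $\Lin_u(X)$, produces (following Rolewicz) an invariant continuous pseudometric $d$ with $\{v:d(v,0)<1\}\subset W$ and $d(tv,0)\le d(v,0)$ for $|t|\le 1$, and builds the scaling functions $\psi_n$ by paracompactness of the pseudometric space $(\Lin_u(X),d)$; the compositions $\psi_n\circ\delta$ are then $\w$-continuous outright and are dominated by members of $D$. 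You instead verify the universal property of $(L(X),T)$ for an arbitrary target TVS $Y$, replacing the pseudometric by the Minkowski gauges $q_{V_n}$ of a chain of balanced neighborhoods with $V_n+V_n\subset V_{n-1}$; since $q_{V_n}\circ f$ is only locally $\w$-bounded, you need Lemma~\ref{l:CwB} to upgrade it to an $\w$-continuous majorant before invoking the domination by $D$. Both routes are sound: yours trades the Rolewicz metrization and the paracompactness construction for the scaled triangle inequality $q_{V_{n-1}}(u+v)\le q_{V_n}(u)+q_{V_n}(v)$ plus an extra appeal to Lemma~\ref{l:CwB} (which the paper has already proved), and it has the small additional payoff of exhibiting $(L(X),T)$ directly as the free topological vector space rather than only matching neighborhoods of zero. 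In a final write-up you should make explicit the two levels of initial shrinking needed so that $\bar f\bigl(\sum_n U_n\bigr)+\bar f\bigl(\sum_n\tfrac1{\varphi_n}X\bigr)$ lands in the prescribed neighborhood, and the strictness $\hat\varphi_n(x)>q_{V_n}(f(x))$ guaranteeing $f(x)/\hat\varphi_n(x)\in V_n$ — both of which you already flag.
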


\begin{proof} Given a pair $P=\big((U_n)_{n\in\w},(\varphi_n)_{n\in\w}\big)\in\U_X^\w\times D^\w$, denote the set $$\sum_{n\in\w}U_n+\sum_{n\in\w}\tfrac1{\varphi_n} X$$ by $V_P$. Let $\tau$ be the topology on $L(X)$ consisting of the sets $W\subset L(X)$ such that for any $w\in W$ there is a pair $P\in\U_X^\w\times D^\w$ such that $w+V_P\subset W$. The definition of the set $V_P$ implies that it belongs to the topology $\tau$.

We claim that the topology $\tau$ turns the vector space $L(X)$ into a topological vector space. We need to check the continuity of the addition $L(X)\times L(X)\to L(X)$, $(u,v)\mapsto u+v$, and the multiplication $\IR\times L(X)\to L(X)$, $(t,u)\mapsto tu$, with respect to the topology $\tau$. Since the topology $\tau$ is invariant under shifts, it suffices to check the continuity of the addition at zero. Given a neighborhood $W\in\tau$ of zero, find a pair $P=\big((U_n)_{n\in\w},(\varphi_n)_{n\in\w}\big)\in\U_X^\w\times D^\w$ such that such that $V_P\subset W$. For every $n\in\w$ consider the entourage $U'_n=U_{2n}\cap U_{2n+1}\in \U_X$ and choose a function $\psi_n\in D$ such that $\psi_n\ge \max\{\varphi_{2n},\varphi_{2n+1}\}$. We claim that for the pair $Q=\big((U_n')_{n\in\w},(\psi_n)_{n\in\w}\big)$ we get $V_Q+V_Q\subset V_P$. Indeed,
\begin{multline*}
V_Q+V_Q=\sum_{n\in\w}\tfrac1{\psi_n} X+\sum_{n\in\w}U_n'+\sum_{n\in\w}\tfrac1{\psi_n} X+\sum_{n\in\w}U_n'\subset\\
\subset\sum_{n\in\w}\tfrac1{\varphi_{2n}}X+\sum_{n\in\w}\tfrac1{\varphi_{2n+1}}X
+\sum_{n\in\w}U_{2n}+\sum_{n\in\w}U_{2n+1}=\sum_{n\in\w}\tfrac1{\varphi_n} X+\sum_{n\in\w}U_n=V_P.
\end{multline*}
The definition of the set $V_Q$ implies that it is open in the topology $\tau$. So, $V_Q$ is an open neighborhood of zero such that $V_Q+V_Q\subset V_P\subset W$, which proves the continuity of the addition at zero.

Next, we prove the continuity of the multiplication by a scalar.
Fix any pair $(t,u)\in\IR\times L(X)$ and any neighborhood $W\in\tau$ of the product $tu$.  Find $m\in\w$ such that $|t|\le m$ and $u=\sum_{n=0}^mt_n\delta_{x_n}$ for some real numbers $t_0,\dots,t_n$ and some points $x_0,\dots,x_n\in X$. By the continuity of the addition at zero, there exists a neighborhood $W_0\in\tau$ of zero such that the set $\sum_{n=0}^{m+1}W_0=\big\{\sum_{n=0}^{m+1}w_n:w_0,\dots,w_{m+1}\in W_0\big\}$ is contained in the neighborhood $W-tu$ of zero. Find a pair $Q=\big((U_n')_{n\in\w},(\psi_n)_{n\in\w}\big)\in\U_X^\w\times D^\w$ such that $V_Q\subset W_0$. Choose $\e\in(0,1]$ so small that $\e t_n\le 1/\hat{\psi}_n(x_n)$ for all $n\le m$, which implies that $[-\e,\e]\cdot u\subset V_Q$. We claim that $(t-\e,t+\e)\cdot (u+V_Q)\subset W$. Indeed, take any pair $(t',u')\in(t-\e,t+\e)\times (u+V_Q)$ and observe that
\begin{multline*}
t'u'=tu+t(u'-u)+(t'-t)u+(t'-t)(u'-u)\in\\
\in tu+tV_Q+[{-}\e,\e]u+[-1,1]V_Q\subset tu+ mV_Q+V_Q+V_Q\subset tu+\sum_{n=0}^{m+1}W_0\subset W.
\end{multline*}

Therefore the topology $\tau$ turns the linear hull $L(X)$ of the set $\{\delta_x:x\in X\}$ into a topological vector space. The definition of the topology $\tau$ implies that the map $\delta:X\to L(X)$, $\delta:x\mapsto\delta_x$, is uniformly continuous. Now the definition of the free  topological vector space $\Lin_u(X)$ guarantees that the identity map $\Lin_u(X)\to L(X)$ is continuous (with respect to the topology $\tau$). We claim that this map is a homeomorphism.

Given any neighborhood $W\subset \Lin_u(X)$ of zero, we should find a pair $P\in\U_X^\w\times D^\w$ such that $V_P\subset W$.
Consider the constant map $\mathbf 1:X\to\{1\}\subset\IR$ and the induced linear continuous operator $\bar{\mathbf 1}:\Lin_u(X)\to\IR$. Replacing $W$ by a smaller neighborhood of zero, we  can assume that $\bar{\mathbf 1}(W)\subset(-1,1)$. By an argument similar to that of \cite[\S 1.2]{Rolewicz}, we can find an invariant continuous pseudometric $d$ on $\Lin_u(X)$ such that $\{x\in \Lin_u(X):d(x,0)<1\}\subset W$ and $d(tx,0)\le d(x,0)$ for any $t\in[-1,1]$ and $x\in \Lin_u(X)$. For every $n\in\w$ consider the entourage $U_n=\{(x,y)\in X\times X:d(\delta_x,\delta_y)<2^{-n-2}\}$. Using the paracompactness of the pseudometric space $(\Lin_u(X),d)$, for every $n\in\w$ it is easy to construct a $d$-continuous function $\psi_n:\Lin_u(X)\to[1,\infty)$ such that $d(tx,0)<2^{-n-2}$ for every $x\in \Lin_u(X)$ and $t\in\IR$ with $|t|\le 1/\psi_n(x)$. The uniform continuity of the map $\delta:X\to \Lin_u(X)$ and the $d$-continuity of the map $\psi_n$ implies the $\w$-continuity of the composition $\psi_n\circ\delta:X\to\IR$. Finally, choose a function $\varphi_n\in D$ such that $\varphi_n\ge \psi_n\circ\delta$.
It follows that the pair $P=\big((U_n)_{n\in\w},(\varphi_n)_{n\in\w}\big)$ belongs to the poset $\U_X^\w\times D^\w$. Using the triangle inequality it is easy to check that $V_P\subset W$.

Therefore the correspondence $\U_X^\w\times D^\w\to \Tau_0(\Lin_u(X))$, $P\mapsto V_P$, is monotone and cofinal, yielding the reduction $\U_X^\w\times D^\w\succcurlyeq \Tau_0(\Lin_u(X))$.
\end{proof}

\begin{theorem}\label{t:Lin+Lc-reduct} For any uniform space $X$ and any directed subset $D\subset \IR^X$ dominating $C_\w(X)$, we get the reductions
$$\U_X^\w\times D^\w\succcurlyeq \Tau_0(\Lin_u(X))\succcurlyeq \Tau_0(\Lc_u(X))\cong \U_X^\w\times C_u(X).$$
\end{theorem}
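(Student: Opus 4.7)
The plan is to split the displayed chain into three links, two of which are already on record: the first reduction $\U_X^\w\times D^\w\succcurlyeq\Tau_0(\Lin_u(X))$ is precisely Theorem~\ref{t:lin-top}, and the final equivalence $\Tau_0(\Lc_u(X))\cong\U_X^\w\times C_u(X)$ is part of Theorem~\ref{t:u}. The substantive new step is therefore the middle reduction $\Tau_0(\Lin_u(X))\succcurlyeq\Tau_0(\Lc_u(X))$, which by Lemma~\ref{l:u1} is equivalent to constructing a monotone cofinal map $\Tau_0(\Lin_u(X))\to\E(C_u(X))$.

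My approach is a polar-type assignment. To each $U\in\Tau_0(\Lin_u(X))$ I would associate
$$E_U:=\{f\in C_u(X):\sup_{\mu\in U}|\mu(f)|\le 1\},$$
viewing $U\subset\Lin_u(X)\subset C_u^*(X)$ so that each $\mu\in U$ acts as a linear functional on $C_u(X)$. I must then check four items: that $E_U$ is pointwise bounded, that $E_U$ is equicontinuous (so that $E_U\in\E(C_u(X))$), that the map $U\mapsto E_U$ is monotone, and that it is cofinal. Pointwise boundedness at $x\in X$ follows from the convergence $\tfrac1n\delta_x\to 0$ in $\Lin_u(X)$, which places $\tfrac1n\delta_x\in U$ for some $n$ and so forces $|f(x)|\le n$ for all $f\in E_U$. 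Monotonicity is direct: $U'\subset U$ gives $E_U\subset E_{U'}$, matching the reverse-inclusion order on $\Tau_0(\Lin_u(X))$ with the inclusion order on $\E(C_u(X))$. For cofinality, given $E\in\E(C_u(X))$ the polar $E^*=\{\mu\in C_u^*(X):\sup_{f\in E}|\mu(f)|\le 1\}$ is a neighborhood of zero in $\Lc_u(X)$ by the definition of the topology of $C_u^*(X)$, and hence a neighborhood of zero in $\Lin_u(X)$ (whose topology is finer); taking $U=E^*\cap\Lin_u(X)$, the inclusion $E\subset E_U$ drops out directly from the definitions.

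The real computation, and the step I expect to be the main obstacle, is the equicontinuity of $E_U$. Given $\e>0$, I pick $n>1/\e$ and use that scalar multiplication by $n$ is a homeomorphism of the TVS $\Lin_u(X)$, so the scaled map $n\delta:X\to\Lin_u(X)$, $x\mapsto n\delta_x$, is still uniformly continuous; this provides an entourage $V\in\U_X$ with $n(\delta_x-\delta_y)\in U$ whenever $(x,y)\in V$, whence for every $f\in E_U$ we obtain $|n(f(x)-f(y))|=|n(\delta_x-\delta_y)(f)|\le 1$ and so $|f(x)-f(y)|\le 1/n<\e$. Beyond this equicontinuity check, the remaining delicacy is purely bookkeeping---tracking the reverse-inclusion convention and the embedding $\Lin_u(X)\subset C_u^*(X)$. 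Once the four items above are in place, the assignment $U\mapsto E_U$ is the required monotone cofinal map, the middle reduction follows, and chaining it with Theorems~\ref{t:lin-top} and~\ref{t:u} yields the full statement.
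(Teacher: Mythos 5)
Your proof is correct, and it follows the paper's decomposition for the outer links (citing Theorem~\ref{t:lin-top} for $\U_X^\w\times D^\w\succcurlyeq\Tau_0(\Lin_u(X))$ and Theorem~\ref{t:u} for $\Tau_0(\Lc_u(X))\cong\U_X^\w\times C_u(X)$), but it handles the middle reduction $\Tau_0(\Lin_u(X))\succcurlyeq\Tau_0(\Lc_u(X))$ by a genuinely different argument. The paper sends each $U\in\Tau_0(\Lin_u(X))$ to its convex hull $\conv(U)$ and invokes Bourbaki (Proposition~5, TVS~II.27) for cofinality, i.e.\ for the fact that convex hulls of neighborhoods of zero in $\Lin_u(X)$ form a neighborhood base of the associated locally convex topology, which by the universal property is the topology of $\Lc_u(X)$. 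You instead route through $\E(C_u(X))\cong\Tau_0(\Lc_u(X))$ (Lemma~\ref{l:u1}) via the polar assignment $U\mapsto E_U$; this is in effect the final part of the proof of Lemma~\ref{l:u1} transplanted from $\Lambda_u(X)$ to $\Lin_u(X)$, with pointwise boundedness from $\tfrac1n\delta_x\to 0$, equicontinuity from the uniform continuity of $n\delta$, and cofinality from the continuity of the identity $\Lin_u(X)\to\Lc_u(X)\subset C_u^*(X)$, which makes $E^*\cap\Lin_u(X)$ a neighborhood of zero in $\Lin_u(X)$ with $E\subset E_U$. All four of your verifications go through. What each approach buys: yours is self-contained within the paper's machinery and avoids the external Bourbaki reference; the paper's convex-hull map is shorter and lands directly in $\Tau_0(\Lc_u(X))$ without a detour through equicontinuous families. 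Both are sound.
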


\begin{proof} The reduction $\U_X^\w\times D^\w\succcurlyeq \Tau_0(\Lin_u(X))$ was proved in Theorem~\ref{t:lin-top}.
To see that $\Tau_0(\Lin_u(X))\succcurlyeq \Tau_0(\Lc_u(X))$, consider the monotone map $\Tau_0(\Lin_u(X))\to\Tau_0(\Lc_u(X))$ assigning to each neighborhood $U\in \Tau_0(\Lin_u(X))$ of zero its convex hull $\conv(U)$. This map is clearly well-defined.
Its cofinality follows from \cite[Proposition 5, TVS II.27]{Bourbaki}.
 The final reduction  $\Tau_0(\Lc_u(X))\cong \U_X^\w\times C_u(X)$ was proved in Theorem~\ref{t:u}.
 \end{proof}

\begin{corollary}\label{c:Lin-dom} Let $X$ be a uniform space
whose uniformity has an $\w^\w$-base. If the set $C_\w(X)$ is $\w^\w$-dominated in $\IR^X$, then the free topological vector space $\Lin_u(X)$ has a local $\w^\w$-base.
\end{corollary}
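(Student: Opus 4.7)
The plan is to combine the two hypotheses with the reduction from Theorem~\ref{t:Lin+Lc-reduct}, passing everything through the poset $\w^\w$. Since $C_\w(X)$ is $\w^\w$-dominated in $\IR^X$, fix a monotone map $f\colon\w^\w\to\IR^X$ such that for every $g\in C_\w(X)$ there exists $\alpha\in\w^\w$ with $g\le f(\alpha)$, and set $D:=f(\w^\w)\subset\IR^X$. By construction $D$ dominates $C_\w(X)$. The set $D$ is directed: given $d_1=f(\alpha_1)$ and $d_2=f(\alpha_2)$ in $D$, the coordinatewise maximum $\alpha:=\max\{\alpha_1,\alpha_2\}\in\w^\w$ satisfies $f(\alpha)\ge d_1$ and $f(\alpha)\ge d_2$ by monotonicity. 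Moreover $f\colon\w^\w\to D$ is a monotone surjection, hence monotone cofinal, so $\w^\w\succcurlyeq D$.

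By the other hypothesis $\U_X$ admits an $\w^\w$-base, so $\w^\w\succcurlyeq\U_X$. Using the elementary isomorphism $\w^\w\cong(\w^\w)^\w$ and the fact that reductions pass to countable powers (if $g\colon\w^\w\to P$ is monotone cofinal, so is the coordinatewise map $(\w^\w)^\w\to P^\w$), one obtains
$$\w^\w\;\cong\;(\w^\w)^\w\times(\w^\w)^\w\;\succcurlyeq\;\U_X^\w\times D^\w.$$
Theorem~\ref{t:Lin+Lc-reduct}, applied to the directed dominating set $D$, gives $\U_X^\w\times D^\w\succcurlyeq\Tau_0(\Lin_u(X))$. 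Composing reductions yields $\w^\w\succcurlyeq\Tau_0(\Lin_u(X))$, which is exactly the statement that $\Lin_u(X)$ admits a neighborhood $\w^\w$-base at $0$; by translation invariance, translating this base by an arbitrary vector produces a neighborhood $\w^\w$-base at every point, so $\Lin_u(X)$ has a local $\w^\w$-base.

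No real obstacle is expected, since the hard work has already been packaged into Theorem~\ref{t:lin-top} (the explicit description of the topology of $\Lin_u(X)$ via the sets $\sum_{n\in\w}U_n+\sum_{n\in\w}\tfrac1{\varphi_n}X$) and its consequence Theorem~\ref{t:Lin+Lc-reduct}. The only mildly subtle point is extracting a \emph{directed} dominating set $D$ from the abstract $\w^\w$-domination of $C_\w(X)$, and this is handled cleanly by the lattice structure of $\w^\w$ under coordinatewise order.
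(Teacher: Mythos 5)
Your proposal is correct and follows essentially the same route as the paper: extract a directed dominating set $D$ with $\w^\w\succcurlyeq D$ from the monotone map witnessing $\w^\w$-domination, combine with $\w^\w\succcurlyeq\U_X$ via $\w^\w\cong(\w^\w)^\w\times(\w^\w)^\w\succcurlyeq\U_X^\w\times D^\w$, and apply Theorem~\ref{t:Lin+Lc-reduct}. The only difference is that you spell out the directedness of $D$ and the cofinality of the surjection onto $D$, which the paper merely asserts.
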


\begin{proof} If the set $C_\w(X)$ is $\w^\w$-dominated in $\IR^X$, then we can find a set $D=\{f_\alpha\}_{\alpha\in\w^\w}\subset\IR^X$ which dominates $C_\w(X)$ and such that $f_\alpha\le f_\beta$ for all $\alpha\le\beta$ in $\w^\w$. It follows that $D$ is a directed set with $\w^\w\succcurlyeq D$. Applying Theorem~\ref{t:Lin+Lc-reduct}, we get the reductions
$$\w^\w\cong (\w^\w)^\w\times(\w^\w)^\w\succcurlyeq D^\w\times \U_X^\w\succcurlyeq\Tau_0(\Lin_u(X)),$$witnessing that the space $\Lin_u(X)$ has a local $\w^\w$-base.
\end{proof}

\begin{corollary}\label{c:Lin-subs} Let $X$ be a uniform space
whose uniformity has an $\w^\w$-base. If for some dense subspace $Z\subset X$ the set $C_\w(Z)$ is $\w^\w$-dominated in $\IR^Z$, then the free topological vector space $\Lin_u(X)$ has an $\w^\w$-base.
\end{corollary}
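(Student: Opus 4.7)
The plan is to apply Theorem~\ref{t:Lin+Lc-reduct}, which reduces the problem to exhibiting a directed set $D\subset\IR^X$ dominating $C_\w(X)$ and satisfying $\w^\w\succcurlyeq D$. Combined with the hypothesis $\w^\w\succcurlyeq\U_X$ and the identifications $(\w^\w)^\w\cong\w^\w\cong\w^\w\times\w^\w$, such a $D$ produces
\[
\w^\w\succcurlyeq\U_X^\w\times D^\w\succcurlyeq\Tau_0(\Lin_u(X)),
\]
and hence the desired $\w^\w$-base of neighborhoods of zero in $\Lin_u(X)$.

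First I would replace the hypothesized $\w^\w$-dominating family $(g_\alpha)_{\alpha\in\w^\w}\subset\IR^Z$ by one lying inside $C_\w(Z)$. Since the subspace $Z\subset X$ inherits an $\w^\w$-base from $X$, and since Lemma~\ref{l:CwB} shows every locally $\w$-bounded function on $Z$ is majorized by an $\w$-continuous one, a pushing-up argument (in the spirit of the $(9)\Leftrightarrow(10)$ equivalence of Theorem~\ref{t:looong}) should produce a monotone family $(g_\alpha)_{\alpha\in\w^\w}\subset C_\w(Z)$ such that every $f\in C_\w(Z)$ satisfies $f\le g_\alpha$ for some $\alpha$; in particular, each $g_\alpha$ is then continuous and locally bounded on $Z$.

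Fix an $\w^\w$-base $(U_\beta)_{\beta\in\w^\w}$ of $\U_X$ (monotone with $U_\gamma\subset U_\beta$ whenever $\beta\le\gamma$, and open without loss of generality). For each code $(\alpha,(\beta_n)_{n\in\w})\in\w^\w\times(\w^\w)^\w$ I define
\[
h_{\alpha,(\beta_n)}(x)\;=\;\sup_{n\in\w}\,\inf\bigl\{g_\alpha(z)+1:\;z\in Z\cap U_{\beta_n}[x]\bigr\},\qquad x\in X.
\]
The density of $Z$ guarantees $Z\cap U_{\beta_n}[x]\ne\emptyset$; the inner infimum is bounded above by $g_\alpha(z_0)+1$ for any chosen $z_0$ in this set, and because $g_\alpha$ is continuous and locally bounded on $Z$, the limit of these infima as $n\to\infty$ equals $\liminf_{z\to x,\,z\in Z}g_\alpha(z)+1$, a finite real. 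Thus $h_{\alpha,(\beta_n)}\in\IR^X$, monotonicity in the code is clear, and closing under finite maxima turns the family into the desired directed set $D$, which is reducible to $\w^\w$ via $\w^\w\times(\w^\w)^\w\cong\w^\w$.

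It remains to verify that $D$ dominates $C_\w(X)$. Given $f\in C_\w(X)$, the restriction $f|_Z$ lies in $C_\w(Z)$, so $f|_Z\le g_\alpha$ for some $\alpha$. The $\w$-continuity of $f$ provides entourages $(V_n)_{n\in\w}\subset\U_X$ such that every $x\in X$ admits $n_x\in\w$ with $f(V_{n_x}[x])\subset[f(x)-1,f(x)+1]$; the $\w^\w$-base supplies $\beta_n\in\w^\w$ with $U_{\beta_n}\subset V_n$. Then for any $z\in Z\cap U_{\beta_{n_x}}[x]$ we have $f(x)\le f(z)+1\le g_\alpha(z)+1$, whence $f(x)\le h_{\alpha,(\beta_n)}(x)$ after taking the infimum over $z$ and the supremum over $n$. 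I anticipate the principal obstacle to be the first step---the reduction of the $\w^\w$-dominating family from $\IR^Z$ into $C_\w(Z)$---because the $\sup$-$\inf$ extension is only guaranteed to return finite values once local boundedness of each $g_\alpha$ is in hand; the remainder of the argument is then the routine verification above, followed by an appeal to Theorem~\ref{t:Lin+Lc-reduct}.
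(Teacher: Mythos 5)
Your strategy---producing a directed $D\subset\IR^X$ that dominates $C_\w(X)$ and is reducible to $\w^\w$, then invoking Theorem~\ref{t:Lin+Lc-reduct}---founders at the extension step. The function $h_{\alpha,(\beta_n)}(x)=\sup_{n}\inf\{g_\alpha(z)+1: z\in Z\cap U_{\beta_n}[x]\}$ need not be real-valued: each individual infimum is finite, but the supremum over $n$ can be $+\infty$ at points $x\in X\setminus Z$ where $g_\alpha$ blows up along $Z$. Your claim that the limit of the infima equals $\liminf_{z\to x,\,z\in Z}g_\alpha(z)+1$, ``a finite real,'' is exactly where this breaks: that $\liminf$ is $+\infty$ in general. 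This is not a pathological corner case but is forced by the hypotheses. Take $X=[0,1]$ with its (metrizable, hence $\w^\w$-based) uniformity and $Z=(0,1)$: the function $z\mapsto 1/z$ belongs to $C_\w(Z)$, so every dominating family must contain some $g_\alpha\ge 1/z$, and then $h_{\alpha,(\beta_n)}(0)=\sup_n\inf\{g_\alpha(z)+1:0<z<\e_n\}=+\infty$ whenever the $U_{\beta_n}$ shrink---which they must be allowed to do, since your domination argument needs $U_{\beta_n}\subset V_n$ for the (arbitrarily fine) entourages $V_n$ witnessing $\w$-continuity of a given $f$. Restricting to a single entourage $\beta$ keeps $h$ finite but then only dominates the \emph{uniformly} continuous functions, not $C_\w(X)$. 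A secondary weak point is your first step: the $g_\alpha$ coming from domination in $\IR^Z$ need not be locally $\w$-bounded, so Lemma~\ref{l:CwB} does not apply to them directly, and the asserted ``pushing-up argument'' is left unproved (the $(9)\Leftrightarrow(10)$ equivalence you cite concerns $C_u$, not $C_\w$, and is imported from an external reference).

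The paper avoids all of this by never extending functions from $Z$ to $X$. It first applies Corollary~\ref{c:Lin-dom} to $Z$ itself (whose uniformity has an $\w^\w$-base as a subspace of $X$), obtaining an $\w^\w$-base for $\Lin_u(Z)$ and hence for its completion $\bar\Lin_u(Z)$; it then uses density of $Z$ in $X$ and the universal properties to show that the induced operator $\bar\Lin_u(Z)\to\bar\Lin_u(X)$ is a topological isomorphism, so $\Lin_u(X)\subset\bar\Lin_u(X)\cong\bar\Lin_u(Z)$ inherits an $\w^\w$-base. Note that your route, if it worked, would prove the strictly stronger statement that $C_\w(X)$ is $\w^\w$-dominated in $\IR^X$; nothing in the paper asserts this follows from the hypotheses, and the completion/functoriality argument is precisely what lets the conclusion be reached without it. I would redo the proof along those lines.
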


\begin{proof} By Corollary~\ref{c:Lin-dom}, the free topological vector space $\Lin_u(Z)$ has an $\w^\w$-base and so does its completion $\bar \Lin_u(Z)$. For a uniform space $Y$ by $c_Y:\Lin_u(Y)\to\bar \Lin_u(Y)$ we denote the identity embedding of the free topological vector space $\Lin_u(Y)$ into its completion. Let $i:Z\to X$ be the identity embedding. By the functoriality of the constructions $\Lin_u$ and $\bar\Lin_u$, we obtain a commutative diagram:
$$
\xymatrix{
Z\ar^{\delta_Z}[r]\ar_i[d]&\Lin_u(Z)\ar^{c_Z}[r]\ar^{\Lin_u i}[d]&\bar\Lin_u(Z)\ar^{\bar\Lin_u i}[d]\\
X\ar_{\delta_X}[r]&\Lin_u(X)\ar_{c_X}[r]&\bar\Lin_u(X).
}
$$
We claim that the linear operator $\bar V_ui:\bar V_u(Z)\to\bar V_u(X)$ is a topological isomorphism.
For this observe that the uniformly continuous map $c_Z\circ\delta_Z:Z\to\bar \Lin_u(Z)$ admits a unique uniformly continuous extension $e:X\to \bar\Lin_u(Z)$, which induces a unique linear continuous operator $\bar e:\bar \Lin_u(X)\to\bar\Lin_u(Z)$ such that $\bar e\circ c_X\circ\delta_X=e$.  It follows that $\bar e\circ \bar\Lin_u i$ is the identity map on the set $c_Z\circ \delta_Z(Z)\subset \bar V_u(Z)$ and also on its  linear hull which is dense in $\bar\Lin_u(Z)$. Now the continuity of the composition $\bar e\circ\bar \Lin_u i$ guarantees that $\bar e\circ\bar\Lin_u i$ is the identity map of $\bar\Lin_u(Z)$. By analogy we can prove that $\bar \Lin_u i\circ\bar e$ is the identity map of $\bar\Lin_u(X)$. Consequently, the topological vector spaces $\bar\Lin_u(Z)$ and $\bar \Lin_u(X)$ are topologically isomorphic, which implies that the space $\bar \Lin_u(X)$ has an $\w^\w$-base and so does its subspace $\Lin_u(X)$.
\end{proof}

A subset $D$ of a uniform space $X$ is called {\em uniformly discrete} if there exists an entourage $U\in\U$ such that $(x,y)\notin U$ for any distinct points $x,y\in D$. Observe that a uniform space $X$ is $\w$-narrow if and only if each uniformly discrete subset of $X$ is at most countable.

\begin{theorem}\label{t:Lin-narrow} For any uniformly discrete subset $D$ of a uniform space $X$ we have the reduction $\Tau_0(\Lin_u(X))\succcurlyeq\w^D$.
\end{theorem}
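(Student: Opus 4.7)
The plan is to define a monotone cofinal map $f \colon \Tau_0(\Lin_u(X)) \to \w^D$ by
$$f(W)(d) = \min\bigl\{n \ge 1 : \tfrac{1}{n}\delta_d \in W\bigr\},$$
which is well-defined because $\tfrac{1}{n}\delta_d \to 0$ in $\Lin_u(X)$ as $n \to \infty$, and monotone since $V \le W$ in $\Tau_0(\Lin_u(X))$ (i.e.\ $W \subseteq V$) forces $f(V)(d) \le f(W)(d)$. All the substantive work is in cofinality: given $g \in \w^D$, construct a neighborhood $W_g$ of zero in $\Lin_u(X)$ such that $\tfrac{1}{n}\delta_d \notin W_g$ for every $d \in D$ and every $1 \le n < g(d)$.

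Using the uniform discreteness of $D$, I first pick a continuous pseudometric $\rho$ on $X$ with $\rho(d,d') \ge 2$ for distinct $d,d' \in D$, so the balls $B_d = \{x \in X : \rho(x,d) < 1\}$ are pairwise disjoint, and the Urysohn functions $\phi_d(x) = \max(0, 1 - \rho(x,d))$ are uniformly continuous, $1$-Lipschitz in $\rho$, supported in $B_d$, and satisfy $\phi_d(d) = 1$. Define $h \colon X \to [0,\infty)$ piecewise by $h(x) = g(d)\phi_d(x)$ for $x \in B_d$ and $h(x) = 0$ on $X \setminus \bigcup_{d \in D}B_d$; the countable family $\{V_k = \{\rho < 2^{-k}\}\}_{k \in \w}$ of pseudometric entourages witnesses that $h$ is locally $\w$-bounded, so Lemma~\ref{l:CwB} yields an $\w$-continuous $\psi \colon X \to \IR$ with $\psi \ge h + 1$; in particular $\psi(d) \ge g(d) + 1$ for $d \in D$ and $\psi \ge 1$ everywhere. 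By Theorem~\ref{t:lin-top}, applied with the directed set $C_\w(X)$ (which contains every $2^{k+2}\psi$), the set
$$W_g := \sum_{k \in \w}U_k + \sum_{k \in \w}\tfrac{1}{\varphi_k}X, \qquad \varphi_k = 2^{k+2}\psi,$$
is a neighborhood of zero in $\Lin_u(X)$ for any $(U_k) \in \U_X^\w$.

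To check $\tfrac{1}{n}\delta_d \notin W_g$ for $n < g(d)$, I test against the continuous linear functional $L_d \colon \Lin_u(X) \to \IR$ induced by the uniformly continuous function $g(d)\phi_d \in C_u(X)$. Since $L_d(\tfrac{1}{n}\delta_d) = g(d)/n > 1$ for $n < g(d)$, it suffices to show $|L_d(v)| \le 1$ for every $v \in W_g$. Writing $v = v_1 + v_2$ with $v_1 = \sum_k t_k(\delta_{x_k} - \delta_{y_k}) \in \sum_k U_k$ and $v_2 = \sum_k s_k \delta_{z_k} \in \sum_k \tfrac{1}{\varphi_k}X$, the estimate
$$|L_d(v_2)| \le \sum_k |s_k|\,g(d)\phi_d(z_k) \le \sum_k \frac{g(d)\phi_d(z_k)}{2^{k+2}\psi(z_k)} \le \sum_k 2^{-k-2} = \tfrac{1}{2}$$
follows termwise from $g(d)\phi_d \le h \le \psi$ (noting $\phi_d(z_k) = 0$ unless $z_k \in B_d$, in which case $\psi(z_k) \ge h(z_k) = g(d)\phi_d(z_k)$).

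The main obstacle, and the technical heart of the proof, is the companion bound $|L_d(v_1)| \le \tfrac{1}{2}$: the naive $1$-Lipschitz estimate gives only $|L_d(v_1)| \le g(d)\sum_k \rho(x_k, y_k)$, which is too weak because $g(d)$ is unbounded across $d \in D$ while $(U_k)$ is a single sequence serving all $d$ simultaneously. My plan is to choose the $(U_k)$ as pseudometric entourages of the form $\{\rho < 2^{-k-1}\}$ refined by a single entourage from the $\w$-continuity witness family of $\psi$, and to exploit three features together: first, that in any given finite expression for $v_1$ only finitely many balls $B_d$ are involved; second, that the disjointness of the $B_d$'s decouples the contributions from different $d$'s; and third, that the divergence identity $[v_1]_d = \tfrac{1}{n} - [v_2]_d$ pins the mass of $v_1$ at $d$ to nearly $1/n$, so that the non-trivial coefficients must participate in chains of cancellation emanating from $d$ into $B_d \setminus \{d\}$. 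Running a Kantorovich--Rubinstein-style accounting along such chains, using $g(d)\phi_d \le \psi$ to convert the Lipschitz-$g(d)$ bound on $g(d)\phi_d$ into a bound dominated by $\psi$, is expected to close the estimate. Reconciling the merely $\w$-continuous (not uniformly continuous) nature of $\psi$ with the requirement of a single entourage sequence is where the delicate combinatorics lie.
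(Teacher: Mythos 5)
Your overall skeleton --- the monotone map $W\mapsto\big(\min\{n:\tfrac1n\delta_d\in W\}\big)_{d\in D}$, the construction of $W_g$ via Theorem~\ref{t:lin-top} with weights $\varphi_k$ of order $2^{k}\psi$ for an $\w$-continuous $\psi$ dominating $g$ on $D$ (obtained from Lemma~\ref{l:CwB}), and the estimate of the $\sum_k\tfrac1{\varphi_k}X$-part --- matches the paper's proof. But the step you defer, bounding the contribution of $v_1=\sum_k t_k(\delta_{x_k}-\delta_{y_k})$, is the entire content of the theorem, and the route you sketch cannot be made to work. Testing against the continuous functional $L_d$ induced by $g(d)\phi_d$ requires $W_g\subset L_d^{-1}([-1,1])$ simultaneously for all $d\in D$. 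Since $L_d$ has Lipschitz constant $g(d)$ with respect to $\rho$ while the entourages $U_k$ are fixed independently of $d$, the set $\sum_kU_k$ contains elements $t(\delta_u-\delta_v)$ with $|L_d(t(\delta_u-\delta_v))|$ comparable to $g(d)\,\rho(u,v)$, which is unbounded over $d$ whenever $g$ is unbounded and $X$ has points at intermediate $\rho$-distance from points of $D$; so in general no neighborhood of zero of the required form lies in $\bigcap_{d}L_d^{-1}([-1,1])$. A Kantorovich--Rubinstein accounting along chains only reproduces the bound (Lipschitz constant)$\,\times\,$(transport cost)$\,\approx g(d)\sum_k2^{-k}$, which is exactly the estimate you already flagged as too weak, and dominating the \emph{values} of $g(d)\phi_d$ by $\psi$ does not help because the obstruction sits in the \emph{differences} $\phi_d(x_k)-\phi_d(y_k)$.

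The paper closes this gap by abandoning Lipschitz test functions altogether. Given the witnessing representation $\tfrac1{2^{k}}\delta_x=\sum_ia_i(\delta_{x_i}-\delta_{y_i})+\sum_ib_i\delta_{z_i}$, it forms the finite graph whose edges are the pairs $\{x_i,y_i\}$ and tests against the (purely algebraic) linear functional $\bar\chi$ induced by the characteristic function $\chi$ of the connected component $C_x$ of $x$. Since $\chi(x_i)=\chi(y_i)$ for every edge, $\bar\chi$ annihilates the difference part \emph{exactly}, so no Lipschitz estimate is needed; and since the $i$-th edge has length $<2^{-i-2}$, the component $C_x$ has $\rho$-diameter $<\tfrac12$, which forces $\Psi(z_i)\ge\psi(x)$ for every $z_i\in C_x$ and makes the surviving sum $\sum_{z_i\in C_x}b_i$ strictly smaller than $2^{-\psi(x)}$, giving the contradiction. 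Replacing your fixed functionals $L_d$ by this representation-dependent, locally constant functional is the missing idea; as written, your argument has a genuine gap at its central step.
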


\begin{proof} The reduction $\Tau_0(\Lin_u(X))\succcurlyeq\w^D$ is defined by the monotone function $\varphi_*:\Tau_0(\Lin_u(X))\to\w^D$, $\varphi_*:V\mapsto \varphi_V$, where $\varphi_V:D\to\w$, $\varphi_V:x\mapsto\min\{n\in\w:\frac1{2^{n}}\delta_x\in V\}$. Let us show that the function $\varphi_*$ is cofinal. Given any function $\psi\in \w^D$, we should find a neighborhood $V\in\Tau_0(\Lin_u(X))$ such that $\varphi_V\ge\psi$. Taking into account that the set $D$ is uniformly discrete, we can apply Theorem 8.1.10 \cite{Eng} and find a uniform pseudometric $d:X\times X\to [0,3]$ on $X$ such that $d(x,y)=3$ for any distinct points $x,y\in D$. For every $z\in D$ consider the function
$\Lambda_z:X\to[0,2]$, $\Lambda_z:x\mapsto 2\max\{0,1-d(x,z)\}$. Observe that the function $\Psi:X\to\IR$, $\Psi:x\mapsto\sum_{z\in D}\psi(z)\Lambda_z(x)$ is $\w$-continuous.

For every $n\in\w$ consider the entourage $U_n=\{(x,y)\in X\times X:d(x,y)<\tfrac1{2^{n+2}}\}\in\U_X$ and the $\w$-continuous function $\Psi_n=2^{n+1+\Psi}\ge1$. By Theorem~\ref{t:lin-top}, the set   $$V=\sum_{n\in\w}U_n+\sum_{n\in\w}\tfrac1{\Psi_n}X$$is a neighborhood of zero in $\Lin_u(X)$. We claim that $\varphi_V\ge\psi$. To derive a contradiction, assume that $\varphi_V\not\ge\psi$ and find a point $x\in D$ with $\varphi_V(x)<\psi(x)$. The definition of the number $k=\varphi_V(x)<\psi(x)$ guarantees that $\frac1{2^{k}}\delta_x\in V$ and hence $\frac1{2^{k}}\delta_x=\sum_{i\in m}a_i(\delta_{x_i}-\delta_{y_i})+\sum_{i\in m} b_i\delta_{z_i}$ for some $m\in\w$ and some sequences $(a_i)_{i\in m}\in[-1,1]^m$, $\big((x_i,y_i)\big)_{i\in m}\in \prod_{i\in m} U_i$, $(z_i)_{i\in m}\in X^m$ and $(b_i)_{i\in m}\in [-1,1]^m$ with $|b_i|\le \frac1{\hat\Psi_i(z_i)}=\frac1{\Psi_i(z_i)}$ for all $i\in m$.

Consider the graph $\Gamma=(F,E)$ whose set of vertices is the finite set $F=\{x\}\cup\{x_i\}_{i\in m}\cup\{y_i\}_{i\in m}\cup \{z_i\}_{i\in m}$ and two vertices $u,v\in F$ are linked by an edge if and only if $\{u,v\}=\{x_i,y_i\}$ for some $i\in m$. In this case $d(u,v)=d(x_i,y_i)<\frac1{2^{i+2}}$. It follows that connected components of the graph $\Gamma$ have diameter at most $\sum_{i\in m}\frac1{2^{i+2}}<\frac12$. In particular, the connected component $C_x\subset F$ of the point $x$ has diameter $<\frac12$. Observe that for every $i\in m$ with $z_i\in C_x$ we get $d(x,z_i)<\frac12$ and hence $\Psi(z_i)=\psi(x)\Lambda_x(z_i)=2\psi(x)(1-d(x,z_i))\ge\psi(x)$.

Consider the characteristic function $\chi:F\to\{0,1\}\subset\IR$ of the set $C_x$ (which means that $\chi^{-1}(1)=C_x$) and observe that $\chi$ is constant on each connected component of the graph $\Gamma$. Let $\bar\chi:\Lin_u(F)\to \IR$ be the unique linear operator such that $\bar \chi\circ\delta_X=\chi$. It follows that
$$
\begin{aligned}
\frac1{2^{k}}&=\bar \chi\big(\tfrac1{2^{k}}\delta_x\big)=\bar\chi\Big(\sum_{i\in m}a_i(\delta_{x_i}-\delta_{y_i})+\sum_{i\in m}^m b_i\delta_{z_i}\Big)=\sum_{i\in m}a_i(\chi(x_i)-\chi(y_i))+\sum_{i\in m} b_i\chi(z_i)=\\
&=\sum_{i\in m}b_i\chi(z_i)=\sum_{z_i\in C_x}b_i\le\sum_{z_i\in C_x}\frac1{\Psi_i(z_i)}=\sum_{z_i\in C_x}\frac1{2^{i+1+\Psi(z_i)}}\le\sum_{z_i\in C_x}\frac1{2^{i+1+\psi(x)}}<\frac1{2^{\psi(x)}}
\end{aligned}
$$
and hence $\varphi_V(x)=k>\psi(x)$, which contradicts our assumption. This contradiction shows that $\varphi_V\ge\psi$, which means that the monotone map $\varphi_*:\Tau_0(\Lin_u(X))\to\w^D$ is cofinal and yields the reduction $\Tau_0(\Lin_u(X))\succcurlyeq\w^D$.
\end{proof}

\begin{corollary}\label{c:Lin-nar} If the free topological vector space $\Lin_u(X)$ of a uniform space $X$ has an $\w^\w$-base, then the uniform space $X$ is $\w$-narrow and $\w^\w$-based. Moreover, under $\w_1<\mathfrak b$, the space $X$ is separable.
\end{corollary}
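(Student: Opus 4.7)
The plan is to chain together three results from the paper. First I would observe that a local $\w^\w$-base at zero in $\Lin_u(X)$ is precisely a monotone cofinal map $\w^\w\to\Tau_0(\Lin_u(X))$, i.e.\ the reduction $\w^\w\succcurlyeq\Tau_0(\Lin_u(X))$.

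For the $\w^\w$-basedness of $X$, I would invoke Theorem~\ref{t:Lin+Lc-reduct} to get $\Tau_0(\Lin_u(X))\succcurlyeq\Tau_0(\Lc_u(X))$, so that composing with the assumption yields $\w^\w\succcurlyeq\Tau_0(\Lc_u(X))$, i.e.\ $\Lc_u(X)$ has an $\w^\w$-base. Then Corollary~\ref{c:Lu3} gives at once that $\U_X$ admits an $\w^\w$-base, meaning $X$ is $\w^\w$-based.

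For $\w$-narrowness I would argue by contradiction: if $X$ contained an uncountable uniformly discrete set $D$, fix an injection $\w_1\hookrightarrow D$, which induces the monotone cofinal projection $\w^D\succcurlyeq\w^{\w_1}$. By Theorem~\ref{t:Lin-narrow} we have $\Tau_0(\Lin_u(X))\succcurlyeq\w^D$, so altogether $\w^\w\succcurlyeq\w^{\w_1}$. This contradicts $\mathfrak e(\w)=\w_1$, which is Proposition~\ref{p:e(X)}(2) applied to $\kappa=\w$ (noting $\cf(\w)=\w$). Hence every uniformly discrete subset of $X$ is countable, i.e.\ $X$ is $\w$-narrow. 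The only subtle point here is making sure the composition of the two reductions really does yield $\w^\w\succcurlyeq\w^{\w_1}$, which is immediate since $\succcurlyeq$ is transitive.

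Finally, for the separability clause, I would combine the two conclusions just obtained: $X$ is an $\w^\w$-based uniform space which is $\w$-narrow (condition (26) of Theorem~\ref{t:looong}). Under the additional set-theoretic hypothesis $\w_1<\mathfrak b$, Theorem~\ref{t:looong} asserts that $(26)\Leftrightarrow(1)$, so $X$ is separable, completing the proof. I expect no real obstacle; the argument is essentially an assembly of the preceding machinery, with the only nontrivial ingredient being the invocation of $\mathfrak e(\w)=\w_1$ to rule out uncountable uniformly discrete subsets.
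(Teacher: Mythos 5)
Your proposal is correct and follows essentially the same route as the paper: the paper likewise derives $\w$-narrowness from Theorem~\ref{t:Lin-narrow} together with $\mathfrak e(\w)=\w_1$ (Proposition~\ref{p:e(X)}(2)), obtains the $\w^\w$-base on $\U_X$ from the reduction chain $\Tau_0(\Lin_u(X))\succcurlyeq\Tau_0(\Lc_u(X))\succcurlyeq\U_X$ of Theorem~\ref{t:Lin+Lc-reduct} (your detour through Corollary~\ref{c:Lu3} is the same machinery), and concludes separability under $\w_1<\mathfrak b$ from Theorem~\ref{t:looong}.
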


\begin{proof} Assuming that $X$ is not $\w$-narrow, we can find a uniformly discrete subset $D\subset X$ of cardinality $|D|=\w_1$. By Theorem~\ref{t:Lin-narrow}, $\w^\w\succcurlyeq\Tau_0(\Lin_u(X))\succcurlyeq\w^D\cong\w^{\w_1}$, which contradicts Proposition~\ref{p:e(X)}(2). So, the space $X$ is $\w$-narrow. Theorem~\ref{t:Lin+Lc-reduct} yields the reductions $\Tau_0(\Lin_u(X))\succcurlyeq\Tau_0(\Lc_u(X))\succcurlyeq\U_X$ implying that the uniform space $X$ is $\w^\w$-based.

By Theorem~\ref{t:looong}, under $\w_1<\mathfrak b$ the $\w$-narrow $\w^\w$-based uniform space $X$ is separable.
\end{proof}

\section{Free (locally convex) topological vector spaces}

In this section we unify the results of Sections~\ref{s:Lc} and \ref{s:Lin} and prove the following chracterization.

\begin{theorem}\label{t:Lin+LcU} For a uniform space $X$ consider the following conditions:
\begin{itemize}
\item[$(\Lin)$] the free topological vector space $\Lin_u(X)$ has an $\w^\w$-base;
\item[$(\Lc)$] the free locally convex space $\Lc_u(X)$ has an $\w^\w$-base;
\item[$(\U)$] the uniformity $\U_X$ of $X$ has an $\w^\w$-base;
\item[$(C)$] the poset $C_u(X)$ is $\w^\w$-dominated;
\item[$(s)$] the space $X$ is separable.
\item[$(\sigma)$] the space $X$ is $\sigma$-compact.
\end{itemize}
Then $(\U{\wedge}\sigma)\Ra(\U{\wedge}s)\Ra(\Lin)\Ra(\Lc)\Leftrightarrow(\U{\wedge}C)$ and moreover $(\U{\wedge}s)\Leftrightarrow(\Lin)$ under $\w_1<\mathfrak b$.\newline
If the completion of $X$ is $\IR$-complete, then $(\U{\wedge}s)\Leftrightarrow (\Lin)\Leftrightarrow(\Lc)\Leftrightarrow(\U{\wedge}C)$.
\newline If the uniform space $X$ is $\IR$-universal, then $(\U{\wedge}\sigma)\Leftrightarrow(\U{\wedge}s)\Leftrightarrow (\Lin)\Leftrightarrow(\Lc)\Leftrightarrow(\U{\wedge}C)$.
\end{theorem}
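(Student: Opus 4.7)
The plan is to obtain all the arrows as a short diagram chase on the reductions and corollaries already established in Sections~\ref{s:Lc} and~\ref{s:Lin}. The equivalence $(\Lc)\Leftrightarrow(\U{\wedge}C)$ is precisely Corollary~\ref{c:Lu3}, and $(\Lin)\Ra(\Lc)$ is immediate from the poset reduction $\Tau_0(\Lin_u(X))\succcurlyeq\Tau_0(\Lc_u(X))$ of Theorem~\ref{t:Lin+Lc-reduct}. The arrow $(\U{\wedge}\sigma)\Ra(\U{\wedge}s)$ is just the elementary fact that $\sigma$-compactness implies separability.

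The one genuinely new step in the base chain is $(\U{\wedge}s)\Ra(\Lin)$. Here I would apply Corollary~\ref{c:Lin-subs} with $Z$ taken to be any countable dense subspace of $X$. Since $|Z|\le\w$, the poset $\IR^Z$ is order-isomorphic to $\IR^\w$, and this poset is $\w^\w$-dominated via the obvious monotone cofinal map $\alpha\mapsto(\alpha(n))_{n\in\w}$. Consequently every subset of $\IR^Z$, and in particular $C_\w(Z)$, is $\w^\w$-dominated in $\IR^Z$. Because $\U_X$ is assumed to have an $\w^\w$-base, Corollary~\ref{c:Lin-subs} then delivers an $\w^\w$-base for $\Lin_u(X)$. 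This route is clean and sidesteps any discussion of $\sigma$-boundedness of $X$ itself.

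To upgrade the chain to the claimed equivalences under each extra assumption, I would argue as follows. Under $\w_1<\mathfrak b$, Corollary~\ref{c:Lin-nar} asserts that $(\Lin)$ forces $X$ to be $\w$-narrow, $\w^\w$-based, and (precisely because $\w_1<\mathfrak b$) separable, giving $(\Lin)\Ra(\U{\wedge}s)$. Under the assumption that the completion of $X$ is $\IR$-complete, the only missing implication is $(\U{\wedge}C)\Ra(\U{\wedge}s)$, and this is extracted directly from Theorem~\ref{t:looong}: the $\IR$-completeness hypothesis is exactly what makes items $(1)$--$(13)$ mutually equivalent, so the $\w^\w$-domination of $C_u(X)$ in~$(9)$ transfers to separability in~$(1)$. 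Finally, for $\IR$-universal $X$, Theorem~\ref{t:looong} adds condition $(15)$ on $\sigma$-compactness to the same list of equivalents, promoting $(\U{\wedge}C)$ all the way to $(\U{\wedge}\sigma)$ and closing the cycle.

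I do not anticipate any real obstacle, since every ingredient is already isolated as a corollary or as a row of the omnibus Theorem~\ref{t:looong}; the proof is essentially bookkeeping. The only micro-point worth verifying is that the countable-dense choice of $Z$ in the second paragraph truly renders the hypothesis of Corollary~\ref{c:Lin-subs} automatic, but this is immediate from $|Z|\le\w$ together with the trivial reduction $\w^\w\succcurlyeq\IR^\w$.
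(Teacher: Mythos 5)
Your proposal follows the paper's own proof almost step for step: $(\Lin)\Ra(\Lc)\Leftrightarrow(\U{\wedge}C)$ via Theorem~\ref{t:Lin+Lc-reduct}, $(\U{\wedge}s)\Ra(\Lin)$ via Corollary~\ref{c:Lin-subs} applied to a countable dense $Z$ with $\IR^Z\cong\IR^\w\preccurlyeq\w^\w$, the converse under $\w_1<\mathfrak b$ via Corollary~\ref{c:Lin-nar}, and the two conditional upgrades via the equivalences $(1)\Leftrightarrow(9)$ and $(1)\Leftrightarrow(9)\Leftrightarrow(15)$ of Theorem~\ref{t:looong}. All of that is correct and is exactly the paper's argument.

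There is, however, one genuine error: you justify $(\U{\wedge}\sigma)\Ra(\U{\wedge}s)$ by ``the elementary fact that $\sigma$-compactness implies separability.'' That is not a fact. Compact Hausdorff spaces need not be separable (consider $[0,\w_1]$ with the order topology, or $\{0,1\}^\kappa$ for $\kappa>\mathfrak c$), so $\sigma$-compactness alone gives nothing. The implication is true only because the hypothesis also includes $(\U)$: by the Cascales--Orihuela theorem (recorded as Theorem~\ref{t:Sigma}(4)), every compact --- indeed every totally bounded --- subset of an $\w^\w$-based uniform space is metrizable and separable, so a $\sigma$-compact space whose uniformity has an $\w^\w$-base is a countable union of separable subspaces and hence separable. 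This is precisely how the paper argues this arrow, and your write-up should invoke the $\w^\w$-base here rather than a false general principle. With that single repair the proof is complete and coincides with the paper's.
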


\begin{proof} The implications $(\Lin)\Ra(\Lc)\Leftrightarrow(\U{\wedge}C)$ follow from the reductions
$\Tau_0(\Lin_u(X))\succcurlyeq \Tau_0(\Lc_u(X))\cong \U_X^\w\times C_u(X)$ proved in Theorem~\ref{t:Lin+Lc-reduct}. The implication $(\U{\wedge}\sigma)\Ra(\U{\wedge}s)$ follows from the metrizability of compact subsets of $\w^\w$-based uniform spaces (proved by Cascales and Orihuela \cite{CO}, see also Theorem~\ref{t:Sigma}). To prove that $(\U{\wedge}s)\Ra(\Lin)$, assume that the space $X$ is separable and the uniformity $\U_X$ of $X$ has an $\w^\w$-base. Fix a countable dense subspace $Z\subset X$ and observe that the set $C_\w(Z)$ is $\w^\w$-dominated in $\IR^Z$ (because $\IR^Z\cong \IR^\w\cong\w^\w$). By Corollary~\ref{c:Lin-subs}, the free topological vector space $\Lin_u(X)$ has an $\w^\w$-base, which completes the proof of the implications $(\U{\wedge}s)\Ra(\Lin)\Ra(\Lc)\Leftrightarrow(\U{\wedge}C)$. Under $\w_1<\mathfrak b$ the implication $(\U{\wedge}s)\Leftarrow(\Lin)$ was proved in Corollary~\ref{c:Lin-nar}.
\smallskip

If the completion of the uniform space $X$ is $\IR$-complete, then the equivalence  $(1)\Leftrightarrow(9)$ of Theorem~\ref{t:looong} yields the equivalence $(\U{\wedge}C)\Leftrightarrow(\U{\wedge}s)$. Combined with the implications $(\U{\wedge}s)\Ra(\Lin)\Ra(\Lc)\Ra(\U{\wedge}C)$, this equivalence yields the equivalences
$(\U{\wedge}s)\Leftrightarrow (\Lin)\Leftrightarrow(\Lc)\Leftrightarrow(\U{\wedge}C)$.
\smallskip

If the uniform space $X$ is $\IR$-universal, then the equivalence  $(1)\Leftrightarrow(9)\Leftrightarrow(15)$ of Theorem~\ref{t:looong} yields the equivalences $(\U{\wedge}\sigma)\Leftrightarrow(\U{\wedge}s)\Leftrightarrow(\U{\wedge}C)$. Combined with the implications $(\U{\wedge}s)\Ra(\Lin)\Ra(\Lc)\Ra(\U{\wedge}C)$, these equivalences yield the equivalences
$(\U{\wedge}\sigma)\Leftrightarrow(\U{\wedge}s)\Leftrightarrow (\Lin)\Leftrightarrow(\Lc)\Leftrightarrow(\U{\wedge}C)$.
\end{proof}

The following example (that can be derived from Example~\ref{ex:Lc-conv} and Corollary~\ref{c:Lin-nar}) shows that the implication $(\Lin)\Ra(\Lc)$ in Theorem~\ref{t:Lin+LcU} cannot be reversed.

\begin{example} For any non-separable convex set $X$ in a Banach space the free locally convex space $\Lc_u(X)$ has an $\w^\w$-base but the free topological vector space $\Lin_u(X)$ does not.
\end{example}

\begin{problem} Is a uniform space $X$ separable if its free topological vector space $\Lin_u(X)$ has an $\w^\w$-base?
\end{problem}

The answer to this problem is affirmative if $\w_1<\mathfrak b$ or if the completion of the uniform space $X$ is $\IR$-complete.
\smallskip

Applying Theorem~\ref{t:Lin+LcU}  to Tychonoff spaces endowed with their universal uniformity, we get the following characterization.

\begin{theorem}\label{t:Lin+LcT} For a Tychonoff space $X$ the following conditions are equivalent:
\begin{itemize}
\item[$(\Lin)$] the free topological vector space $\Lin(X)$ has an $\w^\w$-base;
\item[$(\Lc)$] the free locally convex space $\Lc(X)$ has an $\w^\w$-base;
\item[$(\U C)$] the universal uniformity $\U_X$ of $X$ has an $\w^\w$-base and the poset $C(X)$ is $\w^\w$-dominated;
\item[$(\U s)$] the universal uniformity $\U_X$ of $X$ has an $\w^\w$-base and the space $X$ is separable.
\item[$(\U\sigma)$] the universal uniformity $\U_X$ of $X$ has an $\w^\w$-base and the space $X$ is $\sigma$-compact.
\end{itemize}
\end{theorem}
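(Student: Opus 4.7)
The plan is to derive Theorem~\ref{t:Lin+LcT} as an immediate specialization of Theorem~\ref{t:Lin+LcU}(3), applied to the Tychonoff space $X$ equipped with its universal uniformity $\U_X$. First I would check that the five conditions of Theorem~\ref{t:Lin+LcT} correspond, one-to-one, with the five equivalent conditions appearing in part (3) of Theorem~\ref{t:Lin+LcU}: by the conventions fixed in the introduction and in Section~\ref{s:Lc}, when $X$ carries the universal uniformity one has $\Lin(X)=\Lin_u(X)$, $\Lc(X)=\Lc_u(X)$, and $C_u(X)=C(X)$. Hence the properties labelled $(\Lin)$, $(\Lc)$, $(\U C)$, $(\U s)$, $(\U\sigma)$ here are literally the properties labelled $(\Lin)$, $(\Lc)$, $(\U{\wedge}C)$, $(\U{\wedge}s)$, $(\U{\wedge}\sigma)$ for the uniform space $(X,\U_X)$.

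Second, I would verify that the hypothesis of Theorem~\ref{t:Lin+LcU}(3) is met, namely that $(X,\U_X)$ is $\IR$-universal. This is already recorded in Section~\ref{s:pre}: any continuous map from $X$ to a metric space pulls the metric back to a continuous pseudometric on $X$ and is therefore uniformly continuous with respect to $\U_X$, so $(X,\U_X)$ is universal; by the implication chain \emph{universal} $\Ra$ \emph{$\w$-universal} $\Ra$ \emph{$\IR$-universal} it is in particular $\IR$-universal.

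Given these two observations, the statement follows by simply invoking Theorem~\ref{t:Lin+LcU}(3), which under the translation above reads exactly as the equivalence of $(\Lin)$, $(\Lc)$, $(\U C)$, $(\U s)$, $(\U\sigma)$ claimed in Theorem~\ref{t:Lin+LcT}. The argument is a purely formal reduction to already-established material, so there is no real obstacle; the only point that requires attention is bookkeeping of the identifications $\Lin(X)=\Lin_u(X)$, $\Lc(X)=\Lc_u(X)$, $C_u(X)=C(X)$ and of the labels used for the combined conditions.
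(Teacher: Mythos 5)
Your proposal is correct and coincides with the paper's own derivation: the paper obtains Theorem~\ref{t:Lin+LcT} precisely by applying the last clause of Theorem~\ref{t:Lin+LcU} (the $\IR$-universal case) to $X$ with its universal uniformity, using the identifications $\Lin(X)=\Lin_u(X)$, $\Lc(X)=\Lc_u(X)$, $C_u(X)=C(X)$ and the fact that a Tychonoff space with its universal uniformity is universal, hence $\IR$-universal. Nothing is missing.
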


For an Ascoli space $X$ the equivalent conditions of Theorem~\ref{t:Lin+LcT} are equivalent to the existence of an $\w^\w$-base in the double function space $C_kC_k(X):=C_k(C_k(X))$.
We recall that $C_k(X)$ is the space of all real-valued continuous functions on $X$, endowed with the compact-open topology. A topological space $X$ is called {\em Ascoli} if for every compact subset $K\subset C_k(X)$ the evaluation function $e:K\times X\to \IR$, $e:(f,x)\mapsto f(x)$, is continuous. By \cite[5.4]{BG}, a topological space $X$ is Ascoli if and only if  the canonical map $\delta:X\to C_kC_k(X)$ assigning to each point $x\in X$ the Dirac measure $\delta_x:C_k(X)\to \IR$, $\delta_x:f\mapsto f(x)$, is continuous.

By \cite[\S5]{BG}, the class of Ascoli spaces includes all $k$-spaces and all Tychonoff $k_\IR$-spaces. A topological space $X$ is a {\em $k$-space} if a subset $U\subset X$ is open in $X$ if and only if for every compact subset $K\subset X$ the intersection $F\cap K$ is relatively open in $K$. A topological space $X$ is a {\em $k_\IR$-space} \cite{Noble} if a function $f:X\to\IR$ if continuous if and only if for any compact subset $K\subset X$ the restriction $f|K$ is continuous.

\begin{theorem}\label{t:Ascoli} For an Ascoli Tychonoff space $X$ the following conditions are equivalent:
\begin{enumerate}
\item[$(\Lc)$] the free locally convex space $\Lc(X)$ of $X$ has an $\w^\w$-base;
\item[$(\Lin)$] the free topological vector space $\Lin(X)$ of $X$ has an $\w^\w$-base;
\item[$(C_k^2)$] the double function space $C_kC_k(X)$ has a local $\w^\w$-base.
\end{enumerate}
\end{theorem}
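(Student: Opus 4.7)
The equivalence $(\Lc)\Leftrightarrow(\Lin)$ is a direct consequence of Theorem~\ref{t:Lin+LcT}, which holds for every Tychonoff space without any Ascoli assumption, so the task reduces to proving $(\Lc)\Leftrightarrow(C_k^2)$.

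My plan is to identify the neighborhood posets $\Tau_0(\Lc(X))$ and $\Tau_0(C_kC_k(X))$ up to reduction, using the Ascoli hypothesis as the bridge. By Theorem~\ref{t:u}, $\Tau_0(\Lc(X))\cong\E(C(X))$, the poset of pointwise bounded equicontinuous subsets of $C(X)$ ordered by inclusion. The neighborhood base at $0$ of $C_kC_k(X)$ is the family $\{W(K,n):K\in\K(C_k(X)),\;n\in\w\}$ with $W(K,n)=\{F:\sup_{f\in K}|F(f)|\le 1/n\}$, so $\Tau_0(C_kC_k(X))$ reduces to $\K(C_k(X))\times\w$, where $\K(C_k(X))$ is the poset of compact subsets of $C_k(X)$ under inclusion. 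Since $\w^\w\succcurlyeq\w$ always, $C_kC_k(X)$ admits a local $\w^\w$-base if and only if $\K(C_k(X))$ is $\w^\w$-dominated.

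The Ascoli characterization \cite[5.4]{BG} guarantees continuity of the Dirac map $\delta:X\to C_kC_k(X)$, which by the universal property of $\Lc(X)$ extends to a continuous linear injection $\bar\delta:\Lc(X)\to C_kC_k(X)$. The central step is then the poset equivalence $\E(C(X))\cong\K(C_k(X))$ for Ascoli $X$. For the reduction $\E(C(X))\succcurlyeq\K(C_k(X))$, an equicontinuous pointwise bounded set $E\in\E(C(X))$ has closure in $C_p(X)$ that is compact by the pointwise Arzel\`a--Ascoli theorem, and since equicontinuous families inherit the same topology from $C_p$ and $C_k$, this closure is compact in $C_k(X)$ as well; the assignment $E\mapsto\overline{E}^{C_p(X)}$ is monotone and cofinal. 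For the reverse direction, a finite covering argument applied to the even-continuity formulation of the Ascoli property shows that every compact $K\subset C_k(X)$ is pointwise equicontinuous at each point of $X$, and pointwise boundedness is automatic from the continuity of evaluations on $C_k(X)$. Upgrading pointwise to uniform equicontinuity with respect to the universal uniformity of $X$ then yields a monotone cofinal map $K\mapsto E_K$ completing the reduction.

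The main obstacle is precisely this final upgrade: realizing the pointwise equicontinuity of a compact $K\subset C_k(X)$ by a single continuous pseudometric on $X$, so as to land inside $\E(C(X))$ in the universal-uniformity sense required by Theorem~\ref{t:u}. This is where the Ascoli assumption genuinely enters, via a careful interplay between the topology of $X$ and its universal uniformity.
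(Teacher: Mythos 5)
Your skeleton is essentially the paper's: $(\Lc)\Leftrightarrow(\Lin)$ comes from Theorem~\ref{t:Lin+LcT}, and the pivot is the poset equivalence $\E(C(X))\cong\K(C_k(X))$, which is exactly how the paper proves $(\Lc)\Ra(C_k^2)$ (it then invokes Ferrando--K\c{a}kol for the last step, which your description of the base $\{W(K,1/n)\}$ of $\Tau_0(C_kC_k(X))$ replaces by a direct argument). But there are two concrete gaps. First, your claim that ``$C_kC_k(X)$ admits a local $\w^\w$-base if and only if $\K(C_k(X))$ is $\w^\w$-dominated'' is only justified in the ``if'' direction by the reduction $\K(C_k(X))\times\w\succcurlyeq\Tau_0(C_kC_k(X))$ that you establish; the ``only if'' direction, which is what your route to $(C_k^2)\Ra(\Lc)$ actually uses, needs the reverse reduction $\Tau_0(C_kC_k(X))\succcurlyeq\K(C_k(X))$. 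That is true, but requires an argument: one must check that the polar $K_W=\{f\in C_k(X):|F(f)|\le 1\ \forall F\in W\}$ of a neighborhood $W$ of zero is compact, which follows because complete regularity of $C_k(X)$ forces $K_{W(K,1)}=K$ (any $f\notin K$ is separated from $K$ by some $F$ vanishing on $K$ with $F(f)=2$), so $K_W$ is a closed subset of a compact set. The paper avoids this entirely: for Ascoli $X$ the canonical injection $\Lc(X)\to C_kC_k(X)$ is a topological \emph{embedding} (on $\Lc(X)$ the compact-open topology of $C_kC_k(X)$ coincides with uniform convergence on pointwise bounded equicontinuous sets, by the two Ascoli-type facts you already use), so an $\w^\w$-base of $C_kC_k(X)$ simply restricts to one of $\Lc(X)$. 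Your ``continuous linear injection'' is not enough for that transfer; you need the embedding.

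Second, the step you single out as ``the main obstacle'' is misdiagnosed: upgrading pointwise equicontinuity of a compact $K\subset C_k(X)$ to equicontinuity with respect to the universal uniformity is routine and does not use the Ascoli property. The pseudometric $d_K(x,y)=\sup_{f\in K}\min\{1,|f(x)-f(y)|\}$ is continuous precisely because $K$ is pointwise equicontinuous, hence $[d_K]_{<\e}\in\U_X$, and this single entourage witnesses $K\in\E(C(X))$. Where the Ascoli hypothesis genuinely enters is one step earlier, in the \emph{cofinality} of your map $E\mapsto\overline{E}^{C_p(X)}$ into $\K(C_k(X))$: for that you need every compact $K\subset C_k(X)$ to be contained in some $\overline{E}^{C_p(X)}$, i.e.\ to be itself (pointwise) equicontinuous, which is the finite-covering consequence of \cite[5.4]{BG} that you only mention under the ``reverse direction.'' So the ingredients are all present in your outline, but the cofinality assertion in the first reduction is unproved where it is stated, and the final ``upgrade'' should be written out via $d_K$ rather than left as an appeal to the Ascoli assumption, which is not what resolves it.
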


\begin{proof} The equivalence of the first two conditions follows from Theorem~\ref{t:Lin+LcT}.

Now assume that the Tychonoff space $X$ is Ascoli. In this case every compact subset of $C_k(X)$ is equicontinuous, which implies that the space $C_kC_k(X)$ carries the topology of uniform convergence on pointwise bounded equicontinuous sets and hence $\Lc(X)\subset C_u^*(X)\subset C_kC_k(X)$.

If the space $C_kC_k(X)$ has an $\w^\w$-base, then so does its subspace $\Lc(X)$.

Now assume conversely that the free locally space $\Lc_u(X)$ has an $\w^\w$-base. By Theorem \ref{t:u} we have the equivalence $\E(C_u(X))\cong \w^{\w}$. The Ascoli property of $X$ implies that $\K(C_k(X))\cong \E(C_u(X))\cong\w^\w$. We obtained that
the function space $C_k(X)$ has a compact resolution swallowing compact sets and hence,
according to a theorem of Ferrando and K{\c{a}}kol \cite{feka},
the space $C_kC_k(X)$ has an $\w^\w$-base.
\end{proof}


\begin{remark} Theorem~\ref{t:Ascoli} answers Question 4.18 in \cite{GabKakLei_2}.
\end{remark}

\begin{remark} Free topological vector spaces $\Lin(X)$ over Tychonoff spaces are introduced and studied in a recent preprint \cite{GabMor}, but with completely different approach and results.
Note also that several results about the existence of $\w^\w$-bases in free locally convex spaces  were proved independently (and using different arguments than ours) in \cite{GK_L(X)}.
In particular, the authors of  \cite{GK_L(X)} proved that a metrizable space $X$ is $\sigma$-compact if and only if its free locally convex space $\Lc(X)$ has an $\w^\w$-base; also they proved that for a countable Ascoli space $X$ with an $\w^\w$-base, the free locally convex space $\Lc(X)$ has an $\w^\w$-base.
\end{remark}

\section{Strong topological groups over uniform spaces}\label{s:strong}

In this section we detect $\w^\w$-bases in strong topological groups over uniform spaces.

Given a topological group $G$ by $\Tau_e(G)$ we denote the family of neighborhoods of the unit $e$ of $G$, endowed with the partial order of reverse inclusion ($U\le V$ iff $V\subset U$). Each topological group $G$ will be endowed with the {\em two-sided uniformity} generated by the base consisting of the entourages $\{(x,y)\in G\times G:y\in xU\cap Ux\}$ where $U\in\Tau_e(G)$.
So, talking about uniformly continuous maps to a topological group $G$ we shall always have in mind the two-sided uniformity on $G$.

A topology $\tau$ on a group $G$ is called a {\em group topology} if it turns $G$ into a topological group; this happens if and only if the map $G\times G\to G$, $(x,y)\mapsto xy^{-1}$, is continuous.

\begin{definition} By a {\em strong topological group} over a uniform space $X$ we understand a pair $(G,\delta)$ consisting of a topological group $G$ and a uniformly continuous map $\delta:X\to G$ such that the image $\delta(X)$ algebraically generates $G$ and $G$ carries the strongest topology among all group topologies $\tau$ on $G$ making the map $\delta:X\to(G,\tau)$ uniformly continuous.
\end{definition}

Typical examples of strong topological groups over uniform spaces are the free (Abelian/Boolean) topological groups $(\AG_u(X),\delta_X)$, $(\BG_u(X),\delta_X)$, $(\FG_u(X),\delta_X)$.

Our aim is to describe the topology of a strong topological group $(G,\delta)$ over a uniform space $X$ in terms of the uniformity of the  space $X$.

For a group $G$ and sets $A,B\subset G$ let $A^{-1}=\{a^{-1}:a\in A\}$ be the inversion of the set $A$ in $G$, and let $A\cdot B=\{ab:a\in A,\;b\in B\}$ be the product of the sets $A,B$ in $G$. For a sequence $(A_n)_{n\in\w}$ of sets in $G$ the products $A_0\cdots A_n$ are defined inductively as $A_0\cdots A_n=(A_0\cdots A_{n-1})\cdot A_n$. For a finite ordinal $n$ by $S_n$ we denote the permutation group of the set $n=\{0,\dots,n-1\}$.
The {\em permutation product} of sets $A_0,\dots,A_{n-1}$ in a group $G$ is defined as the union
$$\textstyle{\bigotimes_{i\in n}A_i=\bigcup_{\pi\in S_n}A_{\pi(0)}\cdots A_{\pi(n)}.}$$
For a sequence $(A_n)_{n\in\w}$ of sets of a group $G$ let
$$\textstyle{\bigotimes_{n\in\w}A_n=\bigcup_{n=1}^\infty{\bigotimes_{i\in n}A_i}}$$
be the {\em permutation product} of the sequence $(A_n)_{n\in\w}$.

The following description of the topology of a strong topological group over a uniform space is a small improvement of the known description of the topology of a free topological group, due to V.~Pestov \cite{P} (see also \cite{RD}, \cite{Sipa1}, \cite{LPT}).

\begin{theorem}\label{t:strong} Let $(G,\delta)$ be a strong topological group over a uniform space $X$, $D$ be a dense subset of $X$ and $H$ be the subgroup of $G$ generated by the set $\delta(D)$. For an entourage $U\in\U_X$ let $$U^\pm=\big\{\delta(x)^\e\delta(y)^{-\e}:(x,y)\in U,\;\e\in\{-1,1\}\big\}\subset G.$$
Then the family
$$\textstyle{\mathcal B=\Big\{\bigotimes_{n\in\w}\bigcup_{g\in H}g U_{g,n}^\pm g^{-1}:(U_{g,n})_{(g,n)\in H\times\w}\in \U_X^{H\times \w}\Big\}}$$is a neighborhood base at the unit in the topological group $G$. 
\end{theorem}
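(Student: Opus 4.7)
The plan is to follow Pestov's classical approach for free topological groups: construct a candidate group topology $\tau$ on $G$ having $\mathcal{B}$ as a neighborhood base at the unit, and then use the extremal (``strong'') property of $G$ to identify $\tau$ with the given topology of $G$. First I would verify that $\mathcal{B}$ is a filter base: given two families $(U_{g,n})$ and $(U'_{g,n})$, the family of pairwise intersections $(U_{g,n}\cap U'_{g,n})$ produces a member of $\mathcal{B}$ contained in the intersection of the two original sets. I would then define $\tau$ by declaring $O\subset G$ open iff for each $x\in O$ there is $V\in\mathcal{B}$ with $xV\subset O$.

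Next I would verify the group-topology axioms for $\tau$. Inversion is free because $(U^\pm)^{-1}=U^\pm$ and $(gWg^{-1})^{-1}=gW^{-1}g^{-1}$, so every member of $\mathcal{B}$ is already symmetric. For continuity of multiplication at $e$, given $V=\bigotimes_n A_n\in\mathcal{B}$ with $A_n=\bigcup_{g\in H}gU_{g,n}^\pm g^{-1}$, I would shrink each $U_{g,n}$ and split the index $\w$ into two disjoint cofinal thinnings to build $V'\in\mathcal{B}$ with $V'\cdot V'\subset V$: concatenating two words, one from each thinning, yields a single word indexed by a permutation of a finite subset of $\w$ and hence an element of $V$. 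For continuity of conjugation by an arbitrary $g\in G$, I would write $g$ as a product of generators (using density of $D$ to reduce to $g\in H$) and absorb the conjugation into the freedom of choosing $U_{g,n}$ separately for each $g\in H$.

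I would then check that $\delta:X\to(G,\tau)$ is uniformly continuous with respect to the two-sided uniformity: given $V=\bigotimes_n A_n\in\mathcal{B}$, the single entourage $U_{e,0}$ already yields $\delta(x)\delta(y)^{-1},\delta(y)^{-1}\delta(x)\in U_{e,0}^\pm\subset A_0\subset V$ for all $(x,y)\in U_{e,0}$. By the defining strong property of $G$, the topology $\tau$ is coarser than or equal to the given topology of $G$, so every $V\in\mathcal{B}$ is a neighborhood of the unit in $G$.

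The main obstacle is the converse inclusion --- showing that every neighborhood $W$ of the unit in $G$ contains some $V\in\mathcal{B}$. Here I would inductively build a sequence $(W_n)_{n\in\w}$ of symmetric neighborhoods of the unit in $G$ with $W_0\subset W$ and $W_{n+1}^3\subset W_n$, and with the additional feature that conjugates of $W_{n+1}$ by bounded-length words in $H$ are again captured by $W_n$; by continuity of conjugation in $G$ and uniform continuity of $\delta$, I would then select, for each $(g,n)\in H\times\w$, an entourage $U_{g,n}\in\U_X$ satisfying $gU_{g,n}^\pm g^{-1}\subset W_{n+1}$. The delicate step is a Pestov-style rearrangement lemma: any permutation product $a_{\pi(0)}\cdots a_{\pi(m-1)}$ with $a_i\in A_i\subset W_{i+1}$ can, using the conjugacy control of the $W_n$, be rewritten so that its factors appear in sorted order of indices, after which repeated application of $W_{n+1}^3\subset W_n$ collapses the product into $W_0\subset W$. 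Granting this lemma, the set $V=\bigotimes_n\bigcup_{g\in H}gU_{g,n}^\pm g^{-1}$ lies in $\mathcal{B}$ and is contained in $W$, which completes the identification of $\mathcal{B}$ as a neighborhood base at the unit of $G$.
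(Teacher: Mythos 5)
Your overall architecture is exactly the paper's: define the candidate topology $\tau$ from $\mathcal B$, verify the Pontryagin axioms (including the even/odd index-splitting for $VV\subset U$ and the reduction of conjugation to generators via density of $D$, with the conjugator absorbed into the $H$-indexed choice of entourages), check uniform continuity of $\delta$ via $U_{e,0}$, invoke strongness for $\tau\subset\Tau$, and for $\Tau\subset\tau$ build a sequence $(W_n)$ with $W_{n+1}^3\subset W_n$ and pick $U_{g,n}$ with $gU_{g,n}^{\pm}g^{-1}\subset W_{n+1}$. Two of your steps, however, are off as stated. The minor one: $(U^\pm)^{-1}=U^\pm$ only when the entourage $U$ is symmetric (the inverse of $\delta(x)\delta(y)^{-1}$ is $\delta(y)\delta(x)^{-1}$, which needs $(y,x)\in U$); this is harmless since symmetric entourages are cofinal, but it should be said. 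You also omit the translation axiom (for $u\in U\in\mathcal B$ find $V\in\mathcal B$ with $uV\subset U$), which your definition of $\tau$ requires in order for members of $\mathcal B$ to be neighborhoods of $e$; the paper gets it by shifting indices past the finite support of $u$.

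The substantive problem is your ``rearrangement lemma.'' In a non-abelian group you cannot rewrite a permutation product $a_{\pi(0)}\cdots a_{\pi(m-1)}$ so that the factors appear in sorted order, and the ``conjugacy control'' you propose to pay for the commutations does not exist in the form you describe: the factors of $\bigcup_{g\in H}gU_{g,n}^{\pm}g^{-1}$ are conjugates by \emph{arbitrary} elements $g$ of $H$, so controlling conjugation of $W_{n+1}$ by ``bounded-length words in $H$'' controls nothing relevant. Fortunately no rearrangement is needed. The statement you actually need is the classical one: if $W_{n+1}^3\subset W_n$ for all $n$ and $A_n\subset W_{n+1}$, then $\bigotimes_{n\in\w}A_n\subset W_0$. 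Its proof is an induction on the length of the word: in $a_{\pi(0)}\cdots a_{\pi(m-1)}$ locate the (unique) factor lying in the set of smallest index $k$; the prefix and the suffix are permutation products over strictly larger indices, hence by the induction hypothesis each lies in $W_{k+1}$, and the whole word lies in $W_{k+1}W_{k+1}W_{k+1}\subset W_k\subset W_0$. With this replacement (and the superfluous conjugacy condition on the $W_n$ deleted, since conjugation is already handled by choosing $U_{g,n}$ separately for each $g\in H$), your argument becomes the paper's proof.
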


\begin{proof}
 Let $\Tau$ be the topology of the topological group $G$ and $\tau$ be the topology on $G$ consisting of all sets $U\subset G$ such that for every $x\in U$ there exists a set $B\in\mathcal B$ with $xB\subset U$.

 First we show that $\tau$ is a group topology on $G$. For this it suffices to check that $\mathcal B$ satisfies the following Pontryagin' axioms (see \cite[1.3.12]{AT}):
\begin{enumerate}
\item for every $U\in\mathcal B$ there is $V\in\mathcal B$ such that $VV\subset U$ and $V^{-1}\subset U$;
\item for every $U\in\mathcal B$ and $u\in U$ there is $V\in\mathcal B$ such that $uV\subset U$;
\item for every $U\in\mathcal B$ and $z\in G$ there is $V\in\mathcal B$ such that $zVz^{-1}\subset U$;
\item for every $U,V\in\mathcal B$ there exists $W\in\mathcal B$ such that $W\subset U\cap V$.
\end{enumerate}
\smallskip

1. Given a set $U\in\mathcal B$, find $(U_{g,n})_{(g,n)\in H\times\w}\in\U_X^{H\times\w}$ such that $U=\bigotimes_{n\in\w}\bigcup_{g\in H}gU^\pm_{g,n}g^{-1}$. Consider the sequence $(V_{g,n})_{(g,n)\in H\times\w}\in \U_X^{H\times\w}$ defined by $V_{g,n}=\bigcap_{k\le 2n+1}U_{g,k}$ and observe that the basic neighborhood $V=\bigotimes_{n\in\w}\bigcup_{g\in H}gV^\pm_{g,n}g^{-1}\in\mathcal B$ has the required properties: $V=V^{-1}$ and $VV\subset U$.
\smallskip

2. Given a set $U\in\mathcal B$, find a sequence $(U_{g,n})_{(g,n)\in H\times \w}\in\U_X^{H\times\w}$  such that $U=\bigotimes_{n\in\w}\bigcup_{g\in H}gU^\pm_{g,n}g^{-1}$. For any $u\in U$ we can find $m\in\w$ such that $u\in\bigotimes_{n<m}\bigcup_{g\in H}gU^\pm_{g,n}g^{-1}$.
Consider the sequence $(V_{g,k})_{(g,k)\in H\times\w}$ defined by $V_{g,k}=U_{g,m+k}$ and observe that the basic neighborhood $V=\bigotimes_{k\in\w}\bigcup_{g\in H}gV^\pm_{g,k}g^{-1}$ has the required property: $uV\subset U$.
\smallskip

3. Given any set $U\in\mathcal B$ and point $y\in G$, we should find a set $W\in\mathcal B$ such that $yWy^{-1}\subset U$. Since the group $G$ is algebraically generated by the set $\delta(X)$, it suffices to consider the case $y\in\delta(X)\cup\delta(X)^{-1}$. Find a point $y'\in X$ such that $y=\delta(y')^\e$ for some $\e\in\{-1,1\}$.

By the first item, there exists a set $V\in\mathcal B$ such that $V=V^{-1}$ and $VVV\subset U$. Find a sequence $(V_{g,n})_{(g,n)\in H\times\w}\in\U_X^{H\times\w}$ such that $V=\bigotimes_{n\in\w}\bigcup_{g\in H}gV^\pm_{g,n}g^{-1}$.

Let $e$ be the unit of the group $H$. By the density of $D$ in $X$ the entourage $V_{e,0}\cap V_{e,1}$ contains a pair $(y',z')$ for some $z'\in D$. Then for the point $z=\delta(z')\in H$ we get $y=yz^{-1}z\in V_{e,0}^\pm z\cap V_{e,1}^\pm z$. We claim that the basic entourage $W=\bigotimes_{n\in\w}\bigcup_{g\in H}gV_{zg,n+2}g^{-1}\in\mathcal B$ has the required property: $yWy^{-1}\subset U$. First observe that
$$\textstyle{zWz^{-1}=\bigotimes_{n\in\w}\bigcup_{g\in H}zgV^\pm_{zg,n+2}g^{-1}z^{-1}=\bigotimes_{n\in\w}\bigcup_{h\in H}hV^\pm_{h,n+2}h^{-1}}$$and hence
$$\textstyle{yWy^{-1}\subset V_{e,0}^\pm zWz^{-1}V_{e,1}^\pm\subset \bigotimes_{n\in\w}\bigcup_{h\in H}hV^\pm_{h,n}h^{-1}=U.}$$

4. Given two sets $U,V\in\mathcal B$, find sequences $(U_{g,n})_{(g,n)\in H\times\w},(V_{g,n})_{(g,n)\in H\times\w}\in\U_X^{H\times \w}$ such that $U=\bigotimes_{n\in\w}\bigcup_{g\in H}gU^\pm_{g,n}g^{-1}$ and $V=\bigotimes_{n\in\w}\bigcup_{g\in G}gV^\pm_{g,n}g^{-1}$. Then for the entourages $W_{g,n}=U_{g,n}\cap V_{g,n}$, $(g,n)\in H\times\w$, and the set $W=\bigotimes_{n\in\w}\bigcup_{g\in H}gW^\pm_{g,n}g^{-1}\in\mathcal B$, we get the required inclusion $W\subset U\cap V$.
\smallskip

Therefore, the family $\mathcal B$ satisfies the Pontryagin's axioms and hence $\mathcal B$ is a neighborhood base of the group topology $\tau$ at the unit of the group $G$. Next, we show that the map $\delta:X\to (G,\tau)$ is uniformly continuous.
Given any neighborhood $B\in\mathcal B\subset\tau_e(G)$ of $e$ we should check that the entourage $\{(x,y)\in X\times X:\delta(y)\in B\cdot \delta(x)\cap \delta(x)\cdot B\}$ belongs to the uniformity $\U_X$ of $X$. Find a sequence $(B_{g,n})_{(g,n)\in H\times\w}\in\U_X^{H\times\w}$ such that  $B=\bigotimes_{n\in\w}\bigcup_{g\in G}gB^\pm_{g,n}g^{-1}$. Consider the entourage $V=B_{e,0}$ and observe that for every $(x,y)\in V$ we get $\{\delta(x)\delta(y)^{-1},\delta(x)^{-1}\delta(y)\}\subset V^\pm\subset B$, which implies $\delta(y)\in B\cdot\delta(x)\cap \delta(x)\cdot B$ and witnesses that the map $\delta:X\to (G,\tau)$ is uniformly continuous.

Since $G$ is a strong topological group over $X$, its topology $\Tau$ contains the group topology $\tau$ (so $\tau\subset\Tau)$. It remains to check that  $\Tau\subset \tau$.
Given any neighborhood $V\in\Tau_e(G)$, find a decreasing sequence of neighborhoods $\{V_n\}_{n\in\w}\subset\Tau_e(G)$ of the unit in $G$ such that $V_n^{-1}=V_n$ and $V_{n+1}^3\subset V_n$ for all $n\in\w$, and $V_0^3\subset U$. Such a choice of the sequence $(V_n)_{n\in\w}$ guarantees that $\bigotimes_{n\in\w}V_n\subset U$. For every $(g,n)\in H\times\w$ the set $g^{-1}V_ng$ is a neighborhood of the unit in the topological group $G$. The uniform continuity of the map $\delta:X\to G$ guarantees that the entourage $U_{g,n}=\{(x,y)\in X\times X:\{\delta(x)\delta(y)^{-1},\delta(x)^{-1}\delta(y)\}\subset g^{-1}V_ng\}$ belongs to the uniformity $\U_X$ of $X$. The definition of $U_{g,n}$ guarantees that $U_{g,n}^\pm\subset g^{-1}V_ng$ and hence the basic neighborhood
$$\textstyle{\bigotimes_{n\in\w}\bigcup_{g\in H}gU_{g,n}^\pm g^{-1}\subset \bigotimes_{n\in\w}V_n\subset U}$$is contained in $U$, which implies that $\Tau\subset\tau$ and finally $\Tau=\tau$.
\end{proof}

\begin{corollary}\label{c:strong-dens}  If $(G,\delta)$ is a strong topological group over a uniform space $X$ of density $\kappa$, then $\U_X^\kappa\succcurlyeq \Tau_e(G)$.
\end{corollary}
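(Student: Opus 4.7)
The plan is to combine Theorem~\ref{t:strong} with a cardinality argument transferring the index set $H\times\w$ to the cardinal $\kappa$. First, I fix a dense subset $D\subset X$ with $|D|\le\kappa$ (which exists by the definition of density), and let $H$ be the subgroup of $G$ algebraically generated by $\delta(D)$. By Theorem~\ref{t:strong}, the assignment
$$\Phi:\U_X^{H\times\w}\to \Tau_e(G),\qquad (U_{g,n})_{(g,n)\in H\times\w}\mapsto \bigotimes_{n\in\w}\bigcup_{g\in H}gU_{g,n}^\pm g^{-1},$$
is well-defined and its image is a neighborhood base at the unit of $G$. Monotonicity of $\Phi$ is immediate from the definition (shrinking each $U_{g,n}$ shrinks every factor $gU_{g,n}^\pm g^{-1}$, hence shrinks the permutation product), so $\Phi$ witnesses the reduction $\U_X^{H\times\w}\succcurlyeq \Tau_e(G)$.

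Next, I bound the size of the index set. Since $H$ is generated as an abstract group by $\delta(D)$, its cardinality satisfies $|H|\le\max\{|D|,\w\}\le\kappa$ (the trivial case of finite $X$ forces $G$ to be discrete and the claim holds automatically). Consequently $|H\times\w|\le\kappa$, which yields a reduction $\U_X^\kappa\succcurlyeq \U_X^{H\times\w}$ as follows: fix any injection $\iota:H\times\w\hookrightarrow\kappa$ and any entourage $W_0\in\U_X$, and define
$$\Psi:\U_X^\kappa\to\U_X^{H\times\w},\qquad \Psi\big((U_\alpha)_{\alpha\in\kappa}\big)(g,n)=U_{\iota(g,n)}.$$
This map is monotone (pointwise order), and cofinal: given $(V_{g,n})\in\U_X^{H\times\w}$, set $U_{\iota(g,n)}=V_{g,n}$ and $U_\alpha=W_0$ for $\alpha\notin\iota(H\times\w)$, so that $\Psi\big((U_\alpha)\big)=(V_{g,n})$.

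The composition $\Phi\circ\Psi:\U_X^\kappa\to \Tau_e(G)$ is monotone and cofinal, delivering the desired reduction $\U_X^\kappa\succcurlyeq\Tau_e(G)$. There is essentially no obstacle here: the substantive work was done inside Theorem~\ref{t:strong}, and the corollary is a bookkeeping step transferring the natural index set $H\times\w$ to the cardinal $\kappa$ via the density estimate $|H|\le\kappa$.
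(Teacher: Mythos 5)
Your proof is correct and follows exactly the route the paper intends: the corollary is stated without proof as an immediate consequence of Theorem~\ref{t:strong}, obtained by taking a dense set $D$ of cardinality $\kappa$, noting $|H\times\w|\le\kappa$, and composing the resulting monotone cofinal maps. The bookkeeping with the injection $\iota$ and the filler entourage $W_0$ is exactly the standard way to pass from $\U_X^{H\times\w}$ to $\U_X^\kappa$, so there is nothing to add.
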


\begin{corollary}\label{c:strong-sep}  Let $(G,\delta)$ be a strong topological group over an $\w^\w$-based uniform space $X$. If the space $X$ is separable, then the topological group $G$ has an $\w^\w$-base.
\end{corollary}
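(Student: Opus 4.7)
The plan is to extract this corollary directly from Theorem~\ref{t:strong} by exploiting separability to make the indexing set of the neighborhood base countable, and then invoking the $\w^\w$-base of $\U_X$ to collapse the resulting product poset to $\w^\w$.

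First, I would fix a countable dense subset $D\subset X$, which exists by separability, and let $H\subset G$ be the subgroup algebraically generated by $\delta(D)$. Since $D$ is countable and the group operations produce only countably many words from a countable alphabet, $H$ is countable; in particular $H\times\w$ is a countable index set. Applying Theorem~\ref{t:strong} to the dense subset $D$, I obtain the neighborhood base
\[
\mathcal B=\Big\{\textstyle\bigotimes_{n\in\w}\bigcup_{g\in H}gU_{g,n}^{\pm}g^{-1}:(U_{g,n})_{(g,n)\in H\times\w}\in\U_X^{H\times\w}\Big\}
\]
at the unit of $G$. The assignment $(U_{g,n})_{(g,n)\in H\times\w}\mapsto \bigotimes_{n\in\w}\bigcup_{g\in H}gU_{g,n}^\pm g^{-1}$ is evidently monotone (with respect to the product of the reverse-inclusion order on $\U_X^{H\times\w}$) and its image is cofinal in $\Tau_e(G)$, so it witnesses the reduction $\U_X^{H\times\w}\succcurlyeq\Tau_e(G)$.

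Next, since $X$ is $\w^\w$-based, the uniformity $\U_X$ (as a poset under reverse inclusion) satisfies $\w^\w\succcurlyeq\U_X$. Raising to the countable power $H\times\w$ preserves reducibility coordinatewise, so
\[
\w^\w\cong(\w^\w)^\w\cong(\w^\w)^{H\times\w}\succcurlyeq \U_X^{H\times\w}\succcurlyeq\Tau_e(G).
\]
Thus $\Tau_e(G)$ admits a monotone cofinal map from $\w^\w$, which is exactly the statement that the topological group $G$ has a neighborhood $\w^\w$-base at the unit $e$.

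Finally, I would appeal to the homogeneity of topological groups: for any $x\in G$ the left translation $L_x\colon G\to G$, $L_x(y)=xy$, is a homeomorphism mapping $e$ to $x$, so an $\w^\w$-base at $e$ translates to an $\w^\w$-base at every point of $G$. Hence $G$ has an $\w^\w$-base. There is no serious obstacle in this argument; the only point that deserves attention is the bookkeeping that guarantees $H\times\w$ is countable (so that $(\w^\w)^{H\times\w}\cong\w^\w$), which follows immediately from countability of $D$ combined with the standard word-length count.
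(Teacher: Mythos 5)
Your proposal is correct and follows essentially the same route as the paper: the paper derives this corollary from Theorem~\ref{t:strong} via the intermediate Corollary~\ref{c:strong-dens} (which records $\U_X^{\kappa}\succcurlyeq\Tau_e(G)$ for density $\kappa$), and your argument is exactly that derivation with $\kappa=\w$, including the reduction $\w^\w\cong(\w^\w)^\w\succcurlyeq\U_X^{H\times\w}\succcurlyeq\Tau_e(G)$ and the appeal to homogeneity.
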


Theorem~\ref{t:strong} has a modification for strong balanced topological groups over uniform spaces. A topological group $G$ is {\em balanced} if for every neighborhood $U\in\Tau_e(G)$ there exists a neighborhood $V\in\Tau_e(G)$ such that $gVg^{-1}\subset U$ for all $g\in G$. A group topology $\tau$ on a group $G$ will be called {\em balanced} if the topological group $(G,\tau)$ is balanced.

\begin{definition} A {\em strong balanced topological group} over a uniform space $X$ is a pair $(G,\delta)$ consisting of a balanced topological group $G$ and a uniformly continuous map $\delta:X\to G$ such that the image $\delta(X)$ algebraically generates $G$ and $G$ carries the strongest topology among all balanced group topologies $\tau$ on $G$ making the map $\delta:X\to(G,\tau)$ uniformly continuous.
\end{definition}

The following description of the topology of a strong balanced topological groups can be proved by analogy with Theorem~\ref{t:strong}, see also \cite{LPT}.

\begin{theorem}\label{t:strong-balance} For a strong balanced topological group $(G,\delta)$ over a uniform space $X$ the family
$$\mathcal B=\big\{\textstyle{\bigotimes}_{n\in\w}\bigcup_{g\in G}gU_n^\pm g^{-1}:(U_n)_{n\in\w}\in \U_X^\w\}$$is a neighborhood base at the unit of the topological group $G$. Consequently, $\U_X^\w\succcurlyeq \Tau_e(G)$.
\end{theorem}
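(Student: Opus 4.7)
The proof will parallel Theorem~\ref{t:strong}, with the crucial simplification that every basic set $B = \bigotimes_{n\in\w}\bigcup_{g\in G}gU_n^\pm g^{-1} \in \mathcal B$ is automatically conjugation-invariant: indeed $h\big(\bigcup_{g\in G} gU_n^\pm g^{-1}\big)h^{-1}=\bigcup_{g'\in G} g'U_n^\pm(g')^{-1}$, and a permutation product of conjugation-invariant sets is again conjugation-invariant. My plan is to define $\tau$ on $G$ as the topology consisting of all $W\subset G$ such that for every $x\in W$ some $B\in\mathcal B$ satisfies $xB\subset W$, verify that $\tau$ is a balanced group topology, check that $\delta:X\to(G,\tau)$ is uniformly continuous, and then conclude $\tau=\Tau$ from the universal property of the strong balanced topological group $(G,\delta)$.

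First I would check the Pontryagin axioms (see \cite[1.3.12]{AT}) for $\mathcal B$. Symmetry and the axiom $VV\subset U$ follow by setting $V_n=\bigcap_{k\le 2n+1}U_k$, exactly as in Theorem~\ref{t:strong}; the shifting axiom is handled by the same indexing trick; and closure under intersections is immediate by taking entrywise intersections of the defining sequences. The conjugation axiom, which in Theorem~\ref{t:strong} forced the $H\times\w$-parametrization, becomes trivial here by the invariance observation above: one may take $V:=U$. Thus $\tau$ is a balanced group topology. The uniform continuity of $\delta:X\to(G,\tau)$ is witnessed, for a given basic neighborhood, by the single entourage $U_0\in\U_X$, so by the defining universal property of $(G,\delta)$ we get $\tau\subset\Tau$.

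For the reverse inclusion $\Tau\subset\tau$, given $V\in\Tau_e(G)$ the main step is to construct a decreasing sequence $(V_n)_{n\in\w}$ of symmetric \emph{conjugation-invariant} neighborhoods of the unit satisfying $V_0^3\subset V$ and $V_{n+1}^3\subset V_n$. This is built by alternating the standard cubing construction with a balanced refinement: given $W\in\Tau_e(G)$, balancedness produces $W'\in\Tau_e(G)$ with $gW'g^{-1}\subset W$ for all $g\in G$, and then $\bigcup_g gW'g^{-1}\subset W$ is a conjugation-invariant neighborhood of $e$ contained in $W$. The classical permutation-product estimate (the same one used at the end of the proof of Theorem~\ref{t:strong}) then yields $\bigotimes_{n\in\w} V_n\subset V$. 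Uniform continuity of $\delta$ ensures that each set $U_n=\{(x,y)\in X\times X:\{\delta(x)\delta(y)^{-1},\delta(x)^{-1}\delta(y)\}\subset V_n\}$ belongs to $\U_X$, and conjugation-invariance of $V_n$ gives $gU_n^\pm g^{-1}\subset gV_n g^{-1}=V_n$ for every $g\in G$. Consequently the basic neighborhood $\bigotimes_{n\in\w}\bigcup_{g\in G} gU_n^\pm g^{-1}\subset\bigotimes_{n\in\w}V_n\subset V$ belongs to $\mathcal B$, so $V\in\tau$. The resulting monotone cofinal assignment $(U_n)_{n\in\w}\mapsto \bigotimes_{n\in\w}\bigcup_{g\in G}gU_n^\pm g^{-1}$ is precisely the reduction $\U_X^\w\succcurlyeq\Tau_e(G)$. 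The main technical point to handle carefully is the simultaneous preservation of conjugation-invariance and of the cubing condition $V_{n+1}^3\subset V_n$ during the inductive construction of $(V_n)$; this is exactly the place where balancedness of $G$ is essentially used and is what permits the $\w$-indexed (rather than $H\times\w$-indexed) description of $\mathcal B$.
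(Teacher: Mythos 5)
Your proof is correct and is precisely the argument the paper intends: the paper gives no separate proof of Theorem~\ref{t:strong-balance}, stating only that it ``can be proved by analogy with Theorem~\ref{t:strong}'', and your write-up carries out exactly that analogy, with the right two observations (conjugation-invariance of the basic sets, which trivializes the conjugation axiom and removes the $H\times\w$-indexing, and the use of balancedness to produce conjugation-invariant symmetric neighborhoods $V_n$ in the reverse inclusion). No gaps beyond the minor symmetrization bookkeeping already implicit in the paper's own proof of Theorem~\ref{t:strong}.
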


In this theorem for an entourage $U\in\U_X$ by $U^\pm$ we denote the set
$$\big\{\delta(x)^\e\delta(y)^{-\e}:(x,y)\in U,\;\e\in\{-1,1\}\big\}\subset G.$$

Theorem~\ref{t:strong-balance} implies the following criterion.

\begin{corollary} For a strong balanced topological group $(G,\delta)$ over a uniform space $X$ the topological group $G$ has an $\w^\w$-base if the uniformity of $X$ has an $\w^\w$-base.
\end{corollary}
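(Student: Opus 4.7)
The plan is to deduce this corollary directly from Theorem~\ref{t:strong-balance} by chaining reductions of posets. First I would unpack the hypothesis: the uniformity $\U_X$ has an $\w^\w$-base, which in the language of Section~\ref{s:pre} means precisely that $\w^\w\succcurlyeq\U_X$, i.e., there is a monotone cofinal map from $\w^\w$ into the poset $(\U_X,\supset)$. Similarly, the conclusion to be established is that $\w^\w\succcurlyeq\Tau_e(G)$, since a topological group has an $\w^\w$-base if and only if it has a neighborhood $\w^\w$-base at its identity.

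The key intermediate step is to upgrade the reduction $\w^\w\succcurlyeq\U_X$ to a reduction $\w^\w\succcurlyeq\U_X^\w$. This uses the fact that countable products of the poset $\w^\w$ reduce back to $\w^\w$: indeed $(\w^\w)^\w\cong\w^{\w\times\w}\cong\w^\w$ via any bijection $\w\times\w\to\w$, and the componentwise composition of monotone cofinal maps $\w^\w\to\U_X$ in each coordinate yields a monotone cofinal map $(\w^\w)^\w\to\U_X^\w$. Combining these gives $\w^\w\cong(\w^\w)^\w\succcurlyeq\U_X^\w$.

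Now I would apply Theorem~\ref{t:strong-balance}, which supplies the reduction $\U_X^\w\succcurlyeq\Tau_e(G)$. Composing monotone cofinal maps is again monotone and cofinal, so by transitivity
\[
\w^\w\succcurlyeq\U_X^\w\succcurlyeq\Tau_e(G),
\]
which means there is a monotone cofinal map $\w^\w\to\Tau_e(G)$. Translating this back to the language of neighborhood bases, the topological group $G$ admits a neighborhood base $(U_\alpha)_{\alpha\in\w^\w}$ at the unit satisfying $U_\beta\subset U_\alpha$ whenever $\alpha\le\beta$, i.e.\ an $\w^\w$-base at the identity, and hence an $\w^\w$-base on the whole of $G$ by left translation.

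There is essentially no obstacle here; all of the content has been absorbed into Theorem~\ref{t:strong-balance}. The only routine point one should be careful about is recording the isomorphism $(\w^\w)^\w\cong\w^\w$ and checking that the coordinatewise lift of a monotone cofinal map remains monotone and cofinal on the product poset, which is immediate from the coordinatewise definition of the order on $\U_X^\w$.
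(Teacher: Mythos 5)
Your proposal is correct and is exactly the argument the paper intends: the corollary is stated as an immediate consequence of Theorem~\ref{t:strong-balance}, obtained by chaining $\w^\w\cong(\w^\w)^\w\succcurlyeq\U_X^\w\succcurlyeq\Tau_e(G)$, just as you do. The routine verifications you flag (the isomorphism $(\w^\w)^\w\cong\w^\w$ and the coordinatewise lift of monotone cofinal maps) are standard in the paper's framework and need no further justification.
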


Finally we consider strong (balanced) topological groups over uniform $P_u$-spaces.  Observe that a uniform space $X$ is a uniform $P_u$-space if and only if $\add(\U_X)>\w$.

A topological group $G$ is called a {\em topological $P$-group} if its underlying topological space is a $P$-space.
It is easy to see that a topological group $G$ is a $P$-group if and only if $G$ endowed with the two-sided uniformity is a uniform $P_u$-space.

\begin{corollary}\label{c:strong-P} Let $(G,\delta)$ be a strong topological group over a uniform $P_u$-space $X$. If $\cov^\sharp(X)\le \add(X)$, then the topological group $G$ is balanced and the family $$\mathcal B=\big\{\textstyle{\bigotimes_{n\in\w}\bigcup_{g\in G}gU^\pm g^{-1}:U\in\U_X}\big\}$$ is a neighborhood base at the unit $e$ of $G$. Consequently $G$ is a $P$-group and $\U_X\succcurlyeq \Tau_e(G)$.
\end{corollary}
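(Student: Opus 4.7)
The plan is to first establish balancedness of $G$, then use Theorem~\ref{t:strong-balance} together with the $P_u$-hypothesis to extract $\mathcal B$ as a neighborhood base at $e$. Once $\mathcal B$ is shown to be a base, monotonicity of the map $U\mapsto V_U$ realizes $\U_X\succcurlyeq\Tau_e(G)$; the inclusion $V_{\bigcap_n U_n}\subseteq\bigcap_n V_{U_n}$, valid because $\U_X$ is closed under countable intersections, then yields the $P$-group property.

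For balancedness, I would proceed as follows. Given any $W\in\Tau_e(G)$, apply Theorem~\ref{t:strong} with $D=X$ (so $H=G$) to obtain a basic inner approximation $W'=\bigotimes_{n\in\w}\bigcup_{g\in G}gU_{g,n}^\pm g^{-1}\subseteq W$. A direct reindexing check ($g\mapsto g_0g$) shows that for every $U\in\U_X$ the set $V_U=\bigotimes_{n\in\w}\bigcup_{g\in G}gU^\pm g^{-1}$ is invariant under conjugation by each $g_0\in G$. Hence it suffices to locate $U\in\U_X$ with $V_U\subseteq W'$, which furnishes a conjugation-invariant neighborhood of $e$ inside $W$ and so proves balancedness.

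The crux is constructing such $U$. Here the combined hypotheses are used: the $P_u$-property allows one to replace each sequence $(U_{g,n})_{n\in\w}$ by a single entourage, and $\cov^\sharp(X)\le\add(X)$ is invoked to reduce the effective index set $G\times\w$ to one of cardinality strictly below $\add(X)$, after which the intersection $U:=\bigcap U_{g,n}$ over this reduced set lies in $\U_X$. The inclusion $V_U\subseteq W'$ is then verified by a word-length decomposition: a general conjugate $gug^{-1}$, with $g=\delta(x_1)^{\epsilon_1}\cdots\delta(x_k)^{\epsilon_k}$ and $u\in U^\pm$, is rewritten via the identity $gug^{-1}=(gh^{-1})(huh^{-1})(hg^{-1})$ where $h\in G$ approximates $g$ letter-by-letter using a dense set witnessing $\cov^\sharp$. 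Telescoping $gh^{-1}$ into conjugates of $U^\pm$-elements by prefixes of $g$ and iterating the decomposition produces a finite product of elements of $\bigcup_{h'\in G}h'U^\pm(h')^{-1}$, which slots into $W'$ via the permutation freedom of $\bigotimes_n$, once one arranges that $U\subseteq U_{h',n}$ for every relevant $(h',n)$.

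Once $G$ is known to be balanced it coincides with the strong balanced topological group over $X$, and Theorem~\ref{t:strong-balance} delivers a base at $e$ consisting of the sets $\bigotimes_n\bigcup_{g\in G}gU_n^\pm g^{-1}$ for $(U_n)\in\U_X^\w$. The $P_u$-property gives $U:=\bigcap_n U_n\in\U_X$ and $V_U\subseteq\bigotimes_n\bigcup_g gU_n^\pm g^{-1}$, so $\mathcal B$ is cofinal in this balanced base and is itself a neighborhood base at $e$. The main obstacle is the cofinality argument in the balancedness step: in examples such as $X=\w_1$ the uniform density may coincide with $\add(X)$, so the reduction of the index set $G\times\w$ cannot be obtained through a naive density bound and must be performed through the hypothesis $\cov^\sharp(X)\le\add(X)$, by pre-selecting the family $(U_{g,n})$ from covers of cardinality strictly below $\add(X)$ so that the relevant intersection survives inside $\U_X$.
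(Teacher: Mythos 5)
Your overall architecture (conjugation\-/invariance of $V_U=\bigotimes_{n\in\w}\bigcup_{g\in G}gU^\pm g^{-1}$, reduction of the conjugating set to something of size $<\add(X)$ so that the $P_u$/additivity hypothesis lets you intersect the relevant entourages, then the passage through Theorem~\ref{t:strong-balance} and the countable-intersection argument for the $P$-group property) matches the paper's strategy, and the endgame is fine. But there is a genuine gap exactly at the step you yourself call the crux. The hypothesis $\cov^\sharp(X)\le\add(X)$ is a statement about the uniform space $X$: it gives covers of $\delta(X)$ by fewer than $\kappa=\add(X)$ balls. What the argument actually needs is that the \emph{whole group} $G$ is covered by fewer than $\kappa$ translates of each neighborhood of $e$, i.e.\ $\cov^\sharp(G;\U_G)\le\kappa$, because the index set you must shrink is $G\times\w$, with $g$ ranging over all of $G$. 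Your phrase ``pre-selecting the family $(U_{g,n})$ from covers of cardinality strictly below $\add(X)$'' is circular on this point: the available covers control $\delta(X)$, not $G$, and it is precisely the passage from the generating set to the generated group that has to be proved. This passage is not free: one writes $G=\bigcup_{m\in\w}(\delta(X)\cup\delta(X)^{-1})^m$, covers each finite power by $<\kappa$ translates, and then takes a union over $m\in\w$ --- which stays below $\kappa$ only because $\add(X)$ is a \emph{regular uncountable} cardinal (the paper cites \cite[513C]{F-MT} and the argument of Theorem~5.1.19 of \cite{Tk} for exactly this). Regularity is never invoked in your proposal, and your ``letter-by-letter'' telescoping of a single conjugator $gug^{-1}=(gh^{-1})(huh^{-1})(hg^{-1})$ does not substitute for it: the prefixes appearing in the telescoped product are again arbitrary elements of $G$, so without the narrowness of $G$ you have not reduced the set of conjugators at all, only re-expressed it.

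Two smaller remarks. First, once $\cov^\sharp(G;\U_G)\le\kappa$ is in hand, the paper avoids your permutation-product bookkeeping entirely by choosing $U_{n+1}^3\subset U_n=U_n^{-1}$ inside the given neighborhood and observing that $H=\bigcap_{n\in\w}U_n$ is a \emph{subgroup}; it then suffices to push a single conjugation-invariant set $\bigcup_{x\in G}xW^\pm x^{-1}$ into $H$, and arbitrary finite products stay in $H$ automatically. Your direct fit of length-$k$ products into $A_0\cdots A_{k-1}\subset\bigotimes_nA_n$ does work once $U\subset U_{g,n}$ for all relevant pairs, but the subgroup trick is cleaner and removes the need to track permutations. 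Second, your route to the base $\mathcal B$ via ``$G$ balanced $\Rightarrow$ $G$ is the strong balanced group $\Rightarrow$ apply Theorem~\ref{t:strong-balance} and intersect the sequence $(U_n)$'' is a legitimate alternative to the paper's direct maximality argument ($\tau_b\supset\tau$ forces $\tau_b=\tau$), and that part I would accept as written.
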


\begin{proof} The statement is trivial if the uniform $P_u$-space $X$ is discrete (and then $G$ is discrete). So, we assume that $X$ is not discrete and hence $\w<\add(X)<\infty$.
Given a neighborhood $U\in\tau_e(G)$ of the unit $e$ of $G$, we shall find a basic set $B\in\mathcal B$ with $B\subset U$. Using the continuity of multiplication and inversion at $e$, choose a sequence of neighborhoods $\{U_n\}_{n\in\w}\subset\Tau_e(G)$ such that $U_0^3\subset U_{-1}:= U\cap U^{-1}$ and $U_{n+1}^3\subset U_n=U_{n}^{-1}$ for all $n\in\w$. It follows that $H=\bigcap_{n\in\w}U_n$ is a subgroup of $G$. By $\U_G$ we denote the two-sided uniformity of the topological group $G$.

 By \cite[513C]{F-MT}, the cardinal $\kappa=\add(X)>\w$ is regular.
   The uniform continuity of the canonical map $\delta:X\to G$ implies that $\cov^\sharp(\delta(X);\U_G)\le \cov^\sharp(X)\le\add(X)=\kappa$. Since the group $G$ is algebraically generated by the set $\delta(X)$, we can use the regularity of the cardinal $\kappa$ and repeating the proof of Theorem 5.1.19 of \cite{Tk} can show that $\cov^\sharp(G;\U_G)\le \cov^\sharp(\delta(X);\U_G)\le\kappa$. Then for every $n\in\w$ there exists a set $C_n\subset G$ of cardinality $|C_n|<\kappa$ such that $G=C_nU_{n}$. For every $x\in C_n$ choose a neighborhood $W_{n,x}\in\Tau_e(G)$ such that $W_{n,x}x\subset xU_{n}$. Then for every $y\in xU_{n}$ we get $$W_{n,x}y\subset W_{n,x}xU_{n}\subset xU_{n}U_{n}\subset yU_{n}^{-1}U_nU_n\subset yU_{n-1}.$$
 It follows that the family $\V=\{W_{n,x}:n\in\w,\;x\in C_n\}$ has cardinality $|\V|\le\sum_{n\in\w}|C_n|<\kappa$ and for any $n\in\w$ and $x\in G$ there exists a neighborhood $V_{n,x}\in\V$ such that $V_{n,x}x^{-1}\subset x^{-1}U_n$. By the uniform continuity of the canonical map $\delta:X\to G$, for every $V\in\V$ the entourage $$\tilde V=\{(x,y)\in X\times X:\delta(x)\in \delta(y)V\cap V\delta(y)\}=\{(x,y)\in X\times X:\{\delta(x)\delta(y)^{-1},\delta(x)^{-1}\delta(y)\}\subset V\}$$ belongs to the uniformity $\U_X$ of $X$. Since $|\{\tilde V\}_{V\in\V}|\le|\V|<\kappa=\add(X)$, the intersection $W=\bigcap_{V\in\V}\tilde V$ belongs to the uniformity $\U_X$. Observe that for every $V\in\V$ we get $W^\pm\subset \tilde V^\pm\subset V$. Consequently, for every $n\in\w$ and $x\in G$ we get $xW^\pm x^{-1}\subset xV_{n,x}x^{-1}\subset U_n$ and hence $\bigcup_{x\in G}xW^\pm x^{-1}\subset \bigcap_{n\in\w}U_n\subset H$. Since $H$ is a subgroup, we conclude that the set $B=\bigotimes_{n\in\w}\bigcup_{x\in G}xW^\pm x^{-1}\in\mathcal B$ is contained in $H\subset U$. This implies that the (balanced) group topology $\tau_b$ on $G$ generated by the base $\mathcal B$ is stronger than the topology $\tau$ of the group $G$. Taking into account that $\tau$ of $G$ is the strongest group topology on $G$ making the canonical map $\delta:X\to G$ uniformly continuous, we conclude that $\tau=\tau_b$ and hence the topology $\tau$ is balanced and $\mathcal B$ is a neighborhood base of the topology $\tau$ at $e$.

To see that $G$ is a $P$-groups, choose a countable subfamily $\{U_n\}_{n\in\w}\subset \mathcal B$ and for every $n\in\w$ find an entourage $V_n\in\U_X$ such that $U_n=\bigotimes_{m\in\w}\bigcup_{g\in G}gV_n^\pm g^{-1}$. Since $X$ is a uniform $P_u$-space, the intersection $V=\bigcap_{n\in\w}V_n$ belongs to the uniformity $\U_X$. Then the neighborhood $U=\bigotimes_{n\in\w}\bigcup_{g\in G}g V g^{-1}\in\mathcal B$ of the unit $e$ is contained in the intersection $\bigcap_{n\in\w}U_n$, witnessing that $G$ is a topological $P$-groups.
 \end{proof}

\begin{theorem}\label{t:sG-ca} Let $(G,\delta)$ be a strong topological group over a uniform space $X$. If $X$ is $\w^\w$-based and $\cov^\sharp(X)\le\add(X)$, then the topological group $G$ has an $\w^\w$-base.
\end{theorem}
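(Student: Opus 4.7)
The plan is to reduce Theorem~\ref{t:sG-ca} to the two previously established results, Corollary~\ref{c:strong-sep} and Corollary~\ref{c:strong-P}, via a dichotomy on whether $X$ is a uniform $P_u$-space. Recall that $X$ is a $P_u$-space if and only if $\add(X)>\w$, so in particular every discrete uniform space is $P_u$.

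Suppose first that $X$ is a $P_u$-space. Then the hypotheses of Corollary~\ref{c:strong-P} are satisfied (we have $\cov^\sharp(X)\le\add(X)$ by assumption), and this corollary yields the reduction $\U_X\succcurlyeq\Tau_e(G)$. Since $X$ is $\w^\w$-based, we also have $\w^\w\succcurlyeq\U_X$. Composing these two reductions (via the transitivity of $\succcurlyeq$) gives $\w^\w\succcurlyeq\Tau_e(G)$, which by definition says that the topological group $G$ has an $\w^\w$-base.

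Suppose instead that $X$ is not a $P_u$-space. Since the additivity of any non-discrete uniform space is at least $\w$, we must have $\add(X)=\w$, and the assumption $\cov^\sharp(X)\le\add(X)$ then forces $\cov^\sharp(X)\le\w$, i.e., $X$ is totally bounded. Applying Theorem~\ref{t:Sigma}(4) to the $\w^\w$-based totally bounded uniform space $X$, we conclude that $X$ is (metrizable and) separable. Corollary~\ref{c:strong-sep} now applies directly to the strong topological group $(G,\delta)$ over the separable $\w^\w$-based uniform space $X$ and gives that $G$ has an $\w^\w$-base.

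There is essentially no real obstacle: the two previously established corollaries cover complementary cases, one handling the uncountable-additivity regime (where $X$ is a $P_u$-space) and the other handling the totally-bounded regime forced by $\cov^\sharp(X)\le\add(X)=\w$. The only point requiring attention is to verify that the non-$P_u$ branch forces $X$ to be separable, which is precisely where the $\w^\w$-based hypothesis of Theorem~\ref{t:Sigma}(4) is used.
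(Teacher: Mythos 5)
Your proof is correct and follows essentially the same route as the paper: the same dichotomy on whether $X$ is a uniform $P_u$-space, with Corollary~\ref{c:strong-P} handling the $P_u$ case and Corollary~\ref{c:strong-sep} the other. The only (harmless) difference is in deriving separability in the totally bounded case: you invoke Theorem~\ref{t:Sigma}(4) directly on $X$, whereas the paper passes to the compact completion $\bar X$ and uses its metrizability; both yield the same conclusion.
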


\begin{proof} Assume that the uniform space $X$ is $\w^\w$-based and $\cov^\sharp(X)\le\add(X)$. If $X$ is a uniform $P_u$-space, then by Corollary~\ref{c:strong-P}, $\w^\w\succcurlyeq \U_X\succcurlyeq\Tau_e(G)$, which means that the topological group $G$ has an $\w^\w$-base.

It remains to consider the case when $X$ is not a $P_u$-space and hence $\add(X)=\w$. In this case the inequality $\cov^\sharp(X)\le\add(X)=\w$ means that the uniform space $X$ is totally bounded and hence has compact completion $\bar X$. By \cite[6.8.4]{Ban}, the uniform space $\bar X$ is $\w^\w$-based and by Theorem~\ref{t:Sigma}, the compact space $\bar X$ is metrizable and hence separable. Then it subspace $X$ is separable too, and by Corollary~\ref{c:strong-sep} the topological group $G$ has an $\w^\w$-base.
\end{proof}

\section{Free Abelian (Boolean) topological groups}

For a uniform space $X$ its {\em free Abelian topological group} is a pair $(\AG_u(X),\delta_X)$ consisting of an Abelian topological group $\AG_u(X)$ and a uniformly continuous map $\delta_X:X\to \AG_u(X)$ such that for every uniformly continuous map $f:X\to Y$ into an Abelian topological group $Y$ there exists a continuous group homomorphism $\bar f:\AG_u(X)\to Y$ such that $\bar f\circ\delta_X=f$.

Replacing the adjective ``Abelian'' by ``Boolean'' in this definition, we get the definition of a {\em free Boolean topological group} $(\BG_u(X),\delta_X)$ over a uniform space $X$.

Observe also that for any uniform space $X$ the free Abelian and Boolean topological groups $(\AG_u(X),\delta_X)$ and $(\BG_u(X),\delta_X)$ are strong balanced topological groups over $X$. This observation allows us to apply Theorem~\ref{t:strong-balance} to describing the topological structure of the groups $\AG_u(X)$ and $\BG_u(X)$.

\begin{theorem}\label{t:AGu} Let $X$ be a uniform space. For the subspace $$X-X=\{x-y:x,y\in \delta_X(X)\}\subset\AG_u(X)$$ the following reductions hold:
$$\U(X)^\w\succcurlyeq\Tau_0(\AG_u(X))\succcurlyeq \Tau_0(X-X)\succcurlyeq \U_X.$$
\end{theorem}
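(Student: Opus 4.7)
The plan is to verify the three claimed reductions in turn. The first, $\U_X^\w\succcurlyeq\Tau_0(\AG_u(X))$, is nothing but Theorem~\ref{t:strong-balance} applied to the strong balanced topological group $(\AG_u(X),\delta_X)$; the commutativity collapses each conjugation factor $gU_n^{\pm}g^{-1}$ to $U_n^{\pm}$ and turns the permutation products $\bigotimes_{n\in\w}$ into ordinary finite sums $\sum_{n} U_n^{\pm}$, so the very same monotone cofinal map $(U_n)_{n\in\w}\mapsto\sum_n U_n^{\pm}$ supplied by that theorem does the job. The second reduction, $\Tau_0(\AG_u(X))\succcurlyeq\Tau_0(X-X)$, is supplied by the restriction map $V\mapsto V\cap(X-X)$, which is visibly monotone and is cofinal by the very definition of the subspace topology at $0\in X-X\subset\AG_u(X)$.

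The substantive point is the third reduction, $\Tau_0(X-X)\succcurlyeq\U_X$. I would take the monotone map $V\mapsto U_V$ defined by
$$U_V:=\{(x,y)\in X\times X:\delta_X(x)-\delta_X(y)\in V\}.$$
Monotonicity is immediate from the definition. Cofinality is equivalent to the assertion that $\delta_X:X\to\AG_u(X)$ is a uniform embedding (with $\AG_u(X)$ carrying its two-sided uniformity), and this is the technical core of the theorem. To establish it, I would fix $U\in\U_X$, pick a uniformly continuous pseudometric $d$ on $X$ with $[d]_{<1}\subset U$ (such $d$ generate $\U_X$), pass to the metric identification $X_d$, and compose with the canonical isometric embedding $X_d\hookrightarrow\mathrm{AE}(X_d)$ into the Arens--Eells (Lipschitz-free) Banach space. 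The resulting uniformly continuous map $X\to\mathrm{AE}(X_d)$ into an Abelian topological group extends, by the universal property of $\AG_u(X)$, to a continuous homomorphism $\bar\phi:\AG_u(X)\to\mathrm{AE}(X_d)$. Setting $W:=\bar\phi^{-1}(\{v:\|v\|<1\})\in\Tau_0(\AG_u(X))$ and $V:=W\cap(X-X)$, the isometry identity $\|\bar\phi(\delta_X(x)-\delta_X(y))\|=d(x,y)$ yields $U_V\subset[d]_{<1}\subset U$, which witnesses cofinality.

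The main obstacle is this last cofinality step; the other two reductions are essentially bookkeeping around the previously established Theorem~\ref{t:strong-balance} and the definition of the subspace topology. A possible shortcut would be to invoke the known fact that $\delta_X$ is a uniform embedding directly from \cite{LPT} or \cite{Rai}, but the Arens--Eells construction above has the advantage of being self-contained and fitting the flavor of the free-construction arguments used elsewhere in the paper.
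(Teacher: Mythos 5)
Your proposal is correct. The first two reductions are handled exactly as in the paper: $\U_X^\w\succcurlyeq\Tau_0(\AG_u(X))$ is an application of Theorem~\ref{t:strong-balance} to the strong balanced topological group $(\AG_u(X),\delta_X)$, and $\Tau_0(\AG_u(X))\succcurlyeq\Tau_0(X-X)$ is immediate from the inclusion $X-X\subset\AG_u(X)$. For the third reduction you also use the same monotone map $V\mapsto E_V=\{(x,y):\delta_X(x)-\delta_X(y)\in V\}$ (one remark: you should note in passing that $E_V\in\U_X$, which follows from the uniform continuity of $\delta_X$ --- this is what makes the map well-defined, not just monotone). The genuine divergence is in how cofinality is certified. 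The paper fixes a uniform pseudometric $d$ with $[d]_{<1}\subset U$ and invokes Graev's extension theorem (Theorem 7.2.2 of \cite{AT}) to extend $d$ to an invariant pseudometric $\hat d$ on $\AG_u(X)$; the unit $\hat d$-ball then gives the required neighborhood $V$ with $E_V\subset U$. You instead factor through the metric identification $X_d$ and its isometric embedding into the Arens--Eells (Lipschitz-free) Banach space, and pull back the unit ball along the induced homomorphism $\bar\phi:\AG_u(X)\to\mathrm{AE}(X_d)$, using $\|\bar\phi(\delta_X(x)-\delta_X(y))\|=d(x,y)$. Both arguments are sound and of comparable length. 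Graev's theorem keeps the construction internal to $\AG_u(X)$ and is the tool the paper reuses for $\FG_u(X)$ and $\BG_u(X)$ (where an ambient Banach space is not available), whereas your route exploits the universal property directly and only needs the existence of one sufficiently discriminating uniformly continuous map into an Abelian topological group --- a pattern that matches the semidirect-product arguments used later in Section~\ref{s:FG}.
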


\begin{proof} The reduction $\U_X^\w\succcurlyeq\Tau_0(\AG_u(X))$ follows from Theorem~\ref{t:strong-balance} and the observation that $(\AG_u(X),\delta_X)$ is a strong balanced topological group over $X$. The reduction $\Tau_0(\AG_u(X))\succcurlyeq \Tau_0(X-X)$ trivially follows from the inclusion $X-X\subset \AG_u(X)$.

To prove that $\Tau_0(X-X)\succcurlyeq \U_X$, consider the monotone map $E_*:\Tau_0(X-X)\to \U_X$ assigning to each neighborhood $V\in \Tau_0(X-X)$ the entourage $E_V=\{(x,y)\in X\times X:x-y\in V\}$. Here we identify the set $X$ with its image $\delta_X(X)$ in $\AG_u(X)$. The uniform continuity of the map $\delta_X:X\to\AG_u(X)$ guarantees that the entourage $E_V$ belongs to the uniformity $\U_X$ of $X$. It remains to prove that the map $E_*$ is cofinal. Given any entourage $U\in\U_X$, find a uniform pseudometric $d$ on $X$ such that $\{(x,y)\in X\times X:d(x,y)<1\}\subset U$. By  Graev's Theorem 7.2.2 \cite{AT}, the pseudometric $d$ can be extended to an invariant pseudometric $\hat d$ on the free group $\AG_u(X)$. Then for the unit ball $V=\{g\in\AG_u(X):\hat d(g,e)<1\}$ we get
$$
E_V=\{(x,y)\in X\times X:x-y\in V\}=\{(x,y)\in X\times X:\hat d(x,y)<1\}
=\{(x,y)\in X\times X:d(x,y)<1\}\subset U
$$and we are done.
\end{proof}

By analogy we can prove a similar reduction result for the free Boolean topological groups.

\begin{theorem}\label{t:BGu} For every uniform space $X$ and the subspace $$X+X=\{x+y:x,y\in \delta_X(X)\}\subset\BG_u(X)$$ we have the reductions:
$$\U_X^\w\succcurlyeq\Tau_0(\BG_u(X))\succcurlyeq \Tau_0(X+X)\succcurlyeq \U_X.$$
\end{theorem}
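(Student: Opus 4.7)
The plan is to follow the pattern of the proof of Theorem~\ref{t:AGu} verbatim, substituting $-$ by $+$ throughout (which is legal because every element of a Boolean group is its own inverse, so $x-y=x+y$ and the operation of taking inverses is trivial). First I would invoke the fact that $(\BG_u(X),\delta_X)$ is a strong balanced topological group over $X$: Boolean groups are Abelian and hence balanced, and the universal property of $\BG_u(X)$ says that its topology is the strongest group topology on the free Boolean group making $\delta_X$ uniformly continuous, which (by the universal property applied to balanced targets) coincides with the strongest \emph{balanced} group topology with this property. Therefore Theorem~\ref{t:strong-balance} applies and gives the first reduction $\U_X^\w\succcurlyeq\Tau_0(\BG_u(X))$.

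The middle reduction $\Tau_0(\BG_u(X))\succcurlyeq\Tau_0(X+X)$ is obtained simply by assigning to each neighborhood $W\in\Tau_0(\BG_u(X))$ of zero its trace $W\cap(X+X)\in\Tau_0(X+X)$; the map is obviously monotone, and it is cofinal because every neighborhood of $0$ in the subspace $X+X$ is by definition of the subspace topology of this form.

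For the third reduction $\Tau_0(X+X)\succcurlyeq\U_X$, which is the heart of the argument, I would define the monotone map $E_*\colon\Tau_0(X+X)\to\U_X$ by $V\mapsto E_V:=\{(x,y)\in X\times X:\delta_X(x)+\delta_X(y)\in V\}$, identifying $X$ with $\delta_X(X)\subset\BG_u(X)$. The uniform continuity of the map $X\times X\to\BG_u(X)$, $(x,y)\mapsto \delta_X(x)+\delta_X(y)$, guarantees $E_V\in\U_X$. To verify cofinality, given any $U\in\U_X$, choose a uniform pseudometric $d$ on $X$ with $\{(x,y):d(x,y)<1\}\subset U$. By the Graev-type extension theorem for free Boolean topological groups (see the surveys \cite{Sipa1}, \cite{Sipa2}), the pseudometric $d$ extends to an invariant pseudometric $\hat d$ on $\BG_u(X)$ satisfying $\hat d(x+y,0)=\hat d(x,y)=d(x,y)$ for $x,y\in X$. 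Setting $V=\{g\in X+X:\hat d(g,0)<1\}\in\Tau_0(X+X)$, we obtain
$$E_V=\{(x,y)\in X\times X:\hat d(x+y,0)<1\}=\{(x,y)\in X\times X:d(x,y)<1\}\subset U,$$
as required.

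The only step that is not completely routine is the Graev extension for the free Boolean group; I expect this to be the main (but minor) obstacle, since the standard Graev construction is usually stated for free and free Abelian topological groups. However, the construction is essentially the same: one defines $\hat d$ on a word $x_1+\dots+x_n$ (an unordered sum of distinct elements of $X$, which is the canonical normal form of an element of $\BG_u(X)$) as the infimum of $\sum_i d(x_{\sigma(2i-1)},x_{\sigma(2i)})$ over all pairings $\sigma$ of the indices, with the appropriate modification if $n$ is odd; invariance and the fact that $\hat d$ extends $d$ follow as in the Abelian case.
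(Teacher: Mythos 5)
Your proposal is correct and follows exactly the route the paper intends: the paper's entire proof of Theorem~\ref{t:BGu} is the single sentence ``By analogy we can prove a similar reduction result,'' referring to the proof of Theorem~\ref{t:AGu}, and your three steps (strong balanced group $\Rightarrow$ Theorem~\ref{t:strong-balance}, trace of neighborhoods onto the subspace $X+X$, and the map $V\mapsto E_V$ made cofinal via a Graev-type invariant extension of a uniform pseudometric) are precisely that analogy carried out. You have also correctly isolated the only point where the analogy is not literal --- the existence of the Graev extension for free Boolean topological groups --- and your sketch of it (matching pairs of letters in the normal form, as in Sipacheva's survey \cite{Sipa2}) is the standard construction, so no gap remains.
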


Unifying Theorems~\ref{t:AGu} and \ref{t:BGu} and taking into account that $(\w^\w)^\w\cong\w^\w$, we get the following characterization, whose ``abelian'' part was found also in \cite{GartMor}, \cite{LPT}, \cite{LT}.

\begin{theorem}\label{t:ABGu} For a uniform space $X$ the following conditions are equivalent:
\begin{enumerate}
\item The free Abelian topological group $\AG_u(X)$ has an $\w^\w$-base.
\item The free Boolean topological group $\BG_u(X)$ has an $\w^\w$-base.
\item The subspace $X-X=\{x-y:x,y\in\delta_X(X)\}\subset \AG_u(X)$ has a neighborhood $\w^\w$-base at zero.
\item The subspace $X+X=\{x+y:x,y\in\delta_X(X)\}\subset \BG_u(X)$ has a neighborhood $\w^\w$-base at zero.
\item The uniformity $\U_X$ has an $\w^\w$-base.
\end{enumerate}
\end{theorem}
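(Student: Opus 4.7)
The plan is to deduce Theorem~\ref{t:ABGu} essentially mechanically from the reduction chains already established in Theorems~\ref{t:AGu} and~\ref{t:BGu}, together with the standard identification $(\w^\w)^\w\cong \w^\w$. Recall that a topological group has an $\w^\w$-base if and only if it has a neighborhood $\w^\w$-base at the identity, which in the language of reductions amounts to the single relation $\w^\w\succcurlyeq \Tau_0(G)$. Likewise, the uniformity $\U_X$ has an $\w^\w$-base precisely when $\w^\w\succcurlyeq \U_X$. So the whole theorem reduces to showing that each of the posets $\Tau_0(\AG_u(X))$, $\Tau_0(\BG_u(X))$, $\Tau_0(X-X)$, $\Tau_0(X+X)$ is equivalent to $\U_X$ in the relation $\cong$ induced by $\preccurlyeq$.

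First I would handle the Abelian case. By Theorem~\ref{t:AGu} we have
\[
\U_X^\w\succcurlyeq \Tau_0(\AG_u(X))\succcurlyeq \Tau_0(X-X)\succcurlyeq \U_X.
\]
Assuming $(5)$, i.e.\ $\w^\w\succcurlyeq \U_X$, I would apply monotonicity of the product: $(\w^\w)^\w\succcurlyeq \U_X^\w$, and then invoke $(\w^\w)^\w\cong \w^\w$ (a monotone cofinal bijection is obtained from any bijection $\w\to\w\times\w$) to conclude $\w^\w\succcurlyeq \U_X^\w\succcurlyeq \Tau_0(\AG_u(X))\succcurlyeq \Tau_0(X-X)$, giving $(1)$ and $(3)$. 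Conversely, $(1)\Rightarrow (3)\Rightarrow(5)$ just by composing the last two reductions with $\w^\w\succcurlyeq\Tau_0(\AG_u(X))$. This establishes $(1)\Leftrightarrow(3)\Leftrightarrow(5)$.

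Next, I would run the identical argument for the Boolean case using Theorem~\ref{t:BGu}, which gives
\[
\U_X^\w\succcurlyeq \Tau_0(\BG_u(X))\succcurlyeq \Tau_0(X+X)\succcurlyeq \U_X,
\]
to obtain $(2)\Leftrightarrow(4)\Leftrightarrow(5)$. Combining with the previous paragraph yields the equivalence of all five conditions.

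There is no serious obstacle: the real content is already contained in Theorems~\ref{t:AGu} and~\ref{t:BGu}, whose ingredients are the description of the topology of strong balanced topological groups (Theorem~\ref{t:strong-balance}) and Graev's extension theorem for invariant pseudometrics. The only small point that needs to be flagged explicitly is the equivalence $(\w^\w)^\w\cong\w^\w$, which is what allows us to ``absorb'' the countable power in $\U_X^\w$ once we know $\U_X$ itself reduces to $\w^\w$; this is the standard reason why the somewhat lossy upper bound $\U_X^\w\succcurlyeq \Tau_0(\AG_u(X))$ is strong enough to produce an $\w^\w$-base at zero.
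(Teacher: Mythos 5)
Your proof is correct and follows exactly the paper's route: the authors likewise obtain Theorem~\ref{t:ABGu} by combining the reduction chains of Theorems~\ref{t:AGu} and~\ref{t:BGu} with the observation that $(\w^\w)^\w\cong\w^\w$. Nothing is missing.
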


 Taking into account that for a Tychonoff space $X$ the free Abelian and Boolean topological groups $\AG(X)$ and  $\BG(X)$ coincide with the free Abelian and Boolean topological groups $\AG_u(X)$ and $\BG_u(X)$ of the space $X$ endowed with the universal uniformity $\U_X$, we get the following characterization of Tychonoff spaces whose free Abelian and Boolean topological groups have an $\w^\w$-base.

\begin{corollary}\label{c:ABG} For a Tychonoff space $X$ the following conditions are equivalent:
\begin{enumerate}
\item The free Abelian topological group $\AG(X)$ of $X$ has an  $\w^\w$-base.
\item The free Boolean topological group $\BG(X)$ of $X$ has an $\w^\w$-base.
\item The subspace $X-X=\{x-y:x,y\in\delta_X(X)\}\subset \AG(X)$ has a neighborhood $\w^\w$-base at zero.
\item The subspace $X+X=\{x+y:x,y\in\delta_X(X)\}\subset \BG(X)$ has a neighborhood $\w^\w$-base at zero.
\item The universal uniformity $\U_X$ of $X$ has an $\w^\w$-base.
\end{enumerate}
\end{corollary}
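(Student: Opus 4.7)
The plan is to deduce Corollary~\ref{c:ABG} as an immediate specialization of Theorem~\ref{t:ABGu} to the Tychonoff space $X$ endowed with its universal uniformity $\U_X$. By the convention adopted just after the definitions of the free constructions, one has the identifications $\AG(X)=\AG_u(X,\U_X)$ and $\BG(X)=\BG_u(X,\U_X)$ as topological groups, with the same canonical map $\delta_X$. Under these identifications, each of the five conditions (1)--(5) of Corollary~\ref{c:ABG} is literally the corresponding condition of Theorem~\ref{t:ABGu}, so the equivalence follows once the identification of the free constructions is justified.

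The only step requiring any comment is checking that the universal property of $\AG_u(X,\U_X)$, which tests against uniformly continuous maps into Abelian topological groups, agrees with the universal property of $\AG(X)$, which tests against continuous such maps. For this I would invoke the standard fact that a continuous map $f\colon X\to Y$ from a Tychonoff space $X$ into a Hausdorff topological group $Y$ equipped with its two-sided uniformity is automatically uniformly continuous with respect to $\U_X$: a basic entourage of $Y$ has the form $\{(g,h):g^{-1}h\in V\cap V^{-1},\ hg^{-1}\in V\cap V^{-1}\}$ for some neighborhood $V$ of the unit, and pulling back the pseudometric associated with such an entourage produces a continuous pseudometric on $X$, which by the very definition of $\U_X$ yields an entourage in $\U_X$. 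Hence continuous and uniformly continuous maps from $(X,\U_X)$ into $Y$ coincide, so the two universal properties single out the same object. The identical argument works in the Boolean (and a fortiori Abelian) category, so $\BG(X)=\BG_u(X,\U_X)$ as well.

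Combining these remarks with Theorem~\ref{t:ABGu} yields the five-fold equivalence at once. I do not foresee a genuine obstacle: the corollary is an honest specialization rather than a substantive strengthening, and once the identification of the two pairs of free constructions is in place, all of the content is carried by Theorem~\ref{t:ABGu}, whose proof (via Theorems~\ref{t:AGu} and~\ref{t:BGu}) has already supplied the reductions $\U_X^\w\succcurlyeq\Tau_0(\AG_u(X))\succcurlyeq\Tau_0(X-X)\succcurlyeq\U_X$ and the analogous Boolean chain.
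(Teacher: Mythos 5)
Your proposal is correct and matches the paper's own argument: the paper obtains Corollary~\ref{c:ABG} precisely by specializing Theorem~\ref{t:ABGu} to $X$ endowed with its universal uniformity, using the convention $\AG(X)=\AG_u(X)$, $\BG(X)=\BG_u(X)$ (under which, as noted in the introduction, every continuous map to a uniform space is automatically uniformly continuous). Your extra remark justifying that identification is a harmless elaboration of what the paper treats as a standing convention.
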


The characterization Theorem~\ref{t:ABGu} has an interesting implication related to uniformly quotient maps.

\begin{definition} A surjective map $f:X\to Y$ between uniform spaces is {\em uniformly quotient} if an entourage $U\subset Y\times Y$ belongs to the uniformity $\U_Y$ if and only if its preimage $(f\times f)^{-1}(U)=\{(x,y)\in X\times X:(f(x),f(y))\in U\}$ belongs to the uniformity $\U_X$.
\end{definition}

This definition implies that each uniformly quotient map between uniform spaces is uniformly continuous.

\begin{lemma}\label{l:quot} For a uniformly quotient map $f:X\to Y$ between uniform spaces the induced group homomorphism $\bar f:\AG_u(X)\to\AG_u(Y)$ is open.
\end{lemma}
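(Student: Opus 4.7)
The strategy is to invoke Theorem~\ref{t:strong-balance} to describe neighborhood bases at zero in $\AG_u(X)$ and $\AG_u(Y)$ explicitly in terms of the uniformities $\U_X$ and $\U_Y$, and then to verify directly that $\bar f$ carries the base in $\AG_u(X)$ onto the base in $\AG_u(Y)$. Since $\bar f$ is a group homomorphism between abelian groups, openness at the neutral element is equivalent to openness everywhere, so it suffices to show that for every basic neighborhood $W$ of $0$ in $\AG_u(X)$ the image $\bar f(W)$ is a neighborhood of $0$ in $\AG_u(Y)$.

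First I would record that $(\AG_u(X),\delta_X)$ is a strong balanced topological group over $X$, so that Theorem~\ref{t:strong-balance} applies. Because $\AG_u(X)$ is abelian, every conjugation $gU_n^\pm g^{-1}$ in the formula of that theorem collapses to $U_n^\pm$, and hence $\big\{\bigotimes_{n\in\w}U_n^\pm:(U_n)_{n\in\w}\in\U_X^\w\big\}$ is a neighborhood base at $0$ in $\AG_u(X)$; the analogous description applies to $\AG_u(Y)$ with entourages drawn from $\U_Y$.

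Next I would isolate the principal uniform-topological fact driving the argument: for every entourage $U\in\U_X$ the direct image $V:=(f\times f)(U)$ belongs to $\U_Y$. Indeed, since $f$ is surjective one has $(f\times f)^{-1}(V)\supset U\in\U_X$, so $(f\times f)^{-1}(V)\in\U_X$, and the uniformly quotient hypothesis gives $V\in\U_Y$. Using surjectivity again, every pair $(y_1,y_2)\in V$ lifts to some $(x_1,x_2)\in U$, which yields the pointwise identity $\bar f(U^\pm)=V^\pm$.

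Finally, because $\bar f$ is an additive homomorphism it commutes with Minkowski sums and hence with the permutation product: for every sequence $(U_n)_{n\in\w}\in\U_X^\w$,
$$\bar f\Big(\bigotimes_{n\in\w}U_n^\pm\Big)=\bigotimes_{n\in\w}\bar f(U_n^\pm)=\bigotimes_{n\in\w}\big((f\times f)(U_n)\big)^\pm,$$
which is itself a basic neighborhood of $0$ in $\AG_u(Y)$ by Theorem~\ref{t:strong-balance}. This proves that $\bar f$ is open at $0$, and hence open. I do not expect any real obstacle: the only place where the full strength of the uniformly quotient hypothesis is used (as opposed to mere uniform continuity of $f$) is the observation that $(f\times f)(U)\in\U_Y$, and the remaining steps are a straightforward unwinding of the strong-balanced-group description of the topology.
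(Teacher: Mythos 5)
Your proof is correct, but it takes a genuinely different route from the paper's. The paper argues softly: it factors $\bar f$ as $\tilde f\circ q$ through the quotient group $\AG_u(X)/H$ by the kernel $H=\bar f^{-1}(0)$, uses the uniformly-quotient hypothesis to see that $\tilde f^{-1}\circ\delta_Y$ is uniformly continuous (since its composition with $f$ is), invokes the universal property of $\AG_u(Y)$ to conclude that $\tilde f^{-1}$ is continuous, and deduces openness of $\bar f$ from the openness of the quotient homomorphism $q$. You instead unwind the explicit description of the topology from Theorem~\ref{t:strong-balance}: your key observation that $(f\times f)(U)\in\U_Y$ for $U\in\U_X$ (via surjectivity and the upward closure of $\U_X$ applied to $(f\times f)^{-1}\big((f\times f)(U)\big)\supset U$) is exactly the content of the quotient hypothesis that the paper uses in disguise, and the remaining bookkeeping with $\bar f(U^\pm)=\big((f\times f)(U)\big)^\pm$ and the collapse of the permutation product to iterated Minkowski sums in the abelian case is sound. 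The paper's argument is shorter, does not depend on the explicit neighborhood base, and transfers essentially verbatim to $\BG_u$ and other free constructions; yours yields the sharper conclusion that $\bar f$ carries basic neighborhoods of zero \emph{onto} basic neighborhoods of zero and makes the role of the quotient hypothesis completely transparent. Both are valid proofs.
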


\begin{proof} Let $H=\bar f^{-1}(0)$ be the kernel of the homomorphism $\bar f$, $\AG_u(X)/H$ be the quotient topological Abelian group and $q:\AG_u(X)\to \AG_u(X)/H$ be the quotient homomorphism. It follows that $\bar f=\tilde f\circ q$ for some continuous bijective homomorphism $\tilde f:\AG_u(X)/H\to \AG_u(Y)$. The equality $\tilde f\circ q\circ\delta_X=\bar f\circ \delta_X=\delta_Y\circ f$ implies the equality $q\circ \delta_X=\tilde f^{-1}\circ\delta_Y\circ f$. Taking into account that the map $f$ is uniformly quotient and the map $q\circ \delta_X=\tilde f^{-1}\circ\delta_Y\circ f$ is uniformly continuous, we conclude that the map $\tilde f^{-1}\circ \delta_Y$ is uniformly continuous. Then the definition of the free topological abelian group $\AG_u(Y)$ implies that the homomorphism $\tilde f^{-1}:\AG_u(Y)\to \AG_u(X)/H$ is continuous. So, $\tilde f$ is a topological isomorphism. Since the quotient map $q:\AG_u(X)\to \AG_u(X)/H$ is open, so is the map $f=\tilde f\circ q:\AG_u(X)\to\AG_u(Y)$.
\end{proof}

\begin{corollary}\label{c:uniquot} Let $f:X\to Y$ be a uniformly quotient map between uniform spaces. If the uniform space $X$ is $\w^\w$-based, then so is the uniform space $Y$.
\end{corollary}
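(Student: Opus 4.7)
The plan is to combine the characterization in Theorem~\ref{t:ABGu} with the openness statement of Lemma~\ref{l:quot}, reducing the uniform statement about $X$ and $Y$ to a statement about the free Abelian topological groups $\AG_u(X)$ and $\AG_u(Y)$. More precisely, since $X$ is $\w^\w$-based, Theorem~\ref{t:ABGu} gives that $\AG_u(X)$ admits a neighborhood base $(W_\alpha)_{\alpha\in\w^\w}$ at $0$ with $W_\beta\subset W_\alpha$ whenever $\alpha\le\beta$ in $\w^\w$. My aim is to push this base forward to $\AG_u(Y)$ via $\bar f$, and then invoke Theorem~\ref{t:ABGu} in the reverse direction.

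The first step is to check that the induced homomorphism $\bar f\colon \AG_u(X)\to\AG_u(Y)$ is surjective. This is immediate: by the universal property $\bar f\circ\delta_X=\delta_Y\circ f$, and since $f$ is surjective, the image $\bar f(\AG_u(X))$ contains $\delta_Y(Y)$, which algebraically generates $\AG_u(Y)$; as $\bar f$ is a group homomorphism, surjectivity follows. The second step is to apply Lemma~\ref{l:quot} to conclude that $\bar f$ is an open map. Combining openness, continuity and surjectivity, the family $(\bar f(W_\alpha))_{\alpha\in\w^\w}$ is a neighborhood base of $0$ in $\AG_u(Y)$ and remains monotone ($\bar f(W_\beta)\subset\bar f(W_\alpha)$ for $\alpha\le\beta$), so $\AG_u(Y)$ has an $\w^\w$-base.

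Finally, applying the equivalence of Theorem~\ref{t:ABGu} to $Y$ converts the $\w^\w$-base at $0$ in $\AG_u(Y)$ back into an $\w^\w$-base for the uniformity $\U_Y$, proving that $Y$ is $\w^\w$-based. I do not expect any serious obstacle here, since both Theorem~\ref{t:ABGu} and Lemma~\ref{l:quot} do essentially all of the work; the only point requiring care is the routine verification that open continuous surjective homomorphisms of topological groups carry neighborhood bases at the identity to neighborhood bases at the identity, preserving monotonicity with respect to the $\w^\w$-indexing.
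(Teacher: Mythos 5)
Your proposal is correct and follows essentially the same route as the paper: Theorem~\ref{t:ABGu} to transfer the $\w^\w$-base from $\U_X$ to $\AG_u(X)$, Lemma~\ref{l:quot} for the openness of $\bar f$, pushing the base forward, and Theorem~\ref{t:ABGu} again for $Y$. The surjectivity check you add is a harmless (and reasonable) explicit verification of a point the paper leaves implicit.
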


\begin{proof} By Lemma~\ref{l:quot}, the induced homomorphism $\bar f:\AG_u(X)\to\AG_u(Y)$ is open. If the uniformity $\U_X$ of $X$ has an $\w^\w$-base, then by Theorem~\ref{t:ABGu}, the free abelian topological group $\AG_u(X)$ has a local $\w^\w$-base $(U_\alpha)_{\alpha\in\w^\w}$ at zero. Since the homomorphism $\bar f$ is open, $(\bar f(U_\alpha))_{\alpha\in\w^\w}$ is a neighborhood $\w^\w$-base at zero of the group $\AG_u(Y)$. By Theorem~\ref{t:ABGu}, the uniformity $\U_Y$ has an $\w^\w$-base.
\end{proof}

A map $f:X\to Y$ between topological spaces is called {\em $\IR$-quotient} if for any function  $\varphi:Y\to \IR$ the continuity of $\varphi$ is equivalent to the continuity of the composition $\varphi\circ f:X\to\IR$.

\begin{proposition}\label{p:Rquot} Let $f:X\to Y$ be an $\IR$-quotient map of Tychonoff spaces. If the universal uniformity of $X$ has an $\w^\w$-base, then the universal uniformity of the space $Y$ has an $\w^\w$-base, too.
\end{proposition}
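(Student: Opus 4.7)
The plan is to follow the strategy of Corollary~\ref{c:uniquot}, but since the map $f$ is only $\IR$-quotient rather than uniformly quotient, the openness of the induced homomorphism $\bar f:\AG(X)\to\AG(Y)$ must be established by a different argument. By Corollary~\ref{c:ABG}, the hypothesis on $X$ means that $\AG(X)$ admits an $\w^\w$-base at zero, and the same corollary shows that to finish it is enough to produce an $\w^\w$-base at zero of $\AG(Y)$. Thus, just as in Lemma~\ref{l:quot}, once I know that $\bar f$ is a continuous open surjection, the image of an $\w^\w$-base of $\AG(X)$ under $\bar f$ is an $\w^\w$-base of $\AG(Y)$, and we are done. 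Surjectivity is automatic since the $\IR$-quotient map $f$ is onto and $\bar f(\AG(X))$ contains the generating set $\delta_Y(Y)$ of $\AG(Y)$.

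To establish the openness of $\bar f$, I would factor $\bar f=\tilde f\circ q$, where $q:\AG(X)\to \AG(X)/H$ is the quotient homomorphism modulo $H=\bar f^{-1}(0)$ and $\tilde f:\AG(X)/H\to\AG(Y)$ is a continuous bijective homomorphism. It suffices to show that $\tilde f$ is a topological isomorphism, i.e.\ that $\tilde f^{-1}$ is continuous. By the universal property of $\AG(Y)$, this is equivalent to the continuity of
$$g:=\tilde f^{-1}\circ\delta_Y:Y\to\AG(X)/H,$$
and a short diagram chase (using $\bar f\circ\delta_X=\delta_Y\circ f$) yields $g\circ f=q\circ\delta_X$, which is continuous as a composition of continuous maps.

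The heart of the argument is then to deduce continuity of $g$ from continuity of $g\circ f$ using only the $\IR$-quotient property of $f$. For this I would invoke a Birkhoff--Kakutani-type fact: in the Abelian topological group $\AG(X)/H$, the neighborhood filter at $0$ has a base consisting of open balls $\{z:d(z,0)<\e\}$ indexed by continuous invariant pseudometrics $d$ on $\AG(X)/H$ and positive reals $\e$. Fixing $y_0\in Y$ and such a pair $(d,\e)$, define $\psi:Y\to\IR$ by $\psi(y)=d(g(y),g(y_0))$. Since $g\circ f=q\circ\delta_X$ is continuous and $d(\,\cdot\,,g(y_0))$ is continuous on $\AG(X)/H$, the composition $\psi\circ f$ is continuous on $X$. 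By the $\IR$-quotient hypothesis $\psi$ is then continuous on $Y$; since $\psi(y_0)=0$, the set $\{y\in Y:d(g(y),g(y_0))<\e\}$ is a neighborhood of $y_0$, establishing continuity of $g$ at $y_0$.

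The step I expect to be the main obstacle is precisely this reduction: the group $\AG(X)/H$ is in general very far from metrizable, so the $\IR$-quotient hypothesis cannot be applied to a metric pullback directly. The Birkhoff--Kakutani generation of group topologies by continuous invariant pseudometrics is the device that reduces the continuity of the vector-valued map $g$ to continuity of a family of scalar functions $\psi$, after which the hypothesis on $f$ can be used verbatim; everything else in the argument is standard manipulation of the free-group universal property already exploited in Lemma~\ref{l:quot} and Corollary~\ref{c:uniquot}.
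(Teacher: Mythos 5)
Your argument is correct, but it takes a noticeably longer route than the paper's. The paper's entire proof consists of checking that an $\IR$-quotient map between Tychonoff spaces is automatically \emph{uniformly quotient} with respect to the universal uniformities --- the observation being that a pseudometric $d$ on $Y$ is continuous as soon as each scalar function $d(y_0,\cdot)$ is (by the triangle inequality), and these scalar functions are precisely what the $\IR$-quotient hypothesis controls --- after which Corollary~\ref{c:uniquot} applies verbatim. You instead rework the proof of Lemma~\ref{l:quot} from scratch in the $\IR$-quotient setting, replacing the uniformly-quotient step by a Birkhoff--Kakutani argument on the quotient group $\AG(X)/H$: continuity of $g=\tilde f^{-1}\circ\delta_Y$ is tested against continuous invariant pseudometrics $d$ on the group, and each scalar function $\psi=d(g(\cdot),g(y_0))$ is handled by the $\IR$-quotient hypothesis. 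The underlying trick is the same in both proofs (reduce continuity of a pseudometric-valued quantity to continuity of scalar distance functions); you apply it to pseudometrics on $\AG(X)/H$ pulled back to $Y$, the paper applies it to pseudometrics on $Y$ itself. The paper's version buys modularity and brevity, isolating the reusable fact that $\IR$-quotient implies uniformly quotient for universal uniformities so that Lemma~\ref{l:quot} and Corollary~\ref{c:uniquot} can be cited as black boxes; your version only needs the continuity of the single map $g$ rather than the full uniformly-quotient property, and in fact it establishes exactly the nontrivial implication (``$g\circ f$ continuous $\Rightarrow$ $g$ continuous into a topological group'') that the quotient-group argument really consumes. Two small points to make explicit if you write this up: the universal property of $\AG(Y)$ asks for \emph{uniform} continuity of $g$, so you should record that a continuous map from a Tychonoff space carrying its universal uniformity into a topological group is automatically uniformly continuous; and the passage from continuity of $\bar g=\widetilde{f}^{-1}\circ\delta_Y$ to continuity of $\tilde f^{-1}$ uses that $\delta_Y(Y)$ algebraically generates $\AG(Y)$, as in Lemma~\ref{l:quot}.
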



\begin{proof} This proposition will follow from Corollary~\ref{c:uniquot} as soon as we check that the $\IR$-quotient map $f:X\to Y$ is uniformly quotient (with respect to the universal uniformities $\U_X$ and $\U_Y$). Given a continuous pseudometric $d_Y$ on $Y$ we need to show that the pseudometric $d_X=d_Y(f\times f)$ is continuous. By the triangle inequality, it suffices to show that for every point $x_0\in X$ the map $d_X(x_0,\cdot):X\to \IR$, $d_X(x_0,\cdot):x\mapsto d_X(x_0,x)=d_Y(f(x_0),f(x))$, is continuous. Since the map $f:X\to Y$ is $\IR$-quotient, the continuity of the map $d_Y(f(x_0),\cdot):Y\to\IR$ implies the continuity of the map $d_X(x_0,\cdot)=d_Y(f(x_0),\cdot)\circ f$.
\end{proof}

\section{Free topological groups}\label{s:FG}

For a uniform space $X$ its {\em free topological group} is a pair $(\FG_u(X),\delta_X)$ consisting of a topological group $\FG_u(X)$ and a uniformly continuous map $\delta_X:X\to \FG_u(X)$ such that for every uniformly continuous map $f:X\to Y$ to a topological group $Y$ there exists a continuous group homomorphism $\bar f:\FG_u(X)\to Y$ such that $\bar f\circ\delta_X=f$.  This definition implies that the pair $(\FG_u(X),\delta_X)$ is a strong topological group over the uniform space $X$, so we can apply the results of Section~\ref{s:strong} to studying the structure of the group $\FG_u(X)$.

For a Tychonoff space $X$ its free topological group $(\FG(X),\delta_X)$ coincides with the free  topological group $(\FG_u(X),\delta_X)$ of the space $X$ endowed with the universal uniformity $\U_X$.

For a uniform space $X$ its free topological group $(\FG_u(X),\delta_X)$ can be constructed as follows. Take the free group $(\FG(X),\delta_X)$ over the set $X$. It is well-known that in this case the map $\delta_X:X\to \FG(X)$ is injective, so $X$ can be identifying with the subset $\delta_X(X)$ (which algebraically generates the group $\FG(X)$). By Graev's Theorem 7.2.2 in \cite{AT}, every (pseudo)metric $d$ on $X$ admits a canonical extension to an invariant (pseudo)metric on $\FG_u(X)$. This implies that the family of Graev's extensions of uniform pseudometrics on the uniform space $X$ separate points of the free group $\FG(X)$ and hence determine a Hausdorff group topology $\tau$ on $\FG(X)$ such that the map $\delta_X:X\to\FG(X)$ is uniformly continuous. Now we can take the strongest group topology $\Tau$ on $\FG(X)$ for which the map $\delta_X:X\to \FG(X)$ uniformly continuous. The group $\FG(X)$ endowed with this strongest topology is denoted by $\FG_u(X)$. It follows that the pair $(\FG_u(X),\delta_X)$ is a free topological group over the uniform space $X$. This construction implies that the map $\delta_X:X\to\FG_u(X)$ is a topological embedding and the group $\FG_u(X)$ is algebraically free over $X$. For a point $z\in X$ by $z_\delta$ we shall denote its image $\delta_X(z)$ in $\FG_u(X)$.

We recall that a uniform space $X$ is {\em $\kappa$-universal} if every $\kappa$-continuous map $f:X\to M$ to a metric space $M$ of density $\kappa$ is uniformly continuous. A family $\mathcal D$ of subsets of a topological space $X$ is {\em discrete} if each point $x\in X$ has a neighborhood $O_x\subset X$ meeting at most one set of the family $\mathcal D$.

\begin{theorem}\label{t:FGu} For every uniform space $X$ of density $\kappa$ the following reductions hold:
\begin{enumerate}
\item $\Tau_e(XX^{-1})\succcurlyeq \U_X$ where $XX^{-1}=\{xy^{-1}:x,y\in\delta_X(X)\}\subset\FG_u(X)$.
\item $\U_X^{\kappa}\succcurlyeq \Tau_e(\FG_u(X))$.
\item If $X$ is a uniform $P_u$-space with $\cov^\sharp(X)\le\add(X)$, then $\U_X\succcurlyeq \Tau_e(\FG_u(X))$.
\item If the uniform space $X$ is $\w$-universal and $X$ contains a non-$P$-point $z\in X$, then  $\Tau_e(Y_z)\succcurlyeq B_\w(X)$, where $Y_z=\{xy z_\delta^{-1}x^{-1}:x,y\in \delta_X(X)\}\subset \FG_u(X)$.
\item If $\mathcal D$ is a discrete family of non-empty open sets in $X$, the uniformity of $X$ is $|\mathcal D|\cdot\add(\U_X)$-universal, and $z\in X$ is a non-isolated point with  $\add(\Tau_z(X))=\add(X)>\w$, then  $\Tau_e(Y_z)\succcurlyeq\add(X)^{\mathcal D}$, where $Y_z=\{xyz_\delta^{-1}x^{-1}:x,y\in \delta_X(X)\}\subset\FG_u(X)$.
\end{enumerate}
\end{theorem}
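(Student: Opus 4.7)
The plan is to handle the five items separately, treating (1)--(3) by direct application of results from Section~\ref{s:strong} to the strong topological group $(\FG_u(X),\delta_X)$, and (4)--(5) by a hands-on construction based on the Pestov-style base of Theorem~\ref{t:strong}. For item (1) I mirror the proof of $\Tau_0(X-X)\succcurlyeq\U_X$ from Theorem~\ref{t:AGu}: define the monotone map $E_{\ast}\colon\Tau_e(XX^{-1})\to\U_X$ by $E_V:=\{(x,y)\in X\times X:x_\delta y_\delta^{-1}\in V\}$, which lands in $\U_X$ by uniform continuity of $\delta_X$; for cofinality, given $U\in\U_X$, pick a uniform pseudometric $d$ on $X$ with $\{(x,y):d(x,y)<1\}\subset U$, apply Graev's theorem (\cite[7.2.2]{AT}) to obtain an invariant pseudometric $\hat d$ on $\FG_u(X)$, and observe that $V:=\{g\in XX^{-1}:\hat d(g,e)<1\}$ satisfies $E_V\subset U$. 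Items (2) and (3) follow directly from Corollary~\ref{c:strong-dens} and Corollary~\ref{c:strong-P}, applied to the strong topological group $(\FG_u(X),\delta_X)$.

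Items (4) and (5) carry the main weight. For (4) I will send a neighborhood $V\in\Tau_e(Y_z)$ to the function $\varphi_V\colon X\to\w$ defined by
\[
\varphi_V(x)=\min\{n\in\w : x_\delta y_\delta z_\delta^{-1}x_\delta^{-1}\in V\ \text{for all}\ y\in W_n[z]\},
\]
where $(W_n)_{n\in\w}$ is a fixed decreasing sequence of entourages at $z$ with $\bigcap_n W_n[z]$ not a neighborhood of $z$ (such a sequence exists because $z$ is a non-$P$-point). Local $\w$-boundedness of $\varphi_V$ will follow from $\w$-universality of $X$. For cofinality, given $\varphi\in B_\w(X)$, apply Lemma~\ref{l:CwB} to pass to an $\w$-continuous $\psi\ge\varphi$, whose $\w$-continuity furnishes a countable family $\{V_n\}_{n\in\w}\subset\U_X$ controlling $\psi$ locally; then Theorem~\ref{t:strong} with the choice $U_{x_\delta,n}\subset V_n\cap W_{\lceil\psi(x)\rceil}$ (and arbitrary $U_{g,n}\in\U_X$ for $g$ not of the form $x_\delta$) produces a basic neighborhood $V=\bigotimes_{n\in\w}\bigcup_{g\in H}gU_{g,n}^\pm g^{-1}$ with $\varphi_V\ge\psi\ge\varphi$. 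Item (5) is analogous, with $\w$ replaced by $\add(X)$ and the discrete family $\mathcal D$ supplying independent encoding channels; the $|\mathcal D|\cdot\add(\U_X)$-universality hypothesis ensures that entourages reflecting each $D\in\mathcal D$ can be chosen consistently, while $\add(\Tau_z(X))=\add(X)$ guarantees that the required $\add(X)$-intersections at $z$ stay inside $\U_X$.

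The main obstacle is the cofinality step in (4) and (5): I must verify that when a conjugate $x_\delta y_\delta z_\delta^{-1}x_\delta^{-1}$ lies in the constructed basic neighborhood $V$, its reduced form in $\FG_u(X)$ forces $(y,z)\in U_{x_\delta,n}\subset W_{\lceil\psi(x)\rceil}$ for some $n$, so that $\varphi_V(x)\ge\psi(x)$. This is a Nielsen-type word-reduction argument: any factorization of $x_\delta y_\delta z_\delta^{-1}x_\delta^{-1}$ as a permutation product $\bigotimes_i g_i U_{g_i,n_i}^\pm g_i^{-1}$ admits cancellation patterns at the outer letters $x_\delta,x_\delta^{-1}$ that isolate $x_\delta$ as the conjugator and leave the middle portion to witness $(y,z)\in U_{x_\delta,n}$ up to sign. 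Coupling this combinatorial control with the $\w$-universality hypothesis, which converts $\w$-continuous data into uniformly controllable entourage families, is the technical heart of the argument.
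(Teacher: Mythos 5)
Your treatment of items (1)--(3) coincides with the paper's: (1) is the Graev--pseudometric argument transplanted from Theorem~\ref{t:AGu}, and (2), (3) are exactly Corollaries~\ref{c:strong-dens} and \ref{c:strong-P}. The problems are in items (4) and (5). First, your $\varphi_V$ carries the wrong quantifier and need not be well-defined: you set $\varphi_V(x)=\min\{n: x_\delta y_\delta z_\delta^{-1}x_\delta^{-1}\in V$ for \emph{all} $y\in W_n[z]\}$, and well-definedness would require some $W_n[z]$ to fit inside the ball $E[z]$ supplied by the uniform continuity of $y\mapsto x_\delta y_\delta z_\delta^{-1}x_\delta^{-1}$. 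But the hypothesis on $(W_n)_{n\in\w}$ is only that $\bigcap_{n}W_n[z]$ is not a neighborhood of $z$; this does not make $(W_n[z])_{n\in\w}$ a neighborhood base at $z$ (it cannot be when $z$ has uncountable character), so the minimum may be over the empty set. The paper's map uses the existential form, $\varphi_U(x)=\min\{n:\exists\, y\in X\setminus U_n[z]$ with $xyz^{-1}x^{-1}\in U\}$, precisely so that well-definedness follows from ``$\bigcap_n U_n[z]$ is not a neighborhood'' together with uniform continuity.

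Second, and more seriously, the cofinality step --- which you rightly call the technical heart --- is not carried out, and the route you sketch does not work. In a Pestov basic neighborhood $\bigotimes_{n\in\w}\bigcup_{g\in H}gU_{g,n}^{\pm}g^{-1}$, shrinking only $U_{x_\delta,n}$ and leaving the other $U_{g,n}$ ``arbitrary'' cannot force $\varphi_V\ge\psi$: the element $x_\delta y_\delta z_\delta^{-1}x_\delta^{-1}$ also factors as $(x_\delta u_\delta^{-1})\cdot u_\delta(y_\delta z_\delta^{-1})u_\delta^{-1}\cdot(u_\delta x_\delta^{-1})$ for any $u$, so membership in $V$ is governed as well by $U_{e,n}$ and by $U_{u_\delta,m}$ for every $u$ uniformly close to $x$, and no Nielsen-type cancellation isolates $x_\delta$ as the conjugator. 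The paper sidesteps this word problem entirely: given $\psi\in B_\w(X)$ it takes an $\w$-continuous $\gamma\ge\psi+1$ (Lemma~\ref{l:CwB}), forms the uniformly continuous map $\theta:x\mapsto\big(d(x,z),2^{\gamma(x)}\big)$ into the affine group $\IR\rtimes\IR_+$ (using $\w$-universality), extends it to a continuous homomorphism $\bar\theta:\FG_u(X)\to\IR\rtimes\IR_+$, and puts $U=\bar\theta^{-1}(U_0)$; the explicit formula for $\bar\theta(xyz^{-1}x^{-1})$ then bounds $2^{\gamma(x)}d(y,z)$ and produces the contradiction. This homomorphism trick is the missing idea. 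Your item (5) inherits both defects and requires a further ingredient you do not mention: for uncountable $\kappa=\add(X)$ the field $\IR$ has no strictly decreasing $\kappa$-sequence tending to $0$, so the paper must replace $\IR\rtimes\IR_+$ by $F\rtimes F_+$ for a suitable non-Archimedean ordered field $F$, and must first reduce to a base of $\U_X$ consisting of equivalence relations.
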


\begin{proof} 1. The first statement can be proved by analogy with Theorem~\ref{t:AGu} (using Graev's extensions of pseudometrics, see \cite[7.2.2]{AT}).
\smallskip

2,3. The second and third statements follow from Corollaries~\ref{c:strong-dens} and \ref{c:strong-P}, respectively.
\smallskip

4. To prove the fourth statement, assume that the uniformity of $X$ is $\w$-universal and $X$ contains a non-$P$-point $z$. Then there exists a decreasing sequence of entourages $\{U_n\}_{n\in\w}\subset \U_X$ such that the intersection $\bigcap_{n\in\w}U_n[z]$ is not a neighborhood of the point $z$ in $X$. We can additionally assume that $U_{n+1}^3\subset U_n=U_n^{-1}$ for all $n\in\w$. By Theorem~8.1.10 \cite{Eng}, there exists a pseudometric $d\le 1$ on $X$ such that
$$\{(x,y):d(x,y)<2^{-n}\}\subset U_n\subset \{(x,y):d(x,y)\le 2^n\}$$for every $n\in\IN$.

Consider the monotone map  $\varphi_*:\Tau_e(Y_z)\to \w^X$, assigning to each neighborhood $U\in\Tau_e(Y_z)$ of $e$ the function $$\varphi_U:X\to\w,\;\;\varphi_U:x\mapsto
\min\{n\in\IN: \exists y\in X\setminus U_n[z]\mbox{ such that }xyz^{-1}x^{-1}\in U\}.$$
Here we identify the set $X$ with its image $\delta_X(X)$ in $\FG_u(X)$.
Let us show that the number $\varphi_U(x)$ is well-defined. Choose a neighborhood $\tilde U\subset\FG_u(X)$ such that $\tilde U\cap Y_z=U$. The uniform continuity of the map $\delta_X:X\to\FG_u(X)$ ensures that the entourage
$$V_x=\{(u,v)\in X\times X:vu^{-1}\in x^{-1}\tilde Ux\}=\{(u,v)\in X\times X:xvu^{-1}x^{-1}\in U\}$$belongs to the uniformity $\U_X$.
The choice of the sequence $(U_n)_{n\in\w}$ guarantees that $V_x[z]\not\subset U_n[z]$ for some $n\in\IN$. Consequently, there exists a point $y\in V_x[z]\setminus U_n[z]$. For this point we get $xyz^{-1}x^{-1}\in U$, witnessing that $\varphi_U(x)\le n$.

Next, we show that the function $\varphi_U:X\to\w$ is locally $\w$-bounded. Choose a neighborhood $V\in\Tau_e(\FG_u(X))$ such that  $VVV^{-1}\subset\tilde U$.
The uniform continuity of the map $\delta_X:X\to\FG_u(X)$ ensures that the entourage $V'=\{(x,y)\in X\times X:y\in Vx\}$ belongs to the uniformity $\U_X$. By the definition of the number $n=\varphi_{V\cap Y_z}(x)$, there exists a point $y\in X\setminus U_n[z]$ such that $xyz^{-1}x^{-1}\in V$. Then for every $v\in V'[x]=Vx\cap X$ we get $vyz^{-1}v^{-1}\in Vxyz^{-1}x^{-1}V^{-1}\subset VVV^{-1}\subset \tilde U$, which implies $\varphi_U(v)\le n$ and hence $\varphi_U(V'[x])\subset [0,n]$.
This means that the map $\varphi_U$ is locally $\w$-bounded and hence $\varphi_U\in B_\w(X)$.

It remains to show that the monotone map $\varphi_*:\Tau_e(Y_z)\to B_\w(X)$, $\varphi_*:U\mapsto\varphi_U$, is cofinal. Given any locally $\w$-bounded function $\psi\in B_\w(X)$, we need to find a neighborhood $U\in\Tau_e(Y_z)$ such that $\varphi_U\ge \psi$. By Lemma~\ref{l:CwB}, there exists a $\w$-continuous function $\gamma:X\to[0,\infty)$ such that $\gamma(x)\ge \psi(x)+1$ for all $x\in X$. Let $\IR_+=(0,\infty)$ and $\alpha:X\to\IR_+$ be defined by $\alpha(x)=2^{\gamma(x)}$.
Consider the uniformly continuous function $\beta:X\to[0,1]\subset \IR$, $\beta:x\mapsto d(x,z)$.

Denote by $\IR$ and $\IR_+$ the additive group of real numbers and the multiplicative group of positive real numbers, respectively. On the product $\IR\times\IR_+$ consider the group operation $(x,a)*(y,b)=(x+ay,ab)$.
The obtained topological group will be denoted by $\IR\rtimes\IR_+$ and called the {\em semidirect product} of the topological groups $\IR$ and $\IR_+$. For any element $(x,a)\in\IR\times\IR_+$ its inverse $(x,a)^{-1}$ in the group $\IR\rtimes\IR_+$ is equal to $(-\frac{x}{a},\frac1{a})$.

The maps $\alpha,\beta$ determine a $\w$-continuous map $\theta:X\to \IR\rtimes\IR_+$, $\theta:x\mapsto(\beta(x),\alpha(x))$. Since the uniformity of $X$ is $\w$-universal, the $\w$-continuous map $\theta:X\to\IR\rtimes\IR_+$ is uniformly continuous and hence admits a unique extension to a continuous group homomorphism $\bar\theta:\FG_u(X)\to\IR\times\IR_+$. Consider the open neighborhood $$U_0=\{(x,y)\in\IR\rtimes\IR_+:|x|<1,\;|1-y|<\tfrac12\}$$ of the neutral element $(0,1)$ of the group $\IR\rtimes\IR_+$ and the preimage $U=\bar\theta^{-1}(U_0)\subset\FG_u(X)$ of $U_0$ under the homomorphism $\bar\theta$.
We claim that $\varphi_U\ge \psi$. To derive a contradiction, assume that $\varphi_U(x)<\psi(x)$ for some $x\in X$. Then for the number $n=\varphi_U(x)$ there exists a point $y\in X\setminus U_n[z]$ such that $xyz^{-1}x^{-1}\in U$. It follows that $(z,y)\notin U_n$ and hence $\beta(y)=d(y,z)\ge 2^{-n}$.

In the group $\IR\rtimes\IR_+$ consider the element $$
\begin{aligned}
\bar\theta(xyz^{-1}x^{-1})&=\theta(x)*\theta(y)*\theta(z)^{-1}*\theta(x)^{-1}=
(\beta(x),\alpha(x))*(\beta(y),\alpha(y))*(0,{\alpha(z)})^{-1}*(\beta(x),\alpha(x))^{-1}=\\
&=\big(\beta(x)+\alpha(x)\beta(y)-\tfrac{\alpha(y)\beta(x)}{\alpha(z)},\tfrac{\alpha(y)}{\alpha(z)}\big)=\big(\beta(x)(1-\tfrac{\alpha(y)}{\alpha(z)})+\alpha(x)\beta(y),\tfrac{\alpha(y)}{\alpha(z)}\big).
\end{aligned}$$

The inclusion $xyz^{-1}x^{-1}\in U$ implies that $$
\big(\beta(x)(1-\tfrac{\alpha(y)}{\alpha(z)})+\alpha(x)\beta(y),\tfrac{\alpha(y)}{\alpha(z)}\big)=\bar\theta(xyz^{-1}x^{-1})\in U_0=\big(-1,1\big)\times\big(\tfrac12,\tfrac32\big).$$
Then $$\alpha(x)\beta(y)\in (-1,1)+[0,1]\cdot (-\tfrac12,\tfrac12)=
    (-\tfrac32,\tfrac32)$$and hence
     $$2^{-n}\le \beta(y)<\frac3{2\alpha(x)}=\frac3{2^{\gamma(x)+1}}<2^{1-\gamma(x)},$$which implies $\gamma(x)<n+1=\varphi_U(x)+1<\psi(x)+1$.
    But this contradicts the choice of the function $\gamma$.
 This contradiction shows that $\varphi_U\ge\psi$ and the monotone map $\varphi_*:\Tau_e(Y_z)\to B_\w(X)$ is cofinal, yielding the desired reduction $\Tau_e(Y_z)\succcurlyeq B_\w(X)$.
\smallskip

5. Let $\mathcal D$ be a discrete family of non-empty open sets in $X$ and $z\in X$ be a non-isolated point with $\kappa=\add(\Tau_z(X))=\add(X)>\w$. Assuming that the uniformity of $X$ is $|\mathcal  D|\cdot\add(X)$-universal, we shall  prove that  $\Tau_e(Y_z)\succcurlyeq\kappa^{\mathcal D}$, where $Y_z=\{xyz^{-1}x^{-1}:x,y\in X\}\subset\FG_u(X)$.
Here we identify $X$ with its image $\delta_X(X)$ in $\FG_u(X)$. The inequality $\add(X)>\w$ implies that $X$ is a uniform $P_u$-space. Then the uniformity $\U_X$ has a base consisting of equivalence relations on $X$.

Indeed, for any entourage $U\in\U_X$ we can choose a sequence $\{U_n\}_{n\in\w}\subset \U_X$ such that $U_0\subset U$, $U_n^{-1}=U_n$ and $U_{n+1}^2\subset U_n$ for all $n\in\w$. Since $X$ is a uniform $P_u$-space, the entourage $E=\bigcap_{n\in\w}U_n$ belongs to $\U_X$. It is clear that $E\subset U$ and $E$ is an equivalence relation.

  Since $z$ is a non-isolated point of $X$ with $\add(\Tau_z(X))=\kappa>\w$, there exists a decreasing sequence $\{V_\alpha\}_{\alpha\in\kappa}\subset\U_X$ of entourages such that $\bigcap_{\alpha\in\kappa}V_\alpha[z]$ is not a neighborhood of $z$. We can additionally assume that each entourage $V_\alpha$, $\alpha\in\kappa$, is an equivalence relation on $X$.

Fix a function $s:\mathcal D\to X$ such that $s(D)\in D$ for all $D\in\mathcal D$ and put $S=s(\mathcal D)$. It is clear that $\kappa^S\cong\kappa^{\mathcal D}$.
Consider the monotone map  $\varphi_*:\Tau_e(Y_z)\to \kappa^S$, assigning to each neighborhood $U\in\Tau_e(Y_z)$ of $e$ the function $$\varphi_U:S\to\kappa,\;\;\varphi_U:x\mapsto
\min\{\alpha\in\kappa: \exists y\in X\setminus V_\alpha\mbox{ such that }xyz^{-1}x^{-1}\in U\}.$$
By analogy with the preceding case, we can show that the function $\varphi_U:S\to\kappa$ is well-defined.

It remains to prove that the monotone map $\varphi_*:\Tau_e(Y_z)\to \kappa^S$, $\varphi_*:U\mapsto\varphi_U$, is cofinal. Given any function $\varphi\in \kappa^S$, we need to find a neighborhood $U\in\Tau_e(Y_z)$ such that $\varphi_U\ge \varphi$.

Choose any ordered field $F$ containing a strictly decreasing transfinite sequence $\{\e(\alpha)\}_{\alpha\in\kappa}\subset(0,1]\subset F$ that tends to zero in $F$, endowed with the topology generated by the linear order. For the existence of such ordered field $F$, see e.g. \cite{LPT}. Replacing $F$ by the subfield generated by the set $\{\e(\alpha)\}_{\alpha\in\kappa}$, we can additionally assume that $|F|=\kappa$. Let $F_+=\{x\in F:x>0\}$ be the multiplicative group consisting of strictly positive elements of the field $F$. Consider the product $F\times F_+$ endowed with the continuous group operation $(x,a)*(y,b)=(x+ay,ab)$. The obtained topological group will be denoted by $F\rtimes F_+$.

Consider the map $b:X\to F$ defined by
$$b(x)=\begin{cases}0,&\mbox{if $x\in\bigcap_{\alpha<\kappa}V_\alpha[z]$},\\
\e\big(\!\min\{\alpha\in\kappa:x\notin V_\alpha[z]\}\big),&\mbox{otherwise}.
\end{cases}
$$
Taking into account that the sequence $\big(\e(\alpha)\big)_{\alpha\in\kappa}$ tends to zero in $F$ and for every $\alpha<\kappa$ the entourages $V_\alpha$ are equivalence relations, we can show that the map $b$ is $\kappa$-continuous.

For every $D\in\mathcal D$ find an equivalence relation $E_D\in\U_X$ such that $E_D[s(D)]\subset D$.
Extend the  function $\varphi:S\to \kappa$ to a $|\mathcal D|$-continuous  function $\bar\varphi:X\to\kappa$ defined by
$$\bar\varphi(x)=\begin{cases}
\varphi(s(D)),&\mbox{if $x\in E_D[s(D)]$ for some $D\in\mathcal D$},\\
0,&\mbox{otherwise}.
\end{cases}
$$
Here we endow the cardinal $\kappa$ with the discrete topology.

Consider the map $a:X\to F_+$, $a(x)=\frac3{2\e(\bar\varphi(x))}$, and observe that it is $|\mathcal D|$-continuous (being locally constant).

 Since the uniformity of $X$ is $|\mathcal D|\cdot\kappa$-universal, the continuous map $\theta:X\to F\rtimes F_+$, $\theta:x\mapsto(b(x),a(x))$, is uniformly continuous and hence admits a unique extension to a continuous group homomorphism $\bar\theta:\FG_u(X)\to F\rtimes F_+$. Consider the open neighborhood $$U_0=\big\{(x,y)\in F\times F_+:-1<x<1,\;\tfrac12<y<\tfrac32\big\}\subset F\rtimes F_+$$ of the neutral element $(0,1)$ of the group $F\rtimes F_+$ and the preimage $U=\bar\theta^{-1}(U_0)\subset\FG_u(X)$ of $U_0$ under the homomorphism $\bar\theta$.
We claim that $\varphi_U\ge \varphi$. To derive a contradiction, assume that $\varphi_U(x)<\varphi(x)$ for some $x\in S$. Then for the ordinal $\alpha=\varphi_U(x)$ there exists a point $y\in \bigcap_{\beta<\alpha}V_\beta\setminus V_\alpha$ such that $xyz^{-1}x^{-1}\in U$. It follows that $b(y)=\e(\alpha)$.

In the group $F\rtimes F_+$ consider the element
$$
\begin{aligned}
\bar\theta(xyz^{-1}x^{-1})&=(b(x),a(x))*(b(y),a(y))*(b(z),a(z))^{-1}*(b(x),a(x))^{-1}=\\
&=\big(b(x)(1-\tfrac{a(y)}{a(z)})+a(x)b(y),\tfrac{a(y)}{a(z)}\big)\in U_0=\big(-1,1\big)\times\big(\tfrac12,\tfrac32\big)
\end{aligned}
$$and observe that
$a(x)b(y) \in (-1,1)+[0,1]\cdot (-\tfrac12,\tfrac12)=
    (-\tfrac32,\tfrac32)\subset F$ and hence
     $\frac3{2\e(\varphi(x))}=a(x)<\frac3{2b(y)}=\frac3{2\e(\alpha)}$ which implies $\e(\varphi(x))>\e(\alpha)$ and $\varphi(x)<\alpha=\varphi_U(x)<\varphi(x)$. This  contradiction shows that $\varphi_U\ge\varphi$ and the monotone map $\varphi_*:\Tau_e(Y)\to \kappa^S$ is cofinal, yielding the desired reduction $\Tau_e(Y)\succcurlyeq \kappa^S\cong\kappa^{\mathcal D}$.
\end{proof}

\begin{theorem}\label{t:FGu2} Let $X$ be a uniform space.
\begin{enumerate}
\item The free topological group $\FG_u(X)$ has an $\w^\w$-base if the uniform space $X$ has an $\w^\w$-base and either $X$ is separable or $\cov^\sharp(X)\le\add(X)$.
\item If the subspace $XX^{-1}=\{xy^{-1}:x,y\in\delta_X(X)\}$ of $\FG_u(X)$ has a neighborhood  $\w^\w$-base at the unit $e$ of $\FG_u(X)$, then the uniform space $\U_X$ has an $\w^\w$-base.
\item If the uniform space $X$ is $\w^\w$-based and $\w$-universal, and $X$ contains a non-$P$-point $z\in X$ such that the subspace $Y_z=\{xy z_\delta^{-1}x^{-1}:x,y\in\delta_X(X)\}\subset\FG_u(X)$ has a neighborhood $\w^\w$-base at $e$, then the space $X$ is separable.
\item If the uniform space $X$ is $\w^\w$-based and $\mathfrak e(\add(X))$-universal, and $X$ contains a non-isolated point $z$ such that $\add(\Tau_z(X))=\add(X)$ and the subspace $Y_z=\{xy z_\delta^{-1}x^{-1}:x,y\in\delta_X(X)\}\subset\FG_u(X)$ has a neighborhood $\w^\w$-base at $e$, then $\cov^\sharp(X)\le\mathfrak e(\add(X))\le\mathfrak d$.
\end{enumerate}
\end{theorem}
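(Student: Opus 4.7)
The plan is to derive each of the four assertions from the reductions already established in Theorem~\ref{t:FGu}, together with the fact that $(\FG_u(X),\delta_X)$ is a strong topological group over $X$. Statement (1) is immediate: Corollary~\ref{c:strong-sep} handles the separable case and Theorem~\ref{t:sG-ca} handles the case $\cov^\sharp(X)\le\add(X)$. For statement (2), the existence of an $\w^\w$-base at $e$ in $XX^{-1}$ supplies $\w^\w\succcurlyeq\Tau_e(XX^{-1})$, which I compose with the reduction $\Tau_e(XX^{-1})\succcurlyeq\U_X$ from Theorem~\ref{t:FGu}(1) to obtain an $\w^\w$-base of $\U_X$.

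For statement (3), Theorem~\ref{t:FGu}(4) gives $\Tau_e(Y_z)\succcurlyeq B_\w(X)$; combining with the assumed $\w^\w$-base at $e$ in $Y_z$ I obtain a monotone map $\w^\w\to B_\w(X)\subset \IR^X$ whose image dominates $B_\w(X)$ and hence its subset $C_\w(X)$. The $\w$-universality hypothesis implies $\IR$-universality, so $C_u(X)=C_\w(X)$, and consequently $C_u(X)$ is $\w^\w$-dominated in $\IR^X$. This is condition $(C)$ of Theorem~\ref{t:Lin+LcU}, and since $\U_X$ has an $\w^\w$-base and $X$ is $\IR$-universal, the equivalence $(\U\wedge C)\Leftrightarrow(\U\wedge s)$ in that theorem forces $X$ to be separable.

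For statement (4), the inequality $\mathfrak e(\add(X))\le\mathfrak d$ is Proposition~\ref{p:e(X)}(4). To bound $\cov^\sharp(X)$ I split on the value of $\add(X)$. When $\add(X)=\w$, the equality $\add(\Tau_z(X))=\w$ makes $z$ a non-$P$-point and $\mathfrak e(\w)=\w_1$ by Proposition~\ref{p:e(X)}(2); the $\w_1$-universality of $X$ implies $\w$-universality, so statement (3) renders $X$ separable and $\cov^\sharp(X)\le\w_1=\mathfrak e(\add(X))$. When $\add(X)>\w$ I argue by contradiction: supposing $\cov^\sharp(X)>\mathfrak e(\add(X))$, I choose $U\in\U_X$ with $\cov(X;U)\ge\mathfrak e(\add(X))$, pick a symmetric $V\in\U_X$ with $V\circ V\subset U$, and extract a maximal $V$-separated subset $S\subset X$; maximality forces $V[S]=X$, so $|S|\ge\cov(X;V)\ge\mathfrak e(\add(X))$. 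Using a uniformly continuous pseudometric on $X$ that witnesses the $V$-separation, I thin $S$ to a subset $S'$ of cardinality exactly $\mathfrak e(\add(X))$ and take sufficiently small metric balls around its points to form a discrete family $\mathcal D$ of non-empty open sets with $|\mathcal D|=\mathfrak e(\add(X))$. Regularity of $\add(X)$ (\cite[513C]{F-MT}) together with Proposition~\ref{p:e(X)}(3) forces $\mathfrak e(\add(X))\ge\add(X)$, so $|\mathcal D|\cdot\add(X)=\mathfrak e(\add(X))$, and the assumed $\mathfrak e(\add(X))$-universality matches precisely what Theorem~\ref{t:FGu}(5) demands. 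That theorem yields $\Tau_e(Y_z)\succcurlyeq\add(X)^{\mathcal D}\cong\add(X)^{\mathfrak e(\add(X))}$, and combined with $\w^\w\succcurlyeq\Tau_e(Y_z)$ this contradicts the defining property of $\mathfrak e(\add(X))$.

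The main obstacle will be the last step of (4): fabricating a discrete family of non-empty open sets of precisely the right cardinality from a $V$-separated set, and calibrating everything so that the $|\mathcal D|\cdot\add(X)$-universality required by Theorem~\ref{t:FGu}(5) matches exactly the $\mathfrak e(\add(X))$-universality in the hypothesis. The preliminary case split on $\add(X)$ also seems unavoidable, since Theorem~\ref{t:FGu}(5) is only available when $\add(X)>\w$, while statement (3) is tailored to the complementary non-$P$-point regime.
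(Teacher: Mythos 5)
Your proposal is correct and follows essentially the same route as the paper's proof: parts (1) and (2) invoke the same prior results, part (3) runs the identical chain $\w^\w\succcurlyeq\Tau_e(Y_z)\succcurlyeq B_\w(X)\supset C_\w(X)=C_u(X)$ and then the function-space equivalences, and part (4) makes the same case split on $\add(X)$ and the same maximal-separated-set construction feeding into Theorem~\ref{t:FGu}(5). The only point to add is that applying Proposition~\ref{p:e(X)}(3) to get $\mathfrak e(\add(X))\ge\add(X)$ requires first noting $\mathfrak e(\add(X))>1$, which the paper supplies via $\w^\w\succcurlyeq\U_X\succcurlyeq\add(\U_X)$; conversely, your explicit trimming of $\mathcal D$ to cardinality exactly $\mathfrak e(\add(X))$ (so that the $|\mathcal D|\cdot\add(X)$-universality hypothesis of Theorem~\ref{t:FGu}(5) matches the assumed $\mathfrak e(\add(X))$-universality) only makes precise a step the paper leaves implicit.
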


\begin{proof} 1. The first statement follows from Corollary~\ref{c:strong-sep} and Theorem~\ref{t:sG-ca}.
\smallskip

2. The second statement follows from Theorem~\ref{t:FGu}(1).
\smallskip

3. Assume that the uniform space $X$ is $\w^\w$-based and $\w$-universal, and $X$ contains a non-$P$-point $z\in X$ such that the subspace $Y_z=\{xy z_\delta^{-1}x^{-1}:x,y\in\delta_X(X)\}\subset\FG_u(X)$ has a neighborhood $\w^\w$-base at $e$. By Theorem~\ref{t:FGu}(4), $\w^\w\succcurlyeq\Tau_e(Y_z)\succcurlyeq B_\w(X)$, which implies that the set $C_\w(X)$ is $\w^\w$-dominated in $\IR^X$. The uniform space $X$, being $\w$-universal, is $\IR$-universal. Then $C_\w(X)=C_u(X)$ and hence the set $C_u(X)$ is $\w^\w$-dominated in $\IR^X$. By Theorem~\ref{t:looong}, the space $X$ is separable.
\smallskip

4. Assume that the uniform space $X$ is $\w^\w$-based and $\mathfrak e(\add(X))$-universal, and $X$ contains a non-isolated point $z$ such that $\add(\Tau_z(X))=\add(X)$ and the subspace $Y_z=\{xyz_\delta^{-1}x^{-1}:x,y\in\delta_X(X)\}\subset\FG_u(X)$ has a neighborhood $\w^\w$-base at $e$. By \cite[2.3.2]{Ban}, $\U_X\succcurlyeq\add(\U_X)$ and hence $\w^\w\succcurlyeq\add(\U_X)$. By Proposition~\ref{p:e(X)}(3), $\add(X)\le\mathfrak e(\add(X))\le \mathfrak d$.

If $\add(\Tau_z(X))=\add(X)=\w$, then $z$ is not a $P$-point and $\mathfrak e(\add(X))=\w_1$ according to Proposition~\ref{p:e(X)}(2). The $\mathfrak e(\add(X))$-universality of the uniform space $X$ implies its $\w$-universality. By the preceding statement, the space $X$ is separable and hence $\cov^\sharp(X)\le\w_1=\mathfrak e(\add(X))\le\mathfrak d$ and we are done.

Now assume that $\add(X)>\w$. In this case $X$ is a $P_u$-space.
To derive a contradiction, assume that $\cov^\sharp(X)>\mathfrak e(\add(X))$. Then there exists an entourage $U\in\U_X$ such that $X\ne U[C]$ for any subset $C\subset X$ of cardinality $|C|<\mathfrak e(\add(X))$. Choose an entourage $V\in\U_X$ such that $V^{-1} V\subset U$. Using Zorn Lemma, choose a maximal subset $C\subset X$ such that $V[x]\cap V[y]=\emptyset$ for any distinct points $x,y\in C$. By the maximality of $C$, for every $x\in X$ there is a point $c\in C$ such that $V[x]\cap V[c]\ne\emptyset$, which implies $x\in V^{-1}V[c]\subset U[C]$. So, $X=U[C]$ and $|C|\ge\mathfrak e(\add(X))$ by the choice of $U$.  
Choose an entourage $W\in\U_X$ such that $W^{-1}W\subset V$ and observe that $\mathcal D=\{W[c]\}_{c\in C}$ is a discrete family of $W$-balls in $X$ with $|\mathcal D|=|C|\ge\mathfrak e(\add(X))$. Theorem~\ref{t:FGu}(5) implies that $\w^\w\succcurlyeq  \Tau_e(Y_z)\succcurlyeq\add(X)^{\mathcal D}\succcurlyeq\add(X)^{\mathfrak e(\add(X))}$, which contradicts the definition of $\mathfrak e(\add(X))$. This contradiction completes the proof of the inequality $\cov^\sharp(X)\le\mathfrak e(\add(X))$. \end{proof}

For Tychonoff spaces Theorem~\ref{t:FGu2} implies:

\begin{theorem}\label{t:FG} Let $X$ be a Tychonoff space.
\begin{enumerate}
\item The free topological group $\FG(X)$ has an $\w^\w$-base if  the universal uniformity $\U_X$ of $X$ has an $\w^\w$-base and either $X$ is separable or $\cov^\sharp(X)\le\add(X)$.
\item If the subspace $XX^{-1}=\{xy^{-1}:x,y\in\delta_X(X)\}$ of $\FG(X)$ has a neighborhood  $\w^\w$-base at the unit $e$ of $\FG(X)$, then the universal uniformity $\U_X$ of $X$ has an $\w^\w$-base.
\item If $X$ is universally $\w^\w$-based and $X$ contains a non-$P$-point $z\in X$ such that the subspace $Y_z=\{xy z_\delta^{-1}x^{-1}:x,y\in\delta_X(X)\}\subset\FG_u(X)$ has a neighborhood $\w^\w$-base at $e$, then the space $X$ is separable.
\item If $X$ is not discrete  and the subspace $Y=\{xyz^{-1}x^{-1}:x,y,z\in\delta_X(X)\}$ of $\FG(X)$ has a neighborhood $\w^\w$-base at  $e$, then the universal uniformity $\U_X$ of $X$ has an $\w^\w$-base and $\cov^\sharp(X)\le\mathfrak e(\add(X))\le\mathfrak d$.
\end{enumerate}
\end{theorem}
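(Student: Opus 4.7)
The plan is to derive Theorem~\ref{t:FG} as a corollary of Theorem~\ref{t:FGu2} applied to $X$ endowed with its universal uniformity $\U_X$. Under this identification $\FG(X)=\FG_u(X)$, and $\U_X$ is universal as a uniform space, hence $\kappa$-universal for every cardinal $\kappa$; in particular it is $\w$-universal and $\mathfrak{e}(\add(X))$-universal. Parts (1), (2), and (3) follow directly from the corresponding parts of Theorem~\ref{t:FGu2}; no additional work is needed because the universality of $\U_X$ supplies whichever $\kappa$-universality hypothesis is required.

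For part (4), the first conclusion---that $\U_X$ has an $\w^\w$-base---reduces to part (2). Fix any $x_0\in X$; then $x_0\cdot XX^{-1}\cdot x_0^{-1}\subset Y$, and since conjugation by $x_0$ is a self-homeomorphism of $\FG(X)$ fixing $e$, an $\w^\w$-base of $Y$ at $e$ transports back to an $\w^\w$-base of $XX^{-1}$ at $e$. Part (2) then yields the conclusion.

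The second conclusion, $\cov^\sharp(X)\le\mathfrak e(\add(X))\le\mathfrak d$, I would obtain from Theorem~\ref{t:FGu2}(4). Each subspace $Y_z$ inherits an $\w^\w$-base at $e$ from $Y$, and the other hypotheses (universality, $\w^\w$-basedness of $\U_X$) are already in place. The remaining task is to exhibit a non-isolated $z\in X$ with $\add(\Tau_z(X))=\add(X)$. The inequality $\add(\U_X)\le\add(\Tau_z(X))$ for any non-isolated $z$ is routine: given open neighborhoods $\{W_\alpha\}_{\alpha<\kappa}$ of $z$ whose intersection is not a neighborhood of $z$, choose continuous functions $f_\alpha\colon X\to[0,1]$ with $f_\alpha(z)=0$ and $f_\alpha\equiv 1$ on $X\setminus W_\alpha$; the entourages $[d_\alpha]_{<1/2}$ with $d_\alpha(x,y)=|f_\alpha(x)-f_\alpha(y)|$ satisfy $[d_\alpha]_{<1/2}[z]\subset W_\alpha$, witnessing $\add(\U_X)\le\kappa$.

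The main obstacle is the reverse inequality: producing a non-isolated $z$ with $\add(\Tau_z(X))\le\add(X)$. I plan to split on $\add(X)$. If $\add(X)=\w$, then $X$ is not a $P$-space, so any non-$P$-point $z$ satisfies $\add(\Tau_z(X))=\w=\add(X)$. If $\add(X)>\w$, then $X$ is a $P$-space, and I aim to show that a family $\V\subset\U_X$ of minimum cardinality $\add(X)$ with $\bigcap\V\notin\U_X$ must fail pointwise: some point $z\in X$ must satisfy that $(\bigcap\V)[z]$ is not a neighborhood of $z$, which forces $z$ to be non-isolated and yields $\add(\Tau_z(X))\le|\V|=\add(X)$. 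The delicate step is justifying this pointwise witness in the absence of paracompactness; I expect it to follow from the $P$-space structure together with the explicit description of $\U_X$ through continuous pseudometrics. Once such a $z$ is obtained, Theorem~\ref{t:FGu2}(4) delivers the desired bound $\cov^\sharp(X)\le\mathfrak e(\add(X))\le\mathfrak d$.
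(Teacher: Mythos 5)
Your overall route is the paper's route: parts (1)--(3) are read off from Theorem~\ref{t:FGu2} using the fact that the universal uniformity is $\kappa$-universal for every $\kappa$, and part (4) is reduced to Theorem~\ref{t:FG}(2) plus Theorem~\ref{t:FGu2}(4). Your conjugation argument for the first half of (4) is correct and is in fact more careful than the paper, which simply asserts $XX^{-1}\subset Y$ (literally false in the free group, since for $y\ne z$ the word $xyz^{-1}x^{-1}$ is reduced of length $4$; the containment $x_0(XX^{-1})x_0^{-1}\subset Y$ plus the conjugation homeomorphism is the right fix). The genuine gap is the step you yourself flag: producing a non-isolated $z$ with $\add(\Tau_z(X))=\add(X)$ when $\add(X)>\w$. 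The paper does not prove this at all -- it cites \cite[7.8.14]{Ban} for the identity $\add(X)=\min\{\add(\Tau_x(X)):x\in X'\}$ -- so you cannot treat it as routine, and your ``pointwise witness'' heuristic is false in the generality in which you state it. For example, on $X=\IR$ (paracompact, so $\U_X$ is exactly the family of neighborhoods of the diagonal) the sets $V_n=\{(x,y):|x-y|<|x|\}\cup\{(x,y):|x-y|<1/n\}$ are entourages whose intersection $W$ satisfies: $W[z]$ is a neighborhood of every $z$, yet $W$ is not a neighborhood of the diagonal at $(0,0)$, hence $W\notin\U_X$. So ``$\bigcap\V\notin\U_X$'' does not by itself force a pointwise failure; some uniform structure must be exploited.

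The step is nevertheless true in the case you need it, and here is how to close it. Since $\add(X)>\w$, $X$ is a $P$-space, and then $\U_X$ has a base of \emph{open equivalence relations}: every $U\in\U_X$ contains $[d]_{<1}$ for a continuous pseudometric $d$, hence contains $[d]_{=0}=\{(x,y):d(x,y)=0\}$, whose classes $\{y:d(x,y)=0\}$ are $G_\delta$-sets and therefore open. Take $\V\subset\U_X$ with $|\V|=\add(X)$ and $\bigcap\V\notin\U_X$, and replace each $V\in\V$ by such an open equivalence relation $E_V\subset V$; the new intersection $W=\bigcap_{V}E_V$ is still not in $\U_X$ (it only shrank) and is itself an equivalence relation. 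Now the pointwise dichotomy is valid for equivalence relations: $W\in\U_X$ if and only if all $W$-classes are open (then the $\{0,1\}$-valued pseudometric of $W$ is continuous and $[{\rho}]_{<1}=W$), and a class $W[z]$ is open as soon as $W[y]=W[z]$ is a neighborhood of each of its points $y$. Hence some $z$ has $W[z]$ not a neighborhood of $z$; this $z$ is non-isolated and the family $\{E_V[z]\}_{V\in\V}$ of open neighborhoods of $z$ witnesses $\add(\Tau_z(X))\le|\V|=\add(X)$, which together with your routine inequality $\add(X)\le\add(\Tau_z(X))$ gives equality. With this inserted (or with a citation of \cite[7.8.14]{Ban} as the paper does), your proof is complete.
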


\begin{proof} Observe that all statements of this theorem trivially hold if the space $X$ is discrete. So, we assume that $X$ is not discrete. Endowing the space $X$ with the universal uniformity, we observe that $\FG(X)=\FG_u(X)$.
\smallskip

1--3. The first three statements follow from the corresponding statements of Theorem~\ref{t:FGu2}.
\smallskip

4. To prove the fourth statement, assume that $X$ is not discrete and the subspace $Y=\{xyz^{-1}x^{-1}:x,y,z\in\delta_X(X)\}$ of $\FG(X)$ has a local $\w^\w$-base at  $e$. Since $XX^{-1}\subset Y$, we can apply the second statement and conclude that the universal uniformity of $X$ has an $\w^\w$-base. By \cite[7.8.14]{Ban}, there exists a point $z\in X$ such that $\add(\Tau_z(X))=\add(X)$. By Theorem~\ref{t:FGu2}(4), $\cov^\sharp(X)\le\mathfrak e(\add(X))\le\mathfrak d$.
\end{proof}

Now we prove several characterizations of Tychonoff spaces whose free topological groups have $\w^\w$-bases. The following theorem is an immediate consequence of Theorem~\ref{t:FG}.

\begin{theorem}\label{t:nP-FG} Let $X$ be a Tychonoff space. If  $X$ is not a $P$-space, then the free topological group $\FG(X)$ has an $\w^\w$-base iff $X$ is a separable universally $\w^\w$-based space.
\end{theorem}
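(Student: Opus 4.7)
The plan is to prove both implications by straightforward invocations of Theorem~\ref{t:FG}, since that theorem packages most of the heavy lifting (reductions of $\Tau_e(\FG(X))$ to $\U_X$, to $B_\w(X)$, and to $\add(X)^{\mathcal D}$) that one would otherwise have to redo. So the only real work is verifying that the hypotheses of the relevant items of Theorem~\ref{t:FG} are met.

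For the ``if'' direction, suppose $X$ is separable and universally $\w^\w$-based. Then $\U_X$ has an $\w^\w$-base, and the separability of $X$ allows us to apply Theorem~\ref{t:FG}(1) directly to conclude that $\FG(X)$ has an $\w^\w$-base. No use of the hypothesis that $X$ is not a $P$-space is needed here; this direction holds unconditionally.

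For the ``only if'' direction, suppose $\FG(X)$ has an $\w^\w$-base. Since the subset $XX^{-1}\subset \FG(X)$ inherits a neighborhood $\w^\w$-base at $e$ from $\FG(X)$, Theorem~\ref{t:FG}(2) yields that the universal uniformity $\U_X$ has an $\w^\w$-base, i.e.\ $X$ is universally $\w^\w$-based. Because $X$ is not a $P$-space, there exists a non-$P$-point $z\in X$: indeed, some $G_\delta$-set $G\subset X$ fails to be open, hence contains a point $z$ at which $G$ is not a neighborhood of $z$, so $z\in X^{\prime\mathsf P}$. The neutral element $e$ lies in $Y_z=\{xyz_\delta^{-1}x^{-1}:x,y\in \delta_X(X)\}$ (take $y=z$, $x\in\delta_X(X)$ arbitrary), so $Y_z$ inherits a neighborhood $\w^\w$-base at $e$ from $\FG(X)$. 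Now all hypotheses of Theorem~\ref{t:FG}(3) are satisfied: $X$ is universally $\w^\w$-based, $z$ is a non-$P$-point, and $Y_z$ has the required local $\w^\w$-base. Hence Theorem~\ref{t:FG}(3) concludes that $X$ is separable, finishing the proof.

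There is no real obstacle here; the only potential pitfall is a purely bookkeeping one, namely making sure that the inclusions $XX^{-1}\subset \FG(X)$ and $Y_z\subset\FG(X)$ actually transfer the $\w^\w$-base at $e$ to the respective subspaces at $e$, which is automatic from the definition of a neighborhood $\w^\w$-base (intersect each $U_\alpha[e]$ with the subspace) and from the verification that $e$ does lie in both subspaces. All the nontrivial analytic content has been absorbed into Theorem~\ref{t:FG}, which in turn rests on the reduction lemmas of Section~\ref{s:FG}.
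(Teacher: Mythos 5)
Your proof is correct and follows exactly the route the paper intends: the paper states Theorem~\ref{t:nP-FG} as an immediate consequence of Theorem~\ref{t:FG}, and you have simply supplied the routine verifications (separability plus an $\w^\w$-base of $\U_X$ feeding into Theorem~\ref{t:FG}(1) for one direction; Theorem~\ref{t:FG}(2) and (3), together with the existence of a non-$P$-point and the observation that $e\in XX^{-1}\cap Y_z$, for the other). No gaps.
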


Observe that Theorem~\ref{t:FG} implies the following (consistent) characterization.

\begin{theorem}\label{t:P-FG} If $X$ is a non-discrete Tychonoff space, then we have the implications $(1)\Ra(2)\Ra(3)$ between the following statements:
\begin{enumerate}
\item the universal uniformity $\U_X$ has an $\w^\w$-base and $X$ is either separable or  $\cov^\sharp(X)\le\add(X)$;
\item the free topological group $\FG(X)$ has an $\w^\w$-base;
\item the universal uniformity $\U_X$ has an $\w^\w$-base and $X$ is either separable or  $\cov^\sharp(X)\le\mathfrak e(\add(X))$.
\end{enumerate}
If $\mathfrak e(\add(X))\le\add(X)$, then the conditions $(1)$--$(3)$ are equivalent.
\end{theorem}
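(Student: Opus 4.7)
The plan is to derive all three implications more or less mechanically from Theorem~\ref{t:FG}, which is already at our disposal, using only the elementary observation that if a topological group $G$ has an $\w^\w$-base at the identity $e$, then so does any subspace $Y\subset G$ containing $e$ (just intersect the base with $Y$).

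For the implication $(1)\Ra(2)$, I would simply invoke Theorem~\ref{t:FG}(1): under the hypothesis that $\U_X$ has an $\w^\w$-base and that $X$ is either separable or satisfies $\cov^\sharp(X)\le\add(X)$, the free topological group $\FG(X)$ has an $\w^\w$-base.

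For the implication $(2)\Ra(3)$, I would observe that the subspace $Y=\{xyz_\delta^{-1}x^{-1}:x,y,z\in\delta_X(X)\}$ contains $e$ and lies in $\FG(X)$, so it inherits a neighborhood $\w^\w$-base at $e$ from $\FG(X)$. Applying Theorem~\ref{t:FG}(4) (which requires exactly non-discreteness of $X$, granted by hypothesis) yields simultaneously that the universal uniformity $\U_X$ has an $\w^\w$-base and that $\cov^\sharp(X)\le\mathfrak e(\add(X))\le\mathfrak d$. The latter inequality alone forces the disjunction ``$X$ is separable or $\cov^\sharp(X)\le\mathfrak e(\add(X))$'' to hold, so condition $(3)$ is satisfied.

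Finally, for the last clause, I would assume $\mathfrak e(\add(X))\le\add(X)$ and show $(3)\Ra(1)$. Under this assumption the chain $\cov^\sharp(X)\le\mathfrak e(\add(X))\le\add(X)$ shows that $\cov^\sharp(X)\le\add(X)$ whenever the second disjunct of $(3)$ holds; in the remaining case $X$ is separable, and in both cases the disjunction in $(1)$ is satisfied. Combined with the $\w^\w$-base on $\U_X$ carried over from $(3)$, this gives $(1)$ and closes the cycle $(1)\Ra(2)\Ra(3)\Ra(1)$. There is no real obstacle here: all the content has already been absorbed into Theorem~\ref{t:FG}, and this theorem is essentially a repackaging of parts~(1), (2) and (4) of that result into a clean set of implications with an explicit set-theoretic gap measured by the gap between $\mathfrak e(\add(X))$ and $\add(X)$.
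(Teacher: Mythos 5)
Your proposal is correct and follows exactly the route the paper intends: the paper states Theorem~\ref{t:P-FG} as a direct consequence of Theorem~\ref{t:FG}, and your argument supplies precisely the missing details --- part (1) of that theorem for $(1)\Ra(2)$, part (4) applied to the subspace $Y\ni e$ (which inherits the $\w^\w$-base) for $(2)\Ra(3)$, and the chain $\cov^\sharp(X)\le\mathfrak e(\add(X))\le\add(X)$ for the closing implication $(3)\Ra(1)$. No gaps.
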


Our final characterization holds under the set-theoretic assumption $\mathfrak e^\sharp=\w_1$ which is strictly weaker than $\mathfrak b=\mathfrak d$ (see \cite[2.4.9]{Ban}).

\begin{corollary}\label{c:Cons}  Assume that $\mathfrak e^{\sharp}=\w_1$.
For a Tychonoff space $X$ the following conditions are equivalent:
\begin{enumerate}
\item the free topological group $\FG(X)$ has an $\w^\w$-base;
\item the subspace $Y=\{xyz^{-1}x^{-1}:x,y,z\in\delta_X(X)\}$ of $\FG(X)$ has a neighborhood $\w^\w$-base at the unit $e$ of $\FG(X)$;
\item the space $X$ is universally $\w^\w$-based and $X$ is either separable or $\cov^\sharp(X)\le \add(X)$.
\end{enumerate}
\end{corollary}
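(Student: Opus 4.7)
The plan is to establish the cycle $(1)\Rightarrow(2)\Rightarrow(3)\Rightarrow(1)$. The implication $(1)\Rightarrow(2)$ is immediate: restricting a neighborhood $\w^\w$-base at $e$ in $\FG(X)$ to the subspace $Y$ yields a neighborhood $\w^\w$-base at $e$ in $Y$. The implication $(3)\Rightarrow(1)$ is exactly Theorem~\ref{t:FG}(1). So the substantive content lies in $(2)\Rightarrow(3)$.

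Assume $(2)$. If $X$ is discrete then $\add(X)=\infty$ and $(3)$ holds trivially, so suppose that $X$ is not discrete. Theorem~\ref{t:FG}(4) then applies to give that the universal uniformity $\U_X$ has an $\w^\w$-base and that
$$\cov^\sharp(X)\le \mathfrak e(\add(X))\le\mathfrak d.$$
Invoking \cite[7.8.14]{Ban}, pick a point $z\in X$ with $\add(\Tau_z(X))=\add(X)$; the associated subspace $Y_z=\{xyz_\delta^{-1}x^{-1}:x,y\in\delta_X(X)\}$ lies in $Y$ and so inherits a neighborhood $\w^\w$-base at $e$. The argument now splits according to whether $\add(X)$ is countable.

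If $\add(X)=\w$, then $z$ is a non-$P$-point of $X$, and Theorem~\ref{t:FG}(3) yields that $X$ is separable, proving $(3)$. If instead $\add(X)>\w$, then by \cite[513C]{F-MT} the cardinal $\add(X)$ is regular uncountable. Combining $\w^\w\succcurlyeq\U_X$ with $\U_X\succcurlyeq\add(\U_X)=\add(X)$ (from \cite[2.3.2]{Ban}) yields $\w^\w\succcurlyeq\add(X)$; since any cofinal subset of $\w^\w$ has size at least $\mathfrak d$, the regular cardinal $\add(X)$ must satisfy $\add(X)\le\mathfrak d$. Under the assumption $\mathfrak e^\sharp=\w_1$, Proposition~\ref{p:e=w1} gives $\mathfrak e(\add(X))\in\{1,\add(X)\}$. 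But $\mathfrak e(\add(X))=1$ would mean $\w^\w\not\succcurlyeq\add(X)$, contradicting what was just shown, so $\mathfrak e(\add(X))=\add(X)$. Hence $\cov^\sharp(X)\le\mathfrak e(\add(X))=\add(X)$, yielding $(3)$.

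The main obstacle is the dichotomy embedded in condition $(3)$: when $\add(X)$ is countable we must pass through the non-$P$-point branch of Theorem~\ref{t:FG}(3) to obtain separability, while when $\add(X)$ is uncountable we must use $\mathfrak e^\sharp=\w_1$ together with Proposition~\ref{p:e=w1} to upgrade the bound $\cov^\sharp(X)\le\mathfrak e(\add(X))$ (which is all Theorem~\ref{t:FG}(4) provides) to the strict inequality $\cov^\sharp(X)\le\add(X)$ required by $(3)$. The set-theoretic hypothesis $\mathfrak e^\sharp=\w_1$ is used only in this second branch, exactly to rule out the possibility of a strict inequality $\add(X)<\mathfrak e(\add(X))$.
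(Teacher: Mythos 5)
Your proposal is correct and follows essentially the same route as the paper: $(3)\Ra(1)$ via Theorem~\ref{t:FG}(1), $(1)\Ra(2)$ trivially, and $(2)\Ra(3)$ by combining Theorem~\ref{t:FG}(2)--(4) with Proposition~\ref{p:e=w1} under $\mathfrak e^\sharp=\w_1$. The only cosmetic differences are that you case-split on $\add(X)=\w$ versus $\add(X)>\w$ where the paper splits on separability (both reduce to the same two applications of Theorem~\ref{t:FG}(3) and (4)), and you separately rule out $\mathfrak e(\add(X))=1$ where the paper simply observes that $\mathfrak e(\add(X))\in\{1,\add(X)\}$ gives $\mathfrak e(\add(X))\le\add(X)$ in either case.
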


\begin{proof} The implication $(3)\Ra(1)$ was proved in Theorem~\ref{t:FG}(1) and $(1)\Ra(2)$ is trivial.

To prove that $(2)\Ra(3)$, assume that the subspace $Y$ of $\FG(X)$ has a neighborhood $\w^\w$-base at the unit $e$ of $\FG(X)$. By Theorem~\ref{t:FG}(2), the space $X$ is universally $\w^\w$-based. It remains to prove that $X$ is separable or $\cov^\sharp_\w(X)\le\add(X)$. Assume that $X$ is not separable. If the space $X$ is discrete, then $\cov^\sharp(X)<\infty=\add(X)$ and we are done.
So, we assume that $X$ is not discrete and not separable. Theorem~\ref{t:nP-FG} implies that $X$ is a $P$-space and hence $\add(X)>\w$. By Theorem~\ref{t:FG}(4), $\cov^\sharp(X)\le\mathfrak e(\add(X))$.
 By Proposition~\ref{p:e=w1}, the equality $\mathfrak e^\sharp=\w_1$ implies $\mathfrak e(\add(X))\le\add(X)$ and hence $\cov_\w^\sharp(X)\le \mathfrak e(\add(X))\le\add(X)$.
\end{proof}

The implication $(2)\Ra(3)$ in Corollary~\ref{c:Cons} does not hold in ZFC.

\begin{example}\label{e:FG:b<n} Assume that $\w^\w\succcurlyeq \kappa^\delta$ for some uncountable regular cardinals $\delta\ge\kappa$. Consider the topological sum $X=\delta\sqcup 2^{\odot\kappa}$ of the cardinal $\delta$ endowed with the discrete topology and the space
$$2^{\odot\kappa}=\{x\in 2^{\kappa}:|\{\alpha\in\kappa:x(\alpha)\ne 0\}|<\w\}$$ endowed with the topology generated by the base consisting of the sets $U_\alpha[x]=\{y\in 2^{\odot\kappa}:y|\alpha=x|\alpha\}$ where $x\in 2^{\odot\kappa}$ and $\alpha\in\kappa$. The space $X=\delta\sqcup 2^{\odot\kappa}$ has the following properties:
\begin{enumerate}
\item $|X|=\delta$, $\cov^\sharp(X)=\delta^+>\kappa=\add(X)=\add(\U_X)=\cof(\U_X)\cong\U_X$;
\item the free topological group $\FG(X)$ has an $\w^\w$-base.
\end{enumerate}
\end{example}

\begin{proof} By \cite[8.1.1(2)]{Ban}, the universal uniformity of the space $2^{\odot\kappa}$ is generated by the base consisting of the entourages $U_\alpha=\{(x,y)\in 2^{\odot\kappa}\times 2^{\odot\kappa}:x|\alpha=y|\alpha\}$ where $\alpha\in\kappa$. This description implies the first statement. By Theorem~\ref{t:FGu}(2), $\w^\w\succcurlyeq\kappa^\delta\cong \U_X^{|X|}\succcurlyeq \FG(X)$, which means that $\FG(X)$ has an $\w^\w$-base.
\end{proof}

\begin{corollary} If $\mathfrak e^\sharp>\w_1$, then there exists a non-separable universally $\w^\w$-based Tychonoff space $X$ such that $\w<\add(X)<\cov^\sharp(X)$ and the free topological group $\FG(X)$ has an $\w^\w$-base.
\end{corollary}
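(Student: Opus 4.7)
The plan is to derive this corollary directly from Example~\ref{e:FG:b<n} by choosing an appropriate witness for $\mathfrak e^\sharp>\w_1$. By the definition
$$\mathfrak e^\sharp=\sup\{\kappa^+:\kappa=\cf(\kappa),\;\w^\w\succcurlyeq\kappa^\kappa\},$$
the hypothesis $\mathfrak e^\sharp>\w_1$ means that the supremum is attained by some $\kappa^+$ strictly larger than $\w_1$. Hence there exists a regular uncountable cardinal $\kappa$ with $\w^\w\succcurlyeq\kappa^\kappa$. First I would fix such a $\kappa$ and set $\delta=\kappa$ in the notation of Example~\ref{e:FG:b<n}, producing the space $X=\kappa\sqcup 2^{\odot\kappa}$.

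Next I would read off from Example~\ref{e:FG:b<n}(1) that $\add(X)=\kappa>\w$, that $\cov^\sharp(X)=\kappa^+>\kappa=\add(X)$, and that $\U_X\cong\kappa$ as a poset. Combined with the chosen reduction $\w^\w\succcurlyeq\kappa^\kappa\succcurlyeq\kappa\cong\U_X$ (the second $\succcurlyeq$ via any coordinate projection), this shows that the universal uniformity of $X$ has an $\w^\w$-base, i.e.\ $X$ is universally $\w^\w$-based. Non-separability is immediate because $X$ contains $\kappa$ as an open discrete subspace of uncountable cardinality, so any dense subset of $X$ must have cardinality at least $\kappa\ge\w_1$. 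Finally, Example~\ref{e:FG:b<n}(2) provides the $\w^\w$-base of $\FG(X)$.

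There is essentially no obstacle to overcome here; the content is already packaged in Example~\ref{e:FG:b<n}, and the only task is to verify that a regular uncountable $\kappa$ with $\w^\w\succcurlyeq\kappa^\kappa$ exists precisely when $\mathfrak e^\sharp>\w_1$. This is a direct unwrapping of the definition of $\mathfrak e^\sharp$, so the whole argument consists of one cardinal-theoretic observation followed by a reference to the example.
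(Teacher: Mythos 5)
Your argument is correct and is exactly the intended derivation: the paper states this corollary immediately after Example~\ref{e:FG:b<n} with no separate proof, and the only content is the observation you make, namely that $\mathfrak e^\sharp>\w_1$ yields a regular uncountable $\kappa$ with $\w^\w\succcurlyeq\kappa^\kappa$ (since otherwise the supremum defining $\mathfrak e^\sharp$ would be $\w_1$), after which one applies the example with $\delta=\kappa$. The verifications of non-separability, of $\w<\add(X)=\kappa<\kappa^+=\cov^\sharp(X)$, and of $\w^\w\succcurlyeq\kappa^\kappa\succcurlyeq\kappa\cong\U_X$ are all as you describe.
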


We finish this section by some consistent examples.

\begin{example}\label{ex:w1=b} Under $\w_1=\mathfrak b$ there exists a non-separable Lindel\"of Tychonoff space $X$ whose free topological group $\FG(X)$ has an $\w^\w$-base.
\end{example}

\begin{proof} Consider the space $$2^{\odot\w_1}=\{x\in 2^{\w_1}:|\{\alpha\in\w_1:x(\alpha)\ne 0\}|<\w\}$$ endowed with the topology generated by the base consisting of the sets $U_\alpha[x]=\{y\in 2^{\odot\w_1}:y|\alpha=x|\alpha\}$ where $x\in 2^{\odot{\w_1}}$ and $\alpha\in\w_1$. By \cite[8.1.1]{Ban},
the space $X$ is Lindel\"of, non-separable, and $\cov^\sharp(X)=\w_1=\add(X)\cong \U_X$. Moreover, under $\w_1=\mathfrak b$ the space $X$ is universally $\w^\w$-based. By Theorem~\ref{t:FG}(1), the free topological group $\FG(X)$ has an $\w^\w$-base.
\end{proof}

\begin{example} Under $\w_1=\mathfrak b$ there exists a Lindel\"of Tychonoff space $X$ such that the universal uniformity $\U_X$ of $X$ has $\w^\w$-base but the free topological group $\FG(X)$ fails to have an $\w^\w$-base.
\end{example}

\begin{proof} Let $X=K\sqcup 2^{\odot \w_1}$ be the topological sum of any infinite compact metrizable space $K$ and the non-separable Lindel\"of space $2^{\odot\w_1}$ from Example~\ref{ex:w1=b}. By \cite[8.1.1]{Ban}, the space $2^{\odot\w_1}$ is universally $\w^\w$-based and so is the space $X$. Observe that $X$ is not a $P$-space. Assuming that the free topological group $\FG(X)$ has an $\w^\w$-base, we can apply Theorem~\ref{t:FG}(3) and conclude that the space $X$ is separable, which is not true. This contradiction shows that the free topological group $\FG(X)$ fails to have an $\w^\w$-base.
\end{proof}

The results of this section suggest to ask the following (open) problem.

\begin{problem}\label{p:open} Find a ZFC-characterization of Tychonoff spaces whose free topological group $\FG(X)$ has an $\w^\w$-base.
\end{problem}


\section{Proof of Theorem~\ref{t:mainU}}\label{s:pf1}

Given a uniform space $X$ consider the conditions:
\begin{enumerate}
\item[$(\AG)$] the free Abelian topological group $\AG_u(X)$ of $X$ has an $\w^\w$-base;
\item[$(\BG)$] the free Boolean topological group $\BG_u(X)$ of $X$ has an $\w^\w$-base;
\item[$(\FG)$] the free topological group $\FG_u(X)$ of $X$ has an $\w^\w$-base;
\item[$(\Lc)$] the free locally convex space $\Lc_u(X)$ of $X$ has an $\w^\w$-base;
\item[$(\Lin)$] the free topological vector space $\Lin_u(X)$ of $X$ has an $\w^\w$-base;
\item[$(\U)$] the uniformity $\U_X$ of $X$ has an $\w^\w$-base;
\item[$(C)$] the poset $C_u(X)$ is $\w^\w$-dominated;
\item[$(\sigma)$] the space $X$ is $\sigma$-compact;
\item[$(s)$] the space $X$ is separable;
\item[$(S)$] $X$ is separable or $\cov^\sharp(X)\le\add(X)$.
\end{enumerate}
We need to prove the following statements:
\begin{enumerate}
\item[\textup{(1)}]
$(\U{\wedge}C)\Leftrightarrow(\Lc)\Leftarrow (\Lin)\Leftarrow(\U{\wedge}s)\Leftarrow(\U{\wedge}\sigma)\Ra(\U{\wedge}S)\Ra(\FG)\Ra(\AG)\Leftrightarrow(\BG)\Leftrightarrow(\U)$ and moreover $(\Lin)\Leftrightarrow(\U{\wedge}s)$ under $\w_1<\mathfrak b$.
\item[\textup{(2)}]
If the completion of the uniform space $X$ is $\IR$-complete, then $(\U\wedge C)\Leftrightarrow(\Lc)\Leftrightarrow(\Lin)\Leftrightarrow(\U{\wedge}s)$.
\item[\textup{(3)}]
If the uniform space $X$ is $\IR$-universal, then $(\U\wedge C)\Leftrightarrow(\Lc)\Leftrightarrow(\Lin)\Leftrightarrow(\U{\wedge}s)\Leftrightarrow(\U{\wedge}\sigma)$.
\item[\textup{(4)}]
If the space $X$ is separable, then $(\Lc)\Leftrightarrow (\Lin)\Leftrightarrow(\FG)\Leftrightarrow(\AG)\Leftrightarrow(\BG)\Leftrightarrow(\U)$.
\item[\textup{(5)}]
If the uniform space $X$ is $\w$-universal and $X$ is not a $P$-space, then\newline
$(\U{\wedge}C)\Leftrightarrow(\Lc)\Leftrightarrow (\Lin)\Leftrightarrow(\U{\wedge}s)\Leftrightarrow(\U{\wedge}\sigma)\Leftrightarrow(\FG)
$.
\end{enumerate}
\smallskip

1. The implications $(\U{\wedge}\sigma)\Ra(\U{\wedge}s)\Ra(\Lin)\Ra(\Lc)\Leftrightarrow(\U{\wedge}C)$ were proved in Theorem~\ref{t:Lin+LcU}, $(\U{\wedge}s)\Ra(\U{\wedge}S)$ is trivial, $(\U{\wedge}S)\Ra(\FG)$ was proved in Theorem~\ref{t:FG}(1), $(\FG)\Ra(\AG)$ follows from the openness of the canonical homomorphism $\FG_u(X)\to\AG_u(X)$, and $(\AG)\Leftrightarrow(\BG)\Leftrightarrow(\U)$ were proved in Theorem~\ref{t:ABGu}. Under $\w_1<\mathfrak b$ the equivalence $(\U{\wedge}s)\Leftrightarrow(\Lin)$ was proved in Theorem~\ref{t:Lin+LcU}.
\smallskip

2. If the completion of the uniform space $X$ is $\IR$-complete, then the equivalence of the conditions $(\U{\wedge}C)$ and $(\U{\wedge}s)$ follows from Theorem~\ref{t:looong}. Combining the equivalence
$(\U{\wedge}C)\Leftrightarrow(\U{\wedge}s)$ with the implications $(\U{\wedge}s)\Ra(\Lin)\Ra(\Lc)\Ra(\U{\wedge}C)$ proved in the first statement, we get the equivalences
$(\U{\wedge}s)\Leftrightarrow(\Lin)\Leftrightarrow(\Lc)\Leftrightarrow(\U{\wedge}C)$.
\smallskip

3. If the uniform space $X$ is $\IR$-universal, then by Theorem~\ref{t:looong}, the conditions $(\U{\wedge}C)$ and $(\U{\wedge}\sigma)$ are equivalent. Combined this equivalence with the implications proved in the first statement, we obtain the implications
$$(\U{\wedge}C)\Leftrightarrow(\Lin)\Leftrightarrow(\Lc)\Leftrightarrow
(\U{\wedge}s)\Leftrightarrow(\U{\wedge}\sigma)\Ra(\U{\wedge}S)\Ra(\FG)\Ra(\AG)\Leftrightarrow(\BG)\Leftrightarrow(\U).
$$

4. If the uniform space $X$ is separable, then the chain of implications from the first statement turns into the chain of equivalences $(\U)\Leftarrow(\Lc)\Leftarrow(\Lin)\Leftarrow(\U)\Ra(\FG)\Ra(\AG)\Ra(\BG)\Ra(\U)$.
\smallskip

5. Finally, assume that the uniform space $X$ is $\w$-universal and the topological space $X$ is not a $P$-space. Consequently, $X$ contains a non-isolated point $z\in X$ such that $\add(\Tau_z(X))=\w=\add(X)$. Theorem~\ref{t:FGu2}(3) yields the implication $(\FG)\Ra(\U{\wedge}s)$.  The $\w$-universality of $X$ implies the $\IR$-universality of $X$. Combining the implication $(\FG)\Ra(\U{\wedge}s)$ with the implications established in the statement (3), we get the equivalences $$(\U{\wedge}C)\Leftrightarrow(\Lc)\Leftrightarrow (\Lin)\Leftrightarrow(\U{\wedge}s)\Leftrightarrow(\U{\wedge}\sigma)\Leftrightarrow(\FG).$$

\section{Proof of Theorem \ref{t:mainT}}\label{s:pf2}

Given a Tychonoff space $X$, consider the following conditions:
\begin{enumerate}
\item[$(\AG)$] the free Abelian topological group $\AG(X)$ of $X$ has an $\w^\w$-base;
\item[$(\BG)$] the free Boolean topological group $\BG(X)$ of $X$ has an $\w^\w$-base;
\item[$(\FG)$] the free topological group $\FG(X)$ of $X$ has an $\w^\w$-base;
\item[$(\Lc)$] the free locally convex space $\Lc(X)$ of $X$ has an $\w^\w$-base;
\item[$(\Lin)$] the free topological vector space $\Lin(X)$ of $X$ has an $\w^\w$-base;
\item[$(\U)$] the universal uniformity $\U_X$ of $X$ has an $\w^\w$-base;
\item[$(C)$] the poset $C(X)$ is $\w^\w$-dominated;
\item[$(s)$] the space $X$ is separable;
\item[$(S)$] $X$ is separable or $\cov^\sharp(X)\le\add(X)$.
\item[$(D)$] $X$ is discrete;
\item[$(\sigma)$] $X$ is $\sigma$-compact;
\item[$(\sigma')$] the set $X'$ of non-isolated points of $X$ is $\sigma$-compact.
\end{enumerate}

We need to prove the following statements:
\begin{enumerate}
\item[\textup{(1)}]
$(\U{\wedge}C)\Leftrightarrow(\Lc)\Leftrightarrow (\Lin)\Leftrightarrow(\U{\wedge}s)\Leftrightarrow(\U{\wedge}\sigma)\Ra(\U{\wedge}S)\Ra(\FG)\Ra(\AG)\Leftrightarrow(\BG)\Leftrightarrow(\U)$ and moreover  $(\U{\wedge}S)\Leftrightarrow (\FG)$ under the set-theoretic assumption $\mathfrak e^\sharp=\w_1$ (which is weaker than $\mathfrak b=\mathfrak d$).
\item[\textup{(2)}]
If $X$ is not a $P$-space, then $(\U{\wedge}s)\Leftrightarrow(\FG)$.
\item[\textup{(3)}]
If the space $X$ is separable, then $(\Lc)\Leftrightarrow (\Lin)\Leftrightarrow(\FG)\Leftrightarrow(\AG)\Leftrightarrow(\BG)\Leftrightarrow(\U)$.
\item[\textup{(4)}]
If the space $X$ is metrizable, then
$(\Lc)\Leftrightarrow (\Lin)\Leftrightarrow(\sigma)\Ra(D{\vee}\sigma)\Leftrightarrow(\FG)\Ra(\AG)\Leftrightarrow(\BG)\Leftrightarrow(\sigma')$. \end{enumerate}

1. Considering $X$ as a uniform space endowed with the universal uniformity $\U_X$ and applying Theorem~\ref{t:mainU}(1,3), we obtain the implications:
$$(\U{\wedge}C)\Leftrightarrow(\Lc)\Leftrightarrow (\Lin)\Leftrightarrow(\U{\wedge}s)\Leftrightarrow(\U{\wedge}\sigma)\Ra(\U{\wedge}S)\Ra(\FG)\Ra(\AG)\Leftrightarrow(\BG)\Leftrightarrow(\U).$$
If $\mathfrak e^\sharp=\w_1$, then $(\U{\wedge}S)\Leftrightarrow(\FG)$ by Corollary~\ref{c:Cons}.
\smallskip

2. If $X$ is not a $P$-space, then the equivalence $(\U{\wedge}s)\Leftrightarrow(\FG)$ follows from Theorem~\ref{t:nP-FG}.
\smallskip

3. If the space $X$ is separable, then the equivalences $(\Lc)\Leftrightarrow (\Lin)\Leftrightarrow(\FG)\Leftrightarrow(\AG)\Leftrightarrow(\BG)\Leftrightarrow(\U)$  follow from Theorem~\ref{t:mainU}(4).

4. Assuming that the space $X$ is metrizable, we should prove the equivalences
$$(\Lc)\Leftrightarrow(\Lin)\Leftrightarrow(\U{\wedge}s)\Leftrightarrow(\sigma)\Ra(D{\vee}\sigma)\Leftrightarrow
(\U{\wedge}S)\Leftrightarrow(\FG)\Ra(\AG)\Leftrightarrow(\BG)\Leftrightarrow(\U)\Leftrightarrow(\sigma').$$
Taking into account the implications proved in the first statement, it suffices to show that
$(\U)\Leftrightarrow(\sigma')$ and $(D{\vee}\sigma)\Ra(\U{\wedge}S)\Ra(\FG)\Ra(D{\vee}\sigma)$. The implications
$(\U)\Leftrightarrow(\sigma')$ and $(D{\vee}\sigma)\Ra(\U{\wedge}S)$ follow from Theorems~\ref{t:uwwb}(6) and \ref{t:Lasnev}; the implication $(\U{\wedge}S)\Ra(\FG)$ was proved in Theorem~\ref{t:FG}(1). To prove that $(\FG)\Ra(D{\vee}\sigma)$, assume that the space $X$ is not discrete and the free topological group $\FG(X)$ has an $\w^\w$-base. Since the non-discrete metrizable space $X$ is not a $P$-space, we can apply Theorem~\ref{t:FG}(2,3) and conclude that the space $X$ is separable and universally $\w^\w$-based. By Theorem~\ref{t:looong}, the space $X$ is $\sigma$-compact.

\section{Topological spaces $X$ with the inductive topology}
We say that a topological space $X$  has the {\em inductive topology} with respect to a cover $\C$ of $X$ if a subset $V\subseteq X$ is open in $X$ if and only if for every $C\in\C$ the intersection $V\cap C$ is relatively open in $C$.

\begin{theorem}\label{ind_limit} Assume that a Tychonoff space $X$ has the inductive topology with respect to a countable cover $\{X_n\}_{n\in\omega}$ of $X$. Let $\FO\in\{\AG,\BG,\Lc,\Lin\}$. The free object $\FO(X)$ has an $\w^\w$-base if for every $n\in\w$ the space $\FO(X_n)$ has an $\w^\w$-base.
\end{theorem}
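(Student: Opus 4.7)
The plan is to reduce the statement to the characterizations in Theorem~\ref{t:mainT} by realizing $X$ as an $\IR$-quotient image of the topological sum $Y = \bigsqcup_{n \in \w} X_n$. Let $q : Y \to X$ be the canonical map sending each summand $X_n$ identically onto $X_n \subseteq X$; by the very definition of the inductive topology, a set $U \subseteq X$ is open iff $q^{-1}(U) = \bigsqcup_n (U \cap X_n)$ is open in $Y$, so $q$ is a topological quotient map between Tychonoff spaces, hence an $\IR$-quotient map. Via Proposition~\ref{p:Rquot}, this reduces the whole task to showing that $\U_Y$ has an $\w^\w$-base.

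For the key step, fix for each $n$ an $\w^\w$-base $(U^n_\alpha)_{\alpha \in \w^\w}$ of $\U_{X_n}$, whose existence is guaranteed by Theorem~\ref{t:mainT}(1) applied to $X_n$ (under any of the listed choices of $\FO$ the hypothesis entails $(\U)$). For each $\beta = (\beta_n)_{n\in\w} \in (\w^\w)^\w$ put
$$V_\beta = \bigcup_{n\in\w} U^n_{\beta_n} \subseteq Y \times Y.$$
To verify $V_\beta \in \U_Y$, I would choose continuous pseudometrics $d_n \le 1$ on $X_n$ with $[d_n]_{<1} \subseteq U^n_{\beta_n}$ and define $d : Y \times Y \to [0,1]$ by $d(x,y) = d_n(x,y)$ if $x,y \in X_n$ and $d(x,y)=1$ otherwise. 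Using the coproduct decomposition $Y\times Y = \bigsqcup_{m,n} X_m\times X_n$, the function $d$ is a continuous pseudometric, and $[d]_{<1} \subseteq V_\beta$ witnesses $V_\beta \in \U_Y$. The family $(V_\beta)$ is clearly monotone, and cofinal: for any $V \in \U_Y$ each inclusion $X_n \hookrightarrow Y$ is uniformly continuous between universal uniformities, so $V \cap (X_n \times X_n) \in \U_{X_n}$, whence some $\beta_n$ satisfies $U^n_{\beta_n} \subseteq V \cap (X_n \times X_n)$ and consequently $V_\beta \subseteq V$. The standard isomorphism $(\w^\w)^\w \cong \w^\w$ of directed sets then yields the desired $\w^\w$-base of $\U_Y$.

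Combining the two steps, $\U_X$ has an $\w^\w$-base. If $\FO \in \{\AG,\BG\}$, Corollary~\ref{c:ABG} already concludes. If $\FO \in \{\Lc,\Lin\}$, the equivalence $(\Lc) \Leftrightarrow (\Lin) \Leftrightarrow (\U\wedge\sigma)$ in Theorem~\ref{t:mainT}(1), applied to each $X_n$, tells us that each $X_n$ is $\sigma$-compact; therefore $X = \bigcup_n X_n$ is $\sigma$-compact as well. Applying Theorem~\ref{t:mainT}(1) to $X$ itself then provides the $\w^\w$-bases on $\Lc(X)$ and $\Lin(X)$.

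The main obstacle is really only the verification of $V_\beta \in \U_Y$: one has to confirm that the piecewise-defined $d$ is a genuine continuous pseudometric, and the delicate point is that the coproduct structure on $Y \times Y$ (rather than any arbitrary covering of $X\times X$ by the $X_m\times X_n$) is exactly what makes this construction work. Everything else is a direct invocation of the $\IR$-quotient machinery of Section~\ref{s:pre} and the master characterization in Theorem~\ref{t:mainT}.
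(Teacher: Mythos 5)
Your proof is correct and follows essentially the same route as the paper's: realize $X$ as an $\IR$-quotient image of the topological sum $Y=\bigsqcup_{n\in\w}X_n$, transfer the $\w^\w$-base of $\U_Y\cong\prod_{n\in\w}\U_{X_n}$ to $\U_X$ via Proposition~\ref{p:Rquot}, and finish with Corollary~\ref{c:ABG} resp.\ the characterization theorem for $\Lc$ and $\Lin$. The only cosmetic difference is that for $\FO\in\{\Lc,\Lin\}$ the paper passes through separability of the $X_n$ rather than $\sigma$-compactness; both are equivalent conditions in Theorem~\ref{t:Lin+LcT}, so this changes nothing.
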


\begin{proof} Assume that for every $n\in\w$ the space $\FO(X_n)$ has an $\w^\w$-base. By Theorems~\ref{t:mainT}, for every $n\in\w$ the universal uniformity $\U_{X_n}$ of the space $X_n$ has an $\w^\w$-base. Let $Y=\bigcup_{n\in\w}\{n\}\times X_n\subset \w\times X$ be the topological sum of the family $(X_n)_{n\in\w}$ and $q:Y\to X$, $q:(n,x)\mapsto x$, be the natural projection.
Since the space $X$ carries the inductive topology with respect to the cover $\{X_n\}_{n\in\w}$, the projection $q:Y\to X$ is a $\IR$-quotient map.

Since $$(\w^\w)^\w\succcurlyeq \prod_{n\in\w}\U_{X_n}\cong\U_Y,$$ the universal uniformity $\U_Y$ of the space $Y$ has an $\w^\w$-base. Taking into account that the space $X$ is the image of the universally $\w^\w$-based space $Y$ under the $\IR$-quotient map $q$, we conclude that the space $X$ is universally $\w^\w$-based (see Proposition~\ref{p:Rquot}). If $\FO\in\{\AG,\BG\}$, then the free object $\FO(X)$ has an $\w^\w$-base by Corollary~\ref{c:ABG}.

If $\FO\in\{\Lc,\Lin\}$, then by Theorem~\ref{t:Lin+LcT}, the spaces $X_n$, $n\in\w$, are separable and so are the spaces $Y$ and $X$. By Theorem~\ref{t:Lin+LcT}, the free (locally convex) topological vector space $\FO(X)$ has an $\w^\w$-base.
\end{proof}

Theorem~\ref{ind_limit} is not true for the free object $\FO=\FG$.

\begin{example} Let $X=D\sqcup K$ be the topological sum of an uncountable discrete space $D$ and an infinite compact metrizable space $K$. The free topological groups $\FG(D)$ and $\FG(K)$ have $\w^\w$-bases but $\FG(X)$ does not (see Theorem~\ref{t:nP-FG}).
\end{example}

\begin{problem} Assume that a Tychonoff space $X$ has the inductive topology with respect to a cover $\{X_n\}_{n\in\w}$ of $X$ such that $X_n\subset X_{n+1}$ for all $n\in\w$. Does the free topological group $\FG(X)$ have an $\w^\w$-base if for every $n\in\w$ the free topological group $\FG(X_n)$ has an $\w^\w$-base?
\end{problem}

The following theorem gives a partial affirmative answer to this problem.

\begin{theorem} Assume that a Tychonoff space $X$ has the inductive topology with respect to a countable cover $\{X_n\}_{n\in\omega}$ such that $X_n\subset X_{n+1}$ for all $n\in\w$. Assume that for every $n\in\w$ the free topological group $\FG(X_n)$ has an $\w^\w$-base. If $\mathfrak e^\sharp=\w_1$ or $X$ is not a non-discrete $P$-space, then the free topological group $\FG(X)$ has an $\w^\w$-base.
\end{theorem}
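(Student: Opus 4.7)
The plan is to verify the two conditions $(\U)$ (that $\U_X$ has an $\w^\w$-base) and ``$X$ separable or $\cov^\sharp(X)\le\add(X)$'' that, by Theorem~\ref{t:FG}(1), suffice for $\FG(X)$ to have an $\w^\w$-base. First I would obtain an $\w^\w$-base on $\U_X$ exactly as in the proof of Theorem~\ref{ind_limit}: form the topological sum $Y=\bigsqcup_{n\in\w}X_n$ with the natural projection $q\colon Y\to X$; the inductive-topology hypothesis makes $q$ an $\IR$-quotient map, while each $X_n$ (and hence $Y$, via $(\w^\w)^\w\cong\w^\w$) is universally $\w^\w$-based by Theorem~\ref{t:mainT}(1) applied to the given $\w^\w$-base on $\FG(X_n)$. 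Proposition~\ref{p:Rquot} then transfers the $\w^\w$-base to $\U_X$.

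Second, I would dispose of the second disjunct of the hypothesis, namely that $X$ is not a non-discrete $P$-space. If $X$ is discrete, Graev's metric on $X$ makes $\FG(X)$ metrizable and it trivially has an $\w^\w$-base. If $X$ is not a $P$-space, then since a countable inductive limit of $P$-spaces is a $P$-space (any countable intersection of open sets in $X$ meets $X_n$ in a countable intersection of open sets in the $P$-space $X_n$, hence is open there), some $X_N$ is not a $P$-space; the same then holds for every $X_m$ with $m\ge N$, because failure of the $P$-property is inherited by supersets that contain $X_N$ as a subspace. Theorem~\ref{t:mainT}(2) applied to such $X_m$ gives that each is separable, whence $X=\bigcup_{m\ge N}X_m$ is separable (the union of their countable dense subsets is dense in $X$), and Theorem~\ref{t:mainT}(2) applied to $X$ itself yields the desired $\w^\w$-base on $\FG(X)$.

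The remaining case, $\mathfrak e^\sharp=\w_1$ combined with $X$ a non-discrete $P$-space, is the main obstacle. Under $\mathfrak e^\sharp=\w_1$, Corollary~\ref{c:Cons} reduces the problem to the dichotomy ``$X$ separable or $\cov^\sharp(X)\le\add(X)$''. Applying Corollary~\ref{c:Cons} to each $X_n$ (itself a $P$-space, hence $\add(X_n)>\w$) yields the same dichotomy for every $X_n$; if all $X_n$ are separable then so is $X$, and we are done. Otherwise my plan is to use that $q\colon Y\to X$ is uniformly quotient (an immediate consequence of the proof of Proposition~\ref{p:Rquot}), which, via Corollary~\ref{c:uniquot}-style pullback arguments, yields the inequalities $\add(Y)\le\add(X)$ and $\cov^\sharp(X)\le\cov^\sharp(Y)$. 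Combining these with the standard formulas $\add(Y)=\min_n\add(X_n)$ and $\cov^\sharp(Y)=\sup_n\cov^\sharp(X_n)$ for the disjoint sum, and with Proposition~\ref{p:e=w1}, which under $\mathfrak e^\sharp=\w_1$ forces $\mathfrak e(\add(X))\in\{1,\add(X)\}$, one aims to derive $\cov^\sharp(X)\le\add(X)$. The delicate point will be to coordinate the varying additivities $\add(X_n)$ with the global $\add(X)$ so that the individual inequalities $\cov^\sharp(X_n)\le\add(X_n)$ propagate to $\cov^\sharp(X)\le\add(X)$; once this is secured, Theorem~\ref{t:FG}(1) completes the proof.
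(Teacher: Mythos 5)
Your first two steps are sound and essentially coincide with the paper's: the $\IR$-quotient map $q\colon Y=\bigsqcup_nX_n\to X$ transfers the $\w^\w$-base to $\U_X$, and the case where $X$ is discrete or fails to be a $P$-space is correctly reduced to separability of the $X_m$ via Theorem~\ref{t:mainT}(2) (the hereditarity argument showing that all large $X_m$ fail to be $P$-spaces is fine). The problem is the main case, which you explicitly leave open, and the route you sketch for it does not close. From $\cov^\sharp(X)\le\cov^\sharp(Y)$, $\add(Y)\le\add(X)$ and your disjoint-sum formulas you can only conclude $\cov^\sharp(X)\le\sup_n\cov^\sharp(X_n)\le\sup_n\add(X_n)$, whereas what Theorem~\ref{t:FG}(1) requires is $\cov^\sharp(X)\le\add(X)=\min_n\add(X_n)$; nothing in your argument rules out that the additivities $\add(X_n)$ are unbounded above $\add(X)$, and Proposition~\ref{p:e=w1} does not help here (the hypothesis $\mathfrak e^\sharp=\w_1$ has already been spent in Corollary~\ref{c:Cons} to obtain the dichotomy for each $X_n$).

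The two missing ingredients, which is how the paper finishes, are the following. First, monotonicity: since $X_n\subset X_{n+1}$, the trace map $\Tau_x(X_{n+1})\to\Tau_x(X_n)$ is monotone and cofinal for every $x\in X_n'$, so $\add(\Tau_x(X_{n+1}))\le\add(\Tau_x(X_n))$; combined with the local formula $\add(Z)=\min\{\add(\Tau_x(Z)):x\in Z'\}$ from \cite[7.8.14]{Ban} this shows that $(\add(X_n))_{n\in\w}$ is a non-increasing sequence of cardinals, hence stabilizes at some value, which equals $\add(X)=\min_n\add(X_n)$. Thus there is $k$ with $\cov^\sharp(X_n)\le\add(X_n)=\add(X)$ for all $n\ge k$. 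Second, regularity: $\add(X)$ is an uncountable regular cardinal (uncountable because $X$ is a non-discrete $P$-space, regular by \cite[513C]{F-MT}), so for any entourage $U\in\U_X$ the countably many covering numbers $\cov(X_n;U)<\add(X)$, $n\ge k$, sum to a cardinal still below $\add(X)$, giving $\cov(X;U)<\add(X)$ and hence $\cov^\sharp(X)\le\add(X)$. Without these two observations your ``delicate point'' is a genuine gap, not a routine verification. (A small additional repair: in this case you should first pass to a subsequence on which all $X_n$ are non-discrete and non-separable, as finitely many discrete or separable members would otherwise pollute the suprema.)
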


\begin{proof} By analogy with Theorem~\ref{ind_limit}, we can prove that the universal uniformity $\U_X$ of the space $X$ has an $\w^\w$-base. If infinitely many spaces $X_n$ are discrete, then the space $X$ is discrete and so is its free topological group $\FG(X)$. In this case $\FG(X)$ trivially has an $\w^\w$-base.

So, we assume that only finitely many spaces $X_n$ are discrete.  If
infinitely many spaces $X_n$ are separable, then the space $X=\bigcup_{n\in\w}X_n$ is separable, too.
By Theorem~\ref{t:mainT}, the free topological group $\FG(X)$ has an $\w^\w$-base.

So, we assume that only finitely many spaces $X_n$ are separable. Replacing the sequence $(X_n)_{n\in\w}$ by a suitable subsequence, we can assume that all spaces $X_n$ are not discrete and not separable. Since for every $n\in\w$ the free topological group $\FG(X_n)$ has an $\w^\w$-base, we can apply Theorem~\ref{t:nP-FG} and conclude that the non-discrete space $X_n$ is a $P$-space.
Then $X$ is a non-discrete $P$-space, too and by our assumption, $\mathfrak e^\sharp=\w_1$.
By Corollary~\ref{c:Cons}, $\cov^\sharp(X_n)\le\add(X_n)$ for all $n\in\w$. By \cite[7.8.14]{Ban}, we get $\add(X)=\min\{\add(\Tau_x(X)):x\in X'\}$ and $\add(X_n)=\min\{\add(\Tau_x(X_n)):x\in X_n'\}>\w$ for all $n\in\w$. This equalities imply that $(\add(X_n))_{n\in\w}$ is a decreasing sequence of cardinals with $\add(X)=\min\{\add(X_n):n\in\w\}$. So, we can find a number $k\in\w$ such that $\add(X)=\add(X_n)\ge\cov^\sharp(X_n)$ for all $n\ge k$. The regularity of the uncountable cardinal $\add(X)\ge\cov^\sharp(X_n)$, $n\ge k$, implies that $\cov^\sharp(X)\le\add(X)$. By Theorem~\ref{t:FG}(1), the free topological group $\FG(X)$ has an $\w^\w$-base.
\end{proof}


\begin{thebibliography}{XXX}



\bibitem{Ar86} A.V.~Arhangel'skii, {\em Hurewicz spaces, analytic sets and fan-tightness of spaces of functions}, Soviet Math. Dokl. {\bf 33}:2 (1986), 396--399.


\bibitem{Arch} A.V.~Arkhangel'skii, {\em Topological function spaces} (translated from Russian), Kluwer Academic, Dordrecht, 1992.

 \bibitem{AC} A.V.~Arhangel'skii, J.~Calbrix, {\em A characterization of $\sigma$-compactness of a cosmic space $X$ by means of subspaces of ${\IR}^X$}, Proc. Amer. Math. Soc., {\bf 127} (1999), 2497--2504.

\bibitem{AT} A.V.~Arhangel'skii, M.G.~Tkachenko,
{\em Topological Groups and Related Structures},
Atlantis Series in Mathematics, Vol.~I, Atlantis Press and
World Scientific, Paris--Amsterdam, 2008.

\bibitem{Ban} T.~Banakh, {\em Topological spaces with an $\w^\w$-base}, preprint, 2016 (http://arxiv.org/abs/1607.07978).


 \bibitem{Ban2} T.~Banakh, {\em The strong Pytkeev$^*$ property of topological spaces}, preprint, 2016   (http://arxiv.org/abs/1607.03599).

\bibitem{BanP0} T.~Banakh, {\em $\mathfrak P_0$-spaces}, Topology Appl. {\bf 195} (2015) 151--173.

\bibitem{BG} T.~Banakh, S.~Gabriyelyan, {\em On the $C_k$-stable closure of the class of (separable) metrizable spaces},
 Monatshefte Math. {\bf180}:1 (2016) 39--64.

\bibitem{BanLei} T.~Banakh, A.~Leiderman, {\em $\mathfrak{G}$-bases in free (locally convex) topological vector spaces}, preprint, 2016\\  (http://arxiv.org/abs/1606.01967).

 \bibitem{Bourbaki}
 N.~Bourbaki, {\em Topological vector spaces, Chapters 1--5}, Springer--Verlag, 1987.


 \bibitem{CO} B.~Cascales, J.~Orihuela, {\em On compactness in locally convex spaces}, Math. Z. {\bf 195} (1987), 365--381.


  \bibitem{CFHT}
C.~Chis, M.V.~Ferrer, S.~Hernandez, B.~Tsaban,
{\em  The character of topological groups, via bounded systems, Pontryagin--van Kampen duality and pcf theory,} J. Algebra, {\bf 420} (2014), 86--119.




 \bibitem{Douwen}
E.~van~Douwen, {\em The integers and topology}, in: \textit{Handbook of Set Theoretic Topology} (eds. K. Kunen and J. Vaugan), North Holland, Amsterdam: 1984, 111--167.



 \bibitem{Eng} R.~Engelking, {\em General Topology}, Heldermann Verlag, Berlin, 1989.



 \bibitem{feka}
J.C.~Ferrando, J.~K\c{a}kol, {\em On precompact sets in spaces $C_{c}\left( X\right) $}, Georgian. Math. J., {\bf 20} (2013) 247--254.

  \bibitem{FKLS}
J.C.~Ferrando, J.~K{\c{a}}kol, M.L\'{o}pez Pellicer, S.A.~Saxon, {\em Tightness and distinguished Fr\'{e}chet spaces}, J. Math. Anal. Appl. {\bf 324} (2006), 862--881.

\bibitem{F-MT} D.H.~Fremlin, {\em Measure Theory}, Vol.5, Part.I, 2008.


 \bibitem{GabKak_1}
S.S.~Gabriyelyan, J.~K{\c{a}}kol,
{\em Metrization conditions for topological vector spaces with Baire type properties,}
Topol. Appl. {\bf 173} (2014), 135--141.


\bibitem{GK-P}
S.~Gabriyelyan, J.~K{\c{a}}kol,  {\em On $\mathfrak{P}$-spaces and related concepts}, Topology Appl. {\bf 191} (2015), 178--198.

 \bibitem{GabKak_2}
S.S.~Gabriyelyan, J.~K{\c{a}}kol,
{\em  On topological spaces and topological groups with certain local countable networks},
Topol. Appl. {\bf 90} (2015), 59--73.

 \bibitem{GK_L(X)}
S.S.~Gabriyelyan, J.~K{\c{a}}kol, {\em Free locally convex spaces with a small base},
RACSAM (2016); (doi:10.1007/s13398-016-0315-1).

  \bibitem{GabKakLei_1}
S.S.~Gabriyelyan, J.~K{\c{a}}kol, A.~Leiderman,  {\em The strong Pytkeev property for topological groups and topological vector spaces},
Monatshefte Math. {\bf 175} (2014), 519--542.

 \bibitem{GabKakLei_2}
S.~S.~Gabriyelyan, J.~K{\c{a}}kol, A.~Leiderman, {\em On topological groups with a small base and metrizability}, Fundam. Math.
 {\bf 229} (2015), 129--158.


 \bibitem{GabMor}
S.S.~Gabriyelyan, S.A.~Morris, {\em Free topological vector spaces}, preprint
(http://arxiv.org/abs/1604.04005).

\bibitem{GartMor} P.~Gartside, J.~Morgan, {\em The strong Pytkeev property in function spaces}, in preparation.



 \bibitem{Grue} G.~Gruenhage, {\em Generalized metric spaces}, in: \textit{Handbook of Set Theoretic Topology} (eds. K. Kunen and J. Vaugan), North Holland, Amsterdam: 1984, 423--501.

\bibitem{EG} R.E.~Hodel, {\em Modern Metrization Theorems}, in: Encyclopedia of General Topology  (K.P.Hart, J.Nagata, J.E.Vaughan eds.), Elsevier, (2004), 242--246.


 \bibitem{kak}
J.~K\c akol, W.~Kubi\'s, M.~Lopez-Pellicer, {\em Descriptive Topology in Selected  Topics of Functional Analysis}, Developments in Mathematics, Springer, 2011.


\bibitem{Lei}
A.~Leiderman, {\em Adequate families of sets and function spaces},
Comment. Math. Univ. Carol.
{\bf 29} (1988), 31--39.

 \bibitem{LPT} A.~Leiderman, V.~Pestov, A.~Tomita, {\em On topological groups admitting a base at identity indexed with $\w^\w$}, Fund.~Math. (accepted); available at  (http://arxiv.org/abs/1511.07062).

 \bibitem{LRZ} F.~Lin, A.~Ravsky, J.~Zhang, {\em Countable tightness and $\mathfrak G$-bases on free topological groups}, preprint\newline (http://arxiv.org/abs/1512.07515).

\bibitem{LT} G.~Luk\'acs, B.~Tsaban, {\em Cofinal types of
abelian topological groups: coproducts and free groups}, IVth Workshop on Coverings, Selections, and Games in Topology (June 25-30, 2012),  Caserta, Italy\newline (http://u.cs.biu.ac.il/~tsaban/spmc12/Section2/Lukacs.pdf).



\bibitem{Noble} N.~Noble, {\em Ascoli theorems and the exponential map}, Trans. Am. Math. Soc. {\bf 143} (1969) 393--411.


 \bibitem{P}
 V.G.~Pestov, {\em Neighbourhoods of identity in free topological groups}, Vestnik Moskov. Univ. Ser. I Mat. Mekh. 1985, no. 3, 8--10, 101.  (Russain); English translation: Moscow Univ. Math. Bull. {\bf 40} (1985), no. 3, 8--12.

 \bibitem{Rai} D.A.~Raikov, {\em Free locally convex spaces for uniform spaces}, Mat. Sb. (N.S.) {\bf 63(105)} (1964), 582--590.

 \bibitem{RD}
 W.~Roelcke, S.~Dierolf, {\em Uniform Structures on Topological Groups and Their Quotients}, McGraw-Hill, 1981.

 \bibitem{Rolewicz}
 S.~Rolewicz, {\em Metric linear spaces}, PWN, Warszawa, 1972.

\bibitem{MSak} M.~Sakai, {\em Function spaces with a countable $cs\sp *$-network at a point}, Topology Appl. {\bf 156}:1 (2008), 117--123.

 \bibitem{Sipa1}
O.~Sipacheva, {\em The topology of a free topological group}. Fundam. Prikl. Mat.  {\bf 9}:2  (2003) 99--204,
  (Russain); English translation: J. Math. Sci. (N. Y.)  {\bf 131}:4  (2005) 5765--5838.

 \bibitem{Sipa2}
 O.~Sipacheva, {\em Free Boolean topological groups}, Axioms, {\bf 4} (2015), 492--517.


 \bibitem{Tk}
V.V.~ Tkachuk, {\em A space $C_p(X)$ is dominated by irrationals if and only if it is $K$-analytic}, Acta Math. Hungar. {\bf 107} (2005), 253--265.


\bibitem{Vaug} J.~Vaughan, {\em Small uncountable cardinals and topology}, in: Open problems in topology, 195--218, North-Holland, Amsterdam, 1990.


\end{thebibliography}
\end{document}